\def\rg{\hbox to 30pt{\rightarrowfill}}
\def\lg{\hbox to 30pt{\leftarrowfill}}
          \newtheorem{theorem}{Theorem}[section]
      \newtheorem{proposition}[theorem]{Proposition}
      \newtheorem{corollary}[theorem]{Corollary}
      \newtheorem{lemma}[theorem]{Lemma}
      \newtheorem{remark}[theorem]{Remark}
      \newcommand{\BB}{{\mathbb B}}
      \newcommand{\CC}{{\mathbb C}}
      \newcommand{\NN}{{\mathbb N}}
      \newcommand{\ZZ}{{\mathbb Z}}
      \newcommand{\DD}{{\mathbb D}}
      \newcommand{\FF}{{\mathbb F}}
      \newcommand{\TT}{{\mathbb T}}
\newcommand{\HH}{{\mathbb H}}
\newcommand{\KK}{{\mathbb K}}
      \newcommand{\cA}{{\mathcal A}}
      \newcommand{\cC}{{\mathcal C}}
      \newcommand{\cD}{{\mathcal D}}
      \newcommand{\cE}{{\mathcal E}}
      \newcommand{\cG}{{\mathcal G}}
      \newcommand{\cH}{{\mathcal H}}
      \newcommand{\cK}{{\mathcal K}}
      \newcommand{\cL}{{\mathcal L}}
      \newcommand{\cM}{{\mathcal M}}
      \newcommand{\cN}{{\mathcal N}}
      \newcommand{\cQ}{{\mathcal Q}}
      \newcommand{\cP}{{\mathcal P}}
      \newcommand{\cS}{{\mathcal S}}
      \newcommand{\cV}{{\mathcal V}}
      \newcommand{\cY}{{\mathcal Y}}
      \newcommand{\supp}{\hbox{\rm{supp}}\,}
      \newcommand{\rank}{\hbox{\rm{rank}}\,}
      \newdimen\expt
      \def\boxit#1{\setbox0\hbox{$\displaystyle{#1}$}
            \hbox{\lower.4\expt
       \hbox{\lower3\expt\hbox{\lower\dp0
            \hbox{\vbox{\hrule height.4\expt
       \hbox{\vrule width.4\expt\hskip3\expt
            \vbox{\vskip3\expt\box0\vskip2\expt}%
       \hskip3\expt\vrule width.4\expt}\hrule height.4\expt}}}}}}
\begin{document}
       \pagestyle{myheadings}
      \markboth{ Gelu Popescu}{  Noncommutative polydomains,   Berezin transforms,   and operator model theory    }

      \title [  Berezin transforms on Noncommutative polydomains  ]
      {          Berezin transforms on Noncommutative polydomains }
        \author{Gelu Popescu}
\date{February 24, 2013}
      \thanks{Research supported in part by an NSF grant}
      \subjclass[2000]{Primary:  46L52;  47A56;  Secondary: 47A48; 47A60}
      \keywords{Multivariable operator theory;  Berezin transform;  Noncommutative polydomain;  Free holomorphic
      function;   Characteristic function;
      Fock space; Weighted shift; Invariant subspace, Functional calculus; Dilation theory.
}

      \address{Department of Mathematics, The University of Texas
      at San Antonio \\ San Antonio, TX 78249, USA}
      \email{\tt gelu.popescu@utsa.edu}

\begin{abstract} This paper is an attempt to unify the multivariable  operator
 model  theory for ball-like domains and commutative polydiscs, and extend it  to
   a more general class of  noncommutative polydomains ${\bf D_q^m}(\cH)$
    in $B(\cH)^n$. An important role in our study is played by  noncommutative
     Berezin transforms associated  with  the  elements of  the polydomain.
      These transforms are  used  to
prove that each such polydomain has a universal model ${\bf W}=\{{\bf W}_{i,j}\}$
 consisting of weighted shifts acting on a tensor product of full Fock spaces.
We introduce the noncommutative Hardy algebra $F^\infty({\bf D_q^m})$ as the
 weakly closed algebra generated by $\{{\bf W}_{i,j}\}$ and the identity,
  and use it to provide a WOT-continuous functional calculus for completely
   non-coisometric   tuples in ${\bf D_q^m}(\cH)$, which are identified.
    It is shown that the Berezin transform  is a completely isometric
    isomorphism   between $F^\infty({\bf D_q^m})$ and  the algebra of
     bounded free holomorphic functions on the radial part of ${\bf D_q^m}(\cH)$.
      A characterization of the Beurling type  joint invariant subspaces
      under $\{{\bf W}_{i,j}\}$  is also provided.

It has been an open problem for quite some time to find  significant
classes of elements in the commutative polidisc   for which a theory
 of characteristic functions and model theory can be developed along
  the lines of  the Sz.-Nagy--Foias theory of  contractions.  We give
   a positive answer  to this question, in our more general setting,
    providing a characterization for the class of tuples of  operators
     in  ${\bf D_q^m}(\cH)$ which admit  characteristic functions. The
      characteristic function is constructed explicitly as an artifact
      of the noncommutative Berezin kernel associated with  the  polydomain,
       and it is proved to be  a complete unitary invariant  for the class of
        completely non-coisometric tuples. Using noncommutative Berezin transforms
         and $C^*$-algebras techniques, we develop a dilation theory on the
          noncommutative polydomain  ${\bf D_q^m}(\cH)$.

\end{abstract}

      \maketitle

\section*{Contents}
{\it

\quad Introduction

\begin{enumerate}
\item[1.]  A class of noncommutative polydomains
   \item[2.]    Noncommutative Berezin transforms  and universal models
   \item[3.]    Noncommutative Hardy algebras  and functional calculus
   \item[4.]  Free holomorphic functions on noncommutative polydomains
   \item[5.]  Joint invariant subspaces and universal models
 \item[6.]    Characteristic functions and operator models
\item[7.]    Dilation theory on noncommutative polydomains
   \end{enumerate}

\quad References

}

\bigskip

\section*{Introduction}

Throughout this paper, we denote by $B(\cH)$ the algebra of bounded
linear operators on a Hilbert space $\cH$.
A polynomial $q\in \CC[Z_1,\ldots, Z_n]$ in $n$ noncommuting indeterminates   is called positive regular if all its coefficients are positive, the constant term is zero, and the coefficients of the linear terms $Z_1,\ldots, Z_n$ are different from zero.  If ${ X}=(X_1,\ldots, X_n)\in B(\cH)^n$ and $q=\sum_{\alpha} a_\alpha Z_\alpha$, we define the map
$\Phi_{q,X}:B(\cH)\to B(\cH)$ by setting $\Phi_{q,X}(Y):=\sum_{\alpha} a_\alpha X_\alpha Y X_\alpha ^*$.

Given two $k$-tuples ${\bf m}:=(m_1,\ldots, m_k)$ and ${\bf n}:=(n_1,\ldots, n_k)$  with $m_i,n_i\in  \NN:=\{1,2,\ldots\}$,  and a $k$-tuple ${\bf q}=(q_1,\ldots, q_k)$ of positive regular polynomials $q_i\in \CC[Z_1,\ldots, Z_{n_i}]$, we associate with each  element ${\bf X}=(X_1,\ldots, X_k)\in B(\cH)^{n_1}\times\cdots \times B(\cH)^{n_k}$ the {\it defect mapping} ${\bf \Delta_{q,X}^m}:B(\cH)\to  B(\cH)$ defined by
$$
{\bf \Delta_{q,X}^m}:=\left(id -\Phi_{q_1, X_1}\right)^{m_1}\circ \cdots \circ\left(id -\Phi_{q_k, X_k}\right)^{m_k}.
$$
We denote by $B(\cH)^{n_1}\times_c\cdots \times_c B(\cH)^{n_k}$
   the set of all tuples  ${\bf X}=({ X}_1,\ldots, { X}_k)\in B(\cH)^{n_1}\times\cdots \times B(\cH)^{n_k}$, where ${ X}_i:=(X_{i,1},\ldots, X_{i,n_i})\in B(\cH)^{n_i}$, $i\in \{1,\ldots, k\}$,
     with the property that, for any $p,q\in \{1,\ldots, k\}$, $p\neq q$, the entries of ${ X}_p$ are commuting with the entries of ${ X}_q$. In this case we say that ${ X}_p$ and ${ X}_q$ are commuting tuples of operators. Note that the operators $X_{i,1},\ldots, X_{i,n_i}$ are not necessarily commuting.

In this paper,  we develop an operator  model  theory and a theory of free holomorphic functions on  the noncommutative polydomains
$$
{\bf D_q^m}(\cH):=\left\{ {\bf X}=(X_1,\ldots, X_k)\in B(\cH)^{n_1}\times_c\cdots \times_c B(\cH)^{n_k}: \ {\bf \Delta_{q,X}^p}(I)\geq 0 \ \text{ for }\ {\bf 0}\leq {\bf p}\leq {\bf m}\right\}.
$$
Our study is an attempt to unify the multivariable operator model theory  for the ball-like domains and  commutative polydiscs,  and to extend it further to the above-mentioned polydomains. The main tool in our investigation is a Berezin \cite{Ber} type transform associated with the {\it abstract noncommutative domain} ${\bf D_q^m}:=\{{\bf D_q^m}(\cH): \ \cH \text{ is a Hilbert space}\}$.

In the last sixty years, this type of  polydomains has been studied in several
particular cases. Most of all, we  mention   the study of
the closed operator unit ball
$$
[B(\cH)]_1^-:=\{X\in B(\cH):\ I-XX^*\geq 0\}
$$
(which corresponds to the case $k=n_1=m_1=1$, and $q_1=Z$) which has
generated the celebrated
 Sz.-Nagy--Foias \cite{SzFBK-book} theory of contractions on Hilbert spaces and has had profound
 implications  in  function theory, interpolation,
  and linear systems theory.
 When   $k=n_1=1$, $m_1\geq 2$, and $q_1=Z$,  the
    corresponding domain coincides with the set of all
    $m$-hypercontractions  studied by Agler  in \cite{Ag1}, \cite{Ag2},
    and  recently by
    Olofsson \cite{O1}, \cite{O2}.

 In several variables, the case when
  $k=1$, $n_1\geq 2$, $m_1=1$, and $q_1=Z_1+\cdots+ Z_n$, corresponds to
 the closed operator  ball
 $$
 [B(\cH)^n]_1^-:=\left\{
(X_1,\ldots, X_n)\in B(\cH)^n:\ I-X_1 X_1^*-\cdots -X_nX_n^*\geq 0
 \right\}
$$
and its study has generated a {\it free } analogue of
Sz.-Nagy--Foias theory (see \cite{F}, \cite{Bu},
\cite{Po-isometric}, \cite{Po-charact},
\cite{Po-von},  \cite{Po-funct}, \cite{Po-analytic},
  \cite{Po-poisson}, \cite{Po-curvature},
\cite{DP2}, \cite{DKS}, \cite{BV},
  \cite{Po-varieties}, \cite{Po-unitary}, \cite{Po-automorphism}, and the references there in).
The commutative case was considered by Drurry \cite{Dru}, extensively
studied by Arveson \cite{Arv1}, \cite{Arv2},   and also in
\cite{Po-poisson}, \cite{Po-varieties}, \cite{BES1}, and \cite{BS}.
  We should  remark that, in recent years, many results
 concerning the theory of row contractions were extended by Muhly
 and Solel (\cite{MuSo1}, \cite{MuSo2}, \cite{MuSo3})
 to representations of tensor algebras over $C^*$-correspondences and Hardy algebras.
We mention that in the particular case when $k=1$  and $q_1$ is a positive regular polynomial, the corresponding   domain was studied in \cite{Po-domains}, if $m_1=1$,  and   in \cite{Po-Berezin}, \cite{Po-similarity-domains}, \cite{Po-classification},  when $m_1\geq 2$. The commutative case when $m_1\geq 2$, $n_1\geq 2$, and  $q_1=Z_1+\cdots + Z_n$,  was studied
   by Athavale \cite{At1}, M\" uller \cite{M}, M\" uller-Vasilescu \cite{MVa},
   Vasilescu \cite{Va1}, and Curto-Vasilescu \cite{CV1}.
   Some  of these results  were extended by
   S.~Pott \cite{Pot} when $q_1$ is a positive regular  polynomial in commuting indeterminates.

The commutative polydisc case, i.e, $k\geq 2$, $n_1=\cdots=n_k=1$,
 and ${\bf q}=(Z_1,\ldots,Z_k)$,  was first considered by Brehmer \cite{Br} in connection with regular dilations. Motivated by Agler's work \cite{Ag2} on weighted shifts as model operators, Curto and Vasilescu developed a theory of standard operator models in
 the polydisc in \cite{CV2}, \cite{CV3}. Timotin \cite{T} was able to obtain some of their results from Brehmer's theorem. The polyball case, when $k\geq 2$ and $q_i=Z_1+\cdots +Z_{n_i}$, $i\in \{1,\ldots,k\}$, was considered in \cite{Po-poisson}  and \cite{BeTi2} for the noncommutative and commutative case, respectively.
  As far as we know, unlike the ball case, there is no theory of characteristic functions, analoguos to the  Sz.-Nagy--Foias theory,  for significant classes of  operators in the    polydisc (or polyball) case.

In Section 1, we work out some basic properties of the  noncommutative polydomains ${\bf D_q^m}(\cH)$. One of the main results, which plays an important role in the present paper,  states that any podydomain ${\bf D_q^m}(\cH)$ is {\it radial}, i.e., $r {\bf X}\in {\bf D_q^m}(\cH)$  whenever ${\bf X}\in {\bf D_q^m}(\cH)$ and $r\in [0,1)$. This fact has also an important consequence
in the particular case when $k=1$, namely, that
all the results from \cite{Po-Berezin}, \cite{Po-similarity-domains}, \cite{Po-classification},  which were proved in the setting of the radial part of  ${\bf D}_{q_1}^{m_1}(\cH)$,  are true for any domain ${\bf D}_{q_1}^{m_1}(\cH)$.

In Section 2, we introduce the {\it  noncommutative Berezin transform}
 at  ${\bf T}\in {\bf D_q^m}(\cH)$ to be the mapping
 $ {\bf B_{T}}: B(\otimes_{i=1}^k F^2(H_{n_i}))\to B(\cH)$
 defined by

 \begin{equation*}
 {\bf B_{T}}[g]:= {\bf K^*_{q,T}} (g\otimes I_\cH) {\bf K_{q,T}},
 \qquad g\in B(\otimes_{i=1}^k F^2(H_{n_i})),
 \end{equation*}
 where $F^2(H_{n_i})$ is the full Fock space on $n_i$ generators and
 $${\bf K_{q,T}}: \cH \to F^2(H_{n_1})\otimes \cdots \otimes  F^2(H_{n_k})
  \otimes  \overline{{\bf \Delta_{q,T}^m}(I) (\cH)}$$
 is the {\it noncommutative Berezin kernel} associated with
   ${\bf T}$, which  is defined in terms of the coefficients of the positive regular polynomials $q_1,\ldots, q_k$.
We remark that in the particular case when $\cH=\CC$, ${\bf q}=(Z_1,\ldots, Z_k)$, ${\bf T}=\lambda=(\lambda_1,\ldots, \lambda_k)\in \DD^k$, and $m_i=n_i=1$ for any $i\in \{1,\ldots,k\}$,   we recover the Berezin   transform   of a bounded
linear operator on the Hardy space $H^2(\DD^k)$, i.e.,
$$
{\bf B}_\lambda [g]=\prod_{i=1}^k(1-|\lambda_i|^2)\left<g k_\lambda,
k_\lambda\right>,\qquad g\in B(H^2(\DD^k)),
$$
where $k_\lambda(z):=\prod_{i=1}^k(1-\overline{\lambda}_i z_i)^{-1}$ and  $z=(z_1,\ldots, z_k)\in \DD^k$.

  The noncommutative Berezin transforms are  used  to
prove the main result of this section (Theorem \ref{Berezin-prop}) which shows  that each   polydomain  ${\bf D_q^m}(\cH)$ has a universal model ${\bf W}=\{{\bf W}_{i,j}\}$ consisting of weighted shifts acting on a tensor product of full Fock spaces. Moreover, we show that a tuple  of operators ${\bf X} $
 is in the
noncommutative polydomain ${\bf D^m_q}(\cH)$ if and only if  there
exists a completely positive linear map $\Psi:C^*({\bf W}_{i,j})\to B(\cH)$ such that  $$
\Psi ( p({\bf W})r({\bf W})^*)= p(X)r(X)^*,
 $$
 for any $p({\bf W}), r({\bf W})$ polynomials   in $ \{{\bf W}_{i,j}\}$ and the identity.

 In Section 3, we introduce the noncommutative Hardy algebra $F^\infty({\bf D_q^m})$ as the weakly closed algebra generated by $\{{\bf W}_{i,j}\}$ and the identity, and use it to provide a WOT-continuous functional calculus for {\it completely non-coisometric}   tuples ${\bf T}=\{T_{i,j}\}$ in ${\bf D_q^m}(\cH)$, which are identified.
 We show that
 $$\Phi(\varphi):=\text{\rm SOT-}\lim_{r\to 1} \varphi(rT_{i,j}), \qquad
  \varphi=\varphi({\bf W}_{i,j})\in F^\infty({\bf D_q^m}),
 $$
 exists in the strong operator topology   and defines a map
 $\Phi:F^\infty({\bf D_q^m})\to B(\cH)$ with the
 property that
 $\Phi(\varphi)=\text{\rm SOT-}\lim\limits_{r\to 1}{\bf B}_{r{\bf T}}[\varphi]$, where ${\bf
B}_{r{\bf T}}$ is the noncommutative  Berezin transform  at  $r{\bf T}\in {\bf D_q^m}(\cH)$.
 Moreover,
$\Phi$ is a unital completely contractive homomorphism, which is  WOT-continuous (resp.~SOT-continuous)  on bounded sets.

  In Section 4, we introduce the algebra  $Hol({\bf D_{q, \text{\rm rad}}^m})$   of all free holomorphic
functions on the {\it abstract    radial polydomain} ${\bf D_{q, \text{\rm rad}}^m}$.  We identify the {\it polydomain algebra} $\cA({\bf D_q^m})$ (the closed algebra generated by $\{{\bf W}_{i,j}\}$ and the identity) and the
Hardy algebra $F^\infty({\bf D_q^m})$ with subalgebras  of $Hol({\bf D_{q, \text{\rm rad}}^m})$.
For example, it is shown that the noncommutative Berezin transform  is a completely isometric isomorphism   between $F^\infty({\bf D_q^m})$ and  the algebra of bounded free holomorphic functions on ${\bf D_{q, \text{\rm rad}}^m}$.
 We remark that there is an
important connection between the theory of free holomorphic
functions on abstract radial polydomains ${\bf D_{q, \text{\rm rad}}^m}$,
 and the theory of holomorphic functions on polydomains in
$\CC^d$ (see \cite{Kr}, \cite{Ru}).
Indeed, if $\cH=\CC^p$ and $p\in\NN$, then
     ${\bf D_{q}^m}(\CC^p)$ can be seen as a subset of $\CC^{(n_1+\cdots +n_k)p^2}$ with
an arbitrary norm. Given    a  free holomorphic function $\varphi$  on the abstract radial polydomain ${\bf D_{q,\text{\rm rad}}^m}$, we prove that
   its representation on $\CC^p$,  i.e., the map $\widehat \varphi$ defined by
$$
  \CC^{(n_1+\cdots +n_k)p^2}\supset {\bf
D_{q,\text{\rm rad}}^m}(\CC^p)\ni (\lambda_{i,j})\mapsto \varphi( \lambda_{i,j})\in
\CC^{p^2}
$$
is a  holomorphic function on the interior of ${\bf
D_{q}^m}(\CC^p)$. In addition,  $\widehat \varphi$  is bounded  when $\varphi\in F^\infty({\bf D_q^m})$, and it
  has continuous extension  to  ${\bf D_{q}^m}(\CC^p)$) when  $\varphi\in \cA({\bf D_q^m})$.

In Section 5, we obtain  a characterization of the Beurling \cite{Be} type  joint invariant subspaces under $\{{\bf W}_{i,j}\}$.   We prove that a subspace $\cM\subset \otimes_{i=1}^k F^2(H_{n_i})\otimes \cH$    has the form  $\cM=\Psi\left((\otimes_{i=1}^k F^2(H_{n_i}))\otimes \cE\right)$ for some {\it inner multi-analytic operator} with respect to the universal model ${\bf W}$,  if and only if
 $$
  {\bf \Delta_{q,W\otimes I}^p}(P_\cM)\geq 0,\qquad \text{ for any }\ {\bf p} \in \ZZ_+^k,  {\bf p}\leq {\bf m},
   $$
 where $P_\cM$ is the orthogonal projection of the Hilbert space
  $\otimes_{i=1}^k F^2(H_{n_i})\otimes \cH$ onto $\cM$. In the particular case when  ${\bf m}=(1,\ldots, 1)$, the latter condition
  is satisfied when ${\bf W}\otimes I|_\cM$  is a doubly  commuting tuple.
   We also characterize the reducing subspaces under
    $\{{\bf W}_{i,j}\}$ and present several results concerning the model theory for pure elements in the noncommutative polydomain ${\bf D_q^m}(\cH)$.

In Section 6, we  provide a characterization for the class of tuples of  operators in  ${\bf D_q^m}(\cH)$ which admit  characteristic functions.
 We say that  ${\bf T} \in {\bf D_q^m}(\cH)$  has   characteristic function if there  is a multi-analytic operator $\Psi$  with respect to the universal model ${\bf W}$  such that
$$
{\bf K_{q,T}}{\bf K_{q,T}^*} +\Psi \Psi^*=I,
$$
where ${\bf K_{q,T}}$ is the noncommutative Berezin kernel associated with ${\bf D_q^m}(\cH)$. In this case, $\Psi$ is essentially unique.
We prove that ${\bf T} \in {\bf D_q^m}(\cH)$  has   characteristic function if and only if
$$
  {\bf \Delta_{q,W\otimes\text{\rm I}}^p}\left(I-{\bf K_{q,T}}{\bf K_{q,T}^*}\right)\geq 0,\qquad \text{ for any }\ {\bf p} \in \ZZ_+^k,  {\bf p}\leq {\bf m}.
   $$
The characteristic function is constructed explicitly  and it is proved to be  a complete unitary invariant  for the class of completely non-coisometric tuples. Moreover, we provide an operator model for this class of elements in  ${\bf D_q^m}(\cH)$ in terms of their   characteristic functions.

 In Section 7,  using  several results from the previous sections and $C^*$-algebras techniques, we develop a dilation theory on the noncommutative polydomain  ${\bf D_q^m}(\cH)$. The main result states that if
  ${\bf T}=\{ T_{i,j}\}$ is  a tuple  in
${\bf D}_{\bf q}^{\bf m}(\cH)$,
then there exists  a $*$-representation $\pi:C^*({\bf W}_{i,j})\to
B(\cK_\pi)$  on a separable Hilbert space $\cK_\pi$,  which
annihilates the compact operators and ${\bf \Delta_{q,\pi(W)}^m}(I_{\cK_\pi})=0$
such that
$\cH$ can be identified with a $*$-cyclic co-invariant subspace of
$$\tilde\cK:=\left[(\otimes _{i=1}^k F^2(H_{n_i}))\otimes
 \overline{{\bf \Delta_{q,T}^m}(I)(\cH)}\right]\oplus
\cK_\pi$$ under  each operator
$$
V_{i,j}:=\left[\begin{matrix} {\bf W}_{i,j}\otimes
I &0\\0&\pi({\bf W}_{i,j})
\end{matrix}\right],
$$
and such that
$ T_{i,j}^*=V_{i,j}^*|_{\cH}$ for all $i,j$. Under a certain additional condition on the universal model ${\bf W}$, the dilation above is  minimal and unique up to unitary equivalence. We also obtain  Wold type
decompositions for non-degenerate $*$-representations of the
$C^*$-algebra $C^*({\bf W}_{i,j})$.

We   mention that  the results of this paper are presented in a more general setting,
 when ${\bf q}$ is replaced by a $k$-tuple ${\bf f}=(f_1,\ldots, f_k)$ of
  positive regular free holomorphic functions in a neighborhood of the origin.
   Also, the  results are used in  \cite{Po-Berezin3} to develop an operator model
   theory for  varieties in  noncommutative polydomains. This includes various
    commutative cases  which are presented in close connection with the theory
     of holomorphic functions in several complex variables.

\bigskip

\section{A class of  noncommutative polydomains}

For each $i\in \{1,\ldots, k\}$,
let $\FF_{n_i}^+$ be the unital free semigroup on $n_i$ generators
$g_{1}^i,\ldots, g_{n_i}^i$ and the identity $g_{0}^i$.  The length of $\alpha\in
\FF_{n_i}^+$ is defined by $|\alpha|:=0$ if $\alpha=g_0^i$  and
$|\alpha|:=p$ if
 $\alpha=g_{j_1}^i\cdots g_{j_p}^i$, where $j_1,\ldots, j_p\in \{1,\ldots, n_i\}$.
If $Z_1,\ldots,Z_{n_i}$  are  noncommuting indeterminates,   we
denote $Z_\alpha:= Z_{j_1}\cdots Z_{j_p}$  and $Z_{g_0^i}:=1$.
 Let  $f_i:= \sum_{\alpha\in
\FF_{n_i}^+} a_{i,\alpha} Z_\alpha$, \ $a_{i,\alpha}\in \CC$,  be a formal power series in $n_i$ noncommuting indeterminates $Z_1,\ldots,Z_{n_i}$. We say that $f_i$ is
a {\it
positive regular free holomorphic function} if the following conditions hold:
$a_{i,\alpha}\geq 0$ for
any $\alpha\in \FF_{n_i}^+$, \ $a_{i,g_{0}^i}=0$,
   $a_{i,g_{j}^i}>0$ for  $j=1,\ldots, n_i$, and
 $$
\limsup_{k\to\infty} \left( \sum_{|\alpha|=k}
|a_{i,\alpha}|^2\right)^{1/2k}<\infty.
 $$
Given $X_i:=(X_{i,1},\ldots, X_{i,n_i})\in B(\cH)^{n_i}$, define the map $\Phi_{f_i,X_i}:B(\cH)\to B(\cH)$  by setting
 $$
\Phi_{f_i,X_i}(Y):=\sum_{k=1}^\infty\sum_{\alpha\in \FF_{n_i}^+,|\alpha|=k} a_{i,\alpha} X_{i,\alpha}
YX_{i,\alpha}^*, \qquad   Y\in B(\cH),$$
 where  the convergence is in the week
operator topology.

Let ${\bf n}:=(n_1,\ldots, n_k)$ and ${\bf m}:=(m_1,\ldots, m_k)$, where $n_i,m_i\in\NN:=\{1,2,\ldots\}$ and $i\in \{1,\ldots, k\}$, and let ${\bf f}:=(f_1,\ldots,f_k)$ be a $k$-tuple of positive regular free holomorphic functions.
We introduce the noncommutative polydomain
${\bf D_f^m}(\cH)$
to be the set of all $k$-tuples
$${\bf X}:=(X_1,\ldots, X_k)\in   B(\cH)^{n_1}\times_c\cdots \times_c B(\cH)^{n_k}$$
with the property that $\Phi_{f_i,X_i}(I)\leq I$ and
$$
(id-\Phi_{f_1,X_1})^{\epsilon_1 m_1}\cdots (id-\Phi_{f_k,X_k})^{\epsilon_k m_k}(I)\geq 0
$$
for any $i\in \{1,\ldots,k\}$ and  $\epsilon_i\in \{0,1\}$.
 We use the convention that $(id-\Phi_{f_i,X_i})^0=id$.
  We remark that ${\bf D_f^m}(\cH)$ contains  a polyball
   $[B(\cH)^{n_1}]_{r_1}^-\times_c \cdots \times_c [B(\cH)^{n_k}]_{r_k}^-$
   for some $r_1,\ldots, r_k>0$, where
$$
[B(\cH)^{n_i}]_{r_i}^-:=\{(Y_1,\ldots, Y_{n_i})\in B(\cH)^{n_i}: \ Y_1 Y_1^*+\cdots +Y_{n_i} Y_{n_i}^*\leq r_i^2 I\}.
$$
 Throughout this paper, we refer to ${\bf D_q^m}:=\{{\bf D_q^m}(\cH): \ \cH \text{is a Hilbert space}\}$ as the abstract noncommutative polydomain, and ${\bf D_q^m}(\cH)$ as its representation on the Hilbert space $\cH$.

A linear map $\varphi:B(\cH)\to
B(\cH)$ is called power bounded if there exists a constant $M>0$
such that $\|\varphi^k\|\leq M$ for any $k\in \NN$. For  information  on completely bounded (resp. positive) maps,  we refer
 to \cite{Pa-book} and  \cite{Pi-book}. If ${\bf p}:=(p_1,\ldots, p_k)\in \ZZ_+^k$ and  ${\bf q}:=(q_1,\ldots, q_k)\in \ZZ_+^k$, we set ${\bf p}\leq {\bf q}$ iff $p_i\leq q_i$ for all $i\in \{1,\ldots,k\}$, where $\ZZ_+:=\{0,1,\ldots\}$.

 \begin{proposition} \label{positive-maps}
 Let $\varphi_i:B(\cH)\to B(\cH)$, $i\in \{1,\ldots,k\}$, be power bounded positive linear maps such that
 $$\varphi_i \varphi_j=\varphi_j \varphi_i,\qquad i,j\in \{1,\ldots,k\}.
  $$
  If  $Y\in B(\cH)$ is a self-adjoint operator and ${\bf p}:=(p_1,\ldots, p_k)\in \ZZ_+^k$ with $p_i\geq 1$, then the following statements are equivalent.
 \begin{enumerate}
 \item[(i)] $(id-\varphi_1)^{\epsilon_1 p_1}\cdots (id-\varphi_k)^{\epsilon_k p_k}(Y)\geq 0 $ for all $\epsilon_i\in \{0,1\}$ with $ \epsilon:=(\epsilon_1,\ldots, \epsilon_k)\neq 0$ and $i\in \{1,\ldots,k\}$.
     \item [(ii)]  $(id-\varphi_1)^{ q_1}\cdots (id-\varphi_k)^{q_k}(Y)\geq 0 $ for all ${\bf q}:=(q_1,\ldots, q_k)\in \ZZ_+^k $ with  ${ \bf q}\leq { \bf p}$ and ${\bf q}\neq 0$.

 \end{enumerate}
\end{proposition}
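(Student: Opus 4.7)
The implication (ii) $\Rightarrow$ (i) is immediate, since (i) amounts to specializing ${\bf q} = (\epsilon_1 p_1,\ldots,\epsilon_k p_k)$ in (ii) with $\epsilon_i \in \{0,1\}$ (using the convention $(id-\varphi_i)^0 = id$).

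For (i) $\Rightarrow$ (ii), I plan a two-stage argument: a single-variable core lemma, then a clean induction on $k$.

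\emph{Single-variable lemma.} If $\psi \colon B(\cH)\to B(\cH)$ is a power-bounded positive linear map and $Y$ is self-adjoint with $(id-\psi)^p(Y)\geq 0$, then $(id-\psi)^q(Y) \geq 0$ for every $1 \leq q \leq p$. I would argue by downward induction on $q$, based on the telescoping identity
\[
(id-\psi)^q(Y) - \psi^N\bigl((id-\psi)^q(Y)\bigr) = \sum_{n=0}^{N-1}\psi^n\bigl((id-\psi)^{q+1}(Y)\bigr),
\]
which follows from $\sum_{n=0}^{N-1}\psi^n(id-\psi) = id-\psi^N$ and the commutation of $\psi$ with $(id-\psi)^{q+1}$. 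Inductively assuming $(id-\psi)^{q+1}(Y)\geq 0$, the right-hand side is a monotone increasing sequence of positive operators, uniformly norm-bounded by power-boundedness, so it SOT-converges to some $S\geq 0$. Correspondingly $\psi^N((id-\psi)^q(Y))$ SOT-converges to a $\psi$-fixed point $W$, with $(id-\psi)^q(Y) = S + W$. Iterating this decomposition through the entire inductive chain $(id-\psi)^{q+j}(Y) \geq 0$ for $j=1,\dots,p-q$ and using the fixed-point relations $(id-\psi)^j(W)=0$ together with power-boundedness, one identifies $W$ as a SOT-limit of positive operators and concludes $(id-\psi)^q(Y)\geq 0$.

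\emph{Induction on $k$.} The base $k=1$ is the lemma. For $k \geq 2$, assume the proposition for $k-1$ and split the corner hypotheses (i) according to $\epsilon_k \in \{0,1\}$. The corners with $\epsilon_k=0$ are precisely the $(k-1)$-variable corner conditions for $Y$; by the inductive hypothesis they yield $(id-\varphi_1)^{q_1}\cdots(id-\varphi_{k-1})^{q_{k-1}}(Y)\geq 0$ for every $0 \neq (q_1,\dots,q_{k-1}) \leq (p_1,\dots,p_{k-1})$. The corners with $\epsilon_k=1$ and $(\epsilon_1,\dots,\epsilon_{k-1})=0$ give $Y' := (id-\varphi_k)^{p_k}(Y) \geq 0$, while the remaining $\epsilon_k=1$ corners are the $(k-1)$-variable corners for $Y'$; the inductive hypothesis applied to $Y'$ then produces
\[
(id-\varphi_1)^{q_1}\cdots(id-\varphi_{k-1})^{q_{k-1}}(Y') \geq 0
\qquad
\text{for every } (q_1,\dots,q_{k-1}) \leq (p_1,\dots,p_{k-1}),
\]
including the zero index (thanks to $Y' \geq 0$).

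For an arbitrary ${\bf q} = (q_1,\dots,q_k)\leq {\bf p}$ with ${\bf q}\neq 0$: if $q_k=0$ the first split suffices. If $q_k \geq 1$, set $Z := (id-\varphi_1)^{q_1}\cdots(id-\varphi_{k-1})^{q_{k-1}}(Y)$, which is self-adjoint; the second split yields $(id-\varphi_k)^{p_k}(Z)\geq 0$, and the single-variable lemma applied to $\varphi_k$ and $Z$ delivers $(id-\varphi_k)^{q_k}(Z)\geq 0$, which by commutation is the required $(id-\varphi_1)^{q_1}\cdots(id-\varphi_k)^{q_k}(Y) \geq 0$.

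The main obstacle is the concluding step of the single-variable lemma: promoting the SOT-decomposition $(id-\psi)^q(Y) = S+W$ to genuine positivity by controlling the fixed-point remainder $W$ via power-boundedness and the full downward chain of positivities. The multi-variable reduction is, by contrast, a clean bookkeeping exercise built on the base case.
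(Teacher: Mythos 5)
Your overall architecture --- a single-variable lemma followed by an induction on $k$ --- is exactly the paper's, and your multi-variable induction is correct: splitting the corner hypotheses according to $\epsilon_k$, applying the $(k-1)$-variable statement to $Y$ and to $Y'=(id-\varphi_k)^{p_k}(Y)$, and finishing with the one-variable lemma applied to the self-adjoint operator $Z=(id-\varphi_1)^{q_1}\cdots(id-\varphi_{k-1})^{q_{k-1}}(Y)$ is sound bookkeeping (the paper organizes the same reduction slightly differently, by first observing that it suffices to treat multi-indices with all entries $\geq 1$).

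The genuine gap is where you yourself locate it: the conclusion of the single-variable lemma. Your upward telescoping $A_q-\psi^N(A_q)=\sum_{n=0}^{N-1}\psi^n(A_{q+1})$ (with $A_q:=(id-\psi)^q(Y)$) correctly produces $A_q=S+W$ with $S\geq 0$ and $W=\text{\rm SOT-}\lim_N\psi^N(A_q)$, but your proposed mechanism for showing $W\geq 0$ does not work. First, the fixed-point relation $\psi(W)=W$ would require $\psi$ to be SOT-continuous along the decreasing sequence $\psi^N(A_q)$, which a positive power-bounded map need not be (positive maps need not be normal). Second, even granting it, a self-adjoint fixed point of a positive power-bounded map need not be positive (take $\psi=id$), so ``fixed-point relations plus power-boundedness'' cannot by themselves force $W\geq 0$. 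Third, the operators $\psi^N(A_q)$ whose limit is $W$ are not known to be positive --- that is precisely what is at stake --- so $W$ is not exhibited as a SOT-limit of positive operators. The missing ingredient is the \emph{downward} telescoping
\[
\sum_{n=0}^{N}\psi^n\bigl((id-\psi)^q(Y)\bigr)=(id-\psi^{N+1})\bigl((id-\psi)^{q-1}(Y)\bigr),
\]
whose right-hand side is uniformly norm-bounded by power-boundedness. Since $\langle\psi^n(A_q)h,h\rangle$ is decreasing in $n$ (this uses $A_{q+1}\geq 0$), a negative limit $\langle Wh,h\rangle<0$ would force the partial sums to diverge to $-\infty$, a contradiction; hence $W\geq 0$ and $A_q=S+W\geq 0$. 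This is exactly the paper's argument, run there as a direct contradiction with the scalar sequence $y_j=\langle\psi^j(A_q)h_0,h_0\rangle$. Once you replace your fixed-point step by this second telescoping identity, the lemma --- and with it your whole proof --- closes.
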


\begin{proof} Note that it is enough to prove that  $(id-\varphi_1)^{ p_1}\cdots (id-\varphi_k)^{p_k}(Y)\geq 0 $ if and only if $(id-\varphi_1)^{ q_1}\cdots (id-\varphi_k)^{q_k}(Y)\geq 0 $ for all ${\bf q}:=(q_1,\ldots, q_k)\in \ZZ_+^k $ with  ${ q_i}\leq { p_i}$ and ${ q_i}\geq 1$.
We proceed by induction over  $k\in \NN$. Let $k=1$,  and assume that
$(id-\varphi_1)^{p_1}(Y)\geq 0$ and $p_1\geq 2$.
Suppose that there is $h_0\in \cH$ such that $\left<(id-\varphi_1)^{p_1-1}(Y)h_0,h_0\right><0$. Set
$y_j:=\left<\varphi_1^j(id -\varphi_1)^{p_1-1} (Y)h_0,h_0\right>$, $j=0,1,\ldots$, and note that $\{y_j\}_{j=0}^\infty$ is a decreasing  sequence with
$y_j\leq y_0<0$. Consequently, we deduce that $\sum_{j=0}^\infty y_j=-\infty$.
On the other hand, we have
\begin{equation*}
\begin{split}
\left|\sum_{j=0}^p y_j\right| &:=
\left|\left<(id-\varphi_1^{p+1} )(id-\varphi_1)^{p_1-2}(Y)h_0, h_0\right>\right|\\
&\leq \left(1+\|\varphi_1^{p+1}(I)\|\right) \|(id-\varphi_1)^{p_1-2}(Y)\|\|h_0\|.
\end{split}
\end{equation*}
Since $\varphi_1$ is power bounded, we get a contradiction. Therefore, we must have
$(id-\varphi_1)^{p_1-1}(Y)\geq 0$. Continuing this process, we show that
 $(id-\varphi)^{p_1}(Y)\geq 0$  if and only if  $(id-\varphi)^s(Y)\geq
0$ for  $s=1,2,\ldots,p_1$.
Now, assume that $$(id-\varphi_1)^{ p_1}\cdots (id-\varphi_k)^{ p_k}(id-\varphi_{k+1})^{ p_{k+1}}(Y)\geq 0. $$
 Due to the fact that
$\varphi_i \varphi_j=\varphi_j \varphi_i$ for all $i,j\in \{1,\ldots,k\}$,
  we deduce that
$(id-\varphi_{k+1})^{ p_{k+1}}(Y_k)\geq 0$, where
 $Y_k:=(id-\varphi_1)^{ p_1}\cdots (id-\varphi_k)^{ p_k}(Y)$.
On the other hand,
 due to the identity
$$
(id-\varphi_k)^{p_k}(Y)=\sum_{p=0}^{p_k} (-1)^{p} \left(\begin{matrix}
p_k\\p\end{matrix}\right) \varphi_k^p(Y),
$$
the operator $(id-\varphi_k)^{p_k}(Y)$ is self-adjoint whenever $\varphi_k$ is a positive linear map and  $Y$ is a self-adjoint operator. Inductively, one can easily see that $Y_k$ is a self-adjoint operator. Now, applying the case $k=1$, we deduce that $(id-\varphi_{k+1})^{ p_{k+1}}(Y_k)\geq 0$ if and only if
$(id-\varphi_{k+1})^{ q_{k+1}}(Y_k)\geq 0$ for all  $q_{k+1}\in \{0,1,\ldots, p_{k+1}\}$.
Hence,
$$(id-\varphi_1)^{ p_1}\cdots (id-\varphi_k)^{ p_k}(id-\varphi_{k+1})^{ q_{k+1}}(Y)\geq 0. $$
Due to the induction hypothesis, we deduce that
$$(id-\varphi_1)^{ q_1}\cdots (id-\varphi_k)^{ q_k}(id-\varphi_{k+1})^{ q_{k+1}}(Y)\geq 0 $$
for all $(q_1,\ldots, q_{k+1})\in \ZZ_+^{k+1} $ with  ${ q_i}\leq { p_i}$ and ${ q_i}\geq 1$.
This completes the proof.
\end{proof}

Let $\Phi=(\varphi_1,\ldots, \varphi_k)$ be a $k$-tuple of  power bounded,
 positive linear maps on $B(\cH)$ such that
 $\varphi_i \varphi_j=\varphi_j \varphi_i$, $i,j\in \{1,\ldots,k\}$.
  For each ${\bf p}:=(p_1,\ldots, p_k)\in \ZZ_+^k$, we define the linear map
  $\Delta_{\Phi}^{\bf p}:B(\cH)\to B(\cH)$ by setting
 $$
 \Delta_{\Phi}^{p_1,\ldots,p_k}=\Delta_{\Phi}^{\bf p}:= (id-\varphi_1)^{ p_1}\cdots (id-\varphi_k)^{ p_k}.
 $$

\begin{lemma} \label{Delta-ineq} Let
${\bf m}\in \NN^k$  and let $Y\in B(\cH)$ be a self-adjoint operator such that
$\Delta_{\Phi}^{\bf p}(Y)\geq 0$ for all ${\bf p} \in \ZZ_+^k $ with  ${\bf p}\leq {\bf m}$ and ${\bf p}\neq 0$. If ${\bf q}\in \ZZ_+^k$ with ${\bf q}\neq 0$ and ${\bf q}\leq {\bf m}$, then
$$
\Delta_{\Phi}^{\bf m}(Y)\leq \Delta_{\Phi}^{\bf q}(Y).
$$
\end{lemma}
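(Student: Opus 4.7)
The plan is to reduce the inequality $\Delta_{\Phi}^{\bf m}(Y)\leq \Delta_{\Phi}^{\bf q}(Y)$ to a chain of "single coordinate bumps," each of which follows immediately from the positivity hypothesis together with the positivity and commutativity of the maps $\varphi_i$.

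\textbf{Step 1 (reduction to a one-step inequality).} I would first prove the following auxiliary claim: for every ${\bf p}\in\ZZ_+^k$ with ${\bf 0}\neq {\bf p}$ and ${\bf p}+e_i\leq {\bf m}$, one has $\Delta_{\Phi}^{{\bf p}+e_i}(Y)\leq \Delta_{\Phi}^{\bf p}(Y)$, where $e_i$ is the $i$-th standard basis vector of $\ZZ_+^k$. Because the $\varphi_j$ pairwise commute, the factors in $\Delta_{\Phi}^{\bf p}$ may be reordered freely, and
$$
\Delta_{\Phi}^{{\bf p}+e_i}=(id-\varphi_i)\circ\Delta_{\Phi}^{\bf p}=\Delta_{\Phi}^{\bf p}-\varphi_i\circ\Delta_{\Phi}^{\bf p}.
$$
Therefore $\Delta_{\Phi}^{\bf p}(Y)-\Delta_{\Phi}^{{\bf p}+e_i}(Y)=\varphi_i\bigl(\Delta_{\Phi}^{\bf p}(Y)\bigr)$. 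By hypothesis $\Delta_{\Phi}^{\bf p}(Y)\geq 0$ (since ${\bf 0}\neq{\bf p}\leq {\bf m}$), and $\varphi_i$ is a positive linear map, so the right-hand side is positive, proving the claim.

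\textbf{Step 2 (telescoping from ${\bf q}$ to ${\bf m}$).} Given ${\bf q}\leq {\bf m}$ with ${\bf q}\neq 0$, I would choose a path ${\bf q}={\bf p}^{(0)}\leq {\bf p}^{(1)}\leq \cdots \leq {\bf p}^{(N)}={\bf m}$ in $\ZZ_+^k$ with ${\bf p}^{(j+1)}={\bf p}^{(j)}+e_{i_j}$, where $N=\sum_{i=1}^k(m_i-q_i)$. Every intermediate index satisfies ${\bf 0}\neq {\bf q}\leq {\bf p}^{(j)}\leq {\bf m}$, so Step 1 applies at each stage and gives
$$
\Delta_{\Phi}^{{\bf p}^{(j+1)}}(Y)\leq \Delta_{\Phi}^{{\bf p}^{(j)}}(Y),\qquad j=0,1,\ldots, N-1.
$$
Chaining these $N$ inequalities yields $\Delta_{\Phi}^{\bf m}(Y)\leq \Delta_{\Phi}^{\bf q}(Y)$, as desired.

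\textbf{Anticipated obstacle.} The only subtlety is verifying that $\Delta_{\Phi}^{\bf p}(Y)$ is indeed a self-adjoint operator at every intermediate step, which is required in order to even formulate the operator inequality and to know that the $\varphi_i$ preserve self-adjointness. This is handled exactly as in the proof of Proposition 1.1: expanding $(id-\varphi_j)^{p_j}$ by the binomial formula and using that each $\varphi_j$ sends self-adjoints to self-adjoints (because it is positive), one checks inductively that each $\Delta_{\Phi}^{\bf p}(Y)$ is self-adjoint. Once this is in hand, the argument above is essentially an application of Step 1 followed by a telescoping sum, and no further estimates are needed.
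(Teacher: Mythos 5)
Your proof is correct and follows essentially the same route as the paper: the paper also writes $\Delta_{\Phi}^{\bf m}(Y)=\Delta_{\Phi}^{{\bf m}'}(Y)-\varphi_1(\Delta_{\Phi}^{{\bf m}'}(Y))\leq \Delta_{\Phi}^{{\bf m}'}(Y)$ with ${\bf m}'={\bf m}-e_1$ and then iterates coordinate by coordinate using commutativity, which is exactly your Step 1 plus telescoping. Your explicit remark that every intermediate index stays $\geq {\bf q}\neq 0$ (so the positivity hypothesis applies at each stage) is a point the paper leaves implicit, and is worth having.
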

\begin{proof} Set ${\bf m}:=(m_1,\ldots, m_k)\in \NN^k$ and ${\bf m}':=(m_1-1, m_2,\ldots, m_k)$.
Since $\Delta_{\Phi}^{{\bf m}' }(Y)\geq 0$  and $\varphi_1$ is a positive map,  we deduce that
$$
\Delta_{\Phi}^{\bf m}(Y)=\Delta_{\Phi}^{{\bf m}'}(Y) -\varphi_1(\Delta_{\Phi}^{{\bf m}'}(Y))\leq \Delta_{\Phi}^{{\bf m}'}(Y)
$$
 Using  the fact that $\varphi_i \varphi_j=\varphi_j \varphi_i$ for $i,j\in \{1,\ldots,k\}$, one can continue this process and  complete  the proof.
\end{proof}

\begin{proposition} \label{pure} Let $Y\in B(\cH)$ be  a self-adjoint operator, ${\bf m}\in \ZZ_+^k$, ${\bf m}\neq 0$,  and let
$\Phi=(\varphi_1,\ldots, \varphi_k)$ be a $k$-tuple of commuting,   power bounded, positive linear maps on $B(\cH)$  such that
\begin{enumerate}
\item[(i)] $\Delta_{\Phi}^{\bf m}(Y)\geq 0$, and

\item[(ii)] each $\varphi_i$ is pure, i.e., $\varphi_i^p(I)\to 0$ strongly as $p\to\infty$.

\end{enumerate}
Then  $\Delta_{\Phi}^{\bf q}(Y)\geq 0$ for any ${\bf q}\in \ZZ_+^k$ with ${\bf q}\leq {\bf m}$. In particular, $Y\geq 0$.

\end{proposition}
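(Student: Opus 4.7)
The plan is to reduce the multi-variable conclusion to a one-variable lemma, then peel off coordinates one at a time using commutativity. Concretely, I would first prove: if $\varphi$ is a power bounded, pure, positive linear map on $B(\cH)$ and $Z=Z^*\in B(\cH)$ satisfies $(id-\varphi)^p(Z)\geq 0$ for some $p\geq 1$, then $(id-\varphi)^s(Z)\geq 0$ for every $0\leq s\leq p$. Once this is established, the full statement follows from a straightforward induction on the number of coordinates.

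For the single-variable step, the key is the telescoping identity
\begin{equation*}
\sum_{j=0}^{N-1}\varphi^{j}(id-\varphi)^{p}(Z) = (id-\varphi)^{p-1}(Z) - \varphi^{N}\bigl((id-\varphi)^{p-1}(Z)\bigr).
\end{equation*}
The partial sums on the left are positive, since $\varphi^j$ is positive and is applied to the positive operator $(id-\varphi)^p(Z)$. The operator $A:=(id-\varphi)^{p-1}(Z)$ is self-adjoint because positive maps preserve self-adjointness, so $-\|A\|I\leq A\leq \|A\|I$ and positivity of $\varphi^N$ gives $-\|A\|\varphi^N(I)\leq \varphi^N(A)\leq \|A\|\varphi^N(I)$. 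The purity hypothesis $\varphi^N(I)\to 0$ strongly therefore forces $\varphi^N(A)\to 0$ strongly, and passing to the limit in the telescoping identity yields $(id-\varphi)^{p-1}(Z)\geq 0$. A finite downward induction on $s$ completes the one-variable lemma.

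For the multi-variable step, I would use commutativity to regroup. Writing $Z_1:=(id-\varphi_2)^{m_2}\cdots(id-\varphi_k)^{m_k}(Y)$, which is self-adjoint, hypothesis (i) reads $(id-\varphi_1)^{m_1}(Z_1)\geq 0$, so the single-variable lemma with $\varphi=\varphi_1$ gives $\Delta_{\Phi}^{(q_1,m_2,\ldots,m_k)}(Y)\geq 0$ for every $0\leq q_1\leq m_1$. Fixing such $q_1$, I then use commutativity again to write this quantity as $(id-\varphi_2)^{m_2}(Z_2)$ for the self-adjoint $Z_2:=(id-\varphi_1)^{q_1}(id-\varphi_3)^{m_3}\cdots(id-\varphi_k)^{m_k}(Y)$, and the single-variable lemma with $\varphi=\varphi_2$ produces $\Delta_{\Phi}^{(q_1,q_2,m_3,\ldots,m_k)}(Y)\geq 0$ for every $0\leq q_2\leq m_2$. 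Iterating through coordinates $3,\ldots,k$ yields $\Delta_{\Phi}^{\bf q}(Y)\geq 0$ for all ${\bf q}\leq{\bf m}$; specializing to ${\bf q}=0$ gives $Y\geq 0$.

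The main (and essentially only) obstacle is the strong-operator convergence $\varphi^N(A)\to 0$ for a bounded self-adjoint $A$, which must be extracted from the seemingly weaker assertion $\varphi^N(I)\to 0$; the positivity sandwich above does exactly this. With that in hand, the commuting structure makes the rest a routine induction.
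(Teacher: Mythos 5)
Your proof is correct and follows essentially the same route as the paper: in both arguments the core step is to show $A-\varphi^N(A)\geq 0$ for $A=(id-\varphi)^{p-1}(Z)$ (you via a telescoping sum of positive terms, the paper via iterating $\varphi(A)\leq A$), then to kill $\varphi^N(A)$ with the sandwich $-\|A\|\varphi^N(I)\leq\varphi^N(A)\leq\|A\|\varphi^N(I)$ and purity, and finally to peel off coordinates one at a time using commutativity. No gaps.
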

\begin{proof} Set  ${\bf m}':=(m_1-1, m_2,\ldots, m_k)$ and note that due to the fact that  $\Delta_{\Phi}^{\bf m}(Y)\geq 0$ and $\varphi_1$ is a positive linear map, we have
$$0\leq \Delta_{\Phi}^{\bf m}(Y)=\Delta_{\Phi}^{{\bf m}'}(Y)-\varphi_1(\Delta_{\Phi}^{{\bf m}'}(Y)).
$$
Hence,  we deduce that $\varphi_1^p(\Delta_{\Phi}^{{\bf m}'}(Y))\leq \Delta_{\Phi}^{{\bf m}'}(Y)$ for any $p\in \NN$. Since $\Delta_{\Phi}^{{\bf m}'}(Y)$ is a self-adjoint operator, we have
$$
-\|\Delta_{\Phi}^{{\bf m}'}(Y)\| \varphi_1^p(I)\leq \varphi_1^p(\Delta_{\Phi}^{{\bf m}'}(Y))\leq \|\Delta_{\Phi}^{{\bf m}'}(Y)\| \varphi_1^p(I).
$$
Now, taking into account that  $\varphi_i^p(I)\to 0$ strongly as $p\to\infty$, we conclude that $\Delta_{\Phi}^{{\bf m}'}(Y)\geq 0$. Using the commutativity of $\varphi_1,\ldots, \varphi_k$, one can continue this process and complete the proof.
\end{proof}

 For each $i\in \{1,\ldots,k\}$,  let $f_i:=\sum\limits_{\alpha_i\in \FF_{n_i}^+, |\alpha|\geq 1} a_{i,\alpha} Z_\alpha$ be
a positive regular free holomorphic function in $n_i$ variables
 and let $A:=(A_1,\ldots, A_n)\in B(\cH)^{n_i}$ be an $n_i$-tuple of
 operators such that
$\sum_{|\alpha|\geq 1} a_{i,\alpha} A_\alpha A_\alpha^* $ is convergent
in the weak operator topology.
 One can easily prove that the map
$\Phi_{f_i,A}:B(\cH)\to B(\cH)$, defined by
$$\Phi_{f_i,A}(X)=\sum_{|\alpha|\geq 1} a_{i,\alpha} A_\alpha
XA_\alpha^*,\qquad X\in B(\cH),
$$
where the convergence is in the weak operator topology, is a
 completely positive linear map which is WOT-continuous on bounded
 sets. Moreover, if $0<r<1$, then
$$\Phi_{f_i,A}(X)=\text{\rm WOT-}\lim_{r\to1} \Phi_{f_i,rA}(X), \qquad
X\in B(\cH).
$$
These facts will be used in the proof of the next theorem.

Let
${\bf T}=({ T}_1,\ldots, {T}_k)\in  B(\cH)^{n_1}\times_c\cdots \times_c B(\cH)^{n_k}$, where ${ T}_i:=(T_{i,1},\ldots, T_{i,n_i})\in B(\cH)^{n_i} $ for all   $i=1,\ldots, k$,  be such that $\Phi_{f_i, T_i}(I)$ is well-defined in the weak operator topology.  If ${\bf p}:=(p_1,\ldots, p_k)\in \ZZ_+^k$ and ${\bf f}:=(f_1,\ldots,f_k)$, we  define the {\it defect   mapping}
${\bf \Delta_{f,T}^p}: B(\cH)\to  B(\cH)$ by setting
$$
{\bf \Delta_{f,T}^p} :=(id-\Phi_{f_1,T_1})^{p_1}\cdots (id-\Phi_{f_k,T_k})^{p_k}.
$$
Given   $r\geq 0$, we set $r{\bf T}:=(r{ T}_1,\ldots, r{ T}_k)$ and
$r{ T}_i:=(rT_{i,1},\ldots, rT_{i,n_i})$ for $i\in\{1,\ldots,k\}$. We say that the $k$-tuple ${\bf T}$ has the {\it radial property}  with respect to ${\bf D_f^m}(\cH)$  if    there exists $\delta\in (0,1)$ such that $r{\bf T}\in {\bf D_f^m}(\cH)$ for any $r\in (\delta, 1]$.

\begin{theorem}
\label{radial}
Let
${\bf T}=({ T}_1,\ldots, { T}_k)\in B(\cH)^{n_1}\times_c\cdots \times_c B(\cH)^{n_k}$ be such that $\Phi_{f_i, T_i}(I)\leq I$ for any $i\in \{1,\ldots, k\}$, and let ${\bf q}\in \ZZ_+^k$  be with ${\bf q}\neq 0$. Then the following statements are equivalent:
\begin{enumerate}
\item[(i)] ${\bf T}\in {\bf D_f^m}(\cH)$;
 \item[(ii)] for any $p_i\in \{0,1,\ldots, m_i\}$ and
       $i\in \{1,\ldots,k\}$,
   $$
   (id-\Phi_{f_1,T_1})^{p_1}\cdots (id-\Phi_{f_k,T_k})^{p_k}(I)\geq 0;
    $$
    \item[(iii)]    $ {\bf \Delta}_{{\bf f},r{\bf T}}^{\bf m}(I)\geq 0$ for any $r\in [0, 1]$;
\item[(iv)] there exists $\delta\in (0,1)$ such that $ {\bf \Delta}_{{\bf f},r{\bf T}}^{\bf m}(I)\geq 0$ for any $r\in (\delta, 1)$;
    \item[(v)] ${\bf T} $ has the radial property with respect to ${\bf D_f^m}(\cH)$.
\end{enumerate}
\end{theorem}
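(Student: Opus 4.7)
The plan is to close the cycle $(i) \Leftrightarrow (ii) \Rightarrow (iii) \Rightarrow (iv) \Rightarrow (v) \Rightarrow (i)$. The equivalence $(i) \Leftrightarrow (ii)$ is essentially Proposition~\ref{positive-maps} applied with $Y := I$ and $\varphi_i := \Phi_{f_i,T_i}$: these maps are positive (WOT-convergent sums of elementary CP maps), they commute across indices because the entries of $T_p$ commute with those of $T_q$ for $p \neq q$, and each is power-bounded since $\Phi_{f_i,T_i}(I) \leq I$ together with positivity gives $\|\Phi_{f_i,T_i}^n\| = \|\Phi_{f_i,T_i}^n(I)\| \leq 1$. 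At the other end, $(v) \Rightarrow (i)$ is trivial at $r = 1$ and $(iii) \Rightarrow (iv)$ is immediate, so the substantive implications are $(ii) \Rightarrow (iii)$ and $(iv) \Rightarrow (v)$.

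For $(ii) \Rightarrow (iii)$, I would reduce to the single-variable case by induction on $k$, leveraging commutativity of the $\Phi_{f_i, T_i}$ across indices. The key single-variable claim to establish is: if $A \geq 0$ and $(id - \Phi_{f,T})^p(A) \geq 0$ for $p = 0, 1, \ldots, m$, then $(id - \Phi_{f,rT})^m(A) \geq 0$ for every $r \in [0, 1]$. The $m = 1$ instance is immediate from
$$
(id - \Phi_{f,rT})(A) = (id - \Phi_{f,T})(A) + \sum_{|\alpha| \geq 1} a_\alpha(1 - r^{2|\alpha|}) T_\alpha A T_\alpha^*,
$$
where both summands are non-negative. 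For general $m$ the target is an algebraic identity expressing $(id - \Phi_{f, rT})^m(A)$ as a positive combination of operators of the form $T_\gamma (id - \Phi_{f, T})^s(A) T_\gamma^*$, with $0 \leq s \leq m$ and non-negative $r$-dependent coefficients, each factor being positive by hypothesis. With the single-variable case in hand, the inductive step proceeds by isolating the first variable: commutativity lets us apply single-variable radiality to $A := (id - \Phi_{f_2, T_2})^{p_2} \cdots (id - \Phi_{f_k, T_k})^{p_k}(I) \geq 0$, converting $(id - \Phi_{f_1, T_1})^{p_1}$ into $(id - \Phi_{f_1, rT_1})^{p_1}$; the inductive hypothesis on the remaining $k - 1$ variables then converts all the other factors to their radial versions.

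For $(iv) \Rightarrow (v)$, the key observation is that for $r \in (0, 1)$ each $\Phi_{f_i, rT_i}$ is pure: since $r^{2|\alpha|} \leq r^2$ whenever $|\alpha| \geq 1$, one has $\Phi_{f_i, rT_i}(X) \leq r^2 \Phi_{f_i, T_i}(X)$ for $X \geq 0$, so iterating yields $\Phi_{f_i, rT_i}^p(I) \leq r^{2p} I \to 0$ strongly. The hypotheses of Proposition~\ref{pure} are then met, upgrading ${\bf \Delta_{f,rT}^m}(I) \geq 0$ to ${\bf \Delta_{f,rT}^q}(I) \geq 0$ for every ${\bf q} \leq {\bf m}$, placing $r{\bf T} \in {\bf D_f^m}(\cH)$ for $r \in (\delta, 1)$. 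Taking the WOT-limit $r \to 1^-$, legitimate by the WOT-continuity of $s \mapsto \Phi_{f_i, sT_i}$ on bounded sets (recalled just before the theorem), extends the positivity of the defects to $r = 1$, yielding ${\bf T} \in {\bf D_f^m}(\cH)$; this produces both (i) and the $r = 1$ case in (v). The main obstacle in the whole argument is single-variable radiality for $m \geq 2$: since $id - \Phi_{f, T}$ and the correction $\Phi_{f, T} - \Phi_{f, rT}$ fail to commute---different monomials $T_\alpha \cdot T_\alpha^*$ carry different weights $1 - r^{2|\alpha|}$---a direct binomial expansion does not produce manifestly positive terms, and identifying the correct positive decomposition in $r$ is where the real work lies.
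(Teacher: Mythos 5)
Your architecture coincides with the paper's: (i)$\Leftrightarrow$(ii) via Proposition \ref{positive-maps} applied to $\varphi_i=\Phi_{f_i,T_i}$; (iii)$\Rightarrow$(iv) and (v)$\Rightarrow$(i) trivial; and (iv)$\Rightarrow$(v) by observing that $\Phi_{f_i,rT_i}$ is pure for $r<1$, invoking Proposition \ref{pure} to get positivity of all defects ${\bf \Delta}_{{\bf f},r{\bf T}}^{\bf p}(I)$ for ${\bf p}\leq{\bf m}$, and letting $r\to 1$. Those parts are essentially correct (for the limit $r\to 1$ the paper passes through the individual compositions $\Phi_{f_1,rT_1}^{q_1}\cdots\Phi_{f_k,rT_k}^{q_k}(I)$ with a uniform tail estimate, which is a bit more than the single-map WOT-continuity you cite, but this is a routine repair). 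The genuine gap is in (ii)$\Rightarrow$(iii): your single-variable claim for $m\geq 2$ is announced as ``the target'' and never established. You correctly observe that $\Phi_{f,T}$ and $\Phi_{f,rT}$ do not commute when $f$ is not homogeneous, so that expanding $(id-\Phi_{f,rT})^m=\bigl((id-\Phi_{f,T})+(\Phi_{f,T}-\Phi_{f,rT})\bigr)^m$ produces mixed terms such as $(id-\Phi_{f,T})(\Phi_{f,T}-\Phi_{f,rT})(A)$ that are not manifestly positive --- and then you stop exactly there. Since everything else in the theorem is either easy or quoted, the unexhibited positive decomposition is the entire content of the implication, and the proposal is incomplete as it stands.

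For comparison, the paper does not seek a closed-form decomposition. It isolates a one-step monotonicity lemma (its relation \eqref{ine2}): if $D\geq 0$ and $(id-\Phi_{f_i,T_i})(D)\geq 0$, then $(id-\Phi_{f_i,rT_i})(D)\geq 0$, since $\Phi_{f_i,rT_i}(D)\leq \Phi_{f_i,T_i}(D)\leq D$ term by term --- this is exactly your $m=1$ computation. It then iterates, replacing one factor $id-\Phi_{f_1,T_1}$ at a time by $id-\Phi_{f_1,rT_1}$, and the operator fed into the lemma at the $j$-th stage is ${\bf \Delta}_{{\bf f},{\bf T}}^{{\bf p}-j e_1}\bigl((id-\Phi_{f_1,rT_1})^{j-1}(I)\bigr)$, whose positivity is supplied by statement (ii) for the lower multi-indices ${\bf p}\leq {\bf m}$ (so the full strength of (ii), not just ${\bf p}={\bf m}$, is what powers the induction). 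Note, however, that this iteration must also move the newly created factor $id-\Phi_{f_1,rT_1}$ from the outside of the product to the inside, i.e.\ past the remaining powers of $id-\Phi_{f_1,T_1}$ --- which is precisely the non-commutation you flagged; it is automatic when each $f_i$ is homogeneous (e.g.\ the polyball, where $\Phi_{f_i,rT_i}=r^2\Phi_{f_i,T_i}$), but requires justification in general. So your diagnosis of where the difficulty sits is accurate; to complete the proof you must either justify that reordering or actually produce the positive combination of operators $T_\gamma (id-\Phi_{f,T})^s(A)T_\gamma^*$ you allude to. Until one of these is done, (ii)$\Rightarrow$(iii), and with it the theorem, is not proved.
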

\begin{proof}
 The equivalence of (i) with (ii) is due to Proposition \ref{positive-maps}, when applied to $\varphi_i= \Phi_{f_i,T_i}$. We prove that (ii) implies (iii).
 First, note that if $D\in B(\cH)$, $D\geq 0$, then, for each $i\in \{1,\ldots, k\}$,
 \begin{equation}
 \label{ine2}
 (id-\Phi_{f_i, T_i})(D)\geq 0\quad \implies  \quad (id-\Phi_{f_i, rT_i})(D)\geq 0, \quad r\in [0,1].
 \end{equation}
 Indeed,  if $\Phi_{f_i, T_i}(D)\leq D$, then $\Phi_{f_i,r T_i}(D)\leq D$ for any $r\in [0,1]$. Now, assume that (ii) holds. If ${\bf p}\in \ZZ_+^k$ with  ${\bf p}\geq e_1:=(1,0,\ldots,0)\in \ZZ_+^k$, then
 $(id-\Phi_{f_1, T_1})({\bf \Delta}_{{\bf f},{\bf T}}^{{\bf p}-e_1}(I))\geq 0$
 for any ${\bf p}\in \ZZ_+^k$  with $e_1\leq {\bf p}\leq {\bf m}$.
 Consequently, due to \eqref{ine2}, we have
 \begin{equation}
 \label{ine3}
 (id-\Phi_{f_1, rT_1})({\bf \Delta}_{{\bf f},{\bf T}}^{{\bf p}-e_1}(I))\geq 0
 \end{equation}
 for any $r\in [0,1]$ and any ${\bf p}\in \ZZ_+^k$  with $e_1\leq {\bf p}\leq {\bf m}$. Due to the commutativity of $\Phi_{f_1, T_1},\ldots,\Phi_{f_k, T_k}$, the latter inequality is equivalent to
 \begin{equation*}
 (id-\Phi_{f_1, T_1})({\bf \Delta}_{{\bf f},{\bf T}}^{{\bf p}-2e_1}(id-\Phi_{f_1, rT_1})(I))\geq 0
 \end{equation*}
 for any $r\in [0,1]$ and any ${\bf p}\in \ZZ_+^k$  with $2e_1\leq {\bf p}\leq {\bf m}$.
 Due to \eqref{ine3}, we have ${\bf \Delta}_{{\bf f},{\bf T}}^{{\bf p}-2e_1}(id-\Phi_{f_1, rT_1})(I)\geq 0$ and, applying again relation \eqref{ine2}, we deduce that
  \begin{equation*}
 (id-\Phi_{f_1, T_1})({\bf \Delta}_{{\bf f},{\bf T}}^{{\bf p}-3e_1}(id-\Phi_{f_1, rT_1})^2(I))\geq 0
 \end{equation*}
 for any $r\in [0,1]$ and any ${\bf p}\in \ZZ_+^k$  with $3e_1\leq {\bf p}\leq {\bf m}$. Continuing this process, we obtain the inequality
 $$
 (id-\Phi_{f_2, T_2})^{p_2}\cdots (id-\Phi_{f_k, T_k})^{p_k}(id-\Phi_{f_1, rT_1})^{p_1}(I)\geq 0
 $$
 for any ${\bf p}\in \ZZ_+^k$  with $e_1\leq {\bf p}\leq {\bf m}$, and any $r\in [0,1]$.
Similar arguments lead to the inequality  $ {\bf \Delta}_{{\bf f},r{\bf T}}^{\bf m}(I)\geq 0$ for any $r\in [0, 1]$. Since the implications (iii)$\implies$(iv) and (v)$\implies$(i) are clear, it remains to prove that (iv)$\implies$(v).

To this end, assume that there exists $\delta\in (0,1)$ such that $ {\bf \Delta}_{{\bf f},r{\bf T}}^{\bf m}(I)\geq 0$ for any $r\in (\delta, 1)$.
Since $\Phi_{f_i, rT_i}(I)\leq r I$, it is clear that $\Phi_{f_i, rT_i}$ is pure for each $i\in\{1,\ldots, k\}$. Applying Proposition \ref{pure}, we deduce that
$ {\bf \Delta}_{{\bf f},r{\bf T}}^{\bf p}(I)\geq 0$ for any  $r\in (\delta, 1)$ and any ${\bf p}\in \ZZ_+^k$ with ${\bf p}\leq {\bf m}$.
 Note that ${\bf \Delta}_{{\bf f},r{\bf T}}^{\bf p}(I)$ is a linear combination  of products of the form
 $\Phi_{f_1, rT_1}^{q_1}\cdots \Phi_{f_k, rT_k}^{q_k}(I)$, where $(q_1,\ldots, q_k)\in \ZZ_+^k$.
 On the other hand
 $$
 \Phi_{f_1, T_1}^{q_1}\cdots \Phi_{f_k, T_k}^{q_k}(I)=\text{\rm WOT-}\lim_{j\to \infty} \sum_{{\alpha_i\in \FF_{n_i}^+}\atop{|\alpha_1|+\cdots +|\alpha_k|\leq j}} c_{\alpha_1,\ldots,\alpha_k} T_{1,\alpha_1}\cdots T_{k,\alpha_k} T_{k,\alpha_k}^*\cdots T_{1,\alpha_1}^*\leq I
 $$
 for some positive constants $c_{\alpha_1,\ldots,\alpha_k} \geq 0$. Given $x\in \cH$ and $\epsilon >0$, there is $N_0\in \NN$ such that
 $$\sum_{{\alpha_i\in \FF_{n_i}^+}\atop {|\alpha_1|+\cdots +|\alpha_k|\leq j}} c_{\alpha_1,\ldots,\alpha_k} r^{2(|\alpha_1|+\cdots +|\alpha_k|)} \left<T_{1,\alpha_1}\cdots T_{k,\alpha_k} T_{k,\alpha_k}^*\cdots T_{1,\alpha_1}^* x,x\right><\epsilon$$ for any $j\geq N_0$
 and $r\in (\delta, 1)$.  This can be used to show that
 $$
 \Phi_{f_1, T_1}^{q_1}\cdots \Phi_{f_k, T_k}^{q_k}(I)=\text{\rm WOT-}\lim_{r\to 1}
 \Phi_{f_1, rT_1}^{q_1}\cdots \Phi_{f_k, rT_k}^{q_k}(I).
 $$
 Hence, we deduce that
 $
  {\bf \Delta}_{{\bf f},{\bf T}}^{\bf p}(I)=\text{\rm WOT-}\lim_{r\to 1}
  {\bf \Delta}_{{\bf f},r{\bf T}}^{\bf p}(I)\geq 0
  $ \
 for any ${\bf p}\in \ZZ_+^k$ with ${\bf p}\leq {\bf m}$. Consequently,
  ${\bf T}\in {\bf D_f^m}(\cH)$ and it has the radial property.  This  completes the proof.
 \end{proof}
As expected, the   domain ${\bf D_f^m}(\cH)$ is called {\it radial} if any ${\bf T}\in {\bf D_f^m}(\cH)$ has the radial property.

\begin{corollary}  The  noncommutative polydomain ${\bf D_f^m}(\cH)$ is radial.
   \end{corollary}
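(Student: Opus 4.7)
The plan is to read this off essentially for free from Theorem~\ref{radial}, since the substantive content has already been packaged there. First I would unpack the definition stated immediately before the corollary: ${\bf D_f^m}(\cH)$ being radial means that \emph{every} ${\bf T}\in {\bf D_f^m}(\cH)$ has the radial property with respect to ${\bf D_f^m}(\cH)$, i.e., there is some $\delta\in(0,1)$ with $r{\bf T}\in {\bf D_f^m}(\cH)$ for all $r\in(\delta,1]$. This is literally the content of the implication (i)$\Longrightarrow$(v) in Theorem~\ref{radial}.

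Next I would check that the hypothesis of that theorem is automatic. Theorem~\ref{radial} requires the $k$-tuple ${\bf T}$ to satisfy $\Phi_{f_i,T_i}(I)\leq I$ for each $i\in\{1,\ldots,k\}$; but this inequality is one of the defining conditions for membership in ${\bf D_f^m}(\cH)$, so no separate verification is needed. Given an arbitrary ${\bf T}\in {\bf D_f^m}(\cH)$ one then simply invokes the equivalence (i)$\iff$(v) of the theorem to obtain the required $\delta$. Since ${\bf T}$ was arbitrary, this is exactly the statement that ${\bf D_f^m}(\cH)$ is radial.

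If one further wants the strong formulation asserted in the introduction, namely $r{\bf T}\in{\bf D_f^m}(\cH)$ for \emph{every} $r\in[0,1]$, I would additionally remark that implication (i)$\Longrightarrow$(iii) of Theorem~\ref{radial} already furnishes $ {\bf \Delta}_{{\bf f},r{\bf T}}^{\bf m}(I)\geq 0$ for all $r\in[0,1]$, and that Proposition~\ref{pure}, applied to the pure commuting positive maps $\Phi_{f_i,rT_i}$ (which are pure because $\Phi_{f_i,rT_i}(I)\leq rI$ for $r<1$), upgrades this to the full family of defect positivities ${\bf \Delta}_{{\bf f},r{\bf T}}^{\bf p}(I)\geq 0$ for all ${\bf p}\leq {\bf m}$ when $r<1$, which is exactly membership in ${\bf D_f^m}(\cH)$. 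There is no real obstacle in this corollary; the proof is essentially a one-line citation, with the heavy lifting done by the iteration argument in Theorem~\ref{radial} that transfers positivity from ${\bf T}$ to $r{\bf T}$ using relation \eqref{ine2}.
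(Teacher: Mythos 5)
Your proof is correct and matches the paper's (implicit) argument: the corollary is exactly the implication (i)$\Longrightarrow$(v) of Theorem~\ref{radial} applied to an arbitrary ${\bf T}\in{\bf D_f^m}(\cH)$, and you rightly note that the hypothesis $\Phi_{f_i,T_i}(I)\leq I$ is automatic from the definition of the polydomain. The additional remark about the stronger conclusion for all $r\in[0,1]$ is also consistent with the argument already contained in the proof of Theorem~\ref{radial}.
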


   In the particular case when $k=1$,  Theorem \ref{radial} shows   that   any noncommutative domain ${\bf D}_{f_1}^{m_1}(\cH)$, $m_1\in \NN$,  is radial.  An important consequence is the following

    \begin{corollary} All the results from \cite{Po-Berezin}, \cite{Po-similarity-domains}, \cite{Po-classification},  which were proved in the setting of the radial part of  ${\bf D}_{f_1}^{m_1}(\cH)$,  are true for any domain ${\bf D}_{f_1}^{m_1}(\cH)$.
    \end{corollary}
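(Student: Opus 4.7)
The plan is simply to unwind the definition of ``radial part'' in the cited papers and quote the preceding corollary, so the argument is almost automatic.

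First I would recall that in the papers \cite{Po-Berezin}, \cite{Po-similarity-domains}, \cite{Po-classification}, the ``radial part'' of ${\bf D}_{f_1}^{m_1}(\cH)$ is, by definition, the set of all tuples $T=(T_1,\ldots,T_{n_1})\in {\bf D}_{f_1}^{m_1}(\cH)$ for which there exists $\delta\in(0,1)$ with $rT\in {\bf D}_{f_1}^{m_1}(\cH)$ for every $r\in(\delta,1]$; that is, the tuples in ${\bf D}_{f_1}^{m_1}(\cH)$ having the radial property with respect to ${\bf D}_{f_1}^{m_1}(\cH)$. The content of the corollary is therefore that this subset in fact exhausts the whole domain.

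Next I would specialize Theorem \ref{radial} to the case $k=1$. In that case the hypothesis $\Phi_{f_i,T_i}(I)\leq I$ reduces to $\Phi_{f_1,T_1}(I)\leq I$, which is part of the definition of ${\bf D}_{f_1}^{m_1}(\cH)$, and the equivalence of conditions (i) and (v) of Theorem \ref{radial} reads precisely: a tuple $T\in B(\cH)^{n_1}$ lies in ${\bf D}_{f_1}^{m_1}(\cH)$ if and only if it has the radial property with respect to ${\bf D}_{f_1}^{m_1}(\cH)$. Equivalently, the previous corollary (that ${\bf D_f^m}(\cH)$ is radial) applied with $k=1$ gives immediately that every $T\in {\bf D}_{f_1}^{m_1}(\cH)$ belongs to the radial part.

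Hence the radial part of ${\bf D}_{f_1}^{m_1}(\cH)$ coincides with ${\bf D}_{f_1}^{m_1}(\cH)$ itself. Any statement proved in \cite{Po-Berezin}, \cite{Po-similarity-domains}, \cite{Po-classification} for tuples in the radial part therefore holds, without change of hypothesis or proof, for arbitrary tuples in ${\bf D}_{f_1}^{m_1}(\cH)$. There is no genuine obstacle here; the only point that requires a little care is to verify that the radiality condition used in the earlier papers is indeed the one formulated in terms of $rT\in {\bf D}_{f_1}^{m_1}(\cH)$ for $r$ close to $1$, so that Theorem \ref{radial} applies verbatim; this is a routine matching of definitions rather than a substantive step.
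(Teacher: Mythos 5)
Your argument is correct and is exactly the paper's: the corollary is stated as an immediate consequence of the $k=1$ case of Theorem \ref{radial} (equivalently, of the preceding corollary that ${\bf D_f^m}(\cH)$ is radial), which shows the radial part of ${\bf D}_{f_1}^{m_1}(\cH)$ is all of ${\bf D}_{f_1}^{m_1}(\cH)$. Your added remark about matching the radiality definition used in the earlier papers is the only point of care, and you handle it appropriately.
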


Another consequence of  Theorem \ref{radial} is the following

\begin{corollary} The following statements hold:
 \begin{enumerate}
\item[(i)]
If   ${\bf f}=(f_1,f_2)$,  and  ${\bf T}=({ T}_1,  { T}_2)\in {\bf D}_{f_1}^{m_1} (\cH)\times_c {\bf D}_{f_2}^{m_2}(\cH)$ with  ${\bf \Delta_{f,T}^m}(I)\geq 0$, then  ${\bf T}\in {\bf D_f^m}(\cH)$.
\item[(ii)] If ${\bf T}=({ T}_1,\ldots, { T}_k)\in B(\cH)^{n_1}\times_c\cdots \times_c B(\cH)^{n_k}$ and
$\Phi_{f_i,T_i}(I) =I$, $ i\in \{1,\ldots,k\},
$
then ${\bf T}$ is in the polydomain ${\bf D_f^m}(\cH)$.
 \end{enumerate}
\end{corollary}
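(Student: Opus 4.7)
The plan is to reduce both parts to the equivalent characterization in Theorem \ref{radial}(ii) and, for part (i), to invoke Proposition \ref{positive-maps} as the combinatorial bridge. For (i), I would first set $\varphi_i:=\Phi_{f_i,T_i}$ for $i=1,2$ and verify the hypotheses of Proposition \ref{positive-maps}: the $\varphi_i$ are completely positive (hence positive) linear maps on $B(\cH)$; they commute because the entries of $T_1$ commute with those of $T_2$; and they are power bounded since $\varphi_i(I)\leq I$ yields $\|\varphi_i^p\|=\|\varphi_i^p(I)\|\leq 1$ for every $p\in\NN$. Because each $T_i$ lies in ${\bf D}_{f_i}^{m_i}(\cH)$, the equivalence (i)$\Leftrightarrow$(ii) in Theorem \ref{radial} applied with $k=1$ supplies $(id-\varphi_i)^{p_i}(I)\geq 0$ for all $0\leq p_i\leq m_i$. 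Combined with the standing hypothesis ${\bf \Delta_{f,T}^m}(I)\geq 0$, this gives $(id-\varphi_1)^{\epsilon_1 m_1}(id-\varphi_2)^{\epsilon_2 m_2}(I)\geq 0$ for every nonzero $\epsilon=(\epsilon_1,\epsilon_2)\in\{0,1\}^2$, which is exactly condition (i) of Proposition \ref{positive-maps}.

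I would then apply that proposition with $Y=I$ and ${\bf p}={\bf m}$ to conclude that $(id-\varphi_1)^{q_1}(id-\varphi_2)^{q_2}(I)\geq 0$ for every $(q_1,q_2)\in\ZZ_+^2$ with $(q_1,q_2)\leq(m_1,m_2)$ and $(q_1,q_2)\neq 0$. Together with $\varphi_i(I)\leq I$, this is precisely condition (ii) of Theorem \ref{radial} at $k=2$, so ${\bf T}\in{\bf D_f^m}(\cH)$, which proves (i).

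For (ii), the argument is essentially trivial once one observes that $\Phi_{f_i,T_i}(I)=I$ forces $(id-\Phi_{f_i,T_i})(I)=0$. Using the commutativity of the maps $\Phi_{f_1,T_1},\ldots,\Phi_{f_k,T_k}$, any iterated product $(id-\Phi_{f_1,T_1})^{p_1}\cdots(id-\Phi_{f_k,T_k})^{p_k}(I)$ with some $p_{i_0}\geq 1$ can be rearranged so that one factor $(id-\Phi_{f_{i_0},T_{i_0}})$ acts first on $I$ and annihilates the whole expression. Hence ${\bf \Delta_{f,T}^p}(I)=0\geq 0$ for every nonzero ${\bf p}\leq{\bf m}$, so Theorem \ref{radial}(ii) immediately yields ${\bf T}\in{\bf D_f^m}(\cH)$.

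The only mild obstacle is the bookkeeping in (i): one needs to recognize that Proposition \ref{positive-maps} is exactly the tool that interpolates between the three ``extremal'' positivities available by hypothesis (the two one-variable families coming from $T_i\in{\bf D}_{f_i}^{m_i}(\cH)$, and the top-corner ${\bf \Delta_{f,T}^m}(I)\geq 0$) and the full grid of mixed defect positivities required by Theorem \ref{radial}(ii). Beyond that observation everything is routine verification of hypotheses.
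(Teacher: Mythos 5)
Your proof is correct and follows the route the paper intends (the corollary is stated without proof as a consequence of Theorem \ref{radial}, whose equivalence (i)$\Leftrightarrow$(ii) rests on Proposition \ref{positive-maps}). The only remark worth adding is that part (i) is even more immediate than your argument suggests: the three positivities you assemble for the nonzero $\epsilon\in\{0,1\}^2$ are literally the defining conditions of ${\bf D_f^m}(\cH)$, so the detour through the full grid of Proposition \ref{positive-maps} and back through Theorem \ref{radial}(ii) is harmless but unnecessary.
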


  We say that  a $k$-tuple
${\bf T}=({ T}_1,\ldots, { T}_k)\in {\bf D_f^m}(\cH)$ is {\it pure} if
$$
\lim_{{\bf q}=(q_1,\ldots, q_k)\in \ZZ_+^k}(id-\Phi_{f_k,T_k}^{q_k})\cdots (id-\Phi_{f_1,T_1}^{q_1})(I)=I.
$$
We remark that  $\{(id-\Phi_{f_k,T_k}^{q_k})\cdots (id-\Phi_{f_1,T_1}^{q_1})(I)\}_{{\bf q}=(q_1,\ldots, q_k)\in \ZZ_+^k}$ is an  increasing sequence of positive operators.
 Indeed,  due to Theorem \ref{radial},  $(id-\Phi_{f_k,T_k})\cdots (id-\Phi_{f_1,T_1})(I)\geq 0$. Taking into account that $\Phi_{f_1, T_1}, \ldots, \Phi_{f_k,T_k}$ are commuting, we have
 $$
 (id-\Phi_{f_k,T_k}^{q_k})\cdots (id-\Phi_{f_1,T_1}^{q_1})(I)
 =\sum_{s=0}^{q_k-1}\Phi_{f_k, T_k}^s\cdots \sum_{s=0}^{q_1-1}\Phi_{f_1,T_1}^s
(id-\Phi_{f_k,T_k})\cdots (id-\Phi_{f_1,T_1})(I),
$$
which proves our assertion.
Note also that
$$
 (id-\Phi_{f_k,T_k}^{q_k})\cdots (id-\Phi_{f_1,T_1}^{q_1})(I)\leq  (id-\Phi_{f_{k-1},T_{k-1}}^{q_{k-1}})\cdots (id-\Phi_{f_1,T_1}^{q_1})(I)\leq\cdots \leq   (id-\Phi_{f_1,T_1}^{q_1})(I)\leq I.
$$
Hence, we can deduce the following result.
 \begin{proposition} \label{pure2} A $k$-tuple ${\bf T}=({ T}_1,\ldots, { T}_k)\in {\bf D_f^m}(\cH)$ is pure if and only if,   for each $i\in \{1,\ldots,k\}$,
$\Phi_{f_i,T_i}^p(I)\to 0$ strongly as $p\to\infty$.
\end{proposition}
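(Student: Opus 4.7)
The plan is to reduce both implications to monotonicity and squeeze arguments on the net $B_{\bf q}:=(id-\Phi_{f_k,T_k}^{q_k})\cdots(id-\Phi_{f_1,T_1}^{q_1})(I)$, writing $\varphi_i:=\Phi_{f_i,T_i}$ for brevity. Two structural ingredients will drive the proof. First, since $\varphi_i(I)\leq I$ and $\varphi_i$ is positive, $\{\varphi_i^p(I)\}_p$ is a decreasing sequence of positive contractions, so its strong limit exists; second, by commutativity of $\varphi_1,\ldots,\varphi_k$ one has the inclusion--exclusion identity
\[
I-B_{\bf q}=\sum_{\emptyset\neq S\subseteq\{1,\ldots,k\}}(-1)^{|S|+1}\varphi_S^{\bf q}(I),\qquad \varphi_S^{\bf q}(I):=\prod_{i\in S}\varphi_i^{q_i}(I),
\]
where the product denotes composition (the order is immaterial).

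For the forward implication I would appeal to the chain inequality displayed just before the proposition. Its derivation uses only that each factor $(id-\varphi_j^{q_j})$ preserves positivity, so by commutativity of the $\varphi_j$'s the same argument yields $B_{\bf q}\leq I-\varphi_i^{q_i}(I)$ for \emph{every} individual index $i$ (not just $i=k$): one simply moves $(id-\varphi_i^{q_i})$ into the innermost position. Rearranging gives $\varphi_i^{q_i}(I)\leq I-B_{\bf q}$. Given $h\in\cH$ and $\varepsilon>0$, purity supplies ${\bf q}_0=(q_1^0,\ldots,q_k^0)$ with $\langle(I-B_{{\bf q}_0})h,h\rangle<\varepsilon$, and therefore $\langle\varphi_i^{q_i^0}(I)h,h\rangle<\varepsilon$. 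Monotonicity of $\{\varphi_i^p(I)\}_p$ propagates this to all $p\geq q_i^0$, and the identity $\|\varphi_i^p(I)^{1/2}h\|^2=\langle\varphi_i^p(I)h,h\rangle$, combined with $0\leq\varphi_i^p(I)\leq I$, upgrades the weak estimate to $\varphi_i^p(I)\to 0$ strongly.

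For the converse I would expand $I-B_{\bf q}$ via the identity above. For each nonempty $S$ and any fixed $i_0\in S$, iterated use of positivity of the $\varphi_j$'s applied to the inequalities $\varphi_j^{q_j}(I)\leq I$ yields the key domination
\[
0\leq \varphi_S^{\bf q}(I)\leq \varphi_{i_0}^{q_{i_0}}(I).
\]
Under the hypothesis that each $\varphi_i^p(I)\to 0$ strongly as $p\to\infty$, this forces $\varphi_S^{\bf q}(I)\to 0$ strongly as ${\bf q}\to\infty$; since there are only $2^k-1$ such terms, $I-B_{\bf q}\to 0$ strongly, i.e.\ ${\bf T}$ is pure. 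The only mildly subtle step is the promotion of the chain inequality to an arbitrary coordinate $i$, but this is cost-free once one observes that commutativity of $\varphi_1,\ldots,\varphi_k$ lets one recycle the inductive proof verbatim after a reordering; everything else is bookkeeping on bounded monotone nets of positive operators.
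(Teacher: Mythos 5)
Your argument is correct and follows the route the paper intends: the proposition is stated as an immediate consequence of the two facts established just before it (the telescoping identity showing the net $(id-\Phi_{f_k,T_k}^{q_k})\cdots(id-\Phi_{f_1,T_1}^{q_1})(I)$ is increasing, and the chain inequality bounding it by $I-\Phi_{f_i,T_i}^{q_i}(I)$ for each $i$ after a reordering permitted by commutativity), and your inclusion--exclusion expansion for the converse is a natural way to fill in the deduction the paper leaves implicit. One phrase should be repaired, though it does not affect the validity of the proof: the factors $id-\Phi_{f_j,T_j}^{q_j}$ do \emph{not} preserve positivity in general; what the chain inequality actually uses is that each $\Phi_{f_j,T_j}^{q_j}$ is a positive map (so $(id-\Phi_{f_j,T_j}^{q_j})(Y)\le Y$ whenever $Y\ge 0$) together with the positivity of every partial product $(id-\Phi_{f_{i_1},T_{i_1}}^{q_{i_1}})\cdots(id-\Phi_{f_{i_s},T_{i_s}}^{q_{i_s}})(I)$, which follows from the telescoping identity and Theorem \ref{radial} (ii) applied to the subfamily $\{i_1,\ldots,i_s\}$.
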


A $k$-tuple ${\bf T}\in {\bf D_f^m}(\cH)$ is called  {\it doubly commuting}  if
$T_{i,p}T_{j,q}^*=T_{j,q}^*T_{i,p}$ for any  $i,j\in \{1,\ldots,k\}$ with $i\neq j$, and
$p\in \{1,\ldots, n_i\}$, $q\in \{1,\ldots, n_j\}$. The next results provides some classes of elements in ${\bf D_f^m}(\cH)$.

\begin{proposition}\label{exemp} Let ${\bf T}=({ T}_1,\ldots, { T}_k)\in B(\cH)^{n_1}\times_c\cdots \times_c B(\cH)^{n_k}$  be such that $\Phi_{f_i, T_i}(I)\leq I$ and let ${\bf m}=(m_1,\ldots, m_k)\in \NN^k$. Then the following statements hold.
\begin{enumerate}
\item[(i)]
If
${\bf \Delta_{f,T}^m}(I)\geq 0$ and, for each $i\in \{1,\ldots,k\}$,
$\Phi_{f_i,T_i}^p(I)\to 0$ strongly as $p\to\infty$, then  ${\bf T}\in {\bf D_f^m}(\cH)$.

\item[(ii)] If ${\bf T}\in {\bf D}_{f_1}^{m_1}(\cH)\times_c\cdots \times_c {\bf D}_{f_k}^{m_k}(\cH)$ is doubly commuting, then  ${\bf T}\in {\bf D_f^m}(\cH)$.
    \item[(iii)] If  $
m_1 \Phi_{f_1,T_1}(I)+\cdots + m_k\Phi_{f_k,T_k}(I)\leq I,
$
then ${\bf T}\in {\bf D_f^m}(\cH)$.
\item[(iv)]
 If ${\bf T}=({ T}_1,\ldots, { T}_k)\in {\bf D_f^m}(\cH)$, then   ${\bf \Delta_{f,T}^m}(I)= 0$ if and only if
$$(id-\Phi_{f_1,T_1})\cdots (id-\Phi_{f_k,T_k})(I) =0.
$$
\end{enumerate}
\end{proposition}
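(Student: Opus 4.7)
I'll tackle the four claims in order, using throughout Theorem~\ref{radial}(ii) to reduce membership in ${\bf D_f^m}(\cH)$ to positivity of $(id-\varphi_1)^{p_1}\cdots (id-\varphi_k)^{p_k}(I)$ for all $0\leq p_i\leq m_i$; here $\varphi_i:=\Phi_{f_i,T_i}$, and the maps $\varphi_i$ pairwise commute because the entries of distinct tuples $T_i,T_j$ commute. For (i), each $\varphi_i(I)\leq I$ makes $\varphi_i$ power bounded, pureness is assumed, and $\Delta_{\bf f,T}^{\bf m}(I)\geq 0$ is given, so Proposition~\ref{pure} supplies all the intermediate positivities at once.

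For (ii), the plan is to factor $\Delta_{\bf f,T}^{\bf m}(I)$ as a product of pairwise commuting positive operators on $\cH$. Under double commutation, each $\varphi_j^s(I)$---a WOT-limit of noncommutative polynomials in $\{T_{j,\beta},T_{j,\beta}^*\}$---commutes with every $T_{i,\alpha}$ and $T_{i,\alpha}^*$ for $i\neq j$. Consequently, if $Y$ is any polynomial in $\{T_{i,\alpha},T_{i,\alpha}^*\}_{i\neq 1}$, then $\varphi_1(Y)=Y\cdot\varphi_1(I)$, and iterating gives $\varphi_1^s(Y)=Y\cdot\varphi_1^s(I)$, so $(id-\varphi_1)^{m_1}(Y)=Y\cdot D_1$ with $D_1:=(id-\varphi_1)^{m_1}(I)$. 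Stripping off $(id-\varphi_i)^{m_i}$ one index at a time yields $\Delta_{\bf f,T}^{\bf m}(I)=D_1D_2\cdots D_k$; each $D_i\geq 0$ since $T_i\in{\bf D}_{f_i}^{m_i}(\cH)$, and the $D_i$ pairwise commute by double commutation, so the product is positive. The same argument restricted to any subset of indices handles the remaining positivity conditions.

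For (iii), I would prove the following inductive lemma: if $\Phi_1,\ldots,\Phi_k$ are commuting positive maps on $B(\cH)$, $B\geq 0$, and $\sum_i m_i\Phi_i(B)\leq B$, then $\prod_i(id-\Phi_i)^{m_i}(B)\geq 0$. Induct on $M=\sum m_i$; in the step, WLOG $m_1\geq 1$, and set $B':=B-\Phi_1(B)\geq 0$ (positivity from the hypothesis). A direct expansion, using the hypothesis together with positivity of the cross-terms $\Phi_j\Phi_1(B)$, verifies $\sum_i m_i'\Phi_i(B')\leq B'$ with $m_1':=m_1-1$ and $m_i':=m_i$ otherwise. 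By commutativity, $\prod(id-\Phi_i)^{m_i}(B)=\prod(id-\Phi_i)^{m_i'}(B')\geq 0$ by induction. Since the hypothesis passes to any nonempty $S\subseteq\{1,\ldots,k\}$ (as $\sum_{i\in S}m_i\Phi_i(I)\leq\sum_i m_i\Phi_i(I)\leq I$), the lemma with $B=I$ supplies all required defect positivities.

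For (iv), the direction $P:=(id-\varphi_1)\cdots(id-\varphi_k)(I)=0\Rightarrow\Delta_{\bf f,T}^{\bf m}(I)=0$ is immediate from Lemma~\ref{Delta-ineq} (since $0\leq\Delta_{\bf f,T}^{\bf m}(I)\leq P$). The converse is the main obstacle. Applying the commuting $\varphi^{\bf r}$ to the identity $\Delta_{\bf f,T}^{\bf m}(I)=0$ shows that the multi-sequence $C_{\bf n}:=\varphi^{\bf n}(I)$ is annihilated by $\prod_i(S_i-I)^{m_i}$, where $S_i$ is the shift $(S_iC)_{\bf n}=C_{{\bf n}+e_i}$. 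Since the polynomials $(x_i-1)^{m_i}$ are pairwise coprime in $\CC[x_1,\ldots,x_k]$, the Chinese Remainder Theorem gives $\ker\prod(S_i-I)^{m_i}=\sum_i\ker(S_i-I)^{m_i}$, each summand being obtained by applying a shift-polynomial to $C$ and thereby preserving the bound $0\leq C_{\bf n}\leq I$. The bounded part of $\ker(S_i-I)^{m_i}$ consists of sequences independent of $n_i$ (a bounded polynomial in $n_i$ is constant), so by polarization one writes $C_{\bf n}=\sum_{i=1}^k G_i({\bf n}_{-i})$ for bounded operator-valued $G_i$ of ${\bf n}_{-i}:=(n_1,\ldots,\widehat{n_i},\ldots,n_k)$. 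Then $P=\sum_{F\subseteq\{1,\ldots,k\}}(-1)^{|F|}C_{{\bf e}_F}$ vanishes identically, because for each fixed $i$ the pairs $F=F'$ and $F=F'\cup\{i\}$ (with $F'\subseteq\{1,\ldots,k\}\setminus\{i\}$) contribute $G_i({\bf e}_{F'})$ with opposite signs. The naive alternative---a fixed-point argument on $R_i:=\Delta_{\bf f,T}^{{\bf m}-e_i}(I)$, which satisfies $\varphi_i(R_i)=R_i$---would force $R_i=0$ only under a purity hypothesis on $\varphi_i$ that is not available here, so the recurrence/boundedness argument is essential; it uses only the mild bound $C_{\bf n}\leq I$.
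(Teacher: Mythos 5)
Parts (i)--(iii) are correct and essentially the paper's own arguments: (i) is exactly Proposition \ref{pure} applied to $\Phi=(\Phi_{f_1,T_1},\ldots,\Phi_{f_k,T_k})$; (ii) is the factorization of ${\bf \Delta_{f,T}^p}(I)$ into the commuting positive factors $(id-\Phi_{f_i,T_i})^{p_i}(I)$, which you justify in more detail than the paper does; and your induction on $\sum_i m_i$ in (iii), preserving the subordination $\sum_i m_i'\Phi_i(B')\le B'$, is a correct reorganization of the paper's chain $I\ge Y_{i_n}\ge (id-\Phi_{f_{i_1},T_{i_1}}-\cdots-\Phi_{f_{i_n},T_{i_n}})(I)\ge 0$. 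The easy direction of (iv) is also fine.

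The converse in (iv) has a genuine gap. The ideals $\bigl((x_i-1)^{m_i}\bigr)$ are \emph{not} pairwise comaximal in $\CC[x_1,\ldots,x_k]$ (they are all contained in the maximal ideal of the point $(1,\ldots,1)$), so the Chinese Remainder Theorem does not apply: there is no identity $1=\sum_i p_iq_i$ with $q_i$ divisible by $\prod_{j\ne i}(x_j-1)^{m_j}$, and hence no way to realize the summands of $\ker\prod_i(S_i-I)^{m_i}=\sum_i\ker(S_i-I)^{m_i}$ by ``applying a shift-polynomial to $C$.'' The kernel identity itself happens to be true (one solves $\Delta_1^{m_1}B=\Delta_1^{m_1}C$ inside $\ker\Delta_2^{m_2}\cdots\Delta_k^{m_k}$ by taking antidifferences of coefficient sequences), but the summands so produced have no reason to inherit the bound $0\le C_{\bf n}\le I$, and boundedness is precisely what you need for the step ``a bounded polynomial in $n_i$ is constant.'' Thus the key claim $C_{\bf n}=\sum_i G_i({\bf n}_{-i})$ with \emph{bounded} $G_i$ is unsupported; proving that a bounded solution admits a bounded decomposition requires a separate argument that uses the bound to kill the nonconstant parts, and that argument is essentially a reproof of the one-variable lemma the paper quotes. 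Your stated reason for rejecting the fixed-point route is also incorrect: Lemma 6.2 of [Po-Berezin], which the paper uses, needs only power-boundedness, not purity. Indeed, if $(id-\varphi)^{\gamma}(D)=0$ with $\gamma\ge 2$, then $G:=(id-\varphi)^{\gamma-1}(D)$ satisfies $\varphi(G)=G$, so
$$(p+1)\left<Gh,h\right>=\sum_{j=0}^{p}\left<\varphi^j\bigl((id-\varphi)(H)\bigr)h,h\right>=\left<\bigl(H-\varphi^{p+1}(H)\bigr)h,h\right>,\qquad H:=(id-\varphi)^{\gamma-2}(D),$$
is bounded in $p$, forcing $G=0$; iterating down to $\gamma=1$ and then over the $k$ factors, as the paper does, closes the argument with no purity assumption.
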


\begin{proof} Applying Proposition \ref{positive-maps} and Proposition \ref{pure}, when $\Phi=(\Phi_{f_1, T_1},\ldots, \Phi_{f_k,T_k})$, we deduce part (i).
To prove part (ii), note that  since  $T_i\in {\bf D}_{f_i}^{m_i}(\cH)$, we have
$(id-\Phi_{f_i, T_i})^{p_i}(I)\geq 0$ for any $p_i\in \{0,1,\ldots,m_i\}$. Using the fact that ${\bf T}$ is doubly commuting, we deduce that
$$
{\bf \Delta_{f,T}^p}(I)=(id-\Phi_{f_1, T_1})^{p_1}(I)\cdots (id-\Phi_{f_k, T_k})^{p_k}(I)\geq 0
$$
for any ${\bf p}\in \ZZ_+^k$ with ${\bf p}\leq {\bf m}$, which shows that   ${\bf T}\in {\bf D_f^m}(\cH)$.

Now, we prove part (iii). Let $p:=m_1+\cdots +m_k$ and   set $i_j:=1$ if $1\leq j\leq m_1$, $i_j:=2$ if $m_1+1\leq j\leq m_1+m_2,\ldots$, and $i_j:=k$ if $m_1+\cdots +m_{k-1}+1\leq j\leq m_1+\cdots +m_k$. Due to Theorem \ref{radial}, to prove (iii) is equivalent to showing that if $\sum_{j=1}^p \Phi_{f_{i_j}, T_{i_j}}(I)\leq I$, then
$$
(id-\Phi_{f_{i_1}, T_{i_1}})\cdots (id-\Phi_{f_{i_p}, T_{i_p}})(I)\geq 0.
$$
Set $Y_{i_0}=I$ and $Y_{i_j}:=(id -\Phi_{f_{i_j}, T_{i_j}})(Y_{i_{j-1}})$ if $j\in \{1,\ldots, p\}$. We proceed inductively.
Note that
$I=Y_{i_0}\geq Y_{i_1}=(id -\Phi_{f_{i_1}, T_{i_1}})(I)\geq 0$.   Let $n<p$ and assume that
$$
I\geq Y_{i_n}\geq (id-\Phi_{f_{i_1}, T_{i_1}}-\cdots -\Phi_{f_{i_n}, T_{i_n}})(I)\geq 0.
$$
Hence, we deduce that
\begin{equation*}
\begin{split}
I&\geq Y_{i_n}\geq Y_{i_{n+1}}= Y_{i_n}-\Phi_{f_{i_{n+1}}, T_{i_{n+1}}}(Y_{i_n})\\
&\geq (id-\Phi_{f_{i_1}, T_{i_1}}-\cdots -\Phi_{f_{i_n}, T_{i_n}})(I)
-\Phi_{f_{i_{n+1}},T_{i_{n+1}}}(I),
\end{split}
\end{equation*}
which proves our assertion.

Now, we prove part (iv).
If ${\bf T}=({ T}_1,\ldots, { T}_k)\in {\bf D_f^m}(\cH)$, Theorem \ref{radial} implies that
   $$
   (id-\Phi_{f_1,T_1})^{p_1}\cdots (id-\Phi_{f_k,T_k})^{p_k}(I)\geq 0
    $$
for any $p_i\in \{0,1,\ldots, m_i\}$ and
       $i\in \{1,\ldots,k\}$. Due to   Lemma 6.2 from  \cite{Po-Berezin}, if $\varphi:B(\cH)\to B(\cH)$ is a power bounded positive linear map such that $D\in B(\cH)$ is a positive operator with $(id-\varphi)(D)\geq 0$, and $\gamma\geq 1$,  then
$$
(id-\varphi)^\gamma(D)=0 \quad \text{if and only if} \quad (id-\varphi)(D)=0.
$$
Applying this result  in our setting when
$\varphi=\Phi_{f_1,T_1}$, $\gamma=m_1$, and $D= (id-\Phi_{f_2,T_2})^{m_2}\cdots (id-\Phi_{f_k,T_k})^{m_k}(I)\geq 0$, we deduce that   relation ${\bf \Delta_{f,T}^m}(I)= 0$ is equivalent to
$(id-\Phi_{f_1,T_1})(D)=0$. Due to the commutativity of $\Phi_{f_1,T_1},\cdots, \Phi_{f_k,T_k}$, the latter equality is equivalent to
$(id-\Phi_{f_2,T_2})^{m_2}(\Lambda)=0$, where
 $\Lambda:=(id-\Phi_{f_3,T_3})^{m_3}\cdots (id-\Phi_{f_k,T_k})^{m_k}(id-\Phi_{f_1,T_1})(I)\geq 0$.
 Applying again the result mentioned above,  we deduce that the latter equality is equivalent to $(id-\Phi_{f_2,T_2})(\Lambda)=0$. Continuing this process, we can complete the proof of part (iv).
\end{proof}

\bigskip

\section{ Noncommutative  Berezin transforms and universal models}

Noncommutative Berezin transforms are  used  to
  show  that each   polydomain  ${\bf D_f^m}(\cH)$ has a universal model ${\bf W}=\{{\bf W}_{i,j}\}$ consisting of weighted shifts acting on a tensor product of full Fock spaces.

 Let $H_{n_i}$ be
an $n_i$-dimensional complex  Hilbert space with orthonormal basis
$e_1^i,\dots,e_{n_i}^i$.
  We consider the full Fock space  of $H_{n_i}$ defined by
$$F^2(H_{n_i}):=\bigoplus_{p\geq 0} H_{n_i}^{\otimes p},$$
where $H_{n_i}^{\otimes 0}:=\CC 1$ and $H_{n_i}^{\otimes p}$ is the
(Hilbert) tensor product of $p$ copies of $H_{n_i}$. Set $e_\alpha^i :=
e^i_{j_1}\otimes \cdots \otimes e^i_{j_p}$ if
$\alpha=g^i_{j_1}\cdots g^i_{j_p}\in \FF_{n_i}^+$
 and $e^i_{g^i_0}:= 1\in \CC$.
It is  clear that $\{e^i_\alpha:\alpha\in\FF_{n_i}^+\}$ is an orthonormal
basis of $F^2(H_{n_i})$.

Let $m_i, n_i\in \NN:=\{1,2,\ldots\}$, $i\in\{1,\ldots,k\}$, and  $j\in\{1,\ldots,n_i\}$. We  define
 the {\it weighted left creation  operators} $W_{i,j}:F^2(H_{n_i})\to
F^2(H_{n_i})$,   associated with the abstract noncommutative
domain  ${\bf D}_{f_i}^{m_i} $    by setting
\begin{equation}\label{Wij}
W_{i,j} e_\alpha^i:=\frac {\sqrt{b_{i,\alpha}^{(m_i)}}}{\sqrt{b_{i,g_j
\alpha}^{(m_i)}}} e^i_{g_j \alpha}, \qquad \alpha\in \FF_{n_i}^+,
\end{equation}
where
\begin{equation} \label{b-coeff}
  b_{i,g_0}^{(m_i)}:=1\quad \text{ and } \quad
 b_{i,\alpha}^{(m_i)}:= \sum_{p=1}^{|\alpha|}
\sum_{{\gamma_1,\ldots,\gamma_p\in \FF_{n_i}^+}\atop{{\gamma_1\cdots \gamma_p=\alpha }\atop {|\gamma_1|\geq
1,\ldots, |\gamma_p|\geq 1}}} a_{i,\gamma_1}\cdots a_{i,\gamma_p}
\left(\begin{matrix} p+m_i-1\\m_i-1
\end{matrix}\right)
\end{equation}
for all  $ \alpha\in \FF_{n_i}^+$ with  $|\alpha|\geq 1$.

\begin{lemma} \label{univ-model}
For each $i\in \{1,\ldots, k\}$ and $j\in \{1,\ldots, n_i\}$, we
define the operator ${\bf W}_{i,j}$ acting on the tensor Hilbert space
$F^2(H_{n_1})\otimes\cdots\otimes F^2(H_{n_k})$ by setting
$${\bf W}_{i,j}:=\underbrace{I\otimes\cdots\otimes I}_{\text{${i-1}$
times}}\otimes W_{i,j}\otimes \underbrace{I\otimes\cdots\otimes
I}_{\text{${k-i}$ times}},
$$
where the operators $W_{i,j}$ are defined by relation \eqref{Wij}.
 If  ${\bf W}_i:=({\bf W}_{i,1},\ldots,{\bf W}_{i,n_i})$, then the following statements hold.
 \begin{enumerate}

\item[(i)] $(id-\Phi_{f_1,{\bf W}_1})^{m_1}\cdots (id-\Phi_{f_k,{\bf W}_k})^{m_k}(I)={\bf P}_\CC$, where ${\bf P}_\CC$ is the
 orthogonal projection from $\otimes_{i=1}^k F^2(H_{n_i})$ onto $\CC 1\subset \otimes_{i=1}^k F^2(H_{n_i})$, where $\CC 1$ is identified with $\CC 1\otimes\cdots \otimes \CC 1$.
     \item[(ii)]  ${\bf W}:=({\bf W}_1,\ldots, {\bf W}_k)$ is  a pure $k$-tuple  in the
noncommutative polydomain $ {\bf D_f^m}(\otimes_{i=1}^kF^2(H_{n_i}))$.
 \end{enumerate}
 \end{lemma}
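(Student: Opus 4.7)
My plan rests on two ingredients already available: (a) the single-variable universal model from \cite{Po-Berezin}, \cite{Po-similarity-domains}, \cite{Po-classification}, which shows that the weighted shifts $W_{i,j}$ on $F^2(H_{n_i})$ with weights $b_{i,\alpha}^{(m_i)}$ satisfy $(id-\Phi_{f_i,W_i})^{m_i}(I)=P_\CC^{(i)}$ (the projection onto $\CC 1\subset F^2(H_{n_i})$), belong to ${\bf D}_{f_i}^{m_i}(F^2(H_{n_i}))$, and are pure; and (b) the tensor-product definition of ${\bf W}_{i,j}$, from which the key factorization identity
$$\Phi_{f_i,{\bf W}_i}(A_1\otimes\cdots\otimes A_k)=A_1\otimes\cdots\otimes\Phi_{f_i,W_i}(A_i)\otimes\cdots\otimes A_k$$
follows by direct computation, since operators in distinct slots commute.

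For part (i), I would iterate this factorization to obtain $(id-\Phi_{f_i,{\bf W}_i})^{m_i}(I)=I\otimes\cdots\otimes(id-\Phi_{f_i,W_i})^{m_i}(I)\otimes\cdots\otimes I=I\otimes\cdots\otimes P_\CC^{(i)}\otimes\cdots\otimes I$, invoking the single-variable identity in the $i$-th slot. These ampliations act on disjoint tensor factors, so they commute pairwise, and composing over $i=1,\ldots,k$ yields the tensor product $P_\CC^{(1)}\otimes\cdots\otimes P_\CC^{(k)}={\bf P}_\CC$, as required.

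For part (ii), the entries of ${\bf W}_p$ and ${\bf W}_q$ commute for $p\neq q$ by construction. The bounds $\Phi_{f_i,{\bf W}_i}(I)\leq I$ and ${\bf \Delta_{f,W}^p}(I)\geq 0$ for all ${\bf p}\leq{\bf m}$ both follow from the factorization formula: each resulting operator is a tensor product of ampliations of operators known from the single-variable case to be positive, respectively dominated by $I$, and such tensor products remain positive, respectively dominated by $I$, on $\otimes_{i=1}^k F^2(H_{n_i})$. Combined with Theorem \ref{radial}, this places ${\bf W}$ in ${\bf D_f^m}(\otimes_{i=1}^k F^2(H_{n_i}))$. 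Purity then follows from Proposition \ref{pure2} applied to each coordinate: I must show $\Phi_{f_i,{\bf W}_i}^p(I)\to 0$ SOT as $p\to\infty$; via the factorization $\Phi_{f_i,{\bf W}_i}^p(I)=I\otimes\cdots\otimes\Phi_{f_i,W_i}^p(I)\otimes\cdots\otimes I$, this reduces to SOT convergence of the single-variable pure shift together with a standard $\varepsilon/3$ approximation on finite linear combinations of simple tensors, using the uniform norm bound $\|\Phi_{f_i,{\bf W}_i}^p(I)\|\leq 1$.

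The only real obstacle is unwinding the tensor-factorization identity for $\Phi_{f_i,{\bf W}_i}$ and its powers, but this is mere bookkeeping once one notes that operators in different slots commute. After that, the entire lemma is a clean transfer of the single-variable universal model results through the tensor product.
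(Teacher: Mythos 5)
Your proof is correct and follows essentially the same route as the paper: the single-variable identity $(id-\Phi_{f_i,W_i})^{m_i}(I)=P_\CC$ lifted through the tensor factorization for part (i), and $\Phi_{f_i,{\bf W}_i}^p(I)\to 0$ in SOT together with Proposition \ref{pure2} for purity. The only cosmetic difference is that you verify the positivity of all defect operators ${\bf \Delta_{f,W}^p}(I)$ directly as tensor products of positive factors, whereas the paper deduces them from the top defect ${\bf \Delta_{f,W}^m}(I)={\bf P}_\CC$ via Proposition \ref{pure}; both steps are valid.
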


 \begin{proof} Note that, due to relation \eqref{Wij},  for each $i\in\{1,\ldots, k\}$ and $\beta_i\in \FF_{n_i}^+$, we have

\begin{equation*}
  W_{i,\beta_i}W_{i,\beta_i}^*
e^i_{\alpha_i} =\begin{cases} \frac
{ {b_{i,\gamma_i}^{(m_i)}}}{ {b_{i,\alpha_i}^{(m_i)}}}\,e^i_{\alpha_i}& \text{ if
}
\alpha_i=\beta_i\gamma_i, \ \, \gamma_i\in \FF_{n_i}^+ \\
0& \text{ otherwise. }
\end{cases}
\end{equation*}
As in Lemma 1.2 from \cite{Po-Berezin}, straightforward computations reveal that   $(id-\Phi_{f_i,{\bf W}_i})^{m_i}(I)=I\otimes\cdots \otimes I\otimes P_\CC\otimes I\otimes \cdots \otimes I$, where $P_\CC$ is  on the $i^{\text{\rm th}}$ position  and denotes the
 orthogonal projection from $F^2(H_{n_i})$ onto $\CC 1\subset F^2(H_{n_i})$.
 Since  the $k$-tuple ${\bf W}:=({\bf W}_1,\ldots, {\bf W}_k)$ is   doubly commuting, we deduce that
$$(id-\Phi_{f_1,{\bf W}_1})^{m_1}\cdots (id-\Phi_{f_k,{\bf W}_k})^{m_k}(I)=(id-\Phi_{f_1,{\bf W}_1})^{m_1}(I)\cdots (id-\Phi_{f_k,{\bf W}_k})^{m_k}(I)={\bf P}_\CC.$$
which proves part (i). To prove part (ii), note first that  relation \eqref{Wij} implies $\Phi_{f_i,{ W}_i}^p(I) e_\alpha^i=0$ if $p>|\alpha|$, $\alpha\in \FF_{n_i}^+$. Since $\|\Phi_{f_i,{\bf W}_i}^p(I)\|\leq 1$ for any $p\in \NN$, we deduce that
 $\lim\limits_{p\to\infty} \Phi^p_{f_i,{\bf W}_i}(I)=0$ in the strong operator
 topology. Taking into account that ${\bf \Delta_{f,W}^m}(I)={\bf P}_\CC$, we can use Proposition \ref{pure} to conclude that  ${\bf W}$ is in  the
noncommutative polydomain $ {\bf D_f^m}(\otimes_{i=1}^kF^2(H_{n_i}))$. Moreover, due to Proposition \ref{pure2}, ${\bf W}$ is a pure $k$-tuple in $ {\bf D_f^m}(\otimes_{i=1}^kF^2(H_{n_i}))$.
\end{proof}

 We mention that one  can   define the {\it weighted right
creation operators} $\Lambda_{i,j}:F^2(H_{n_i})\to F^2(H_{n_i})$ by setting
\begin{equation*}
\Lambda_{i,j} e_\alpha^i:=\frac {\sqrt{b_{i,\alpha}^{(m_i)}}}{\sqrt{b_{i,
\alpha g_j}^{(m_i)}}} e^i_{\alpha g_j}, \qquad \alpha\in \FF_{n_i}^+.
\end{equation*}
As in Lemma \ref{univ-model}, it turns out that
  ${\bf \Lambda}:=({\bf \Lambda}_1,\ldots, {\bf \Lambda}_k)$ is  a pure $k$-tuple
   in the
noncommutative polydomain $ {\bf D_{\widetilde f}^m}(\otimes_{i=1}^kF^2(H_{n_i}))$,
 where ${\bf \widetilde f}=(\widetilde f_1,\ldots, \widetilde f_k)$ with
  $\widetilde{f_i}:=\sum_{|\alpha|\geq 1} a_{i,\widetilde \alpha} Z_\alpha$ and
$\widetilde \alpha=g_{j_p}^i\cdots g_{j_1}^i$ denotes the reverse of
$\alpha=g_{j_1}^i\cdots g_{j_p}^i\in \FF_{n_i}^+$.

 Throughout this paper, the $k$-tuple ${\bf W}:=({\bf W}_1,\ldots, {\bf W}_k)$
 of Lemma \ref{univ-model} will be called the {\it universal model} associated
  with the abstract noncommutative
  polydomain ${\bf D_f^m}$. We introduce the {\it noncommutative Berezin kernel} associated with any element
   ${\bf T}=\{T_{i,j}\}$ in the noncommutative polydomain ${\bf D_f^m}(\cH)$ as the operator
   $${\bf K_{f,T}}: \cH \to F^2(H_{n_1})\otimes \cdots \otimes  F^2(H_{n_k}) \otimes  \overline{{\bf \Delta_{f,T}^m}(I) (\cH)}$$
   defined by
   $$
   {\bf K_{f,T}}h:=\sum_{\beta_i\in \FF_{n_i}^+, i=1,\ldots,k}
   \sqrt{b_{1,\beta_1}^{(m_1)}}\cdots \sqrt{b_{k,\beta_k}^{(m_k)}}
   e^1_{\beta_1}\otimes \cdots \otimes  e^k_{\beta_k}\otimes {\bf \Delta_{f,T}^m}(I)^{1/2} T_{1,\beta_1}^*\cdots T_{k,\beta_k}^*h,
   $$
where the defect operator is defined by
$$
{\bf \Delta_{f,T}^m}(I)  :=(id-\Phi_{f_1,T_1})^{m_1}\cdots (id-\Phi_{f_k,T_k})^{m_k}(I),
$$
and the coefficients  $b_{1,\beta_1}^{(m_1)}, \ldots, b_{k,\beta_k}^{(m_k)}$
are given by relation \eqref{b-coeff}. The fact that ${\bf K_{f,T}}$ is a well-defined bounded operator will be proved  in the next theorem.

\begin{theorem} \label{Berezin-prop}
The noncommutative Berezin kernel associated with a $k$-tuple
${\bf T}=({ T}_1,\ldots, { T}_k)$ in the noncommutative polydomain ${\bf D_f^m}(\cH)$ has the following properties.
\begin{enumerate}
\item[(i)] ${\bf K_{f,T}}$ is a contraction  and
$$
{\bf K_{f,T}^*}{\bf K_{f,T}}=
\lim_{q_k\to\infty}\ldots \lim_{q_1\to\infty}  (id-\Phi_{f_k,T_k}^{q_k})\cdots (id-\Phi_{f_1,T_1}^{q_1})(I),
$$
where the limits are in the weak  operator topology.
\item[(ii)]  If ${\bf T}$ is pure,   then
$${\bf K_{f,T}^*}{\bf K_{f,T}}=I_\cH. $$
\item[(iii)]  For any $i\in \{1,\ldots, k\}$ and $j\in \{1,\ldots, n_i\}$,  $${\bf K_{f,T}} { T}^*_{i,j}= ({\bf W}_{i,j}^*\otimes I)  {\bf K_{f,T}}.
    $$
\end{enumerate}
\end{theorem}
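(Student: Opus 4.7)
The plan is to treat the three parts in order (i)$\,\to\,$(ii)$\,\to\,$(iii), with (i) doing the real work and (ii) being a short consequence.

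\textbf{Part (i).} I would first expand $\|{\bf K_{f,T}}h\|^2$ using the fact that $\{e^1_{\beta_1}\otimes\cdots\otimes e^k_{\beta_k}:\beta_i\in\FF_{n_i}^+\}$ is an orthonormal basis of $\otimes_{i=1}^k F^2(H_{n_i})$, obtaining
$$
\|{\bf K_{f,T}}h\|^2=\sum_{\beta_1,\ldots,\beta_k} b_{1,\beta_1}^{(m_1)}\cdots b_{k,\beta_k}^{(m_k)}\,\big\langle {\bf \Delta_{f,T}^m}(I)\, T_{k,\beta_k}^*\!\cdots T_{1,\beta_1}^*h,\, T_{k,\beta_k}^*\!\cdots T_{1,\beta_1}^*h\big\rangle.
$$
Because the tuples $T_p$ and $T_q$ commute with each other for $p\neq q$ (and hence so do their adjoints), I can rearrange the sum so that after summing over $\beta_i$ it amounts to applying the ``one-variable'' operation $Y\mapsto \sum_{\beta_i} b_{i,\beta_i}^{(m_i)} T_{i,\beta_i} Y T_{i,\beta_i}^*$ iteratively. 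The key one-variable identity, established in \cite{Po-Berezin} and valid here thanks to Theorem \ref{radial} (which makes the radial results of \cite{Po-Berezin} apply in full generality), states that for any self-adjoint $Z$ with $(id-\Phi_{f_i,T_i})^{m_i}(Z)\geq 0$,
$$
\sum_{\beta_i\in \FF_{n_i}^+} b_{i,\beta_i}^{(m_i)}\, T_{i,\beta_i}(id-\Phi_{f_i,T_i})^{m_i}(Z)\,T_{i,\beta_i}^*=\text{WOT-}\lim_{q_i\to\infty}(id-\Phi_{f_i,T_i}^{q_i})(Z).
$$
This follows from the definition \eqref{b-coeff} via the power-series identity
$(id-\Phi_{f_i,T_i})^{-m_i}=\sum_{p\geq 0}\binom{p+m_i-1}{m_i-1}\Phi_{f_i,T_i}^p$
together with a standard telescoping/partial-sum argument.

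\textbf{Iteration.} Starting from the innermost sum (in $\beta_k$) and applying the above identity with $Z={\bf \Delta}_{{\bf f},{\bf T}}^{(m_1,\ldots,m_{k-1},0)}(I)\geq 0$ (which is nonnegative by Theorem \ref{radial} and commutes through, since $\Phi_{f_k,T_k}$ commutes with $\Phi_{f_i,T_i}$ for $i<k$), the inner sum collapses to $\lim_{q_k}(id-\Phi_{f_k,T_k}^{q_k})$ applied to $(id-\Phi_{f_1,T_1})^{m_1}\!\cdots(id-\Phi_{f_{k-1},T_{k-1}})^{m_{k-1}}(I)$. Iterating outward (in $\beta_{k-1},\ldots,\beta_1$), each stage replaces one factor $(id-\Phi_{f_i,T_i})^{m_i}$ by a limit $\lim_{q_i}(id-\Phi_{f_i,T_i}^{q_i})$. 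Since each intermediate self-adjoint operator is nonnegative (by Theorem \ref{radial}) and bounded above by $I$, all limits exist in the WOT, and we obtain
$$
{\bf K_{f,T}^*}{\bf K_{f,T}}=\lim_{q_k\to\infty}\cdots\lim_{q_1\to\infty}(id-\Phi_{f_k,T_k}^{q_k})\cdots(id-\Phi_{f_1,T_1}^{q_1})(I)\leq I,
$$
which simultaneously proves the formula and contractivity.

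\textbf{Part (ii).} Assume ${\bf T}$ is pure. By Proposition \ref{pure2}, $\Phi_{f_i,T_i}^{q_i}(I)\to 0$ strongly for each $i$. Combining this with $0\leq \Phi_{f_i,T_i}^{q_i}\big(\lim_{q_1}\cdots \lim_{q_{i-1}}(\cdots)\big)\leq \|\!\cdot\!\| \Phi_{f_i,T_i}^{q_i}(I)$, each successive limit on the right-hand side of (i) collapses one factor to the identity, yielding ${\bf K_{f,T}^*}{\bf K_{f,T}}=I_\cH$.

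\textbf{Part (iii).} This is a direct intertwining computation. From the definition of $W_{i,j}$ in \eqref{Wij}, one reads off
$$
W_{i,j}^* e^i_\alpha=\begin{cases}\dfrac{\sqrt{b_{i,\gamma}^{(m_i)}}}{\sqrt{b_{i,g_j\gamma}^{(m_i)}}}\,e^i_{\gamma}& \text{if } \alpha=g_j\gamma,\\[4pt] 0 & \text{otherwise.}\end{cases}
$$
Applying ${\bf W}_{i,j}^*\otimes I$ to the series defining ${\bf K_{f,T}}h$ therefore picks out only the terms with $\beta_i=g_j\gamma_i$, cancels the square-root coefficient ratio, and uses $T_{i,g_j\gamma_i}^*=T_{i,\gamma_i}^*T_{i,j}^*$. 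The commutativity of $T_{i,j}$ with every entry of the tuples $T_{i'}$ ($i'\neq i$) lets one pull $T_{i,j}^*$ all the way to the right, producing precisely ${\bf K_{f,T}}T_{i,j}^*h$ after relabeling $\gamma_i$ as $\beta_i$.

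The main obstacle is Part (i): the technical content is the one-variable finite-sum/telescoping identity relating the weights $b_{i,\beta_i}^{(m_i)}$ to $(id-\Phi_{f_i,T_i}^{q_i})$ (together with checking that the iterated WOT limits are well-defined); once that is in hand, the multivariable case follows cleanly from commutativity of the maps $\Phi_{f_i,T_i}$ and the radial/positivity structure guaranteed by Theorem \ref{radial}.
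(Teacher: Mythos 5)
Your proposal is correct and follows essentially the same route as the paper: expanding $\|{\bf K_{f,T}}h\|^2$ over the orthonormal basis, reducing to iterated applications of the weighted summation operators $\Psi_i(Y)=\sum_{\beta_i}b_{i,\beta_i}^{(m_i)}T_{i,\beta_i}YT_{i,\beta_i}^*$, and identifying $\Psi_i$ applied to the defect operator with the telescoped limit $\lim_{q_i}(id-\Phi_{f_i,T_i}^{q_i})$ — which is exactly what the paper establishes via its $D_i^{(j)}$ induction and combinatorial rearrangement, with the same use of Theorem \ref{radial} for the chain of positivity conditions and the same direct intertwining computation for part (iii). The only caveat is that your one-variable identity as stated needs positivity of all intermediate powers $(id-\Phi_{f_i,T_i})^{j}(Z)$ for $j\leq m_i$, not just the top one, but since you invoke Theorem \ref{radial} for precisely this in the application, there is no gap.
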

\begin{proof}
 Let ${\bf T}=({ T}_1,\ldots, { T}_k)$ be in the noncommutative polydomain ${\bf D_f^m}(\cH)$ and let $X\in B(\cH)$ be a positive operator such that
   $$
  {\bf \Delta_{f,T}^p}(X)  := (id-\Phi_{f_1,T_1})^{p_1}\cdots (id-\Phi_{f_k,T_k})^{p_k}(X)\geq 0
    $$
 for any  ${\bf p}:=(p_1,\ldots, p_k)\in \ZZ_+^k$ with $p_i\in \{0,1,\ldots, m_i\}$ and
       $i\in \{1,\ldots,k\}$.
Fix $i\in \{1,\ldots,k\}$ and assume that $1\leq p_i\leq m_i$.  Then, due to the commutativity of  $\Phi_{f_1,T_1},\ldots, \Phi_{f_k,T_k}$, we have
$$
(id-\Phi_{f_i,T_i}){\bf \Delta_{f,T}^{p-e_i}}(X) ={\bf \Delta_{f,T}^{p}}(X)\geq 0,
$$
where $\{{\bf e}_i\}_{i=1}^k$ is the canonical basis in $\CC^k$. Hence, and using Lemma \ref{Delta-ineq}, we have
$$
0\leq \Phi_{f_i,T_i}({\bf \Delta_{f,T}^{p-e_i}}(X))\leq {\bf \Delta_{f,T}^{p-e_i}}(X)\leq X,
$$
which proves that $\{\Phi_{f_i,T_i}^s({\bf \Delta_{f,T}^{p-e_i}}(X))\}_{s=0}^\infty$ is a decreasing sequence of positive operators which is convergent in the weak operator topology.  Since $\Phi_{f_i,T_i}$  is WOT-continuous on bounded sets and  $\Phi_{f_1,T_1},\ldots, \Phi_{f_k,T_k}$ are commuting,
  we deduce that
\begin{equation}
\label{wot}
\lim_{s\to\infty}\Phi_{f_i,T_i}^s({\bf \Delta_{f,T}^{p-e_i}}(X))
={\bf \Delta_{f,T}^{p-e_i}}\left(\lim\limits_{s\to\infty}\Phi_{f_i,T_i}^s(X)\right).
\end{equation}
 Then we have
 \begin{equation*}
 \begin{split}
 D_i^{(1)}({\bf \Delta_{f,T}^p}(X))&:=\sum_{s=0}^\infty \Phi_{f_i,T_i}^{s}({\bf \Delta_{f,T}^p}(X))
 =\sum_{s=0}^\infty \Phi_{f_i,T_i}^{s}\left[ {\bf \Delta_{f,T}^{p-e_i}}(X)
 -\Phi_{f_i, T_i}({\bf \Delta_{f,T}^{p-e_i}}(X))\right]\\
 &={\bf \Delta_{f,T}^{p-e_i}}(X)- \lim_{q_1\to \infty}
 \Phi_{f_i,T_i}^{q_1}({\bf \Delta_{f,T}^{p-e_i}}(X))\leq {\bf \Delta_{f,T}^{p-e_i}}(X)\leq X.
 \end{split}
 \end{equation*}
 Due to relation \eqref{wot}, we deduce that
 $$
 0\leq D_i^{(1)}({\bf \Delta_{f,T}^p}(X))={\bf \Delta_{f,T}^{p-e_i}}\left(X- \lim_{q_1\to \infty}
 \Phi_{f_i,T_i}^{q_1}(X)\right), \qquad {\bf p}\leq {\bf m}, 1\leq p_i.
 $$
Define
$D_{i}^{(j)}({\bf \Delta_{f,T}^p}(X))
:=\sum_{s=0}^\infty \Phi_{f_i, T_i}^s(D_{i}^{(j-1)}({\bf \Delta_{f,T}^p}(X)))$,
 where $j=2,\ldots p_i$.
Inductively, we can prove that
\begin{equation}
\label{induct}
0\leq D_{i}^{(j)}({\bf \Delta_{f,T}^p}(X))={\bf \Delta_{f,T}^{ p-{\it j}e_i}}
\left(X- \lim_{q_j\to \infty}
 \Phi_{f_i,T_i}^{q_j}(X)\right)\leq {\bf \Delta_{f,T}^{ p-{\it j}e_i}}(X)\leq X,
 \qquad j\leq p_i.
\end{equation}
Indeed, if $j\leq p_i-1$ and  setting $Y:=X- \lim\limits_{q_j\to \infty}
 \Phi_{f_i,T_i}^{q_j}(X)$, relation
\eqref{induct} implies
\begin{equation*}
\begin{split}
D_{i}^{(j+1)}({\bf \Delta_{f,T}^p}(X))
&=\lim_{q_{j+1}\to \infty}\sum_{s=0}^{q_{j+1}} \Phi_{f_i, T_i}^s\left[{\bf \Delta_{f,T}^{ p-{\it j}e_i}}(Y)\right] \\
&=
{\bf \Delta_{f,T}^{ p-\text{$(j+ 1)$}e_i}}\left[
Y- \lim_{q_{j+1}\to \infty} \Phi_{f_i,T_i}^{q_{j+1}}(Y)\right]\\
&=
{\bf \Delta_{f,T}^{ p-\text{$(j+ 1)$}e_i}}(Y)-{\bf \Delta_{f,T}^{ p-\text{$(j+ 1)$}e_i}}\left( \lim_{q_{j+1}\to \infty} \Phi_{f_i,T_i}^{q_{j+1}}(Y) \right).
\end{split}
\end{equation*}
On the other hand, we have
 \begin{equation*}
 \begin{split}
 \lim_{q_{j+1}\to \infty} \Phi_{f_i,T_i}^{q_{j+1}}(Y)
 &=\lim_{q_{j+1}\to \infty} \Phi_{f_i,T_i}^{q_{j+1}}
 \left(X- \lim\limits_{q_j\to \infty}
 \Phi_{f_i,T_i}^{q_j}(X)\right)\\
 &=
 \lim_{q_{j+1}\to \infty} \Phi_{f_i,T_i}^{q_{j+1}}(X)-
 \lim_{q_{j+1}\to \infty}\lim_{q_{j}\to \infty} \Phi_{f_i,T_i}^{q_{j+1}}\left( \Phi_{f_i,T_i}^{q_{j}}(X)\right)=0.
 \end{split}
 \end{equation*}
Combining these results, we obtain
$$
D_{i}^{(j+1)}({\bf \Delta_{f,T}^p}(X)) =
{\bf \Delta_{f,T}^{ p-\text{$(j+ 1)$}e_i}}\left( X- \lim\limits_{q_j\to \infty}
 \Phi_{f_i,T_i}^{q_j}(X)\right)\leq {\bf \Delta_{f,T}^{ p-\text{$(j+ 1)$}e_i}}(X)\leq X,
$$
for any  ${\bf p}:=(p_1,\ldots, p_k)\in \ZZ_+^k$ with ${\bf p}\leq {\bf m}$ and  $p_i\geq 1$,
which proves our assertion. When $j=p_i$, relation \eqref{induct} becomes
\begin{equation*}
\label{pi}
0\leq D_{i}^{(p_i)}({\bf \Delta_{f,T}^p}(X)) =
{\bf \Delta_{f,T}^{ p-\text{$p_i$}e_i}}\left( X- \lim\limits_{q\to \infty}
 \Phi_{f_i,T_i}^{q}(X)\right)\leq X.
\end{equation*}

On the other hand, taking into account that we can rearrange WOT-convergent series of positive operators, we deduce that, for each $d\in \NN$,
\begin{equation*}
\begin{split}
\Phi_{f_i, T_i}^d({\bf \Delta_{f,T}^p}(X))
 &=\sum_{\alpha_1\in \FF_{n_i}^+, |\alpha_1|\geq 1} a_{i,\alpha_1} T_{i,\alpha_1}\left( \cdots
\sum_{\alpha_d\in \FF_{n_1}^+, |\alpha_d|\geq 1} a_{i,\alpha_d} T_{i,\alpha_d}{\bf \Delta_{f,T}^p}(X)T_{i,\alpha_d}^*\cdots
       \right)T_{i,\alpha_1}^*\\
     &\qquad =\sum_{\gamma\in \FF_{n_i}^+, |\gamma|\geq d}
     \sum_{{\alpha_1,\ldots,\alpha_d\in \FF_{n_i}^+}\atop{{\alpha_1\cdots \alpha_d=\gamma }\atop {|\alpha_1|\geq
1,\ldots, |\alpha_d|\geq 1}}} a_{i,\alpha_1}\cdots a_{i,\alpha_d} T_{i,\gamma}{\bf \Delta_{f,T}^p}(X)T_{i,\gamma}^*
\end{split}
\end{equation*}
and
\begin{equation*}
\begin{split}
D_{i}^{(1)}({\bf \Delta_{f,T}^p}(X))&=\sum_{s=0}^\infty
 \Phi_{f_i,T_i}^{s}({\bf \Delta_{f,T}^p}(X))\\
&
=
{\bf \Delta_{f,T}^p}(X)+\sum_{\gamma\in \FF_{n_i}^+, |\gamma|\geq 1}
\left(\sum_{d=1}^{|\gamma|}
     \sum_{{\alpha_1,\ldots,\alpha_d\in \FF_{n_i}^+}\atop{{\alpha_1\cdots \alpha_d
     =\gamma }\atop {|\alpha_1|\geq
1,\ldots, |\alpha_d|\geq 1}}} a_{i,\alpha_1}\cdots
 a_{i,\alpha_d}\right) T_{i,\gamma}{\bf \Delta_{f,T}^p}(X)T_{i,\gamma}^*.
\end{split}
\end{equation*}
 Since $D_{i}^{(j)}({\bf \Delta_{f,T}^p}(X)):=\sum_{s=0}^\infty
  \Phi_{f_i,T_i}^{s}(D_{i}^{(j-1)}({\bf \Delta_{f,T}^p}(X)))$ for
   $j=2,\ldots, p_i$, using a combinatorial argument and rearranging
    WOT-convergent series  of positive operators, one can prove by induction
    over $p_i$ that
\begin{equation*}
\begin{split}
D_{i}^{(p_i)}({\bf \Delta_{f,T}^p}(X))&={\bf \Delta_{f,T}^p}(X)+
\sum_{\alpha\in \FF_{n_i}^+, |\alpha|\geq 1}\left(
 \sum_{p=1}^{|\alpha|}
\sum_{{\gamma_1,\ldots,\gamma_p\in \FF_{n_i}^+}\atop{{\gamma_1\cdots \gamma_p=\alpha }\atop {|\gamma_1|\geq
1,\ldots, |\gamma_p|\geq 1}}} a_{i,\gamma_1}\cdots a_{i,\gamma_p}
\left(\begin{matrix} p+p_i-1\\p_i-1
\end{matrix}\right)\right)T_{i,\alpha} {\bf \Delta_{f,T}^p}(X) T_{i,\alpha}^*\\
&
=\sum_{\alpha\in \FF_{n_i}^+} b_{i,\alpha}^{(p_i)} T_{i,\alpha} {\bf \Delta_{f,T}^p}(X) T_{i,\alpha}^*.
\end{split}
\end{equation*}
For each $i\in \{1,\ldots,k\}$, let $\Omega_i\subset B(\cH)$ be the
 set of all $Y\in B(\cH)$, $Y\geq0$, such that
the series $\sum_{\beta_i\in \FF_{n_i}^+} b_{i,\beta_i}^{(m_i)} T_{i,\beta_i}Y T_{i,\beta_i}^*$ is convergent in the weak operator topology,
where
\begin{equation*}
  b_{i,g_0}^{(m_i)}:=1\quad \text{ and } \quad
 b_{i,\alpha}^{(m_i)}:= \sum_{p=1}^{|\alpha|}
\sum_{{\gamma_1,\ldots,\gamma_p\in \FF_{n_i}^+}\atop{{\gamma_1\cdots \gamma_p=\alpha }\atop {|\gamma_1|\geq
1,\ldots, |\gamma_p|\geq 1}}} a_{i,\gamma_1}\cdots a_{i,\gamma_p}
\left(\begin{matrix} p+m_i-1\\m_i-1
\end{matrix}\right)
\end{equation*}
for all  $ \alpha\in \FF_{n_i}^+$ with  $|\alpha|\geq 1$.
We define the map $\Psi_i:\Omega_i\to B(\cH)$ by setting
\begin{equation*}
\label{Psi-i}
\Psi_i(Y):=\sum_{\beta_i\in \FF_{n_i}^+} b_{i,\beta_i}^{(m_i)} T_{i,\beta_i}Y T_{i,\beta_i}^*.
\end{equation*}
Due to the  results above,  we have
\begin{equation}
\label{Psi}
\begin{split}
0\leq \Psi_i( {\bf \Delta_{f,T}^p}(X))&=D_{i}^{(m_i)}({\bf \Delta_{f,T}^p}(X))\\
&={\bf \Delta_{f,T}^{ p-\text{$m_i$}e_i}}
\left(id-\lim_{q_{i}\to \infty} \Phi_{f_i,T_i}^{q_{i}} \right)(X)\\
&
\leq {\bf \Delta_{f,T}^{ p-\text{$m_i$}e_i}}(X)\leq X,
\end{split}
\end{equation}
for any  ${\bf p}:=(p_1,\ldots, p_k)\in \ZZ_+^k$ with ${\bf p}\leq {\bf m}$ and
 $p_i=m_i$.
Since ${\bf T}=({ T}_1,\ldots, { T}_k)$ is in the noncommutative polydomain ${\bf D_f^m}(\cH)$, Theorem  \ref{radial}
implies
   $$
  {\bf \Delta_{f,T}^p}(I)  := (id-\Phi_{f_1,T_1})^{p_1}\cdots (id-\Phi_{f_k,T_k})^{p_k}(I)\geq 0
    $$
 for any  ${\bf p}:=(p_1,\ldots, p_k)\in \ZZ_+^k$ with $p_i\in \{0,1,\ldots, m_i\}$ and
       $i\in \{1,\ldots,k\}$.
Applying relation \eqref{Psi} in the  particular case when $i=1$, $p_1=m_1$, and $X=I$, we have
$$
0\leq \Psi_1( {\bf \Delta_{f,T}^{p'}}(I))=D_{1}^{(m_1)}({\bf \Delta_{f,T}^{p'}}(I))
={\bf \Delta_{f,T}^{ p'-\text{$m_1$}e_1}}\left(I-\lim_{q_{1}\to \infty} \Phi_{f_1,T_1}^{q_{1}} (I)\right)
\leq {\bf \Delta_{f,T}^{ p'-\text{$m_1$}e_1}}(I)\leq I
$$
for any ${\bf p}'=(m_1,p_2,\ldots, p_k)$ with ${\bf p}'\leq {\bf m}$. Hence and
using again relation \eqref{Psi}, when $i=2$, ${\bf p}=(0,m_2, p_3\ldots, p_k)$, and
$X=\lim_{q_{1}\to \infty} \left(id-\Phi_{f_1,T_1}^{q_{1}} \right)(I)\geq  0$, we obtain
\begin{equation*}
\begin{split}
0\leq \Psi_2(\Psi_1( {\bf \Delta_{f,T}^{p''}}(I)))&=\Psi_2\left({\bf \Delta_{f,T}^{ p''-\text{$m_1$}e_1}}\left(I-\lim_{q_{1}\to \infty} \Phi_{f_1,T_1}^{q_{1}} (I)\right)\right)\\
&={\bf \Delta_{f,T}^{ p''-\text{$m_1$}e_1-\text{$m_2$}e_2}}
\lim_{q_{2}\to \infty}\lim_{q_{1}\to \infty}\left(id- \Phi_{f_2,T_2}^{q_{2}}\right)
\left(id- \Phi_{f_1,T_1}^{q_{1}}\right)(I)\\
&\leq {\bf \Delta_{f,T}^{ p''-\text{$m_1$}e_1-\text{$m_2$}e_2}}(I)\leq I
\end{split}
\end{equation*}
for any ${\bf p}''=(m_1,m_2, p_3,\ldots, p_k)$. Continuing  this process, a repeated application of \eqref{Psi}, leads to the relation
\begin{equation*}
\label{PsiPsi}
0\leq (\Psi_k\circ\cdots \circ \Psi_1)({\bf \Delta_{f,T}^m}(I))=
\lim_{q_k\to\infty}\ldots \lim_{q_1\to\infty}  (id-\Phi_{f_k,T_k}^{q_k})\cdots (id-\Phi_{f_1,T_1}^{q_1})(I)\leq I,
\end{equation*}
where ${\bf m}=(m_1,\ldots,m_k)$.
To prove item (i), note  that the results above imply

 \begin{equation*}
\begin{split}
\|{\bf K_{f,T}}h\|^2 &=
\sum_{\beta_k\in \FF_{n_k}}\cdots \sum_{\beta_1\in \FF_{n_1}}  b_{1,\beta_1}^{(m_1)}\cdots b_{k,\beta_k}^{(m_k)} \left<T_{k,\beta_k}\cdots T_{1,\beta_1}{\bf \Delta_{f,T}^m}(I) T_{1,\beta_1}^*\cdots T_{k,\beta_k}^*h, h\right>\\
&=\left< (\Psi_k\circ\cdots \circ\Psi_1)({\bf \Delta_{f,T}^m}(I))h,h\right>\leq \|h\|^2
\end{split}
\end{equation*}
for any  $h\in \cH$, and
$$
{\bf K_{f,T}^*}{\bf K_{f,T}}=
\lim_{q_k\to\infty}\ldots \lim_{q_1\to\infty}  (id-\Phi_{f_k,T_k}^{q_k})\cdots (id-\Phi_{f_1,T_1}^{q_1})(I).
$$
Now, item (ii) is clear.
To prove part (iii),  note that
\begin{equation}\label{WbWb}
  W_{i,j}^*
e^i_{\beta_i} =\begin{cases} \frac
{\sqrt{b_{i,\gamma_i}^{(m_i)}}}{\sqrt{b_{i,\beta_i}^{(m_i)}}}\,e^i_{\gamma_i}& \text{ if
}
\beta_i=g_j^i\gamma_i, \ \, \gamma_i\in \FF_{n_i}^+ \\
0& \text{ otherwise }
\end{cases}
\end{equation}
 for any   $\beta_i\in \FF_{n_i}^+$ and
 $j\in \{1,\ldots, n_i\}$.
 Hence, and using the definition of the noncommutative Berezin kernel, we have
 \begin{equation*}
 \begin{split}
  &({\bf W}_{i,j}^*\otimes I){\bf K_{f,T}}h\\
  & =\sum_{\beta_p\in \FF_{n_p}^+, p\in\{1,\ldots,k\}}
   \sqrt{b_{1,\beta_1}^{(m_1)}}\cdots \sqrt{b_{k,\beta_k}^{(m_k)}}
   e^1_{\beta_1}\otimes \cdots \otimes e^{i-1}_{\beta_{i-1}}\otimes W_{i,j}^*e^i_{\beta_i}\otimes e^{i+1}_{\beta_{i+1}}\otimes \cdots \otimes  e^k_{\beta_k}\otimes {\bf \Delta_{f,T}^m}(I)^{1/2} T_{1,\beta_1}^*\cdots T_{k,\beta_k}^*h\\
   & =\sum_{{\beta_p\in \FF_{n_p}^+, p\in \{1,\ldots,k\}\backslash\{i\}} \atop{\gamma_i\in \FF_{n_i}}}
   \sqrt{b_{1,\beta_1}^{(m_1)}}\cdots \sqrt{b_{i,\gamma_i}^{(m_i)}}\cdots \sqrt{b_{k,\beta_k}^{(m_k)}}
   e^1_{\beta_1}\otimes \cdots \otimes  e^{i-1}_{\beta_{i-1}}\otimes e^i_{\gamma_i}\otimes e^{i+1}_{\beta_{i+1}} \otimes \cdots \otimes e^k_{\beta_k}\\
   &\qquad \qquad\qquad \otimes {\bf \Delta_{f,T}^m}(I)^{1/2} T_{1,\beta_1}^*\cdots  T_{i-1,\beta_i}^* T_{i,g_j^i \gamma_i}^* T_{i+1, \beta_{i+1}}^*\cdots T_{k,\beta_k}^*h
 \end{split}
 \end{equation*}
for any $h\in \cH$. Using the commutativity of  the tuples $T_1,\ldots,T_k$,
we deduce that
$$
({\bf W}_{i,j}^*\otimes I){\bf K_{f,T}}={\bf K_{f,T}} T_{i,j}^*
$$
for any $i\in \{1,\ldots, k\}$ and $j\in \{1,\ldots, n_i\}$.
The proof is complete.
\end{proof}

\begin{remark} If ${\bf \Delta_{f,T}^m}(I)=0$, then ${\bf K_{f,T}}=0$ and
$(id-\Phi_{f_k,T_k})\cdots (id-\Phi_{f_1,T_1})(I)=0$.

\end{remark}

We can define now the {\it  noncommutative Berezin transform}
 at   ${\bf T}\in {\bf D_f^m}(\cH)$ to be the mapping
 $ {\bf B_{T}}: B(\otimes_{i=1}^k F^2(H_{n_i}))\to B(\cH)$
 given by

 \begin{equation*}
 \label{def-Be2}
 {\bf B_{T}}[g]:= {\bf K^*_{f,T}} (g\otimes I_\cH) {\bf K_{f,T}},
 \qquad g\in B(\otimes_{i=1}^k F^2(H_{n_i})).
 \end{equation*}

We denote by $\cP({\bf W})$ the set of all polynomials $p({\bf W}_{i,j})$  in  the operators ${\bf W}_{i,j}$, $i\in \{1,\ldots, k\}$, $j\in \{1,\ldots, n_i\}$,  and the identity. We introduce the polydomain algebra $\cA({\bf D_f^m})$ to be the norm closed algebra generated by ${\bf W}_{i,j}$ and the identity.

\begin{theorem}\label{Poisson-C*}
Let ${\bf T}=\{{T}_{i,j}\}$ be in  the noncommutative polydomain ${\bf D_f^m}(\cH)$  and let
  $$\cS:=\overline{\text{\rm  span}} \{ p({\bf W}_{i,j})q({\bf W}_{i,j})^*:\
p({\bf W}_{i,j}),q({\bf W}_{i,j}) \in  \cP({\bf W})\},
$$
where the closure is in the operator norm.
   Then there is
    a unital completely contractive linear map
$
\Psi_{{\bf f,T}}:\cS \to B(\cH)
$
such that
\begin{equation*}
\Psi_{{\bf f,T}}(g)=\lim_{r\to 1} {\bf B}_{r{\bf T}}[g],\qquad g\in \cS,
\end{equation*}
where the limit exists in the norm topology of $B(\cH)$, and
$$
\Psi_{\bf f,T}\left(\sum_{\gamma=1}^s p_\gamma({\bf W}_{i,j})q_\gamma({\bf W}_{i,j})^*\right)= \sum_{\gamma=1}^s p_\gamma(T_{i,j})q_\gamma(T_{i,j})^*
$$
for any $p_\gamma({\bf W}_{i,j}),q_\gamma({\bf W}_{i,j}) \in  \cP({\bf W})$ and $ s\in \NN$.
 In particular, the restriction of
$\Psi_{\bf f,T}$ to the polydomain algebra $\cA({\bf D_f^m})$ is a
completely contractive homomorphism.
 If, in addition,
 ${\bf T}$ is a  pure $k$-tuple,
then  $$\lim_{r\to 1} {\bf B}_{r{\bf T}}[g]= {\bf B}_{T}[g],\qquad g\in \cS.$$
\end{theorem}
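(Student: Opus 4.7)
The plan is to use the radial property of the polydomain from Theorem \ref{radial} to reduce matters to the pure case, compute the Berezin transform explicitly on the polynomial generators of $\cS$, and then pass to the norm limit $r \to 1$. For $r \in (0,1)$, Theorem \ref{radial} gives $r{\bf T} \in {\bf D_f^m}(\cH)$. Since every monomial of the positive regular $f_i$ has length at least one, one has $\Phi_{f_i, rT_i}(I) = \sum_{|\alpha|\geq 1} a_{i,\alpha} r^{2|\alpha|} T_{i,\alpha} T_{i,\alpha}^* \leq r^2 \Phi_{f_i, T_i}(I) \leq r^2 I$, and iterating with positivity yields $\Phi_{f_i, rT_i}^p(I) \leq r^{2p} I \to 0$ strongly. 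Hence $r{\bf T}$ is pure by Proposition \ref{pure2}, so Theorem \ref{Berezin-prop}(ii) shows ${\bf K_{f,rT}^*}{\bf K_{f,rT}} = I_\cH$, and ${\bf B}_{r{\bf T}} = {\bf K_{f,rT}^*}(\,\cdot\,\otimes I_\cH){\bf K_{f,rT}}$ is a unital completely positive, hence completely contractive, map on $B(\otimes_{i=1}^k F^2(H_{n_i}))$.

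Next, iterating the intertwining of Theorem \ref{Berezin-prop}(iii) applied to $r{\bf T}$ yields $p(rT)\,{\bf K_{f,rT}^*} = {\bf K_{f,rT}^*}\,(p({\bf W}) \otimes I)$ for every $p \in \cP({\bf W})$, and adjointing gives ${\bf K_{f,rT}}\,q(rT)^* = (q({\bf W})^* \otimes I)\,{\bf K_{f,rT}}$. Combining these with ${\bf K_{f,rT}^*}{\bf K_{f,rT}} = I_\cH$,
$$
{\bf B}_{r{\bf T}}[p({\bf W})q({\bf W})^*] = {\bf K_{f,rT}^*}(p({\bf W}) \otimes I)(q({\bf W})^* \otimes I){\bf K_{f,rT}} = p(rT)\,q(rT)^*.
$$
As $r \to 1$, this tends in norm to $p(T)q(T)^*$, since polynomial expressions depend norm-continuously on scalar coefficients. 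Defining $\Psi_{\bf f,T}$ on the dense subspace $\cS_0 \subset \cS$ of sums $\sum_\gamma p_\gamma({\bf W})q_\gamma({\bf W})^*$ by $\sum_\gamma p_\gamma(T)q_\gamma(T)^*$, the displayed identity gives $\Psi_{\bf f,T}(g) = \lim_{r\to 1} {\bf B}_{r{\bf T}}[g]$ in norm for every $g \in \cS_0$.

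Because each ${\bf B}_{r{\bf T}}$ is a complete contraction, the same limit computed at every matrix amplification level shows that $\Psi_{\bf f,T}$ is completely contractive on $\cS_0$, and a standard $3\epsilon$ argument uniquely extends it to a unital completely contractive map on $\cS$, preserving the formula $\Psi_{\bf f,T}(g) = \lim_{r\to 1}{\bf B}_{r{\bf T}}[g]$ for every $g \in \cS$. Specializing the displayed identity to $q \equiv 1$ yields $\Psi_{\bf f,T}(p_1({\bf W}) p_2({\bf W})) = (p_1 p_2)(T) = p_1(T)\,p_2(T) = \Psi_{\bf f,T}(p_1({\bf W}))\,\Psi_{\bf f,T}(p_2({\bf W}))$; density then makes the restriction to $\cA({\bf D_f^m})$ a completely contractive homomorphism.

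Finally, for the pure case ${\bf K_{f,T}}$ is already an isometry by Theorem \ref{Berezin-prop}(ii), so the exact computation of the previous paragraph at $r=1$ gives ${\bf B}_T[p({\bf W})q({\bf W})^*] = p(T)\,q(T)^*$; hence ${\bf B}_T$ and $\Psi_{\bf f,T}$ are norm-continuous linear maps on $\cS$ agreeing on the dense subspace $\cS_0$, so they coincide on all of $\cS$, yielding $\lim_{r\to 1}{\bf B}_{r{\bf T}}[g] = {\bf B}_T[g]$ for every $g \in \cS$. The main technical point, though not deep, is the extension of $\Psi_{\bf f,T}$ from $\cS_0$ to $\cS$ compatibly with complete contractivity; this rests on the uniform bound $\|{\bf B}_{r{\bf T}}\|_{cb} \leq 1$ arising from the purity of $r{\bf T}$ for $r<1$, which is the key leverage provided by the radial structure of the polydomain.
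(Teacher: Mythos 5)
Your proposal is correct and follows essentially the same route as the paper: the radial property makes $r{\bf T}$ pure, the Berezin kernel ${\bf K}_{{\bf f},r{\bf T}}$ is then an isometry intertwining ${\bf W}_{i,j}\otimes I$ with $rT_{i,j}$, which yields the von Neumann-type bound and the identity ${\bf B}_{r{\bf T}}[p({\bf W})q({\bf W})^*]=p(rT)q(rT)^*$, after which the definition on the dense span, the extension by complete contractivity, and the $3\epsilon$ argument for the limit as $r\to 1$ all match the paper's proof. No gaps.
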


\begin{proof}  According to Theorem \ref{radial},  $r{\bf T}=(rT_1,\ldots,
 rT_k)\in  {\bf D_f^m}(\cH)$ for   any $r\in (0,1)$.
Since we have
$\Phi_{f_i,rT_i}^n(I)\leq r^n \Phi_{f_i,T_i}^n(I)\leq r^n I$ for any  $n\in \NN$, Proposition \ref{pure2} shows   that $ r{\bf T}$  is a pure $k$-tuple in ${\bf D_f^m}(\cH)$.
 Using  Theorem \ref{Berezin-prop}, we deduce that the noncommutative Berezin kernel
  ${\bf K}_{{\bf f},r{\bf T}}$ is an isometry and
\begin{equation}
\label{K-r}
{\bf K}_{{\bf f},r{\bf T}}^* \left[p({\bf W}_{i,j})q({\bf W}_{i,j})^*\otimes
I_\cH\right]{\bf K}_{{\bf f},r{\bf T}}= p(rT_{i,j})q(rT_{i,j})^*, \qquad p({\bf W}_{i,j}),q({\bf W}_{i,j}) \in  \cP({\bf W}).
\end{equation}
Hence, we obtain the von Neumann \cite{von} type inequality
\begin{equation}
\label{vn1}
\left\|\sum_{\gamma=1}^s p_\gamma(rT_{i,j})q_\gamma(rT_{i,j})^*\right\|
\leq \left\|\sum_{\gamma=1}^s p_\gamma({\bf W}_{i,j})q_\gamma({\bf W}_{i,j})^*\right\|
 \end{equation}
 for any
  $ p_\gamma({\bf W}_{i,j}),q_\gamma({\bf W}_{i,j}) \in  \cP({\bf W})$, $s\in \NN$, and $r\in [0,1]$.
  Fix $g\in \cS $ and  let
$\{\chi_n({\bf W}_{i,j}, {\bf W}_{i,j}^*)\}_{n=0}^\infty$ be a sequence of operators in the span of $\cP({\bf W})\cP({\bf W})^*$  which converges to $g$ in norm, as $n\to\infty$. Define
 $\Psi_{{\bf f,T}}(g):=\lim_{n\to\infty} \chi_n({ T}_{i,j}, { T}_{i,j}^*)$. The inequality
 \eqref{vn1} shows that $\Psi_{{\bf f,T}}(g)$ is  well-defined and
 $\|\Psi_{{\bf f,T}}(g)\|\leq \|g\|$. Using the matrix version of \eqref{K-r},
  we deduce that $\Psi_{{\bf f,T}}$ is  a unital completely contractive linear map.
Now we  prove  that $\Psi_{{\bf f,T}}(g)=\lim_{r\to 1} {\bf B}_{r{\bf T}}[g]$.
    Note that  relation \eqref{K-r}
implies
$$
 \chi_n(rT_i,rT_i^*)={\bf K}_{{\bf f},r{\bf T}}^*(\chi_n({\bf W}_{i,j}, {\bf W}_{i,j}^*)\otimes I_\cH){\bf K}_{{\bf f},r{\bf T}}={\bf B}_{r{\bf T}}[\chi_n({\bf W}_{i,j}, {\bf W}_{i,j}^*)]
$$
for any $n\in \NN$ and $r\in (0,1)$. Using the fact that   $\Psi_{f,rT}(g):=\lim_{n\to\infty}
\chi_n(rT_i,rT_i^*)$ exists in norm,  we deduce that
\begin{equation}
\label{I-tensor} \Psi_{{\bf f},r{\bf T}}(g)={\bf K}_{{\bf f},r{\bf T}}^* (g\otimes I_\cH){\bf K}_{{\bf f},r{\bf T}}={\bf B}_{r{\bf T}}[g].
\end{equation}
Given  $\epsilon>0$ let $s\in \NN$   be such that
$\|\chi_s(W_i,W_i^*)-g\|<\frac{\epsilon}{3}. $ Due to the first part of
the theorem, we have
$$
\|\Psi_{{\bf f},r{\bf T}}(g)-\chi_s(rT_i,rT_i^*)\|\leq
\|g-\chi_s({\bf W}_i,{\bf W}_i^*)\|<\frac{\epsilon}{3}
$$
for any $r\in [0,1]$.
On the other hand, since $\chi_s({\bf W}_i,{\bf W}_i^*)$  has a finite number of
terms, there exists $\delta\in (0,1)$ such that
$$
\|\chi_s(rT_i,rT_i^*)-\chi_s(T_i,T_i^*)\|<\frac{\epsilon}{3}
$$
for any $r\in (\delta,1)$.
Now, using these inequalities and  relation \eqref{I-tensor}, we
deduce that
\begin{equation*}
\begin{split}
\left\|\Psi_{\bf f,T}(g)-{\bf B}_{r{\bf T}}[g]\right\| &=
\left\|
\Psi_{\bf f,T}(g)- \Psi_{{\bf f},r{\bf T}}(g)\right\|\\
&
\leq \left\| \Psi_{\bf f,T}(g)-\chi_s(T_i,
T_i^*)\right\|
  +\left\|\chi_s(T_i, T_i^*)- \chi_s(rT_i,
rT_i^*)\right\|\\
&\qquad
+ \left\|\chi_s(rT_i, rT_i^*)- \Psi_{{\bf f},r{\bf T}}(g)\right\|
< \epsilon
\end{split}
\end{equation*}
for any $r\in (\delta, 1)$, which proves our assertion.
Now, we assume that  ${\bf T}=({ T}_1,\ldots, { T}_k)$ is a pure $k$-tuple in  ${\bf D_f^m}(\cH)$.
Due to Theorem  \ref{Berezin-prop}, we have
$$
{\bf B_{T}}[\chi_n({\bf W}_{i,j}, {\bf W}_{i,j}^*)]:= {\bf K^*_{f,T}} (\chi_n({\bf W}_{i,j}, {\bf W}_{i,j}^*)\otimes I_\cH) {\bf K_{f,T}}=\chi_n({ T}_{i,j}, { T}_{i,j}^*).
$$
Taking into account that $\{\chi_n({\bf W}_{i,j}, {\bf W}_{i,j}^*)\}_{n=0}^\infty$ is a sequence of operators in the span of $\cP({\bf W})\cP({\bf W})^*$  which converges to $g$ in norm, we conclude that
$$
{\bf B_{T}}[g]=\Psi_{\bf f,T}(g)=\lim_{r\to 1} {\bf B}_{r{\bf T}}[g],\qquad g\in \cS.
$$
This
completes the proof.
\end{proof}

We remark that Theorem \ref{Poisson-C*} shows that  the noncommutative polydomain algebra $\cA({\bf D_f^m})$ is the universal algebra generated by the identity and a  doubly commuting $k$-tuple in  the abstract polydomain domain ${\bf D_f^m}$.

  We denote by  $C^*({\bf W}_{i,j})$ the $C^*$-algebra generated by  the operators ${\bf W}_{i,j}$,  where $i\in \{1,\ldots, k\}$,  $j\in \{1,\ldots, n_i\}$, and the identity.

\begin{corollary}\label{C*-charact} Let ${\bf q}=(q_1,\ldots, q_k)$ be a $k$-tuple of   positive regular  noncommutative polynomials and let
$${\bf X}:=(X_1,\ldots, X_k)\in   B(\cH)^{n_1}\times\cdots \times B(\cH)^{n_k}.$$
 Then  ${\bf X}$ is in the
noncommutative polydomain ${\bf D^m_q}(\cH)$ if and only if  there
exists a unital completely positive linear map $\Psi:C^*({\bf W}_{i,j})\to B(\cH)$ such that  $$
\Psi_{\bf q,T}( p({\bf W}_{i,j})r({\bf W}_{i,j})^*)= p(X_{i,j})r(X_{i,j})^*, \qquad
p({\bf W}_{i,j}),r({\bf W}_{i,j}) \in  \cP({\bf W}), $$
where ${\bf W}:=\{{\bf W}_{i,j}\}$  is the  universal model associated with the abstract noncommutative polydomain ${\bf D_q^m}$.
\end{corollary}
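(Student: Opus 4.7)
The plan is to prove both directions by leveraging the machinery already assembled in the paper. The forward implication is essentially a repackaging of Theorem~\ref{Poisson-C*} together with Arveson's extension theorem, while the reverse implication is a short positivity argument exploiting Lemma~\ref{univ-model} and the characterization in Theorem~\ref{radial}.

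For the forward direction, I would assume ${\bf X}\in {\bf D_q^m}(\cH)$ and apply Theorem~\ref{Poisson-C*} (with ${\bf f}={\bf q}$) to obtain a unital completely contractive linear map $\Psi_{\bf q,X}:\cS\to B(\cH)$ with the prescribed action on monomials $p({\bf W})r({\bf W})^*$. Since $\cS$ is a unital self-adjoint subspace of $C^*({\bf W}_{i,j})$, hence an operator system, the standard fact that a unital completely contractive map on an operator system is automatically completely positive (see \cite{Pa-book}) upgrades $\Psi_{\bf q,X}$ to a unital completely positive map. Arveson's extension theorem then yields a unital completely positive extension $\Psi:C^*({\bf W}_{i,j})\to B(\cH)$, which is the desired map.

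For the reverse direction, suppose such a $\Psi$ is given. First I would verify the commutativity condition packaged into ${\bf X}\in B(\cH)^{n_1}\times_c\cdots\times_c B(\cH)^{n_k}$: by the construction of the universal model, for $i\neq i'$ the operators ${\bf W}_{i,j}$ and ${\bf W}_{i',j'}$ act on distinct tensor factors and so commute. Applying $\Psi$ (via the hypothesis specialized to $r=1$) to the identity ${\bf W}_{i,j}{\bf W}_{i',j'}={\bf W}_{i',j'}{\bf W}_{i,j}$ gives $X_{i,j}X_{i',j'}=X_{i',j'}X_{i,j}$. Next I would track the defect operator through $\Psi$. Because each $q_i$ is a polynomial, the formula $\Phi_{q_i,{\bf W}_i}(Y)=\sum_\alpha a_{i,\alpha}{\bf W}_{i,\alpha}Y{\bf W}_{i,\alpha}^*$ shows by iteration that every composition $\Phi_{q_1,{\bf W}_1}^{s_1}\cdots\Phi_{q_k,{\bf W}_k}^{s_k}(I)$ is a finite positive-coefficient linear combination of monomials of the form $p({\bf W})p({\bf W})^*$ with $p({\bf W})={\bf W}_{1,\beta_1}\cdots{\bf W}_{k,\beta_k}$. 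Applying $\Psi$ term by term, one reads off $\Psi\bigl(\Phi_{q_1,{\bf W}_1}^{s_1}\cdots\Phi_{q_k,{\bf W}_k}^{s_k}(I)\bigr)=\Phi_{q_1,X_1}^{s_1}\cdots\Phi_{q_k,X_k}^{s_k}(I)$ and hence, by linearity, $\Psi({\bf \Delta^p_{q,W}}(I))={\bf \Delta^p_{q,X}}(I)$ for every ${\bf p}\leq{\bf m}$, together with $\Psi(I-\Phi_{q_i,{\bf W}_i}(I))=I-\Phi_{q_i,X_i}(I)$.

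Now Lemma~\ref{univ-model} places ${\bf W}$ in ${\bf D_q^m}(\otimes_{i=1}^k F^2(H_{n_i}))$, so Theorem~\ref{radial} supplies ${\bf \Delta^p_{q,W}}(I)\geq 0$ for every ${\bf 0}\leq{\bf p}\leq{\bf m}$, as well as $\Phi_{q_i,{\bf W}_i}(I)\leq I$. Positivity of $\Psi$ transports these inequalities to ${\bf X}$, and a final invocation of Theorem~\ref{radial} yields ${\bf X}\in{\bf D_q^m}(\cH)$. I do not anticipate a serious obstacle: the argument is functorial in nature and every ingredient is pre-assembled. The only minor point requiring care is the combinatorial observation that each summand in the expansion of ${\bf \Delta^p_{q,W}}(I)$ is already in the required $p({\bf W})r({\bf W})^*$ form (with $p=r$), which is exactly what allows the hypothesis on $\Psi$ to apply term by term.
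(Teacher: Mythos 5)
Your proof is correct and follows essentially the same route as the paper: the forward direction via Theorem~\ref{Poisson-C*} plus Arveson's extension theorem, and the converse by transporting the positivity of ${\bf \Delta^p_{q,W}}(I)$ through $\Psi$ to get ${\bf \Delta^p_{q,X}}(I)\geq 0$ and then invoking Theorem~\ref{radial}. Your explicit verification of the cross-commutativity of the tuples $X_p$, $X_q$ and of the term-by-term identity $\Psi({\bf \Delta^p_{q,W}}(I))={\bf \Delta^p_{q,X}}(I)$ merely spells out steps the paper leaves implicit.
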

\begin{proof}
The direct implication is due to Theorem \ref{Poisson-C*} and
Arveson's extension theorem \cite{Arv-acta}. For the converse,  note that, due to Lemma \ref{univ-model}, Proposition \ref{pure2}, and Proposition \ref{pure}, we have
$$ (I-\Phi_{q_1,{ X}_1})^{p_1}\cdots (I-\Phi_{q_k,{ X}_k})^{p_k}(I)=\Psi_{\bf q,T}\left[ (I-\Phi_{q_1,{\bf W}_1})^{p_1}\cdots (I-\Phi_{q_k,{\bf W}_k})^{p_k}(I)\right]\geq 0
$$
for any $p_i\in \{0,1,\ldots, m_i\}$ and
       $i\in \{1,\ldots,k\}$.    Using now
  Theorem \ref{radial} we can complete the proof.
\end{proof}

We remark that under the condition
  \begin{equation*}
\overline{\text{\rm span}}\,\{ p({\bf W}_{i,j})r({\bf W}_{i,j})^*) :\
 p({\bf W}_{i,j}),r({\bf W}_{i,j}) \in  \cP({\bf W})\}=C^*({\bf W}_{i,j}),
\end{equation*}
 Corollary \ref{C*-charact} shows that  $C^*({\bf W}_{i,j})$ is the universal
  $C^*$-algebra generated by the identity and a  doubly commuting $k$-tuple in
    the abstract polydomain domain ${\bf D_f^m}$. We remark that the condition
     above holds, for example, if  ${\bf D_f^m}(\cH)$ is the noncommutative polyball
 $[B(\cH)^{n_1}]_{1}^-\times_c \cdots \times_c [B(\cH)^{n_k}]_{1}^-$.

\bigskip

\section{Noncommutative Hardy algebras and functional calculus}

 We introduce the noncommutative Hardy algebra $F^\infty({\bf D_f^m})$
  and   provide a WOT-continuous functional calculus for
  completely non-coisometric   tuples
 in  in the noncommutative polydomain ${\bf D_f^m}(\cH)$.

Let $\varphi({\bf W}_{i,j})=\sum\limits_{\beta_1\in \FF_{n_1}^+,\ldots, \beta_k\in \FF_{n_k}^+} c_{\beta_1,\ldots, \beta_k}
{\bf W}_{1,\beta_1}\cdots {\bf W}_{k,\beta_k} $ be a formal sum with $c_{\beta_1,\ldots, \beta_k}\in \CC$ and such that
$$\sum\limits_{\beta_1\in \FF_{n_1}^+,\ldots, \beta_k\in \FF_{n_k}^+} |c_{\beta_1,\ldots, \beta_k}|^2 \frac{1}{b_{1,\beta_1}^{(m_1)}\cdots b_{k,\beta_k}^{(m_k)}}<\infty.
$$
We prove that
 $\varphi({\bf W}_{i,j})(e_{\gamma_1}^1\otimes\cdots \otimes e_{\gamma_k}^k)$
 is in $\otimes_{i=1}^k F^2(H_{n_i})$,
for any $\gamma_1\in \FF_{n_1}^+,\ldots, \gamma_k\in \FF_{n_k}^+$.
 Indeed, due to relation \eqref{Wij},
    we have
\begin{equation*}
\begin{split}
\sum\limits_{\beta_1\in \FF_{n_1}^+,\ldots, \beta_k\in \FF_{n_k}^+} c_{\beta_1,\ldots, \beta_k}
{\bf W}_{1,\beta_1}&\cdots {\bf W}_{k,\beta_k}
(e_{\gamma_1}^1\otimes\cdots \otimes e_{\gamma_k}^k)\\
& =
\sum_{\beta_1\in \FF_{n_1}^+,\ldots, \beta_k\in \FF_{n_k}^+}
c_{\beta_1,\ldots, \beta_k}\sqrt{\frac{b_{1,\gamma_1}
^{(m_1)}}{b_{1,\beta_1\gamma_1}^{(m_1)}}}\cdots \sqrt{\frac{b_{k,\gamma_k}
^{(m_k)}}{b_{k,\beta_k\gamma_k}^{(m_k)}}} e_{\beta_1
\gamma_1}^1 \otimes\cdots \otimes  e_{\beta_k \gamma_k}^k.
\end{split}
\end{equation*}
Let $i\in \{1,\ldots,k\}$ and $\alpha, \beta\in \FF_{n_i}$ be such that   $ |\alpha|\geq 1$ and
$|\beta|\geq 1$.  Note that,  for any $j\in\{1,\ldots, |\alpha|\}$ and
 $k\in\{1,\ldots,|\beta|\}$,
$$
\left(\begin{matrix} j+m_i-1\\m_i-1
\end{matrix}\right)
\left(\begin{matrix} k+m_i-1\\m_i-1
\end{matrix}\right)
\leq C_{i,|\beta|}^{( m_i)} \left(\begin{matrix} j+k+m_i-1\\m_i-1
\end{matrix}\right),
$$
where $C_{i,|\beta|}^{( m_i)}:=\left(\begin{matrix} |\beta|+m_i-1\\m_i-1
\end{matrix}\right)$.
Using relation \eqref{b-coeff}
and comparing the product
$b_{i,\alpha}^{(m_i)} b_{i, \beta}^{(m_i)} $ with
$b_{i,\alpha \beta}^{(m)}$, we deduce that
 \begin{equation}
\label{Mb} b_{i,\alpha}^{(m_i)} b_{i,\beta}^{(m_i)}\leq C_{i,|\beta|}^{( m_i)} b_{i,\alpha
\beta}^{(m_i)} \quad \text{ and } \quad  b_{i,\alpha}^{(m_i)} b_{i,\beta}^{(m_i)}\leq C_{i,|\alpha|}^{(m_i)} b_{i,\alpha \beta}^{(m_i)}
\end{equation}
for any $\alpha, \beta\in \FF_{n_i}^+$.
 Hence,  we deduce that
\begin{equation*}
\begin{split}
\sum_{\beta_1\in \FF_{n_1}^+,\ldots, \beta_k\in \FF_{n_k}^+}
&|c_{\beta_1,\ldots, \beta_k}|^2\frac{b_{1,\gamma_1}
^{(m_1)}}{b_{1,\beta_1\gamma_1}^{(m_1)}}\cdots \frac{b_{k,\gamma_k}
^{(m_k)}}{b_{k,\beta_k\gamma_k}^{(m_k)}} \\
&\leq C_{1,|\gamma_1|}^{(
m_1)}\cdots C_{k,|\gamma_k|}^{(
m_k)}\sum_{\beta_1\in \FF_{n_1}^+,\ldots, \beta_k\in \FF_{n_k}^+}
|c_{\beta_1,\ldots, \beta_k}|^2
\frac{1}{b_{1,\beta_1}^{(m_1)}\cdots b_{k,\beta_k}^{(m_k)}}<\infty,
\end{split}
\end{equation*}
which proves our assertion.
Let $\cP$ be the linear span of  the vectors   $e_{\gamma_1}\otimes\cdots \otimes e_{\gamma_k}$  for  $\gamma_1\in \FF_{n_1}^+,\ldots, \gamma_k\in \FF_{n_k}^+$.
If
$$
\sup_{p\in\cP, \|p\|\leq 1} \left\|\sum\limits_{\beta_1\in \FF_{n_1}^+,\ldots, \beta_k\in \FF_{n_k}^+} c_{\beta_1,\ldots, \beta_k}
{\bf W}_{1,\beta_1}\cdots {\bf W}_{k,\beta_k}(p)\right\|<\infty,
$$
then there is a unique bounded operator acting on $F^2(H_{n_1})\otimes\cdots \otimes F^2(H_{n_k})$, which
we denote by $\varphi({\bf W}_{i,j})$, such that
$$
\varphi({\bf W}_{i,j})p=\sum\limits_{\beta_1\in \FF_{n_1}^+,\ldots, \beta_k\in \FF_{n_k}^+} c_{\beta_1,\ldots, \beta_k}
{\bf W}_{1,\beta_1}\cdots {\bf W}_{k,\beta_k}(p)\quad \text{ for any } \ p\in \cP.
$$
The set of all operators $\varphi({\bf W}_{i,j})\in B(\otimes_{i=1}^k F^2(H_{n_i}))$
satisfying the above-mentioned properties is denoted by
$F^\infty({\bf D^m_f})$.   One can prove that $F^\infty({\bf D^m_f})$
is a Banach algebra, which we call Hardy algebra associated with the
noncommutative polydomain ${\bf D^m_f}$.

 In a similar manner,   one can   define
     the Hardy algebra $R^\infty({\bf D^m_f})$.
For each $i\in \{1,\ldots, k\}$ and $j\in \{1,\ldots, n_i\}$, we
define the operator ${\bf \Lambda}_{i,j}$ acting on the Hilbert space
$F^2(H_{n_1})\otimes\cdots\otimes F^2(H_{n_k})$ by setting
$${\bf \Lambda}_{i,j}:=\underbrace{I\otimes\cdots\otimes I}_{\text{${i-1}$
times}}\otimes \Lambda_{i,j}\otimes \underbrace{I\otimes\cdots\otimes
I}_{\text{${k-i}$ times}}.
$$
 Set ${\bf \Lambda}_i:=({\bf \Lambda}_{i,1},\ldots,{\bf \Lambda}_{i,n_i})$.
  As in Lemma \ref{univ-model}, one can  prove that,
 ${\bf \Lambda}:=({\bf \Lambda}_1,\ldots, {\bf \Lambda}_k)$  is in the
noncommutative polydomain ${\bf D_{{\tilde f}}^m}(\otimes_{i=1}^kF^2(H_{n_i}))$,
 where $\tilde{\bf f}=(\tilde{f_1},\ldots, \tilde{f_k})$.

Let
 $\chi({\bf \Lambda}_{i,j})=\sum\limits_{\beta_1\in \FF_{n_1}^+,\ldots,
 \beta_k\in \FF_{n_k}^+} c_{\tilde{\beta}_1,\ldots, \tilde{\beta}_k}
{\bf \Lambda}_{1,\beta_1}\cdots {\bf \Lambda}_{k,\beta_k} $ be a formal sum with
$c_{\tilde{\beta}_1,\ldots, \tilde{\beta}_k}\in \CC$ and such   that
$$\sum\limits_{\beta_1\in \FF_{n_1}^+,\ldots, \beta_k\in \FF_{n_k}^+}
 |c_{\beta_1,\ldots, \beta_k}|^2 \frac{1}{b_{1,\beta_1}^{(m_1)}\cdots b_{k,\beta_k}^{(m_k)}}<\infty
$$
and
$$
\sup_{p\in\cP, \|p\|\leq 1} \left\|\sum\limits_{\beta_1\in \FF_{n_1}^+,\ldots, \beta_k\in \FF_{n_k}^+} c_{\tilde{\beta}_1,\ldots, \tilde{\beta}_k}
{\bf \Lambda}_{1,\beta_1}\cdots {\bf \Lambda}_{k,\beta_k}(p)\right\|<\infty.
$$
Then there is a unique bounded operator acting on $F^2(H_{n_1})\otimes\cdots \otimes F^2(H_{n_k})$, which
we denote by $\chi({\bf \Lambda}_{i,j})$, such that
$$
\chi({\bf \Lambda}_{i,j})p=\sum\limits_{\beta_1\in \FF_{n_1}^+,\ldots, \beta_k\in \FF_{n_k}^+} c_{\tilde{\beta}_1,\ldots, \tilde{\beta}_k}
{\bf \Lambda}_{1,\beta_1}\cdots {\bf \Lambda}_{k,\beta_k}(p)\quad \text{ for any } \ p\in \cP.
$$
The set of all operators $\chi({\bf \Lambda}_{i,j})\in
B(\otimes_{i=1}^kF^2(H_{n_i}))$
satisfying the above-mentioned properties is  a Banach algebra which is denoted by $R^\infty({\bf D^m_f})$.

\begin{proposition}\label{tilde-f2}
 The following statements hold:
\begin{enumerate}
\item[(i)] $F^\infty({\bf D^m_f})'=R^\infty({\bf D^m_f})$, where $'$ stands for the commutant;
 \item[(ii)] $F^\infty({\bf D^m_f})''=F^\infty({\bf D^m_f})$;
 \item[(iii)] $F^\infty({\bf D^m_f})$  is WOT-closed   in
$B(\otimes_{i=1}^kF^2(H_{n_i}))$.
 \end{enumerate}
\end{proposition}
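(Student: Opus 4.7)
The plan is to prove (i) first by setting up a Fourier-type representation for operators commuting with the universal model, then deduce (ii) by applying (i) to the "reverse" polydomain ${\bf D^m_{\tilde f}}$ (whose universal model is ${\bf \Lambda}$), and finally get (iii) as a routine consequence of (ii), since every commutant is WOT-closed.

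To begin, I would verify the crucial weighted commutation identity $W_{i,j}\Lambda_{i,q}=\Lambda_{i,q}W_{i,j}$ on each $F^2(H_{n_i})$. A direct computation on the basis $\{e^i_\alpha\}$ using the definitions shows both sides act as $e^i_\alpha\mapsto \sqrt{b_{i,\alpha}^{(m_i)}/b_{i,g_j\alpha g_q}^{(m_i)}}\, e^i_{g_j\alpha g_q}$. Combined with the fact that ${\bf W}_{i,j}$ and ${\bf \Lambda}_{p,q}$ act on different tensor factors when $i\neq p$, this yields ${\bf W}_{i,j}{\bf \Lambda}_{p,q}={\bf \Lambda}_{p,q}{\bf W}_{i,j}$ for all indices. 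Passing from polynomials to the formal series defining elements of $F^\infty({\bf D^m_f})$ and $R^\infty({\bf D^m_f})$ via approximation on the dense polynomial core $\cP$ gives $R^\infty({\bf D^m_f})\subseteq F^\infty({\bf D^m_f})'$.

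For the reverse inclusion in (i), let $A\in F^\infty({\bf D^m_f})'$ and set $\zeta:=A(1\otimes\cdots\otimes 1)$, with expansion $\zeta=\sum a_{\beta_1,\ldots,\beta_k}\,e^1_{\beta_1}\otimes\cdots\otimes e^k_{\beta_k}$. A direct calculation using \eqref{Wij} shows that
\[
{\bf W}_{1,\beta_1}\cdots{\bf W}_{k,\beta_k}(1\otimes\cdots\otimes 1)=\frac{1}{\sqrt{b_{1,\beta_1}^{(m_1)}\cdots b_{k,\beta_k}^{(m_k)}}}\,e^1_{\beta_1}\otimes\cdots\otimes e^k_{\beta_k},
\]
so these vectors span a dense subspace of $\otimes_{i=1}^k F^2(H_{n_i})$. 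Define coefficients $c_{\alpha_1,\ldots,\alpha_k}:=a_{\alpha_1,\ldots,\alpha_k}\sqrt{b_{1,\alpha_1}^{(m_1)}\cdots b_{k,\alpha_k}^{(m_k)}}$; then $\sum |c|^2/(b_1\cdots b_k)=\|\zeta\|^2<\infty$, so the formal series $\chi({\bf\Lambda}_{i,j}):=\sum c_{\tilde\beta_1,\ldots,\tilde\beta_k}{\bf \Lambda}_{1,\beta_1}\cdots{\bf \Lambda}_{k,\beta_k}$ is a candidate element of $R^\infty({\bf D^m_f})$. The analogous computation for ${\bf \Lambda}$ shows $\chi({\bf\Lambda})(1\otimes\cdots\otimes 1)=\zeta=A(1\otimes\cdots\otimes 1)$. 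Using the ${\bf W}$--${\bf\Lambda}$ commutation and that $A$ commutes with each ${\bf W}_{i,j}$, I would then check that
\[
A({\bf W}_{1,\beta_1}\cdots{\bf W}_{k,\beta_k}(1\otimes\cdots\otimes 1))={\bf W}_{1,\beta_1}\cdots{\bf W}_{k,\beta_k}\,\zeta=\chi({\bf \Lambda})({\bf W}_{1,\beta_1}\cdots{\bf W}_{k,\beta_k}(1\otimes\cdots\otimes 1)),
\]
so $A$ and $\chi({\bf \Lambda})$ agree on a dense set; since $A$ is bounded, the supremum condition defining membership in $R^\infty({\bf D^m_f})$ is automatic, and $A=\chi({\bf\Lambda})\in R^\infty({\bf D^m_f})$.

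Finally, the same argument carried out with the roles of $f$ and $\tilde f$ (equivalently, ${\bf W}$ and ${\bf\Lambda}$) interchanged yields $R^\infty({\bf D^m_f})'=F^\infty({\bf D^m_f})$. Combining the two identities gives $F^\infty({\bf D^m_f})''=R^\infty({\bf D^m_f})'=F^\infty({\bf D^m_f})$, which is (ii); and (iii) is then immediate because any commutant is automatically WOT-closed in $B(\otimes_{i=1}^k F^2(H_{n_i}))$. The principal technical difficulty I anticipate is organizing the weighted Fourier-coefficient bookkeeping (keeping track of $b_{i,\alpha}^{(m_i)}$ factors and of the reversal $\tilde\beta$ entering the definition of $R^\infty$) so that the identification $A=\chi({\bf\Lambda})$ is clean; the verification of boundedness of the candidate series, which is often the hard step in such Fourier-representation arguments, is handled here for free by the boundedness of $A$.
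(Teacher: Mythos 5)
Your proposal is correct and follows essentially the same route as the paper: establish the weighted commutation of ${\bf W}_{i,j}$ with ${\bf \Lambda}_{p,q}$ to get $R^\infty({\bf D^m_f})\subseteq F^\infty({\bf D^m_f})'$, then recover any commutant element from its action on the vacuum vector as an $R^\infty$ Fourier series whose boundedness is inherited from the operator itself, and finally obtain (ii) by the $f\leftrightarrow\tilde f$ symmetry and (iii) because commutants are WOT-closed. The only difference is that you spell out the symmetry argument for (ii) that the paper dismisses as "follows easily applying part (i)."
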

\begin{proof}   Let $U\in B(\otimes_{i=1}^kF^2(H_{n_i}))$ be the unitary operator
defined by equation
 $$ U(e_{\gamma_1}^1\otimes\cdots \otimes e_{\gamma_k}^k):= (e_{\widetilde{\gamma}_1}^1\otimes\cdots \otimes e_{\widetilde{\gamma}_k}^k),\qquad \gamma_1\in \FF_{n_1}^+,\ldots, \gamma_k\in \FF_{n_k}^+,
 $$
 and note that
 $U^* {\bf \Lambda}_{i,j} U={\bf W^{\tilde f}}_{i,j}$ for any
$i=1,\ldots,k$ and  $j\in \{1,\ldots, n_i\}$,  where ${\bf W^{\tilde f}}_{i,j}$ is the universal model associated with ${\bf D^m_{\tilde
f}}$.
Consequently,  we have $U^*(F^\infty({\bf D^m_{\tilde
f}}))U=R^\infty({\bf D^m_f})$.
On the other hand, since ${\bf W}_{i_1,j_1}
{\bf \Lambda}_{i_2,j_2} ={\bf \Lambda}_{i_2,j_2}{\bf W}_{i_1,j_1} $ for any $i_1,i_2\in \{1,\ldots,k\}$, $j_1\in \{1,\ldots, n_{i_1}\}$, and $j_2\in \{1,\ldots, n_{i_2}\}$.
we deduce that $R^\infty({\bf D^m_f})\subseteq F^\infty({\bf
D^m_f})'$. Now, we prove the reverse inclusion. Let $G\in F^\infty({\bf
D^m_f})'$ and note that,
 since $G(1)\in \otimes_{i=1}^k F^2(H_{n_i})$, we have
  $$G(1)=\sum\limits_{\beta_1\in \FF_{n_1}^+,\ldots, \beta_k\in \FF_{n_k}^+} c_{\tilde{\beta}_1,\ldots, \tilde{\beta}_k}\frac{1}{\sqrt{b_{1,\tilde \beta_1}^{(m_1)}}}\cdots \frac{1}{\sqrt{b_{k,\tilde \beta_k}^{(m_k)}}}
e_{\tilde \beta_1}^1\otimes \cdots \otimes e_{\tilde \beta_k}^k$$
for some coefficients $c_{\tilde{\beta}_1,\ldots, \tilde{\beta}_k}\in \CC $ with
$$
\sum\limits_{\beta_1\in \FF_{n_1}^+,\ldots, \beta_k\in \FF_{n_k}^+} |c_{\beta_1,\ldots, \beta_k}|^2 \frac{1}{b_{1,\beta_1}^{(m_1)}\cdots b_{k,\beta_k}^{(m_k)}}
<\infty.
$$
Taking into account that $ G{\bf W}_{i,j} ={\bf W}_{i,j}G$ for
$i\in\{1,\ldots,k\}$ and $j\in \{1,\ldots,n_i\}$, relations \eqref{WbWb} and its analogue  for ${\bf \Lambda}_{i,j}$  imply
\begin{equation*}
\begin{split}
G(e_{\alpha_1}^1\otimes\cdots \otimes e_{\alpha_k}^k)
&=\sqrt{b_{1,\alpha_1}^{(m_1)}}\cdots \sqrt{b_{k,\alpha_k}^{(m_k)}}GW_{1,\alpha_1}\cdots W_{k,\alpha_k}(1)\\
&=\sqrt{b_{1,\alpha_1}^{(m_1)}}\cdots \sqrt{b_{k,\alpha_k}^{(m_k)}}W_{1,\alpha_1}\cdots W_{k,\alpha_k}
G(1)\\
&=\sum\limits_{\beta_1\in \FF_{n_1}^+,\ldots, \beta_k\in \FF_{n_k}^+} c_{\tilde{\beta}_1,\ldots, \tilde{\beta}_k}
\frac{\sqrt{b^{(m_1)}_{1,\alpha_1}}}{\sqrt{b^{(m_1)}_{1,\alpha_1 \tilde\beta_1}}}\cdots \frac{\sqrt{b^{(m_k)}_{k,\alpha_k}}}{\sqrt{b^{(m_k)}_{k,\alpha_k \tilde\beta_k
}}}
e_{\alpha_1 \tilde \beta_1}^1\otimes \cdots \otimes e_{\alpha_k \tilde \beta_k}^k\\
& =\sum\limits_{\beta_1\in \FF_{n_1}^+,\ldots, \beta_k\in \FF_{n_k}^+} c_{\tilde{\beta}_1,\ldots, \tilde{\beta}_k}
{\bf \Lambda}_{1,\beta_1}\cdots {\bf \Lambda}_{k,\beta_k}(e_{\alpha_1}^1\otimes \cdots \otimes e_{\alpha_k}^k)
\end{split}
\end{equation*}
for any $\alpha_1\in \FF_{n_1}^+, \ldots, \alpha_k\in \FF_{n_k}^+$.
Therefore,
$$G(p)=\sum\limits_{\beta_1\in \FF_{n_1}^+,\ldots, \beta_k\in \FF_{n_k}^+} c_{\tilde{\beta}_1,\ldots, \tilde{\beta}_k}
{\bf \Lambda}_{1,\beta_1}\cdots {\bf \Lambda}_{k,\beta_k}(p)
 $$
 for any polynomial for any  $ p\in \cP$. Since $G$ is a bounded operator,
  $$
g({\bf \Lambda}_{i,j}):=\sum\limits_{\beta_1\in \FF_{n_1}^+,\ldots, \beta_k\in \FF_{n_k}^+} c_{\tilde{\beta}_1,\ldots, \tilde{\beta}_k}
{\bf \Lambda}_{1,\beta_1}\cdots {\bf \Lambda}_{k,\beta_k}
$$
 is
in $R^\infty({\bf D^m_f})$ and $G=g({\bf \Lambda}_{i,j})$.
Therefore, $R^\infty({\bf D^m_f})= F^\infty({\bf D^m_f})'$. The
item (ii) follows easily applying part (i). Now, item (iii) is clear.  This completes the
proof.
\end{proof}

Similarly to the proof of Proposition \ref{tilde-f2}, one can prove that if $\cS\subset B(\cK)$ and $I_\cK\in \cS$, then
$$
\left(F^\infty({\bf D_f^m})\otimes \cS\right)'=R^\infty({\bf D_f^m})\bar\otimes \cS'\quad \text{ and } \quad
\left(R^\infty({\bf D_f^m})\otimes \cS\right)'=F^\infty({\bf D_f^m})\bar\otimes \cS',
$$
where $F^\infty({\bf D_f^m})\bar\otimes \cS'$ is the WOT-closed algebra generated by the spatial tensor product of the two algebras.  Moreover,
for each $i\in \{1,\ldots,k\}$, the commutant of the set
$$
\{W_{i,j}\otimes I_\cH:\ j\in \{1,\ldots,n_i\}\}\cup \{I_{F^2(H_{n_i}}\otimes Y:\ Y\in \cS\}
$$
is equal to $R^\infty({\bf D}_{f_1}^{m_1})\bar \otimes \cS'$.
A repeated appplication of these  results  shows that,
  if ${\bf f}=(f_1,\ldots, f_k)$ and ${\bf m}=(m_1,\ldots, m_k)$, then
$$
F^\infty({\bf D_f^m})\bar \otimes  B(\cH)=F^\infty({\bf D}_{f_1}^{m_1})\bar
 \otimes \cdots \bar \otimes F^\infty({\bf D}_{f_k}^{m_k})\bar \otimes B(\cH)
$$
In  the same manner, one can prove the corresponding result for $R^\infty({\bf D_f^m})\bar \otimes  B(\cH)$.
Another consequence of the results above is the following Tomita-type theorem in our non-selfadjoint setting:
  if $\cM$ is a von Neumann algebra, then
$$
\left(F^\infty({\bf D_f^m})\bar\otimes \cM\right)''=F^\infty({\bf D_f^m})\bar\otimes \cM.
$$

\begin{proposition}\label{density-pol}
 The noncommutative Hardy  algebra $F^\infty({\bf D^m_f})$ is the sequential  SOT-(resp.~WOT-, $w^*$-) closure of all polynomials in ${\bf W}_{i,j}$   and
the identity, where   $i\in \{1,\ldots,k\}$, $j\in \{1,\ldots, n_k\}$.
\end{proposition}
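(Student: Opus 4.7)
The plan is to prove the non-trivial inclusion, namely that every $\varphi\in F^\infty({\bf D^m_f})$ is a sequential SOT-limit of polynomials in $\{{\bf W}_{i,j}\}$ and $I$; the reverse inclusion is immediate from Proposition~\ref{tilde-f2}(iii), since $F^\infty({\bf D^m_f})$ is WOT-closed and any SOT-, WOT-, or $w^*$-convergent sequence of operators is norm-bounded by the uniform boundedness principle, hence WOT-convergent to the same limit.

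The main construction is a noncommutative multi-variable Abel mean. Given $\varphi = \sum c_{\beta_1,\ldots,\beta_k}{\bf W}_{1,\beta_1}\cdots{\bf W}_{k,\beta_k}\in F^\infty({\bf D^m_f})$ and $r\in(0,1)$, I would introduce the radial dilate $\varphi_r$ obtained by scaling each Fourier coefficient by $r^{|\beta_1|+\cdots+|\beta_k|}$. The first step is the uniform norm bound $\|\varphi_r\|\le\|\varphi\|$. Because $\Phi_{f_i,r{\bf W}_i}(I)\le rI$, the tuple $r{\bf W}$ is a pure element of ${\bf D^m_f}(\otimes_{i=1}^k F^2(H_{n_i}))$, so by Theorem~\ref{Berezin-prop} the Berezin kernel ${\bf K}_{{\bf f},r{\bf W}}$ is an isometry with $({\bf W}_{i,j}^*\otimes I){\bf K}_{{\bf f},r{\bf W}} = {\bf K}_{{\bf f},r{\bf W}}(r{\bf W}_{i,j})^*$. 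Verifying on polynomials by means of this intertwining relation and then extending by density, one identifies $\varphi_r = {\bf K}^*_{{\bf f},r{\bf W}}(\varphi({\bf W})\otimes I){\bf K}_{{\bf f},r{\bf W}}$, which gives $\|\varphi_r\|\le\|\varphi\|$. An equivalent route uses the multi-parameter gauge unitaries $U_\lambda$ on $\otimes_{i=1}^k F^2(H_{n_i})$ defined for $\lambda=(\lambda_1,\ldots,\lambda_k)\in\TT^k$ by $U_\lambda(e^1_{\alpha_1}\otimes\cdots\otimes e^k_{\alpha_k}) := \lambda_1^{|\alpha_1|}\cdots\lambda_k^{|\alpha_k|}\,e^1_{\alpha_1}\otimes\cdots\otimes e^k_{\alpha_k}$, which satisfy $U_\lambda{\bf W}_{i,j}U_\lambda^*=\lambda_i{\bf W}_{i,j}$, and integrates $U_\lambda \varphi U_\lambda^*$ against a product Poisson kernel on $\TT^k$.

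Next, for each fixed $r\in(0,1)$ I would show that $\varphi_r$ is the operator-norm limit of polynomials in $\{{\bf W}_{i,j}\}$. The $\ell^2$-summability of $(c_{\beta_1,\ldots,\beta_k})$, the weight estimates \eqref{Mb}, and the geometric decay $r^{|\beta_1|+\cdots+|\beta_k|}$ together control tail norms: a suitable multi-index Fej\'er/Ces\`aro truncation $q_N^{(r)}$ of $\varphi_r$ is a polynomial in $\{{\bf W}_{i,j}\}$ and $\|\varphi_r - q_N^{(r)}\|\to 0$ as $N\to\infty$, so $\varphi_r\in\cA({\bf D_f^m})$. Separately, for any polynomial $p\in\cP$ one checks that $\varphi_r p\to\varphi p$ in norm as $r\nearrow 1$ by coefficient-wise dominated convergence on the finitely many basis components contributing to $\varphi p$. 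Combined with the uniform bound $\|\varphi_r\|\le\|\varphi\|$ and the density of $\cP$ in $\otimes_{i=1}^k F^2(H_{n_i})$, a standard $\epsilon/3$ argument upgrades this to $\varphi_r h\to\varphi h$ in norm for every $h$, i.e.\ $\varphi_r\to\varphi$ in the strong operator topology.

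A diagonal selection then finishes the proof: pick $r_n\nearrow 1$ and, for each $n$, a polynomial $q_n$ in $\{{\bf W}_{i,j}\}$ with $\|q_n - \varphi_{r_n}\| < 1/n$; then $q_n\to\varphi$ in SOT, and since the sequence is norm-bounded this convergence also holds in WOT and $w^*$. The hard part will be the norm approximation of $\varphi_r$ by its truncations: the bound $\|\varphi_r\|\le\|\varphi\|$ from Step~1 does not by itself yield norm-approximability, so one must carefully balance the $\ell^2$-summability of the Fourier coefficients, the weight inequalities \eqref{Mb}, and the geometric decay $r^{|\beta|}$. This is the multi-parameter analogue of the classical fact that Abel means of bounded holomorphic functions on the disk lie in the disk algebra, adapting the one-variable argument from \cite{Po-Berezin} to the present tensor-product setting.
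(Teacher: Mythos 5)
Your proof is correct, but it takes a genuinely different route from the paper's. The paper proves Proposition \ref{density-pol} with Fej\'er--Ces\`aro means adapted to the natural grading of $\otimes_{i=1}^k F^2(H_{n_i})$: with $P_n$ the projection onto the span of the $e^1_{\alpha_1}\otimes\cdots\otimes e^k_{\alpha_k}$ with $|\alpha_1|+\cdots+|\alpha_k|=n$, the maps $\Gamma_j(A)=\sum_{n\geq\max\{0,-j\}} P_nAP_{n+j}$ and $\chi_n(A)=\sum_{|j|<n}(1-|j|/n)\Gamma_j(A)$ are completely contractive and $\chi_n(A)\to A$ in SOT for every $A$; for $A\in F^\infty({\bf D^m_f})$ one checks that $P_{n+j}AP_j$ picks out the $n$-homogeneous part of the Fourier series and that $P_jAP_{n+j}=0$ for $n\geq 1$, so each $\chi_n(A)$ is literally a polynomial in the ${\bf W}_{i,j}$. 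This yields the density statement with no summability estimates and no Berezin kernel. Your Abel-mean route --- radial dilates $\varphi_r$, the uniform bound $\|\varphi_r\|\leq\|\varphi\|$ via the isometric Berezin kernel of the pure tuple $r{\bf W}$, norm-approximability of $\varphi_r$ by its truncations, SOT-convergence $\varphi_r\to\varphi$, and a diagonal selection --- is essentially the content of the paper's Lemma \ref{gKKg}, which is proved immediately after this proposition and independently of it, so no circularity arises; it buys more (membership of each $\varphi_r$ in $\cA({\bf D_f^m})$ and the formula $\varphi=\text{\rm SOT-}\lim_{r\to 1}\varphi_r$, which the paper needs anyway for the functional calculus) at the cost of the weight inequalities \eqref{Mb} and the binomial tail estimates. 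One step of your sketch needs more care than ``extend by density'': the identification $\varphi_r={\bf K}^*_{{\bf f},r{\bf W}}(\varphi({\bf W}_{i,j})\otimes I){\bf K}_{{\bf f},r{\bf W}}$ cannot be obtained by approximating $\varphi$ in norm by polynomials (that fails for general $\varphi\in F^\infty({\bf D^m_f})$); the correct argument, as in Lemma \ref{gKKg}, computes matrix coefficients against the graded basis and uses that ${\bf W}^*_{1,\beta_1}\cdots{\bf W}^*_{k,\beta_k}$ annihilates $e^1_{\gamma_1}\otimes\cdots\otimes e^k_{\gamma_k}$ once $|\beta_1|+\cdots+|\beta_k|>|\gamma_1|+\cdots+|\gamma_k|$, so that on each basis vector $\varphi(r{\bf W}_{i,j})^*$ agrees with a polynomial truncation. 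Your treatment of the easy inclusion (uniform boundedness of convergent sequences plus WOT-closedness from Proposition \ref{tilde-f2}) is fine.
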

\begin{proof}
Let $P_n$, $n\geq 0$,  be the orthogonal projection of $F^2(H_{n_1})\otimes \cdots \otimes F^2(H_{n_k})$ on the the
subspace  $\text{\rm span}\,\{e_{\alpha_1}\otimes \cdots \otimes e_{\alpha_k}: \ |\alpha_1|+\cdots +|\alpha_k|=n, \, \alpha_1\in \FF_{n_1}^+,\ldots, \alpha_k\in \FF_{n_k}^+\}$.  Define the completely contractive projection
$\Gamma_j:B(\otimes_{i=1}^k F^2(H_{n_i}))\to B(\otimes_{i=1}^k F^2(H_{n_i}))$, $j\in \ZZ$, by
$$
\Gamma_j(A):=\sum_{n\geq\max\{0,-j\}} P_nAP_{n+j}.
$$
The Cesaro operators on
$B(\otimes_{i=1}^k F^2(H_{n_i}))$, defined by
$$
\chi_n(A):=\sum_{|j|<n}\left(1-\frac{|j|}{n}\right)
\Gamma_j(A),\quad n\geq 1,
$$
 are completely contractive and $\chi_n(A)$ converges to $A$ in
 the strong operator topology.
 Let $A\in F^\infty({\bf D^m_f})$ have the Fourier
 representation $\sum\limits_{\beta_1\in \FF_{n_1}^+,\ldots, \beta_k\in \FF_{n_k}^+} c_{\beta_1,\ldots, \beta_k}
{\bf W}_{1,\beta_1}\cdots {\bf W}_{k,\beta_k}$.  Taking into account the
 definition of the  operators ${\bf W}_{i,j}$, one can easily  check that
 $$
 P_{n+j} AP_j=\left(\sum\limits_{{|\beta_1|+\cdots|\beta_k|=n}\atop{\beta_1\in \FF_{n_1}^+,\ldots, \beta_k\in \FF_{n_k}^+}} c_{\beta_1,\ldots, \beta_k}
{\bf W}_{1,\beta_1}\cdots {\bf W}_{k,\beta_k} \right)
 P_j,\quad n\geq 0, j\geq 0,
 $$
 and $P_j A P_{n+j}=0$ if $n\geq 1$  and  $j\geq 0$. Therefore,
 $$
\chi_k(A)=\sum_{0\leq q\leq
n-1}\left(1-\frac{q}{n}\right)\left(\sum\limits_{{|\beta_1|+\cdots|\beta_k|=q}\atop{\beta_1\in \FF_{n_1}^+,\ldots, \beta_k\in \FF_{n_k}^+}} c_{\beta_1,\ldots, \beta_k}
{\bf W}_{1,\beta_1}\cdots {\bf W}_{k,\beta_k} \right)
$$
converges to $A$, as $ k\to \infty$, in the strong operator
topology. The proof is complete.
\end{proof}

\begin{lemma}
\label{gKKg} Let  ${\bf W}:=({\bf W}_1,\ldots, {\bf W}_k)$ be
the universal model associated to the abstract  noncommutative domain
 ${\bf D_f^m}$, where ${\bf W}_i:=({\bf W}_{i,1},\ldots, {\bf W}_{i,n_i})$
 for  $i\in \{1,\ldots,k\}$. If
  $$\varphi({\bf W}_{i,j})=\sum\limits_{\beta_1\in \FF_{n_1}^+,\ldots, \beta_k\in \FF_{n_k}^+} c_{\beta_1,\ldots, \beta_k}
{\bf W}_{1,\beta_1}\cdots {\bf W}_{k,\beta_k}$$ is  in  the noncommutative Hardy algebra $F^\infty({\bf D_f^m})$,
 then the following statements hold.
 \begin{enumerate}
 \item[(i)]
The series  $$\varphi(r{\bf W}_{i,j})
:=\sum\limits_{q=0}^\infty \sum\limits_{{\beta_1\in \FF_{n_1}^+,\ldots,
 \beta_k\in \FF_{n_k}^+}\atop{|\beta_1|+\cdots +|\beta_k|=q}}
  r^{q}c_{\beta_1,\ldots, \beta_k}
{\bf W}_{1,\beta_1}\cdots {\bf W}_{k,\beta_k}$$ converges  in the operator norm topology for any $r\in [0,1)$.
\item[(ii)] The operator $\varphi(r{\bf W}_{i,j})$ is in the noncommutative domain algebra $\cA({\bf D_f^m})$ and
    $$
    \|\varphi(r{\bf W}_{i,j})\|\leq \|\varphi({\bf W}_{i,j})\|.
    $$
    \item[(iii)]
     $\varphi({\bf W}_{i,j})
     =\text{\rm SOT-}\lim_{r\to 1} \varphi(r{\bf W}_{i,j})
     $
     and
     $$ \|\varphi({\bf W}_{i,j})\|=\sup_{0\leq r<1}\|\varphi(r{\bf W}_{i,j})\|=\lim_{r\to 1}\|\varphi(r{\bf W}_{i,j})\|.
     $$
\end{enumerate}
\end{lemma}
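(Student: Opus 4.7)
The plan is to treat parts (i)--(iii) in order, the key tool being a uniform bound on the ``homogeneous parts''
$$\varphi_q := \sum_{{|\beta_1|+\cdots+|\beta_k|=q}\atop{\beta_i\in \FF_{n_i}^+}} c_{\beta_1,\ldots,\beta_k}{\bf W}_{1,\beta_1}\cdots {\bf W}_{k,\beta_k}.$$
Writing $P_j$ for the projection of $\otimes_{i=1}^k F^2(H_{n_i})$ onto the total-degree-$j$ subspace (as in Proposition \ref{density-pol}) and using that each ${\bf W}_{i,\ell}$ raises the total degree by one, I would observe that $\varphi_q = \sum_{j\ge 0} P_{j+q}\varphi({\bf W}_{i,j})P_j$, whence
$$\|\varphi_q x\|^2=\sum_j \|P_{j+q}\varphi({\bf W}_{i,j})P_j x\|^2\le \|\varphi({\bf W}_{i,j})\|^2\|x\|^2$$
for every $x\in\otimes_{i=1}^k F^2(H_{n_i})$. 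Hence $\|\varphi_q\|\le \|\varphi({\bf W}_{i,j})\|$, and since $\sum_q r^q$ converges for $r\in[0,1)$, part (i) follows by absolute norm convergence.

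For part (ii), the norm convergence from (i) exhibits $\varphi(r{\bf W}_{i,j})$ as the limit of the polynomials $\sum_{q\le N}r^q\varphi_q$ in the ${\bf W}_{i,j}$, placing it in $\cA({\bf D_f^m})$. For the norm bound, I would use Lemma \ref{univ-model} together with the radial property (Theorem \ref{radial}) to obtain $r{\bf W}\in {\bf D_f^m}(\otimes_{i=1}^k F^2(H_{n_i}))$ for every $r\in[0,1)$. Theorem \ref{Poisson-C*} then provides a completely contractive homomorphism on $\cA({\bf D_f^m})$ sending each polynomial $p({\bf W}_{i,j})$ to $p(r{\bf W}_{i,j})$. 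Applied to the Cesaro polynomial $\chi_n(\varphi({\bf W}_{i,j}))$ of Proposition \ref{density-pol}, which satisfies $\|\chi_n(\varphi({\bf W}_{i,j}))\|\le \|\varphi({\bf W}_{i,j})\|$, this yields
$$\Bigl\|\sum_{q<n}\bigl(1-\tfrac{q}{n}\bigr)r^q\varphi_q\Bigr\|\le \|\varphi({\bf W}_{i,j})\|.$$
A short estimate using $\|\varphi_q\|\le\|\varphi({\bf W}_{i,j})\|$ shows that the left-hand side converges in norm to $\varphi(r{\bf W}_{i,j})$ as $n\to\infty$ (the tails $\sum_{q\ge n}r^q$ and $\frac{1}{n}\sum_{q<n}qr^q$ both vanish at fixed $r<1$), proving (ii).

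For part (iii), the uniform bound from (ii) reduces the SOT convergence $\varphi({\bf W}_{i,j})=\text{SOT-}\lim_{r\to 1}\varphi(r{\bf W}_{i,j})$ to pointwise convergence on the dense subspace $\cP$ by a standard $3\epsilon$ argument. For $p=e^1_{\gamma_1}\otimes\cdots\otimes e^k_{\gamma_k}\in\cP$, the explicit action of ${\bf W}_{1,\beta_1}\cdots {\bf W}_{k,\beta_k}$ computed at the start of Section 3 gives
\begin{equation*}
\|\varphi({\bf W}_{i,j})p-\varphi(r{\bf W}_{i,j})p\|^2
=\sum_{\beta_1,\ldots,\beta_k}|c_{\beta_1,\ldots,\beta_k}|^2\bigl(1-r^{|\beta_1|+\cdots+|\beta_k|}\bigr)^2\frac{b_{1,\gamma_1}^{(m_1)}\cdots b_{k,\gamma_k}^{(m_k)}}{b_{1,\beta_1\gamma_1}^{(m_1)}\cdots b_{k,\beta_k\gamma_k}^{(m_k)}},
\end{equation*}
which tends to $0$ as $r\to 1$ by dominated convergence; the series without the factor $(1-r^{|\beta|})^2$ is precisely $\|\varphi({\bf W}_{i,j})p\|^2<\infty$. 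The norm equalities then follow by combining $\|\varphi(r{\bf W}_{i,j})\|\le\|\varphi({\bf W}_{i,j})\|$ from (ii) with the SOT-lower-semicontinuity of the operator norm, which gives $\|\varphi({\bf W}_{i,j})\|\le\liminf_{r\to 1}\|\varphi(r{\bf W}_{i,j})\|$.

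The main obstacle I anticipate is the double approximation in part (ii), where at fixed $r\in(0,1)$ one must interchange Cesaro truncation in $n$ with evaluation at $r{\bf W}$; the uniform bound $\|\varphi_q\|\le \|\varphi({\bf W}_{i,j})\|$ established in part (i) is precisely what tames this interchange and secures the inequality $\|\varphi(r{\bf W}_{i,j})\|\le \|\varphi({\bf W}_{i,j})\|$, from which everything else in (iii) follows cleanly.
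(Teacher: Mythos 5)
Your proof is correct, but it reaches the two key estimates by a genuinely different route than the paper. For part (i), you exploit the orthogonality of the total-degree subspaces to show that each homogeneous part satisfies $\|\varphi_q\|\leq\|\varphi({\bf W}_{i,j})\|$ (this is exactly the contractivity of the Fourier-coefficient maps $\Gamma_{-q}$ of Proposition \ref{density-pol}), and then sum a geometric series; the paper instead proves the combinatorial bound $\sum_{|\beta_i|=p_i} b_{1,\beta_1}^{(m_1)}\cdots b_{k,\beta_k}^{(m_k)}{\bf W}_{1,\beta_1}\cdots{\bf W}_{k,\beta_k}{\bf W}_{k,\beta_k}^*\cdots{\bf W}_{1,\beta_1}^*\leq \binom{p_1+m_1-1}{m_1-1}\cdots\binom{p_k+m_k-1}{m_k-1}I$ and pairs it with the $\ell^2$ coefficient condition via Cauchy--Schwarz. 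For part (ii), you obtain $\|\varphi(r{\bf W}_{i,j})\|\leq\|\varphi({\bf W}_{i,j})\|$ by pushing the Ces\`aro means through the von Neumann inequality of Theorem \ref{Poisson-C*} at $r{\bf W}$ and passing to the limit in $n$, whereas the paper derives it from the intertwining ${\bf K_{f,rW}}\varphi(r{\bf W}_{i,j})^*=[\varphi({\bf W}_{i,j})^*\otimes I]{\bf K_{f,rW}}$ together with the isometry of the Berezin kernel. Your route is shorter and more elementary, but note what the paper's detour buys: both the binomial estimate \eqref{se-ine} and the kernel intertwining relation are reused verbatim in Lemma \ref{Berezin-relations}, so they are not incidental scaffolding. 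Part (iii) is essentially the paper's argument, with the pleasant refinements that you make the dominated-convergence step on basis vectors explicit and obtain the norm equalities from SOT lower semicontinuity of the norm rather than from the monotonicity of $r\mapsto\|\varphi(r{\bf W}_{i,j})\|$.
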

\begin{proof} First, we prove that
\begin{equation} \label{ineq-comin}
\begin{split}
\sum\limits_{{\beta_1\in \FF_{n_1}^+,\ldots, \beta_k\in \FF_{n_k}^+}\atop{|\beta_1|=p_1,\ldots, |\beta_k|=p_k}}
  b_{1,\beta_1}^{(m_1)}\cdots b_{k,\beta_k}^{(m_k)}
  &{\bf W}_{1,\beta_1}\cdots {\bf W}_{k,\beta_k}{\bf W}_{k,\beta_k}^*\cdots {\bf W}_{1,\beta_1}^*\\
  &
  \leq
 \left(\begin{matrix}
p_1+m_1-1\\m_1-1 \end{matrix}\right) \cdots \left(\begin{matrix}
p_k+m_k-1\\m_k-1 \end{matrix}\right) I.
  \end{split}
  \end{equation}
According to  relations \eqref{Wij} and   \eqref{Mb}, for each $i\in \{1,\ldots,k\}$, and $p_i\in \NN$,
the operators $\{W_{i,\beta_i}\}_{\beta_i\in \FF_{n_i}, |\beta_i|=p_i}$ have orthogonal ranges and

$$\|W_{i,\beta_i} x\|\leq \frac{1}{\sqrt{b^{(m_i)}_{i,\beta_i}}} \left(\begin{matrix} |\beta_i|+m_i-1\\m_i-1
\end{matrix}\right)^{1/2}\|x\|,
\qquad x\in F^2(H_{n_i}).
$$
Consequently,  we deduce that
\begin{equation*}
  \sum\limits_{\beta_i\in \FF_{n_i}^+,|\beta_i|=p_i} b_{i,\beta_i}^{(m_i)} W_{i,\beta_i}
W_{i,\beta_i}^* \leq \left(\begin{matrix} p_i+m_i-1\\m_i-1
\end{matrix}\right)I\quad \text{  for any } \quad p_i\in\NN.
\end{equation*}
A repeated application of this inequality proves our assertion.
Since $\varphi({\bf W}_{i,j}) \in F^\infty({\bf D_f^m})$, we have
  \begin{equation}
  \label{ell2}
  \sum\limits_{\beta_1\in \FF_{n_1}^+,\ldots, \beta_k\in \FF_{n_k}^+} |c_{\beta_1,\ldots, \beta_k}|^2 \frac{1}{b_{1,\beta_1}^{(m_1)}\cdots b_{k,\beta_k}^{(m_k)}}<\infty.
\end{equation}
 Hence,  using relation \eqref{ineq-comin} and Cauchy-Schwarz inequality,
 we deduce that, for $0\leq r<1$,
\begin{equation*}
 \begin{split}
 &\sum_{p=0}^\infty r^p \left\|\sum_{{p_1,\ldots, p_k\in \NN\cup\{0\}}\atop{p_1+\cdots +p_k=p}}\sum\limits_{{\beta_1\in \FF_{n_1}^+,\ldots, \beta_k\in \FF_{n_k}^+}\atop{|\beta_1|=p_1,\ldots, |\beta_k|=p_k}} c_{\beta_1,\ldots, \beta_k}{\bf W}_{1,\beta_1}\cdots {\bf W}_{k,\beta_k}\right\|\\
 &\leq
 \sum_{p=0}^\infty r^p \sum_{{p_1,\ldots, p_k\in \NN\cup\{0\}}\atop{p_1+\cdots +p_k=p}} \left(\sum\limits_{{\beta_1\in \FF_{n_1}^+,\ldots, \beta_k\in \FF_{n_k}^+}\atop{|\beta_1|=p_1,\ldots, |\beta_k|=p_k}} |c_{\beta_1,\ldots, \beta_k}|^2 \frac{1}{b_{1,\beta_1}^{(m_1)}\cdots b_{k,\beta_k}^{(m_k)}}\right)^{1/2}\\
 &\qquad \qquad \left\|
 \sum\limits_{{\beta_1\in \FF_{n_1}^+,\ldots, \beta_k\in \FF_{n_k}^+}\atop{|\beta_1|=p_1,\ldots, |\beta_k|=p_k}}
  b_{1,\beta_1}^{(m_1)}\cdots b_{k,\beta_k}^{(m_k)}
  {\bf W}_{1,\beta_1}\cdots {\bf W}_{k,\beta_k}{\bf W}_{k,\beta_k}^*\cdots {\bf W}_{1,\beta_1}^*\right\|^{1/2}  \\
  &\leq
  \sum_{p=0}^\infty  r^p\sum_{{p_1,\ldots, p_k\in \NN\cup\{0\}}\atop{p_1+\cdots +p_k=p}} \left(\begin{matrix}
p_1+m_1-1\\m_1-1 \end{matrix}\right)^{1/2}\cdots \left(\begin{matrix}
p_k+m_k-1\\m_k-1 \end{matrix}\right)^{1/2}  \left(\sum\limits_{{\beta_1\in \FF_{n_1}^+,\ldots, \beta_k\in \FF_{n_k}^+}\atop{|\beta_1|=p_1,\ldots, |\beta_k|=p_k}} |c_{\beta_1,\ldots, \beta_k}|^2 \frac{1}{b_{1,\beta_1}^{(m_1)}\cdots b_{k,\beta_k}^{(m_k)}}\right)^{1/2} \\
 &\leq
   \left(\sum_{p=0}^\infty  r^{2p}\sum_{{p_1,\ldots, p_k\in \NN\cup\{0\}}\atop{p_1+\cdots +p_k=p}} \left(\begin{matrix}
p_1+m_1-1\\m_1-1 \end{matrix}\right)\cdots \left(\begin{matrix}
p_k+m_k-1\\m_k-1 \end{matrix}\right)\right)^{1/2}  \left(\sum\limits_{{\beta_1\in \FF_{n_1}^+,\ldots, \beta_k\in \FF_{n_k}^+} } |c_{\beta_1,\ldots, \beta_k}|^2 \frac{1}{b_{1,\beta_1}^{(m_1)}\cdots b_{k,\beta_k}^{(m_k)}}\right)^{1/2}.
 \end{split}
 \end{equation*}
 Now,  using  relation \eqref{ell2} we obtain
 \begin{equation}
 \label{se-ine}
 \sum_{p=0}^\infty  r^{2p}\sum_{{p_1,\ldots, p_k\in \NN\cup\{0\}}\atop{p_1+\cdots +p_k=p}} \left(\begin{matrix}
p_1+m_1-1\\m_1-1 \end{matrix}\right)\cdots \left(\begin{matrix}
p_k+m_k-1\\m_k-1 \end{matrix}\right)\leq \sum_{p=0}^\infty  r^{2p} (p+M)^{Mk-k} (p+1)^{k}<\infty,
 \end{equation}
where $M:=\max\{m_1,\ldots,m_k\}$,
and deduce  that
the series
\begin{equation*}
\varphi(r{ \bf W}_{i,j}):=\sum\limits_{q=0}^\infty \sum\limits_{{\beta_1\in \FF_{n_1}^+,\ldots, \beta_k\in \FF_{n_k}^+}\atop{|\beta_1|+\cdots +|\beta_k|=q}} r^{q}c_{\beta_1,\ldots, \beta_k}
{\bf W}_{1,\beta_1}\cdots {\bf W}_{k,\beta_k}
 \end{equation*}
 converges  in the operator norm topology. Therefore $\varphi(r{ \bf W}_{i,j})$
 is in the  noncommutative domain  algebra
$\cA({\bf D^m_f})$.
In what follows, we show that
\begin{equation}\label{c0sot}
\varphi({\bf W}_{i,j})=\text{\rm SOT-}\lim_{r\to 1} \varphi(r{\bf W}_{i,j})
\end{equation}
for any  $\varphi({\bf W}_{i,j})=\sum\limits_{\beta_1\in \FF_{n_1}^+,\ldots, \beta_k\in \FF_{n_k}^+} c_{\beta_1,\ldots, \beta_k}
{\bf W}_{1,\beta_1}\cdots {\bf W}_{k,\beta_k}$    in  the noncommutative Hardy algebra $F^\infty({\bf D_f^m})$.
 According to the first part of this lemma,
 \begin{equation}
\label{limm}
 \varphi(r{\bf W}_{i,j})=\lim_{n\to \infty}p_n(r{\bf W}_{i,j}),
\end{equation}
where $p_n({\bf W}_{i,j}):=\sum_{q=0}^n \sum\limits_{{\beta_1\in \FF_{n_1}^+,\ldots, \beta_k\in \FF_{n_k}^+}\atop{|\beta_1|+\cdots +|\beta_k|=q}} c_{\beta_1,\ldots, \beta_k}
{\bf W}_{1,\beta_1}\cdots {\bf W}_{k,\beta_k}$ and the convergence is in the operator norm topology.
 For each $i\in \{1,\ldots,k\}$, let   $\gamma_i, \sigma_i, \epsilon_i\in \FF_{n_i}^+$ and set  $n:=|\gamma_1|+\cdots +|\gamma_k|$.
 Since ${\bf W}_{1,\beta_1}^*\cdots {\bf W}_{k,\beta_k}^*(e_{\gamma_1}^1\otimes \cdots \otimes e_{\gamma_k}^k) =0$ for any
$\beta_i\in \FF_{n_i}^+$ with $|\beta_1|+\cdots + |\beta_k|>n$,  we have
$$
\varphi(r{\bf W}_{i,j})^* (e_{\alpha_1}^1\otimes \cdots \otimes e_{\alpha_k}^k) =p_n(r{\bf W}_{i,j})^*  (e_{\alpha_1}^1\otimes \cdots \otimes e_{\alpha_k}^k)$$
for any $\alpha_i\in \FF_{n_i}^+$ with $|\alpha_1|+\cdots +|\alpha_k|\leq n$ and any
$r\in [0,1)$. Due to Lemma \ref{univ-model} and Theorem \ref{radial},   $r{\bf W}:=(r{\bf W}_1,\ldots, r{\bf W}_n)$ is
  a pure $k$-tuple  in the
noncommutative polydomain $ {\bf D_f^m}(\otimes_{i=1}^kF^2(H_{n_i}))$
for any $r\in [0, 1)$. Applying Theorem \ref{Berezin-prop}, we obtain
$$
{\bf K_{f,rW}}p_n(r{\bf W}_{i,j})^*=[p_n({\bf W}_{i,j})^*\otimes
I_{\otimes_{i=1}^k F^2(H_{n_i})}]{\bf K_{f,rW}}
$$
for any $r\in [0,1)$. Using all these facts and the definition of the noncommutative Berezin kernel,  careful
calculations reveal that
\begin{equation*}
\begin{split}
&\left<{\bf K_{f,rW}}\varphi(r{\bf W}_{i,j})^*(e_{\gamma_1}^1\otimes \cdots \otimes e_{\gamma_k}^k),
(e_{\sigma_1}^1\otimes \cdots \otimes e_{\sigma_k}^k)\otimes (e_{\epsilon_1}^1\otimes \cdots \otimes e_{\epsilon_k}^k)\right> \\
&=\left<{\bf K_{f,rW}}p_n(r{\bf W}_{i,j})^*(e_{\gamma_1}^1\otimes \cdots \otimes e_{\gamma_k}^k),
(e_{\sigma_1}^1\otimes \cdots \otimes e_{\sigma_k}^k)\otimes (e_{\epsilon_1}^1\otimes \cdots \otimes e_{\epsilon_k}^k)\right>\\
&=\left<[(p_n({\bf W}_{i,j})^*\otimes
I_{\otimes_{i=1}^kF^2(H_{n_i})})]{\bf K_{f,rW}}(e_{\gamma_1}^1\otimes \cdots \otimes e_{\gamma_k}^k),
(e_{\sigma_1}^1\otimes \cdots \otimes e_{\sigma_k}^k)\otimes (e_{\epsilon_1}^1\otimes \cdots \otimes e_{\epsilon_k}^k)\right>\\
&=\sum_{\beta_i\in \FF_{n_i}^+, i=1,\ldots,k} r^{|\beta_1|+\cdots|\beta_k|}
   \sqrt{b_{1,\beta_1}^{(m_1)}}\cdots \sqrt{b_{k,\beta_k}^{(m_k)}}\left<p_n({\bf W}_{i,j})^*
(e^1_{\beta_1}\otimes \cdots \otimes  e^k_{\beta_k}),e_{\sigma_1}^1\otimes \cdots \otimes e_{\sigma_k}^k\right>\\
&
\qquad \qquad \times\left<{\bf W}_{1,\beta_1}^*\cdots {\bf W}_{k,\beta_k}^*(e_{\gamma_1}^1\otimes \cdots \otimes e_{\gamma_k}^k),  {\bf \Delta_{f,rW}^m}(I)^{1/2} (e_{\epsilon_1}^1\otimes \cdots \otimes e_{\epsilon_k}^k)\right>\\
&=\sum_{\beta_i\in \FF_{n_i}^+, i=1,\ldots,k} r^{|\beta_1|+\cdots|\beta_k|}
   \sqrt{b_{1,\beta_1}^{(m_1)}}\cdots \sqrt{b_{k,\beta_k}^{(m_k)}}\left<\varphi({\bf W}_{i,j})^*
(e^1_{\beta_1}\otimes \cdots \otimes  e^k_{\beta_k}),e_{\sigma_1}^1\otimes \cdots \otimes e_{\sigma_k}^k\right>\\
&
\qquad \qquad \times\left<{\bf W}_{1,\beta_1}^*\cdots {\bf W}_{k,\beta_k}^*(e_{\gamma_1}^1\otimes \cdots \otimes e_{\gamma_k}^k),  {\bf \Delta_{f,rW}^m}(I)^{1/2} (e_{\epsilon_1}^1\otimes \cdots \otimes e_{\epsilon_k}^k)\right>\\
&= \left<[(\varphi({\bf W}_{i,j})^*\otimes
I_{\otimes_{i=1}^kF^2(H_{n_i})})]{\bf K_{f,rW}}(e_{\gamma_1}^1\otimes \cdots \otimes e_{\gamma_k}^k),
(e_{\sigma_1}^1\otimes \cdots \otimes e_{\sigma_k}^k)\otimes (e_{\epsilon_1}^1\otimes \cdots \otimes e_{\epsilon_k}^k)\right>
\end{split}
\end{equation*}
for any $r\in [0,1)$ and $\gamma_i, \sigma_i, \epsilon_i\in \FF_{n_i}^+$, $i\in \{1,\ldots,k\}$.
Hence, since $\varphi(r{\bf W}_{i,j})$ and  $\varphi({\bf W}_{i,j})$  are
bounded operators on $\otimes_{i=1}^k F^2(H_{n_i})$, we deduce that
$$
{\bf K_{f,rW}}\varphi(r{\bf W}_{i,j})^*=[\varphi({\bf W}_{i,j})^*\otimes
I_{\otimes_{i=1}^k F^2(H_{n_i})}]{\bf K_{f,rW}}
$$
for any $r\in[0,1)$.
  Since  $r{\bf W}:=(r{\bf W}_1,\ldots, r{\bf W}_n)$ is
  a pure $k$-tuple  in the
noncommutative polydomain $ {\bf D_f^m}(\otimes_{i=1}^kF^2(H_{n_i}))$
for any $r\in[0, 1)$,  Theorem \ref{Berezin-prop} shows that the Berezin  kernel
${\bf K_{f,rW}}$ is an isometry and, therefore, the equality above
implies
\begin{equation}
\label{gr} \|\varphi(r{\bf W}_{i,j}))\|\leq \|\varphi({\bf W}_{i,j}))\|\quad
\text{ for any } r\in [0,1).
\end{equation}
 Hence, and due to the fact that
$\varphi({\bf W}_{i,j})(e_{\alpha_1}^1\otimes \cdots \otimes e_{\alpha_k}^k) = \lim\limits_{r\to 1}\varphi(r{\bf W}_{i,j})(e_{\alpha_1}^1\otimes \cdots \otimes e_{\alpha_k}^k) $ for any  $\alpha_i\in \FF_{n_i}^+$, an approximation
argument  implies relation \eqref{c0sot}.  Note that if   $0<r_1<r_2<1$, then
$$
\|\varphi(r_1{\bf W}_{i,j})\|\leq \|\varphi(r_2{\bf W}_{i,j})\|.
$$
Indeed, since $\varphi(r_2{\bf W}_{i,j})$ is in
the polydomain  algebra $ \cA({\bf D_f^m})$, Theorem \ref{Poisson-C*} implies
$\|\varphi(r r_2{\bf W}_{i,j})\|\leq
\|\varphi(r_2{\bf W}_{i,j})\|$  for any  $ r\in [0,1)$.
Taking $r:=\frac{r_1}{r_2}$, we prove our assertion.
Now one can easily complete the proof of the theorem.
\end{proof}

\begin{lemma}\label{Berezin-relations}
Let ${\bf T}=({ T}_1,\ldots, { T}_k)$ be in  the noncommutative polydomain ${\bf D_f^m}(\cH)$ and let  $\varphi({\bf W}_{i,j})$ be  in the Hardy algebra $F^\infty({\bf D_f^m})$. Then the noncommutative Berezin kernel satisfies the relations
 \begin{equation*}
 \varphi(r{ T}_{i,j})  {\bf K_{f,T}^*}
= {\bf K_{f,T}^*}(\varphi(r{\bf W}_{i,j})\otimes I_\cH)
\end{equation*}
and
\begin{equation*}
 \varphi(r{T}_{i,j})
  {\bf K_{f,rT}^*}={\bf K_{f,rT}^*} (\varphi({\bf W}_{i,j})\otimes I_\cH)
 \end{equation*}
 for any   $r\in[0,1)$.
\end{lemma}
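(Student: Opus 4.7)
The plan is to bootstrap from the intertwining relation $(\mathbf{W}_{i,j}^*\otimes I)\mathbf{K}_{\mathbf{f},\mathbf{T}} = \mathbf{K}_{\mathbf{f},\mathbf{T}} T_{i,j}^*$ of Theorem \ref{Berezin-prop}(iii): I would first extend it from a single $T_{i,j}$ to arbitrary polynomials in the generators, then to $\varphi(r{\bf W})$ using the norm-convergence of its partial sums supplied by Lemma \ref{gKKg}(i), which yields the first identity; and finally to $\varphi({\bf W})$ itself by applying the first identity to the pure $k$-tuple $r{\bf T}$ with an auxiliary radial parameter and letting that parameter tend to $1$.

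Taking adjoints in Theorem \ref{Berezin-prop}(iii) gives $T_{i,j}{\bf K_{f,T}^*}={\bf K_{f,T}^*}({\bf W}_{i,j}\otimes I_\cH)$, and iterating this while using the mutual commutation of $T_p$ and $T_q$ (resp.\ ${\bf W}_p$ and ${\bf W}_q$) for $p\neq q$ produces $p({\bf T}){\bf K_{f,T}^*}={\bf K_{f,T}^*}(p({\bf W})\otimes I_\cH)$ for every noncommutative polynomial $p$ in the generators. Lemma \ref{gKKg}(i)--(ii) shows that the partial sums $p_n(r{\bf W})$ of $\varphi(r{\bf W})$ converge to $\varphi(r{\bf W})$ in operator norm and that $\varphi(r{\bf W})\in\cA({\bf D_f^m})$. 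Since Theorem \ref{Poisson-C*} provides a norm-contractive homomorphism $\Psi_{{\bf f},{\bf T}}:\cA({\bf D_f^m})\to B(\cH)$, defining $\varphi(r{\bf T}):=\Psi_{{\bf f},{\bf T}}(\varphi(r{\bf W}))$ gives $p_n(r{\bf T})\to\varphi(r{\bf T})$ in norm; passing to the norm-limit on both sides of the polynomial intertwining then yields the first identity.

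For the second identity, the $k$-tuple $r{\bf T}$ lies in ${\bf D_f^m}(\cH)$ by Theorem \ref{radial} and is pure, so applying the first identity to $r{\bf T}$ with an auxiliary parameter $s\in[0,1)$ gives
\begin{equation*}
\varphi(s\cdot r{\bf T}_{i,j})\,{\bf K_{f,rT}^*} = {\bf K_{f,rT}^*}\bigl(\varphi(s{\bf W}_{i,j})\otimes I_\cH\bigr).
\end{equation*}
Letting $s\to 1^-$, Lemma \ref{gKKg}(iii) gives $\varphi(s{\bf W})\to\varphi({\bf W})$ in the strong operator topology on a uniformly bounded net, so tensoring with $I_\cH$ and composing with the bounded operator ${\bf K_{f,rT}^*}$ makes the right-hand side tend to ${\bf K_{f,rT}^*}(\varphi({\bf W})\otimes I_\cH)$ in SOT. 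For the left-hand side I would identify $\Psi_{{\bf f},r{\bf T}}(\varphi(s{\bf W}))$ with $\Psi_{{\bf f},{\bf T}}(\varphi(sr{\bf W}))$ (both are norm-limits of the same polynomials evaluated at $sr{\bf T}$) and invoke that $t\mapsto\varphi(t{\bf W})$ is norm-continuous on $[0,1)$, a consequence of the absolute convergence of its power series established in the proof of Lemma \ref{gKKg}(i); thus $\varphi(sr{\bf W})\to\varphi(r{\bf W})$ in norm, and norm-continuity of $\Psi_{{\bf f},{\bf T}}$ then yields $\varphi(s\cdot r{\bf T})\to\varphi(r{\bf T})$ in norm. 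The main delicate point is precisely this left-hand side: because $\varphi({\bf W})\in F^\infty({\bf D_f^m})$ is in general only a SOT-limit and not a norm-limit of polynomials, one cannot directly approximate $\varphi(r{\bf T})$ in norm from inside $\cA({\bf D_f^m})$, and it is the insertion of the auxiliary radial parameter $s$ that keeps the approximants inside the norm-closed polydomain algebra and makes the two functional-calculus conventions for $\varphi(s\cdot r{\bf T})$ agree in the limit.
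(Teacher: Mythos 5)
Your proposal is correct and follows essentially the same route as the paper: the first identity by extending the intertwining of Theorem \ref{Berezin-prop}(iii) to polynomials and passing to the norm limit via Lemma \ref{gKKg}(i) and the von Neumann inequality, and the second by establishing the two-parameter relation $\varphi(srT_{i,j})\,{\bf K_{f,rT}^*}={\bf K_{f,rT}^*}(\varphi(s{\bf W}_{i,j})\otimes I_\cH)$ and letting $s\to 1$, with SOT convergence on the ${\bf W}$-side and norm convergence on the ${\bf T}$-side. The only (harmless) deviation is that you obtain $\varphi(srT_{i,j})\to\varphi(rT_{i,j})$ by pushing the norm-continuity of $t\mapsto\varphi(t{\bf W}_{i,j})$ through the contractive homomorphism $\Psi_{{\bf f},{\bf T}}$, whereas the paper proves the same limit \eqref{lim-t} by a direct tail estimate on ${\bf T}$ using the binomial-coefficient bounds.
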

\begin{proof}
 Due to Theorem \ref{Berezin-prop}, we have
   $${ T}_{i,j}{\bf K_{f,T}^*} =   {\bf K_{f,T}^*}({\bf W}_{i,j}\otimes I_\cH)
    $$
    for any
$i\in \{1,\ldots, k\}$ and $j\in \{1,\ldots, n_i\}$.
Hence, using Theorem \ref{Poisson-C*} and part (i) of Lemma \ref{gKKg}, we deduce that
\begin{equation*}
\varphi(r{  T}_{i,j}):=\sum\limits_{q=0}^\infty \sum\limits_{{\beta_1\in \FF_{n_1}^+,\ldots, \beta_k\in \FF_{n_k}^+}\atop{|\beta_1|+\cdots +|\beta_k|=q}} r^{q}c_{\beta_1,\ldots, \beta_k}
{T}_{1,\beta_1}\cdots {T}_{k,\beta_k}
 \end{equation*}
 converges  in the operator norm topology and  $\varphi(r{ T}_{i,j})  {\bf K_{f,T}^*}
= {\bf K_{f,T}^*}(\varphi(r{\bf W}_{i,j})\otimes I_\cH)$ for all $r\in [0,1)$.
Now, we prove the second part of this lemma.
Using again Theorem \ref{Berezin-prop}, we obtain
 \begin{equation}\label{eq-ker2}
{\bf K_{f,rT}^*}[p({\bf W}_{i,j})\otimes I_\cH]=p(r{ T}_{i,j})
 {\bf K_{f,rT}^*}
 \end{equation}
for any polynomial $p({\bf W}_{i,j})$ and $r\in [0,1)$.
  Since $r{\bf T}:=(r{T}_1,\ldots, r{ T}_n)\in {\bf D_f^m}
  (\cH)$ (see Theorem \ref{radial}), relation \eqref{limm} and
 Theorem \ref{Poisson-C*}   imply
\begin{equation*}
 \varphi(rt T_{i,j})=\lim_{n\to \infty}\sum_{q=0}^n \sum\limits_{{\beta_1\in \FF_{n_1}^+,\ldots, \beta_k\in \FF_{n_k}^+}\atop{|\beta_1|+\cdots +|\beta_k|=q}} (rt)^q c_{\beta_1,\ldots, \beta_k}
{T}_{1,\beta_1}\cdots {T}_{k,\beta_k}, \qquad r,t\in[0,1),
\end{equation*}
where  the convergence is in the operator norm topology. Consequently, an approximation argument shows that relation
 \eqref{eq-ker2} implies
 \begin{equation}
 \label{eq-ker3}
{\bf K_{f,rT}^*} [\varphi (t{\bf W}_{i,j})\otimes I_\cH]=\varphi(rt{ T}_{i,j})
 {\bf K_{f,rT}^*}\quad \text{ for } \ r,t\in (0, 1).
 \end{equation}
 On the other hand, let us prove  that
\begin{equation}\label{lim-t}
 \lim_{t\to 1} \varphi(rt{T}_{i,j})=\varphi(rT_{i,j}),
 \end{equation}
 where the convergence is in the operator norm topology.
  Notice that, due to relation \eqref{se-ine}, if  $\epsilon>0$,  there is   $m_0\in \NN$
 such that $\sum_{p=m_0}^\infty  r^{2p}\sum_{{p_1,\ldots, p_k\in \NN\cup\{0\}}\atop{p_1+\cdots +p_k=p}} \left(\begin{matrix}
p_1+m_1-1\\m_1-1 \end{matrix}\right)\cdots \left(\begin{matrix}
p_k+m_k-1\\m_k-1 \end{matrix}\right)  <\frac {\epsilon^2} {4 K^2}$, where $K:=\|\varphi({\bf W}_{i,j})(1)\|$.
  Since   ${\bf T}:=({T}_1,\ldots, { T}_n)\in {\bf D_f^m}
  (\cH)$ ,  Theorem \ref{Poisson-C*}
  and relation \eqref{ineq-comin} imply
  \begin{equation*}
\begin{split}
\sum\limits_{{\beta_1\in \FF_{n_1}^+,\ldots, \beta_k\in \FF_{n_k}^+}\atop{|\beta_1|=p_1,\ldots, |\beta_k|=p_k}}
  b_{1,\beta_1}^{(m_1)}\cdots b_{k,\beta_k}^{(m_k)}
  &T_{1,\beta_1}\cdots T_{k,\beta_k}T_{k,\beta_k}^*\cdots T_{1,\beta_1}^*\\
  &
  \leq
 \left(\begin{matrix}
p_1+m_1-1\\m_1-1 \end{matrix}\right) \cdots \left(\begin{matrix}
p_k+m_k-1\\m_k-1 \end{matrix}\right) I.
  \end{split}
  \end{equation*}
  Now, as in the proof of Lemma \ref{gKKg}, we can deduce that

\begin{equation*}
 \begin{split}
 &\sum_{p=m_0}^\infty r^p
  \left\|\sum_{{p_1,\ldots, p_k\in \NN\cup\{0\}}\atop{p_1+\cdots +p_k=p}}
  \sum\limits_{{\beta_1\in \FF_{n_1}^+,\ldots, \beta_k\in \FF_{n_k}^+}
  \atop{|\beta_1|=p_1,\ldots, |\beta_k|=p_k}} c_{\beta_1,\ldots, \beta_k}
  {T}_{1,\beta_1}\cdots { T}_{k,\beta_k}\right\|\\
 &\leq
  \left(\sum_{p=m_0}^\infty  r^{2p}\sum_{{p_1,\ldots, p_k\in \NN\cup\{0\}}\atop{p_1+\cdots +p_k=p}} \left(\begin{matrix}
p_1+m_1-1\\m_1-1 \end{matrix}\right)\cdots \left(\begin{matrix}
p_k+m_k-1\\m_k-1 \end{matrix}\right)\right)^{1/2} \|\varphi({\bf W}_{i,j})(1)\|\\
&\leq \frac{\epsilon}{2}.
  \end{split}
 \end{equation*}
 Consequently,   setting $T_{(\beta)}:={T}_{1,\beta_1}\cdots { T}_{k,\beta_k}$, there exists $0< d<1$ such that
 \begin{equation*}
 \begin{split}
 &\left\|\sum_{p=0}^\infty
  \sum\limits_{{\beta_1\in \FF_{n_1}^+,\ldots, \beta_k\in \FF_{n_k}^+}
  \atop{|\beta_1|+\ldots +|\beta_k|=p}}(rt)^{|\beta_1|+\ldots +|\beta_k|}
   c_{\beta_1,\ldots, \beta_k}T_{(\beta)} -
  \sum_{p=0}^\infty
  \sum\limits_{{\beta_1\in \FF_{n_1}^+,\ldots, \beta_k\in \FF_{n_k}^+}
  \atop{|\beta_1|+\ldots +|\beta_k|=p}}r^{|\beta_1|+\ldots +|\beta_k|}
   c_{\beta_1,\ldots, \beta_k}T_{(\beta)}\right\|\\
    &\leq   {\epsilon} +
    \left\| \sum_{p=1}^{m_0-1} r^p(t^p-1)
    \sum\limits_{{\beta_1\in \FF_{n_1}^+,\ldots,
     \beta_k\in \FF_{n_k}^+}\atop{|\beta_1|+\ldots +|\beta_k|=p}}
      c_{\beta_1,\ldots, \beta_k}
      T_{(\beta)}\right\|\|\varphi({\bf W}_{i,j})(1)\| \\
    &\leq   2\epsilon
\end{split}
 \end{equation*}
   for any  $ t\in (d, 1)$.
  Hence, we deduce  relation \eqref{lim-t}.
  On the other hand, due to Lemma \ref{gKKg}, we have $\varphi({\bf W}_{i,j})=\text{\rm SOT-}\lim_{t\to 1} \varphi(t{\bf W}_{i,j}).
     $
 Since  the map $Y\mapsto Y\otimes I_\cH$ is
SOT-continuous on bounded sets, we deduce  that
\begin{equation}
\label{SOT-lim}
\text{\rm SOT-}\lim_{t\to 1}[\varphi(t{\bf W}_{i,j}) )\otimes
I_\cH]=\varphi({\bf W}_{i,j}) \otimes I_\cH.
\end{equation}
 Consequently, using relation \eqref{lim-t} and passing to limit in \eqref{eq-ker3}, as $t\to 1$,
  we   complete the proof.
\end{proof}

In what follows we show that the restriction of the noncommutative
Berezin transform to the Hardy algebra $F^\infty({\bf D_f^m})$
provides a functional calculus  associated with each   pure
tuple of operators in the noncommutative domain ${\bf
D_f^m}(\cH)$.
Moreover, we obtain a
Fatou type result.

\begin{theorem}
\label{funct-calc} Let ${\bf T}=({ T}_1,\ldots, { T}_k)$ be a pure $k$-tuple in  the noncommutative polydomain ${\bf D_f^m}(\cH)$  and define the map
$$\Psi_{\bf T}:F^\infty({\bf D^m_f})\to B(\cH)\quad \text{ by} \quad
 \Psi_{\bf T}(\varphi):= {\bf B_T}[\varphi],$$
 where ${\bf B_T}$ is the noncommutative Berezin transform  at ${\bf T}\in {\bf
 D_f^m}(\cH)$.
 Then
\begin{enumerate}
\item[(i)] $\Psi_{\bf T}$ is    WOT-continuous (resp.
SOT-continuous)  on bounded sets;
\item[(ii)]
$\Psi_{\bf T}$ is a unital completely contractive homomorphism and
$$\Psi_{\bf T}({\bf W}_{1,\beta_1}\cdots {\bf W}_{k,\beta_k})={T}_{1,\beta_1}\cdots {T}_{k,\beta_k}, \qquad \beta_i\in \FF_{n_i}^+,  i\in \{1,\ldots, k\}
$$
\item[(iii)] for any\  $\varphi({\bf W}_{i,j}) \in F^\infty({\bf D^{m}_f})$,
$${\bf B}_{r{\bf T}}[\varphi({\bf W}_{i,j})]=\varphi(r{T}_{i,j})={\bf B}_{\bf T}[\varphi({r\bf W}_{i,j})]$$ and
 $$
\Psi_{\bf T}(\varphi({\bf W}_{i,j}) )=\text{\rm SOT-}\lim_{r\to 1}\varphi(r{ T}_{i,j}).
$$
 \end{enumerate}
 \end{theorem}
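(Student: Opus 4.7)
The plan is to build everything from two facts: since $\mathbf{T}$ is pure, Theorem~\ref{Berezin-prop}(ii) gives that $\mathbf{K_{f,T}}$ is an isometry, so $\mathbf{K_{f,T}^*}\mathbf{K_{f,T}}=I_\cH$; and Lemma~\ref{Berezin-relations} supplies the two intertwining relations we need. Part~(iii) is really the workhorse, so I would prove it first and then read off (i) and (ii).

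\textbf{Step 1 (part (iii)).} Fix $\varphi({\bf W}_{i,j})\in F^\infty({\bf D_f^m})$ and $r\in[0,1)$. By Theorem~\ref{radial} we have $r\mathbf{T}\in {\bf D_f^m}(\cH)$, and since $\Phi_{f_i,rT_i}^n(I)\le r^n\Phi_{f_i,T_i}^n(I)\to 0$ strongly, Proposition~\ref{pure2} gives that $r\mathbf{T}$ is pure, hence $\mathbf{K}_{\mathbf{f},r\mathbf{T}}$ is isometric. Applying the second identity of Lemma~\ref{Berezin-relations},
\[
\varphi(rT_{i,j})=\varphi(rT_{i,j})\mathbf{K}_{\mathbf{f},r\mathbf{T}}^*\mathbf{K}_{\mathbf{f},r\mathbf{T}}=\mathbf{K}_{\mathbf{f},r\mathbf{T}}^*(\varphi(\mathbf{W}_{i,j})\otimes I_\cH)\mathbf{K}_{\mathbf{f},r\mathbf{T}}=\mathbf{B}_{r\mathbf{T}}[\varphi(\mathbf{W}_{i,j})],
\]
and the first identity of Lemma~\ref{Berezin-relations} together with $\mathbf{K_{f,T}^*}\mathbf{K_{f,T}}=I_\cH$ likewise yields $\varphi(rT_{i,j})=\mathbf{B_T}[\varphi(r\mathbf{W}_{i,j})]$. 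For the SOT-limit, Lemma~\ref{gKKg}(iii) gives $\varphi(r\mathbf{W}_{i,j})\to \varphi(\mathbf{W}_{i,j})$ in SOT with $\|\varphi(r\mathbf{W}_{i,j})\|\le\|\varphi(\mathbf{W}_{i,j})\|$. Since tensoring with $I_\cH$ on the right and compressing by the fixed isometry $\mathbf{K_{f,T}}$ on the right and the bounded operator $\mathbf{K_{f,T}^*}$ on the left are SOT-continuous on bounded sets, we conclude
\[
\Psi_{\mathbf{T}}(\varphi)=\mathbf{B_T}[\varphi(\mathbf{W}_{i,j})]=\text{\rm SOT-}\lim_{r\to 1}\mathbf{B_T}[\varphi(r\mathbf{W}_{i,j})]=\text{\rm SOT-}\lim_{r\to 1}\varphi(rT_{i,j}).
\]

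\textbf{Step 2 (part (i)).} The map $g\mapsto g\otimes I_\cH$ is WOT- and SOT-continuous on bounded sets, and so is the two-sided compression $g\mapsto \mathbf{K_{f,T}^*}(g\otimes I_\cH)\mathbf{K_{f,T}}$ (for SOT, this uses that $\mathbf{K_{f,T}}$ is a fixed bounded operator and $\mathbf{K_{f,T}^*}$ is bounded; for WOT, it is automatic). Since $\Psi_{\mathbf{T}}$ is this compression, (i) is immediate.

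\textbf{Step 3 (part (ii)).} Unitality is $\Psi_{\mathbf{T}}(I)=\mathbf{K_{f,T}^*}\mathbf{K_{f,T}}=I_\cH$. For complete contractivity, the $n\times n$ amplification is $[\Psi_{\mathbf{T}}(\varphi_{ij})]=(I_n\otimes \mathbf{K_{f,T}^*})[\varphi_{ij}\otimes I_\cH](I_n\otimes \mathbf{K_{f,T}})$, and $I_n\otimes \mathbf{K_{f,T}}$ is an isometry, so the norm is bounded by $\|[\varphi_{ij}\otimes I_\cH]\|=\|[\varphi_{ij}]\|$. The action on monomials follows from (iii) applied to polynomials (or directly by iterating the intertwining ${\bf K_{f,T}}T^*_{i,j}=({\bf W}_{i,j}^*\otimes I){\bf K_{f,T}}$ of Theorem~\ref{Berezin-prop}). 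The homomorphism property is the only subtle point: for $\varphi,\psi\in F^\infty({\bf D_f^m})$ and $r\in[0,1)$, $\varphi(r\mathbf{W}_{i,j}),\psi(r\mathbf{W}_{i,j})\in\cA(\mathbf{D_f^m})$ (Lemma~\ref{gKKg}(ii)), so Theorem~\ref{Poisson-C*} applied to the pure tuple $r\mathbf{T}$ gives
\[
\mathbf{B}_{r\mathbf{T}}[\varphi(\mathbf{W})\psi(\mathbf{W})]=\mathbf{B}_{r\mathbf{T}}[\varphi(\mathbf{W})]\cdot \mathbf{B}_{r\mathbf{T}}[\psi(\mathbf{W})]=\varphi(rT_{i,j})\psi(rT_{i,j})
\]
using (iii). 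Taking SOT-limits as $r\to 1$ and using (iii) on the left together with joint SOT-continuity of multiplication on bounded sets (the norms $\|\varphi(rT_{i,j})\|,\|\psi(rT_{i,j})\|$ are uniformly bounded by Lemma~\ref{gKKg}) yields $\Psi_{\mathbf{T}}(\varphi\psi)=\Psi_{\mathbf{T}}(\varphi)\Psi_{\mathbf{T}}(\psi)$.

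\textbf{Main obstacle.} The only genuinely delicate point is the passage to the limit in the homomorphism identity: one must ensure that SOT-convergence of $\varphi(rT_{i,j})$ and $\psi(rT_{i,j})$ to $\Psi_{\mathbf{T}}(\varphi)$ and $\Psi_{\mathbf{T}}(\psi)$ respectively entails SOT-convergence of the product, which is why uniform boundedness via Lemma~\ref{gKKg} is essential. Everything else reduces to a direct manipulation of the intertwining relations $\varphi(rT_{i,j})\mathbf{K_{f,T}^*}=\mathbf{K_{f,T}^*}(\varphi(r\mathbf{W}_{i,j})\otimes I_\cH)$ and $\varphi(rT_{i,j})\mathbf{K}_{\mathbf{f},r\mathbf{T}}^*=\mathbf{K}_{\mathbf{f},r\mathbf{T}}^*(\varphi(\mathbf{W}_{i,j})\otimes I_\cH)$ together with the fact that the relevant Berezin kernels are isometric.
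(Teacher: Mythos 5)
Your treatment of parts (i) and (iii) is essentially the paper's own argument: both rest on the two intertwining identities of Lemma \ref{Berezin-relations}, the isometry of ${\bf K_{f,T}}$ and ${\bf K}_{{\bf f},r{\bf T}}$ (purity of ${\bf T}$ and of $r{\bf T}$), and the SOT-convergence $\varphi(r{\bf W}_{i,j})\otimes I_\cH\to\varphi({\bf W}_{i,j})\otimes I_\cH$ from Lemma \ref{gKKg}. Where you genuinely diverge is the homomorphism property in (ii). The paper proves multiplicativity by combining three ingredients: $\Psi_{\bf T}$ is a homomorphism on polynomials in $\{{\bf W}_{i,j}\}$ (Theorem \ref{Poisson-C*}), polynomials are sequentially WOT-dense in $F^\infty({\bf D_f^m})$ via the uniformly bounded Cesaro operators (Proposition \ref{density-pol}), and $\Psi_{\bf T}$ is WOT-continuous on bounded sets. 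You instead pass through the radial approximants $\varphi(rT_{i,j})$ and use joint SOT-continuity of multiplication on bounded sets; this avoids Proposition \ref{density-pol} altogether and is a legitimate alternative, with the uniform bound $\|\varphi(rT_{i,j})\|\le\|\varphi({\bf W}_{i,j})\|$ correctly identified as the crucial point for passing to the limit in the product.

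One step in your Step 3 is miscited, though the gap is easily closed. The identity ${\bf B}_{r{\bf T}}[\varphi({\bf W})\psi({\bf W})]={\bf B}_{r{\bf T}}[\varphi({\bf W})]\cdot{\bf B}_{r{\bf T}}[\psi({\bf W})]$ does not follow from Theorem \ref{Poisson-C*}: that theorem only asserts multiplicativity on the norm-closed polydomain algebra $\cA({\bf D_f^m})$ (and $\Psi_{{\bf f},r{\bf T}}$ is defined on the operator system $\cS$), whereas $\varphi({\bf W})\psi({\bf W})$ for general $\varphi,\psi\in F^\infty({\bf D_f^m})$ lies outside $\cA({\bf D_f^m})$; the fact that $\varphi(r{\bf W}_{i,j})$ belongs to $\cA({\bf D_f^m})$ does not help, since the operator inside the Berezin transform is $\varphi({\bf W})\psi({\bf W})$, not $\varphi(r{\bf W})\psi(r{\bf W})$. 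The correct one-line justification uses the second identity of Lemma \ref{Berezin-relations} twice:
\begin{equation*}
{\bf K}_{{\bf f},r{\bf T}}^*\bigl(\varphi({\bf W})\psi({\bf W})\otimes I_\cH\bigr){\bf K}_{{\bf f},r{\bf T}}
=\varphi(rT_{i,j})\,{\bf K}_{{\bf f},r{\bf T}}^*\bigl(\psi({\bf W})\otimes I_\cH\bigr){\bf K}_{{\bf f},r{\bf T}}
=\varphi(rT_{i,j})\psi(rT_{i,j}),
\end{equation*}
using that ${\bf K}_{{\bf f},r{\bf T}}$ is an isometry. With that substitution your argument for (ii) is complete.
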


\begin{proof}
Since
\begin{equation}\label{def-be}
\Psi_{\bf T}(\varphi({\bf W}_{i,j}) )={\bf K_{f,T}^*} (\varphi({\bf W}_{i,j}) \otimes I_\cH) {\bf K_{f,T}}, \qquad \varphi({\bf W}_{i,j}) \in
F^\infty({\bf D}_f^m),
\end{equation}
 using  standard
facts in functional analysis, we deduce part (i).

 Now, we prove part (ii). Since ${\bf T}$ is pure,
 Theorem \ref{Berezin-prop} shows that
    ${\bf K_{f,T}}$ is an isometry. Consequently, relation
    \eqref{def-be}
    implies
  $$
   \left\|\left[\Psi_{\bf T}(\varphi_{ij}) \right]_{k\times k}\right\|
   \leq
\left\|\left[\varphi_{ij} \right]_{k\times k}\right\|
$$
for any operator-valued matrix $\left[\varphi_{ij} \right]_{k\times k}$ in
$M_{k\times k}(F^\infty({\bf D}^{m}_f))$, which proves that $\Psi_{\bf T}$ is a
unital completely contractive  linear map. Due to Theorem
\ref{Poisson-C*}, $\Psi_{\bf T}$ is a homomorphism  on  the set $\cP({\bf W})$
of  polynomials in $\{{\bf W}_{i,j}\}$.
   By Proposition \ref{density-pol}, the
polynomials in ${\bf W}_{i,j}$ and the identity  are
 sequentially
 WOT-dense in $F^\infty({\bf D}^m_f)$. On the other hand, due to part (i),  $\Psi_{T}$  is
 WOT- continuous on bounded sets. Using  the principle of
 uniform boundedness  we  deduce that $\Psi_T$  is also a homomorphism on
 $F^\infty({\bf D}^m_f)$.

Due to Lemma \ref{Berezin-relations} and taking into account that ${\bf K_{f,T}}$ and ${\bf K_{f,rT}}$ are isometries, we have
\begin{equation*}
\begin{split}
 {\bf B}_{r{\bf T}}[\varphi({\bf W}_{i,j})]&={\bf K_{f,rT}^*} (\varphi({\bf W}_{i,j}) \otimes I) {\bf K_{f,rT}}\\
 &=\varphi(r{T}_{i,j})
 ={\bf K_{f,T}^*} (\varphi(r{\bf W}_{i,j}) \otimes I) {\bf K_{f,T}}\\
 &={\bf B}_{{\bf T}}[\varphi(r{\bf W}_{i,j})].
\end{split}
\end{equation*}
 Now, due to relation \eqref{SOT-lim} we have
 \begin{equation*}
\text{\rm SOT-}\lim_{t\to 1}[\varphi(t{\bf W}_{i,j}) )\otimes
I_\cH]=\varphi({\bf W}_{i,j}) \otimes I_\cH.
\end{equation*}
 Hence, and using the equalities above, we deduce that

\begin{equation*}
 \begin{split}
 {\bf B}_{{\bf T}}[\varphi({\bf W}_{i,j})]&:={\bf K_{f,T}^*}
  (\varphi({\bf W}_{i,j}) \otimes I) {\bf K_{f,T}}\\
 &=\text{\rm SOT-} \lim_{r\to 1}{\bf K_{f,T}^*}
  (\varphi(r{\bf W}_{i,j}) \otimes I) {\bf K_{f,T}}\\
 &=\text{\rm SOT-} \lim_{r\to 1}\varphi(r{T}_{i,j}).
\end{split}
\end{equation*}
This completes the proof.
\end{proof}

We say that ${\bf T}=({ T}_1,\ldots, { T}_k)\in {\bf D_f^m}(\cH)$ is
 completely non-coisometric    if there is no $h\in \cH$, $h\neq 0$ such that
$$
\left< (id-\Phi_{f_1,T_1}^{q_1})\cdots (id-\Phi_{f_k,T_k}^{q_k})(I_\cH)h, h\right>=0$$
for any $(q_1,\ldots, q_k)\in \NN^k$. This is equivalent to the condition
$$
\lim_{q=(q_1,\ldots, q_k)\in \NN^k} \left< (id-\Phi_{f_1,T_1}^{q_1})\cdots (id-\Phi_{f_k,T_k}^{q_k})(I_\cH)h, h\right>=0.
$$
In what follows we present an $F^\infty({\bf D_f^m})$-functional calculus
 for the completely non-coisometric  part of the noncommutative polydomain ${\bf D^m_f}(\cH)$.

\begin{theorem} \label{funct-calc2}
Let ${\bf T}=({ T}_1,\ldots, { T}_k)$ be  a completely
non-coisometric $k$-tuple
      in  the noncommutative polydomain ${\bf D_f^m}(\cH)$.  Then
 $$\Phi(\varphi):=\text{\rm SOT-}\lim_{r\to 1} \varphi(rT_{i,j}), \qquad
  \varphi=\varphi({\bf W}_{i,j})\in F^\infty({\bf D_f^m}),
 $$
 exists in the strong operator topology   and defines a map
 $\Phi:F^\infty({\bf D_f^m})\to B(\cH)$ with the following
 properties:
\begin{enumerate}
\item[(i)]
$\Phi(\varphi)=\text{\rm SOT-}\lim\limits_{r\to 1}{\bf B}_{r{\bf T}}[\varphi]$, where ${\bf
B}_{r{\bf T}}$ is the noncommutative  Berezin transform  at  $r{\bf T}\in {\bf D_f^m}(\cH)$;
\item[(ii)] $\Phi$ is    WOT-continuous (resp.
SOT-continuous)  on bounded sets;
\item[(iii)]
$\Phi$ is a unital completely contractive homomorphism.
\end{enumerate}
 \end{theorem}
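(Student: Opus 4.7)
The plan is to transfer the pure-tuple functional calculus from Theorem~\ref{funct-calc} to the completely non-coisometric setting via the Berezin-kernel intertwining of Lemma~\ref{Berezin-relations}, using the completely non-coisometric (CNC) hypothesis to pass from a dense subspace of $\cH$ to all of $\cH$ via uniform boundedness.

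First I would observe that for each $r\in[0,1)$ the scaled tuple $r{\bf T}$ is pure in ${\bf D_f^m}(\cH)$: since $\Phi_{f_i,rT_i}^p(I)\le r^p\Phi_{f_i,T_i}^p(I)\le r^p I$, Proposition~\ref{pure2} applies. Consequently Theorem~\ref{funct-calc} gives a completely contractive WOT/SOT-continuous homomorphism $\Psi_{r{\bf T}}$ with $\Psi_{r{\bf T}}(\varphi)=\varphi(rT_{i,j})={\bf B}_{r{\bf T}}[\varphi]$ and the uniform bound $\|\varphi(rT_{i,j})\|\le\|\varphi({\bf W}_{i,j})\|$ for every $\varphi\in F^\infty({\bf D_f^m})$ and $r\in[0,1)$. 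This already proves the identity in (i) before the limit is taken; the identity will survive the passage to the SOT-limit tautologically.

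Next I would invoke Lemma~\ref{Berezin-relations}, namely
$$
\varphi(rT_{i,j})\,{\bf K_{f,T}^*}={\bf K_{f,T}^*}\bigl(\varphi(r{\bf W}_{i,j})\otimes I_\cH\bigr),\qquad r\in[0,1).
$$
By Lemma~\ref{gKKg}(iii) one has $\varphi(r{\bf W}_{i,j})\to\varphi({\bf W}_{i,j})$ in SOT with $\|\varphi(r{\bf W}_{i,j})\|\le\|\varphi({\bf W}_{i,j})\|$; spatially tensoring with $I_\cH$ and using density of simple tensors, $(\varphi(r{\bf W}_{i,j})\otimes I_\cH)y\to(\varphi({\bf W}_{i,j})\otimes I_\cH)y$ in norm for each $y$, so $\varphi(rT_{i,j})\,{\bf K_{f,T}^*}y$ converges in norm. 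Now I would use the CNC hypothesis: by Theorem~\ref{Berezin-prop}(i),
$$
{\bf K_{f,T}^*}{\bf K_{f,T}}=\lim_{q_k\to\infty}\cdots\lim_{q_1\to\infty}(id-\Phi_{f_k,T_k}^{q_k})\cdots(id-\Phi_{f_1,T_1}^{q_1})(I),
$$
and the CNC condition is exactly the statement that this positive operator has trivial kernel, i.e.\ ${\bf K_{f,T}}$ is injective, equivalently $\ran({\bf K_{f,T}^*})$ is dense in $\cH$. Combining pointwise convergence on this dense subspace with the uniform bound $\|\varphi(rT_{i,j})\|\le\|\varphi({\bf W}_{i,j})\|$, a standard three-epsilon argument gives that $\varphi(rT_{i,j})h$ converges in norm for every $h\in\cH$; this defines $\Phi(\varphi)\in B(\cH)$ with $\|\Phi(\varphi)\|\le\|\varphi({\bf W}_{i,j})\|$, and by the identity from the first paragraph, $\Phi(\varphi)=\text{SOT-}\lim_{r\to1}{\bf B}_{r{\bf T}}[\varphi]$, proving (i).

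For (iii), multiplicativity follows by writing $(\varphi\psi)(rT)=\varphi(rT)\psi(rT)$ (which holds because $\Psi_{r{\bf T}}$ is a homomorphism on $F^\infty$) and using that $\psi(rT)h\to\Phi(\psi)h$ in norm while $\{\varphi(rT)\}$ is uniformly bounded, so
$\varphi(rT)\psi(rT)h\to\Phi(\varphi)\Phi(\psi)h$, while the left side converges to $\Phi(\varphi\psi)h$ by definition. Complete contractivity is obtained matricially from $\|[\Phi(\varphi_{ij})]\|\le\liminf_r\|[\varphi_{ij}(rT)]\|\le\|[\varphi_{ij}({\bf W})]\|$, using SOT-lower-semicontinuity of the norm and the matricial form of Theorem~\ref{funct-calc}. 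For (ii), the construction identifies $\Phi(\varphi){\bf K_{f,T}^*}y={\bf K_{f,T}^*}(\varphi\otimes I_\cH)y$ on the dense subspace $\ran({\bf K_{f,T}^*})$; if $\varphi_\alpha\to\varphi$ on a bounded subset of $F^\infty({\bf D_f^m})$ in WOT (resp.\ SOT), then $\varphi_\alpha\otimes I_\cH\to\varphi\otimes I_\cH$ in the same topology, hence $\Phi(\varphi_\alpha)\to\Phi(\varphi)$ on $\ran({\bf K_{f,T}^*})$, and uniform boundedness extends this to all of $\cH$. The one genuinely substantive step is the second paragraph: identifying the CNC condition with injectivity of ${\bf K_{f,T}}$ and thus with density of $\ran({\bf K_{f,T}^*})$; once this link is in place, the rest is a clean density-plus-uniform-boundedness argument.
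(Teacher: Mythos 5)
Your proposal is correct and follows essentially the same route as the paper: the uniform bound $\|\varphi(rT_{i,j})\|\le\|\varphi({\bf W}_{i,j})\|$ from the pure scaled tuples, the intertwining of Lemma~\ref{Berezin-relations}, the identification of the completely non-coisometric condition with injectivity of ${\bf K_{f,T}}$ (hence density of $\ran {\bf K_{f,T}^*}$), and the density-plus-uniform-boundedness passage to all of $\cH$. The only (harmless) deviation is that you prove multiplicativity by a direct limit of the identities $(\varphi\psi)(rT)=\varphi(rT)\psi(rT)$, whereas the paper deduces it from the homomorphism property on polynomials together with their sequential WOT-density and the WOT-continuity of $\Phi$ on bounded sets.
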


\begin{proof} According to Theorem \ref{radial}, $r{\bf T}\in {\bf D_f^m}(\cH)$
 and $r{\bf W}\in {\bf D_f^m}(\otimes_{i=1}^k F^2(H_{n_i}))$ for any   $r\in [0,1)$.
 Due to relations  \eqref{gr} and \eqref{SOT-lim}, we have
 $\text{\rm SOT-}\lim_{t\to 1}[\varphi(t{\bf W}_{i,j}) )\otimes
I_\cH]=\varphi({\bf W}_{i,j}) \otimes I_\cH$.
 Taking the limit in the first  relation  of Lemma \ref{Berezin-relations}, as
  $r\to 1$,  we deduce that the map $\Lambda:
\text{\rm range} \,{\bf K_{f,T}^*}\to \cH$ given by
$\Lambda y:=\lim\limits_{r\to 1}\varphi(t T_{i,j})y$, $y\in \text{\rm
range} \,{\bf K_{f,T}^*}$,
 is well-defined, linear, and
$$
\|\Lambda {\bf K_{f,T}^*}x\|\leq \limsup_{r\to 1} \|\varphi(r{T}_{i,j})\|\|{\bf K_{f,T}^*}x\|\leq \|\varphi({\bf W}_{i,j})\|\|{\bf K_{f,T}^*}x\|
$$
   for any $x\in
 (\otimes_{i=1}^kF^2(H_{n_i}))\otimes \cH$.

Since  ${\bf T}=({ T}_1,\ldots, { T}_k)$ is completely
non-coisometric, Theorem \ref{Berezin-prop} implies that
  the noncommutative Berezin  kernel ${\bf K_{f,T}}$ is
one-to-one and, therefore, the  range  of ${\bf K_{f,T}^*}$ is dense in $\cH$.
Consequently, the map $\Lambda$ has a unique extension to a bounded linear
operator on $\cH$, denoted also by $\Lambda$, with  $\|\Lambda\|\leq
\|\varphi({\bf W}_{i,j})\|$.
We show that
\begin{equation}\label{A}
\lim_{r\to 1} \varphi(rT_{i,j})h =\Lambda h\quad \text{ for any }\ h\in
\cH.
\end{equation}
Let $h\in \cH$ and let  $\{y_k\}_{k=1}^\infty$  be a sequence of vectors
in the range of ${\bf K_{f,T}^*}$, which converges to $h$.
According to Theorem \ref{Poisson-C*} and relations \eqref{limm}, \eqref{gr},  we have
$$
\| \varphi(rT_{i,j})\|\leq\|\varphi(r{\bf W}_{i,j})\|\leq
\|\varphi({\bf W}_{i,j})\|
$$
for any $r\in [0, 1)$.
 Note that
\begin{equation*}
\begin{split}
\|\Lambda h-\varphi(rT_{i,j}))h\|&\leq \|\Lambda h-\Lambda
y_k\|+\|\Lambda y_k-\varphi(rT_{i,j}))y_k\|+\|\varphi(rT_{i,j}))y_k-\varphi(rT_{i,j}))h\|\\
&\leq 2\|\varphi({\bf W}_{i,j})\| \|h-y_k\|+\|\Lambda y_k-\varphi(rT_{i,j}))y_k\|.
\end{split}
\end{equation*}
Consequently, since $\lim\limits_{r\to 1} \varphi(rT_{i,j})y_k =\Lambda y_k$, relation \eqref{A}
 follows.
 Due to Lemma \ref{Berezin-relations},   we have
\begin{equation}
\label{anot} \varphi(rT_{i,j})={\bf K_{f,rT}^*}[\varphi({\bf W}_{i,j})\otimes I_\cH]{\bf K_{f,rT}},
\end{equation}
which together with relation \eqref{A} imply part (i) of the theorem.

Now we prove part (ii). Since
$\|\varphi(rT_{i,j})\|\leq \|\varphi({\bf W}_{i,j})\|$ we deduce that
$\|\Phi(\varphi)\|\leq \|\varphi\|$ for  $\varphi\in F^\infty({\bf D_f^{m}})$.
Taking $r\to 1$ in  the first relation   of Lemma \ref{Berezin-relations} and using the first part of this theorem, we obtain
\begin{equation}
\label{Phi-Kf}
\Phi(\varphi){\bf K_{f,T}^*}={\bf K_{f,T}^*}(\varphi\otimes I),\quad
\varphi\in F^\infty({\bf D_f^m}).
\end{equation}
If $\{g_\iota\}$
be a bounded net in $F^\infty({\bf D_f^m})$ such that
 $g_\iota\to g\in F^\infty({\bf D_f^m})$ in the weak (resp. strong)
 operator topology, then $g_\iota\otimes I$ converges to $ g\otimes I$ in the same topologies.
 By relation \eqref{Phi-Kf}, we have
  $\Phi(g_\iota){\bf K_{f,T}^*}={\bf K_{f,T}^*}(g_\iota\otimes I)$.
Since the range of ${\bf K_{f,T}^*}$ is dense in $\cH$ and
 $\{\Phi(g_\iota)\}$ is bounded, an approximation argument shows that
 $\Phi(g_\iota)\to \Phi(g)$ in the weak (resp. strong)
 operator topology.

 Now, we prove (iii). Relation \eqref{anot} and the fact that  ${\bf K_{f,rT}}$
 is an isometry for $r\in [0, 1)$ imply
$$
\|[\varphi_{st}(rT_{i,j})]_{k\times k}\|\leq \|[\varphi_{st}]_{k\times k}\|
$$
for any operator-valued matrix $[\varphi_{st}]_{k\times k}\in M_{k\times k}(F^\infty({\bf D_f^m}))$
and $r\in [0, 1)$.
Hence, and  using the fact that
 $\Phi(\varphi_{st})=\text{\rm SOT-}\lim_{r\to 1} \varphi_{st}(rT_{i,j})$,
we deduce that $\Phi$ is completely contractive map.
Due to Theorem \ref{Poisson-C*}, $\Phi$ is a homomorphism
 on polynomials in ${\bf W}_{i,j}$ and the identity.
 Since, due to Proposition \ref{density-pol}, these polynomials are sequentially WOT-dense in
 $F^\infty({\bf D_f^m})$
 and $\Phi$ is WOT-continuous  on bounded sets, we deduce part (iii) of the theorem.
  The proof is complete.
\end{proof}

\bigskip

\section{Free holomorphic functions on  noncommutative  polydomains}

We introduce the algebra  $Hol({\bf D_{f, \text{\rm rad}}^m})$   of all free holomorphic
functions on the abstract    radial polydomain ${\bf D_{f, \text{\rm rad}}^m}$.  We identify the polydomain algebra $\cA({\bf D_f^m})$ and the
Hardy algebra $F^\infty({\bf D_f^m})$ with subalgebras  of $Hol({\bf D_{f, \text{\rm rad}}^m})$ .

For each $i\in\{1,\ldots, k\}$, let $Z_i:=(Z_{i,1},\ldots, Z_{i,n_i})$ be
an  $n_i$-tuple of noncommuting indeterminates and assume that, for any
$p,q\in \{1,\ldots, k\}$, $p\neq q$, the entries in $Z_p$ are commuting
 with the entries in $Z_q$. We set $Z_{i,\alpha_i}:=Z_{i,j_1}\cdots Z_{i,j_p}$
  if $\alpha_i\in \FF_{n_i}^+$ and $\alpha_i=g_{j_1}^i\cdots g_{j_p}^i$, and
   $Z_{i,g_0^i}:=1$, where $g_0^i$ is the identity in $\FF_{n^i}^+$.
We consider formal power series
$$
\varphi=\sum_{\alpha_1\in \FF_{n_1}^+,\ldots, \alpha_k\in \FF_{n_k}^+} a_{\alpha_1,\ldots, \alpha_k} Z_{1,\alpha_1}\cdots Z_{k,\alpha_k},\qquad a_{\alpha_1,\ldots, \alpha_k}\in \CC,
$$
in ideterminates $Z_{i,j}$. Denoting  $(\alpha):=(\alpha_1,\ldots, \alpha_k)\in \FF_{n_1}^+\times \cdots \times\FF_{n_k}^+$, $Z_{(\alpha)}:= Z_{1,\alpha_1}\cdots Z_{k,\alpha_k}$, and $a_{(\alpha)}:=a_{\alpha_1,\ldots, \alpha_k}$, we can also use
   the abbreviation
$\varphi=\sum\limits_{(\alpha)} a_{(\alpha)} Z_{(\alpha)}$.

Given a Hilbert space $\cH$, we introduce   the radial polydomain
$$
{\bf D_{f,\text{\rm rad}}^m}(\cH):= \bigcup_{0\leq r<1} r{\bf D_{f}^m}(\cH)\subseteq {\bf D_{f}^m}(\cH).
$$
A formal power series $\varphi$, having the representation above, is called  free holomorphic function on the
{\it abstract radial polydomain}
${\bf D_{f,\text{\rm rad}}^m}:=
\{{\bf D_{f,\text{\rm rad}}^m}(\cH):\ \cH \text{ is a Hilbert space}\}$ if the series
$$
\varphi(X_{i,j}):=\sum_{q=0}^\infty \sum_{{(\alpha)\in \FF_{n_1}^+\times \cdots \times\FF_{n_k}^+ }\atop {|\alpha_1|+\cdots +|\alpha_k|=q}} a_{(\alpha)} X_{(\alpha)}
$$
is convergent in the operator norm topology for any $X=(X_{i,j})\in {\bf D_{f, \text{\rm rad}}^m}(\cH)$ with $i\in \{1,\ldots, k\}$, $j\in \{1,\ldots, n_i\}$,  and any Hilbert space $\cH$.
We denote by $Hol({\bf D_{f, \text{\rm rad}}^m})$ the set of all free holomorphic functions on the abstract radial polydomain ${\bf D_{f,\text{\rm rad}}^m}$.

\begin{lemma}\label{free-ho}
Let $\varphi=\sum\limits_{(\alpha)\in  \FF_{n_1}^+\times \cdots
 \times\FF_{n_k}^+} a_{(\alpha)} Z_{(\alpha)}$ be a
   formal power series and let  ${\bf W}=\{{\bf W}_{i,j}\}$
    be the universal model associated with the abstract noncommutative
polydomain ${\bf D_f^m}$.
Then the following statements are equivalent.
\begin{enumerate}
\item [(i)] $\varphi$  is a free holomorphic function on the abstract radial polydomain ${\bf D_{f,\text{\rm rad}}^m}$.
\item[(ii)] For  any  $ r\in
[0,1)$, the series
$$
 \varphi(r{\bf W}_{i,j}):=\sum_{q=0}^\infty \sum_{{(\alpha)\in \FF_{n_1}^+\times \cdots \times\FF_{n_k}^+ }\atop {|\alpha_1|+\cdots +|\alpha_k|=q}} a_{(\alpha)} r^{|\alpha_1|+\cdots +|\alpha_k|} {\bf W}_{(\alpha)}
$$
is convergent in the operator norm topology.
 \item[(iii)] The inequality
 $$
 \limsup_{n\to\infty}\left\| \sum_{{(\alpha)\in \FF_{n_1}^+\times
  \cdots \times\FF_{n_k}^+ }\atop {|\alpha_1|+\cdots +|\alpha_k|=n}}
  a_{(\alpha)} {\bf W}_{(\alpha)}\right\|^{1/n}\leq 1.
 $$
\end{enumerate}
\end{lemma}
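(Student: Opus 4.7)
The plan is to prove the three implications (i)$\Rightarrow$(ii)$\Rightarrow$(iii)$\Rightarrow$(i) by combining the radial property of the polydomain (Theorem \ref{radial}) with the von Neumann type inequality established in Theorem \ref{Poisson-C*}. Throughout, write
$$
S_n := \sum_{{(\alpha)\in \FF_{n_1}^+\times\cdots\times\FF_{n_k}^+}\atop{|\alpha_1|+\cdots+|\alpha_k|=n}} a_{(\alpha)}\, {\bf W}_{(\alpha)},
$$
so that the series appearing in (ii) and (iii) is $\sum_{n\geq 0} r^n S_n$.

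For (i)$\Rightarrow$(ii), observe that by Lemma \ref{univ-model}, the universal model ${\bf W}$ lies in ${\bf D_f^m}(\otimes_{i=1}^k F^2(H_{n_i}))$; hence $r{\bf W}\in r{\bf D_f^m}(\otimes_{i=1}^k F^2(H_{n_i}))\subseteq {\bf D_{f,\text{\rm rad}}^m}(\otimes_{i=1}^k F^2(H_{n_i}))$ for every $r\in[0,1)$. Evaluating the free holomorphic function $\varphi$ at $r{\bf W}$ then gives (ii) by the very definition of $Hol({\bf D_{f,\text{\rm rad}}^m})$.

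For (ii)$\Rightarrow$(iii), fix $r\in(0,1)$. Convergence in norm of the series $\sum_{n\geq 0} r^n S_n$ forces $\|r^n S_n\|\to 0$, so the sequence $\{r^n\|S_n\|\}$ is bounded by some constant $M_r$. Taking $n$th roots yields $\|S_n\|^{1/n}\leq M_r^{1/n}/r$, whence $\limsup_{n\to\infty}\|S_n\|^{1/n}\leq 1/r$. Letting $r\to 1^{-}$ gives (iii).

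For (iii)$\Rightarrow$(i), let ${\bf X}=(X_{i,j})\in {\bf D_{f,\text{\rm rad}}^m}(\cH)$. By definition, ${\bf X}=r{\bf T}$ for some ${\bf T}\in {\bf D_f^m}(\cH)$ and some $r\in[0,1)$. Applying the noncommutative von Neumann type inequality from Theorem \ref{Poisson-C*} (more precisely, the completely contractive homomorphism $\Psi_{{\bf f},{\bf T}}$ restricted to the polydomain algebra $\cA({\bf D_f^m})$, which sends $p({\bf W})$ to $p({\bf T})$) we obtain
$$
\left\|\sum_{{(\alpha)}\atop{|\alpha_1|+\cdots+|\alpha_k|=n}} a_{(\alpha)} X_{(\alpha)}\right\| = r^n\left\|\sum_{{(\alpha)}\atop{|\alpha_1|+\cdots+|\alpha_k|=n}} a_{(\alpha)} T_{(\alpha)}\right\|\leq r^n \|S_n\|.
$$
Given $\epsilon>0$ small enough that $r(1+\epsilon)<1$, hypothesis (iii) provides $N$ with $\|S_n\|^{1/n}\leq 1+\epsilon$ for all $n\geq N$. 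Consequently the $n$th block of the series for $\varphi({\bf X})$ is majorized by $(r(1+\epsilon))^n$, a summable geometric series; so $\varphi({\bf X})$ converges absolutely in the operator norm. This holds on any Hilbert space $\cH$, so $\varphi\in Hol({\bf D_{f,\text{\rm rad}}^m})$.

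The only step that requires any substantive input from the paper is (iii)$\Rightarrow$(i), where we rely on the von Neumann type inequality for the polydomain algebra; the other two implications are essentially a Cauchy--Hadamard style argument combined with the radial structure already established in Theorem \ref{radial}.
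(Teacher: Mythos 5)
Your proof is correct and follows essentially the same route as the paper: the paper disposes of (i)$\Leftrightarrow$(ii) by citing the von Neumann type inequality of Theorem \ref{Poisson-C*} and of (ii)$\Leftrightarrow$(iii) by ``standard arguments,'' which are precisely the Cauchy--Hadamard estimates you spell out. Your write-up simply makes explicit the details the paper leaves implicit, with the key input in (iii)$\Rightarrow$(i) being the same inequality $\|p(T_{i,j})\|\leq\|p({\bf W}_{i,j})\|$ for ${\bf T}\in{\bf D_f^m}(\cH)$.
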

\begin{proof} The equivalence of (i) with (ii) is due to Theorem \ref{Poisson-C*}.
Using standard arguments, one can  easily prove that (ii) is equivalent to (iii).
\end{proof}

We remark  that  the coefficients  of a  free holomorphic function
are uniquely determined by its representation on
   an infinite dimensional   Hilbert space. Indeed,
    under the above notations,
   let $0<r<1$ and assume that
   $\varphi(r{\bf W}_{i,j})=0$.  Taking into account  relation  \eqref{WbWb},
    we have
\begin{equation*}
\left< \varphi(r{\bf W}_{i,j})1 ,  {\bf W}_{(\alpha)} 1 \right>= r^{|\alpha_1|+\cdots +|\alpha_k|} a_{(\alpha)} \frac{1}{ b_{1,  \alpha_1}^{(m_1)}}\cdots \frac{1}{ b_{k, \alpha_k}^{(m_k)}}=0
\end{equation*}
for any   $(\alpha)=(\alpha_1,\ldots, \alpha_k)\in \FF_{n_1}^+\otimes \cdots \otimes \FF_{n_k}^+$. Therefore
$a_{(\alpha)}=0$, which proves our
assertion.

   Due to
Lemma \ref{free-ho}, if $\varphi\in Hol({\bf D_{f,\text{\rm rad}}^m})$,
then $ \varphi(r{\bf W}_{i,j})$ is in the domain algebra $\cA({\bf
D_f^m})$ for any $r\in [0,1)$. Using the results from the previous section, one can see
that $Hol({\bf D_{f,\text{\rm rad}}^m})$ is an algebra. Let
$H^\infty({\bf D_{f,\text{\rm rad}}^m})$  denote the set of  all
elements $\varphi$ in $Hol({\bf D_{f,\text{\rm rad}}^m})$     such
that
$$\|\varphi\|_\infty:= \sup \|\varphi(X_{i,j} )\|<\infty,
$$
where the supremum is taken over all   $ (X_{i,j})\in {\bf D_{f,\text{\rm rad}}^m(\cH)}$ and any Hilbert space
$\cH$. One can  show that $H^\infty({\bf D_{f,\text{\rm
rad}}^m)}$ is a Banach algebra under pointwise multiplication and the
norm $\|\cdot \|_\infty$.
For each $p\in \NN$, we define the norms $\|\cdot
\|_p:M_{p\times p}\left(H^\infty({\bf D_{f,\text{\rm rad}}^m})\right)\to
[0,\infty)$ by setting
$$
\|[\varphi_{st}]_{p\times p}\|_p:= \sup \|[\varphi_{st}(X_{i,j})]_{p\times p}\|,
$$
where the supremum is taken over all  $ (X_{i,j})\in {\bf D_{f,\text{\rm rad}}^m(\cH)}$ and any Hilbert space
$\cH$.
It is easy to see that the norms  $\|\cdot\|_p$, $p\in \NN$,
determine  an operator space structure  on $H^\infty({\bf
D_{f,\text{\rm rad}}^m})$,
 in the sense of Ruan (\cite{Pa-book}).
Let $\varphi$  be  a free holomorphic function on the abstract radial polydomain ${\bf D_{f,\text{\rm rad}}^m}$.
  Note that if   $0<r_1<r_2<1$, then $r_1{\bf D_{f}^m}(\cH)\subset r_2{\bf D_{f}^m}(\cH)\subset {\bf D_{f}^m}(\cH)$.
Since $\varphi(r_2{\bf W}_{i,j})$ is in
the polydomain  algebra $ \cA({\bf D_f^m})$, Theorem \ref{Poisson-C*} implies
$\|\varphi(r r_2{\bf W}_{i,j})\|\leq
\|\varphi(r_2{\bf W}_{i,j})\|$  for any  $ r\in [0,1)$.
Taking $r:=\frac{r_1}{r_2}$, we deduce that
$$
\|\varphi(r_1{\bf W}_{i,j})\|\leq \|\varphi(r_2{\bf W}_{i,j})\|.
$$
On the other hand,
 if $0<r<1$, then we can use again Theorem \ref{Poisson-C*} to show that  the mapping
  $g:r{\bf D_{f}^m}(\cH)\to B(\cH)$
defined by
$$g(X_{i,j}):=\varphi(X_{i,j}),\qquad (X_{i,j}) \in r{\bf D_{f}^m}(\cH),
$$
is continuous and \ $\|g(X_{i,j})\|\leq \|g(r{\bf W}_{i,j})\|$.  Moreover, the series defining $g$  converges uniformly on
$r{\bf D_{f}^m}(\cH)$ in the operator norm topology.

Given  $\varphi\in F^\infty({\bf D^m_f})$ and a Hilbert space $\cH$, the noncommutative Berezin
transform  associated with the abstract noncommutative polydomain ${\bf D^m_f}$
generates a function whose representation on $\cH$ is
$$
{\bf B}[\varphi]:{\bf D_{f,\text{\rm rad}}^m}(\cH)\to B(\cH)
$$
defined by
$$
{\bf B}[\varphi](X_{i,j} ):={\bf B}_{X}[\varphi],\qquad
X:=(X_{i,j})\in {\bf D_{f,\text{\rm rad}}^m}(\cH),
$$
where ${\bf B}_X$  is the Berezin transform at $X$.
We call ${\bf B}[\varphi]$  the Berezin transform of $\varphi$. In
what follows,  we identify the  noncommutative algebra
$F^\infty({\bf D^m_f})$ with the  Hardy subalgebra $H^\infty({\bf
D_{f,\text{\rm rad}}^m})$ of   bounded free holomorphic functions on
${\bf D_{f,\text{\rm rad}}^m}$.

\begin{theorem}\label{f-infty} The map
$ \Phi:H^\infty({\bf D_{f,\text{\rm rad}}^m})\to F^\infty({\bf
D^m_f}) $ defined by
$$
\Phi\left(\sum\limits_{(\alpha)} a_{(\alpha)} Z_{(\alpha)}\right):=\sum\limits_{(\alpha)} a_{(\alpha)} {\bf W}_{(\alpha)}
$$
is a completely isometric isomorphism of operator algebras.
Moreover, if  $g:=\sum\limits_{(\alpha) } a_{(\alpha)} Z_{(\alpha)}$
is  a free holomorphic function on the abstract radial polydomain ${\bf
D_{f,\text{\rm rad}}^m}$, then the following statements are equivalent:
 \begin{enumerate}
 \item[(i)]$g\in H^\infty({\bf D_{f,\text{\rm
rad}}^m})$;
\item[(ii)] $\sup\limits_{0\leq r<1}\|g(r{\bf W}_{i,j})\|<\infty$, where $g(r{\bf W}_{i,j}):=\sum_{q=0}^\infty \sum_{{(\alpha)\in \FF_{n_1}^+\times \cdots \times\FF_{n_k}^+ }\atop {|\alpha_1|+\cdots +|\alpha_k|=q}} r^q a_{(\alpha)} {\bf W}_{(\alpha)}$;
\item[(iii)]
there exists $\varphi\in F^\infty({\bf D^m_f})$ with $g={\bf
B}[\varphi]$, where ${\bf B}$ is the  noncommutative Berezin
transform  associated with the abstract   polydomain ${\bf D^m_f}$.
\end{enumerate}

In this case,
$$
\Phi(g)=\text{\rm SOT-}\lim_{r\to 1}g(r{\bf W}_{i,j}),   \quad
 \quad  \Phi^{-1}(\varphi)={\bf B}[\varphi],\quad
\varphi\in F^\infty({\bf D^m_f}),
$$
and
\begin{equation*}
\|\Phi(g)\|=\sup_{0\leq r<1}\|g(r{\bf W}_{i,j})\|=
\lim_{r\to 1}\|g(r{\bf W}_{i,j})\|.
\end{equation*}

\end{theorem}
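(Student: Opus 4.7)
The plan is to prove that $\Phi$ is a completely isometric isomorphism of operator algebras by exhibiting the Berezin transform ${\bf B}[\cdot]$ as its inverse, and then to read off the characterization (i)--(iii) and the norm formulas as immediate corollaries.

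First, I would establish that $\Phi$ is well defined and completely contractive. Given $g=\sum_{(\alpha)} a_{(\alpha)} Z_{(\alpha)}\in H^\infty({\bf D_{f,\text{\rm rad}}^m})$, Lemma \ref{free-ho} yields norm-convergence of $g(r{\bf W}_{i,j})$ for every $r\in[0,1)$, and $\|g(r{\bf W}_{i,j})\|\leq \|g\|_\infty$ since $r{\bf W}$ lies in ${\bf D_{f,\text{\rm rad}}^m}(\otimes_{i=1}^kF^2(H_{n_i}))$. Using the explicit formula \eqref{Wij} for the action of ${\bf W}_{(\beta)}$ on the orthonormal basis vectors $e_{(\alpha)}$, each Fourier coefficient of $g(r{\bf W}_{i,j})e_{(\alpha)}$ is of the form $a_{(\beta)} r^{|\beta|}\sqrt{b_{(\alpha)}^{({\bf m})}/b_{(\beta\alpha)}^{({\bf m})}}$ and converges as $r\to 1$; combining this pointwise convergence with the uniform bound $\|g(r{\bf W})e_{(\alpha)}\|^2\leq \|g\|_\infty^2$ via a Fatou-type argument on $\ell^2$ produces norm convergence of $g(r{\bf W})e_{(\alpha)}$. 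Uniform boundedness then extends this to a well-defined $\text{\rm SOT-}\lim_{r\to 1}g(r{\bf W}_{i,j}) =: \Phi(g)$, whose formal Fourier series is $\sum a_{(\alpha)}{\bf W}_{(\alpha)}$; membership in $F^\infty({\bf D_f^m})$ is guaranteed by the $\ell^2$-coefficient condition, which follows from $\Phi(g)(1)\in\otimes_{i=1}^k F^2(H_{n_i})$. The matrix amplification gives complete contractivity $\|[\Phi(g_{st})]\|_p\leq \|[g_{st}]\|_p$, and the homomorphism property holds on polynomials in ${\bf W}_{i,j}$ and extends to $H^\infty$ by SOT-continuity of multiplication on bounded sets.

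Next, I would show that $\Phi$ is bijective with inverse the Berezin transform. Given $\varphi\in F^\infty({\bf D_f^m})$, set $\widetilde g:={\bf B}[\varphi]$. For each $r\in[0,1)$ the tuple $r{\bf W}$ is pure in ${\bf D_f^m}$ (since $\Phi_{f_i,r{\bf W}_i}^n(I)\leq r^n I\to 0$), so Theorem \ref{funct-calc}(iii) applied to ${\bf T}={\bf W}$ gives ${\bf B}_{r{\bf W}}[\varphi]=\varphi(r{\bf W}_{i,j})$, which by Lemma \ref{gKKg} is norm-convergent as $\sum a_{(\alpha)} r^{|\alpha|}{\bf W}_{(\alpha)}$. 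Hence the function $\widetilde g$ has formal Fourier series $\sum a_{(\alpha)}Z_{(\alpha)}$, and by Lemma \ref{free-ho}(iii) it is a free holomorphic function on ${\bf D_{f,\text{\rm rad}}^m}$. The contractivity of the noncommutative Berezin kernel ${\bf K_{f,X}}$ (Theorem \ref{Berezin-prop}) yields $\|\widetilde g(X)\|=\|{\bf K_{f,X}^*}(\varphi\otimes I){\bf K_{f,X}}\|\leq \|\varphi\|$ for every $X\in{\bf D_{f,\text{\rm rad}}^m}$, so $\widetilde g\in H^\infty({\bf D_{f,\text{\rm rad}}^m})$. Matching Fourier coefficients now gives $\Phi(\widetilde g)=\varphi$ and ${\bf B}[\Phi(g)]=g$ for every $g\in H^\infty$, so $\Phi^{-1}={\bf B}[\cdot]$.

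Finally, the identification $g={\bf B}[\Phi(g)]$ supplies the reverse norm inequality: for every $X\in{\bf D_{f,\text{\rm rad}}^m}$, $\|g(X)\|=\|{\bf K_{f,X}^*}(\Phi(g)\otimes I){\bf K_{f,X}}\|\leq \|\Phi(g)\|$, whence $\|g\|_\infty\leq \|\Phi(g)\|$; combined with the bound from the first step this gives $\|g\|_\infty=\|\Phi(g)\|$, and the matrix version gives complete isometry. The three equivalences then drop out: (i)$\Rightarrow$(ii) is automatic from $r{\bf W}\in{\bf D_{f,\text{\rm rad}}^m}$, (ii)$\Rightarrow$(iii) follows from the SOT-construction of $\Phi(g)$ and the identity $g={\bf B}[\Phi(g)]$, and (iii)$\Rightarrow$(i) is the contractivity estimate just displayed. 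The concluding norm formula $\|\Phi(g)\|=\sup_{0\leq r<1}\|g(r{\bf W}_{i,j})\|=\lim_{r\to 1}\|g(r{\bf W}_{i,j})\|$ comes from Lemma \ref{gKKg}(iii) applied to $\varphi=\Phi(g)$. The main technical obstacle is the first step: producing the SOT-limit for an arbitrary $g\in H^\infty$ that is not a priori known to come from a Hardy-algebra element. Unlike in Lemma \ref{gKKg}, no ambient operator on $\otimes_{i=1}^kF^2(H_{n_i})$ is handed to us; the weighted-shift structure of ${\bf W}_{i,j}$ and the $H^\infty$-norm bound together must produce one, and this is precisely the role of the coefficient-wise $\ell^2$ analysis described in the first paragraph.
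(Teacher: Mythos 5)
Your proposal is correct and follows essentially the same route as the paper: well-definedness and contractivity of $\Phi$ via Lemma \ref{free-ho} and the von Neumann-type inequality of Theorem \ref{Poisson-C*}, surjectivity and the inverse via the Berezin transform and Theorem \ref{funct-calc}, and the norm identities via Lemma \ref{gKKg}(iii). The only local difference is that you build the SOT-limit $\Phi(g)$ directly by a dominated-convergence argument on Fourier coefficients, whereas the paper verifies $g({\bf W}_{i,j})\in F^\infty({\bf D_f^m})$ by a short contradiction argument using $\|g(r{\bf W}_{i,j})q-g({\bf W}_{i,j})q\|\to 0$ for polynomials $q$; both rest on the same two facts and are interchangeable.
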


\begin{proof} To show that the map $\Phi$ is well-defined, let
$g:=\sum\limits_{(\beta) } a_{(\beta)} Z_{(\beta)}$ be in the Hardy algebra $ H^\infty({\bf D_{f,\text{\rm rad}}^m})$.
 Since $(r{\bf W}_{i,j})\in {\bf D_{f,\text{\rm
rad}}^m}(F^2(H_n))$,  Lemma \ref{free-ho} shows that $g(r{\bf W}_{i,j})$ is well-defined for any $r\in [0,1)$ and
$
\sup_{0\leq r<1}\|g(r{\bf W}_{i,j})\|\leq \|g\|_\infty<\infty.
$
We need to show that $g({\bf W}_{i,j}):=\sum\limits_{(\beta) } a_{(\beta)} {\bf W}_{(\beta)}$ is the Fourier representation of an element in $F^\infty({\bf D_f^m})$.
 Taking into account
relation \eqref{WbWb}, we deduce that
\begin{equation*}
\begin{split}
\sum\limits_{\beta_1\in \FF_{n_1}^+,\ldots, \beta_k\in \FF_{n_k}^+} r^{|\beta_1|+\cdots + |\beta_k|} |a_{\beta_1,\ldots, \beta_k}|^2 \frac{1}{b_{1,\beta_1}^{(m_1)}\cdots b_{k,\beta_k}^{(m_k)}}=
\left\|g(r{\bf W}_{i,j})(1)\right\|
\leq \sup\limits_{0\leq r<1}\|g(r{\bf W}_{i,j})\|<\infty
\end{split}
\end{equation*}
for any $0\leq r<1$. Consequently, $\sum\limits_{\beta_1\in \FF_{n_1}^+,\ldots, \beta_k\in \FF_{n_k}^+}  |a_{\beta_1,\ldots, \beta_k}|^2 \frac{1}{b_{1,\beta_1}^{(m_1)}\cdots b_{k,\beta_k}^{(m_k)}}<\infty$. As in Section 3, the latter relation implies  that $
g({\bf W}_{i,j})p$ is in the tensor product $\otimes_{i=1}^k F^2(H_{n_i})$ for any
polynomial $p\in \otimes_{i=1}^k F^2(H_{n_i})$. Now assume that
  $g({\bf W}_{i,j})\notin F^\infty({\bf
D^m_f})$.
 According to the definition of $F^\infty({\bf
D^m_f})$,  for any fixed  positive
  number $M$, there exists a polynomial $q\in \otimes_{i=1}^k F^2(H_{n_i})$ with $\|q\|=1$ such that
$
\|g({\bf W}_{i,j})q\|>M.
$
Since $\|g(r{\bf W}_{i,j})(1)-g({\bf W}_{i,j})(1)\|\to 0$ as
$r\to 1$, we have
$\|g({\bf W}_{i,j})q-g(r{\bf W}_{i,j})q\| \to 0$, as $\ r\to 1$.
Consequently, there is $r_0\in (0,1)$ such that $
\|g(r_0{\bf W}_{i,j}) q\|> M$,  which implies $
\|g(r_0{\bf W}_{i,j})\|
>M$.  This contradicts the fact  that
$
\sup_{0\leq r<1}\|g(r{\bf W}_{i,j})\|<\infty.
$
 Therefore, $g({\bf W}_{i,j})\in
F^\infty({\bf D^m_f})$, which proves that the map $\Phi$ is
well-defined.

Moreover,  due to Theorem \ref{Poisson-C*} , we have
$\|g(X_{i,j}))\|\leq \|g(r{\bf W}_{i,j})\|$  for any
$(X_{i,j})\in r{\bf D_{f}^m}(\cH)$.  Using  now   Lemma \ref{gKKg}, we deduce that
$$
\|g({\bf W}_{i,j})\|=\sup_{0\leq r<1}\|g(r{\bf W}_{i,j}))\|=\|g\|_\infty
$$
and
$$
\Phi(g)=g({\bf W}_{i,j})=\text{\rm SOT-}\lim\limits_{r\to
1}g(r{\bf W}_{i,j}).
$$
Therefore, $\Phi$ is  a well-defined isometric linear map.
We show now
that $\Phi$ is a surjective map. To this end, let $\varphi({\bf W}_{i,j}):=\sum\limits_{(\beta) } a_{(\beta)} {\bf W}_{(\beta)}$ be in
$F^\infty({\bf D^m_f)}$.
Using  Lemma \ref{gKKg} and  Theorem \ref{free-ho}, we deduce that $g:=\sum\limits_{(\alpha) } a_{(\alpha)} Z_{(\alpha)}$
  is a free holomorphic function on the noncommutative domain ${\bf D_{f,\text{\rm
  rad}}^m}$
  and
  $$
  \|g({ X}_{i,j})\|\leq \|g(r{\bf W}_{i,j})\|\leq \|g({\bf W}_{i,j})\|
  $$
  for any $(X_{i,j})\in r{\bf D_{f}^m}(\cH)$ and $r\in [0,1)$. Hence, we
  deduce that
 $$ \sup_{(X_{i,j})\in {\bf D_{f,\text{\rm rad}}^m}(\cH)} \|g(X_{i,j})\|\leq \|g({\bf W}_{i,j}))\|<\infty,
$$
  which proves that $g\in H^\infty({\bf D_{f,\text{\rm rad}}^m})$.
   This  shows that the map $\Phi$ is surjective.
   Therefore, we have proved
  that $\Phi$ is an isometric isomorphism of operator algebras.
  Using the same techniques and passing to matrices, one can prove that $\Phi$ is a
  completely isometric isomorphism.
Moreover,  note
   that if $X:=(X_{i,j})\in {\bf D_{f,\text{\rm rad}}^m}$,
   then   there is  $r\in (0,1)$ such that $X=rY$ with $Y=(Y_{i,j})\in {\bf D_f^m}(\cH)$. Applying Theorem \ref{funct-calc} part (iii), we deduce that
   $\varphi(X)={\bf B}_{X}[\varphi]$.
   Now, the equivalences mentioned in the theorem  can be easily deduced
from the considerations above.
  The proof is complete.
\end{proof}

For the rest of this section, we assume that  ${\bf D_f^m} (\cH)$  is
closed in the operator norm topology for any Hilbert space $\cH$. Then
 we have ${\bf D_{f, \text{\rm rad}}^m}(\cH)^- ={\bf D_f^m} (\cH)$.
 Note that the
interior  of ${\bf D_f^m} (\cH)$, which we denote by $Int({\bf
D_f^m} (\cH))$, is a subset of ${\bf D_{f, \text{\rm rad}}^m}(\cH)$.
 We remark that if  ${\bf q}=(q_1,\ldots, q_k)$ is a $k$-tuple of   positive regular  noncommutative polynomials, then ${\bf D_q^m} (\cH)$  is
closed in the operator norm topology.

 We  denote by  $A({\bf
D_{f,\text{\rm rad}}^m})$   the set of all  elements $g$
  in $Hol({\bf
D_{f,\text{\rm rad}}^m})$   such that the mapping
$${\bf
D_{f,\text{\rm rad}}^m}(\cH)\ni (X_{i,j})\mapsto
g(X_{i,j})\in B(\cH)$$
 has a continuous extension to  $[{\bf
D_{f,\text{\rm rad}}^m}(\cH)]^-={\bf
D_{f}^m}(\cH)$ for any Hilbert space $\cH$. One can show that  $A({\bf
D_{f,\text{\rm rad}}^m})$ is a  Banach algebra under pointwise
multiplication and the norm $\|\cdot \|_\infty$, and it has an operator space structure under the norms $\|\cdot \|_p$, $p\in \NN$.  Moreover, we can
identify the polydomain algebra $\cA({\bf D^m_f})$ with the subalgebra
 $A({\bf D_{f,\text{\rm rad}}^m})$. Using Theorem \ref{Poisson-C*},
 Theorem  \ref{f-infty}, and an approximation argument, one can obtain the following result.

\begin{corollary}\label{A-infty} The map
$ \Phi:A({\bf D_{f,\text{\rm rad}}^m})\to \cA({\bf D^m_f}) $
defined by
$$
\Phi\left(\sum\limits_{(\alpha)} a_{(\alpha)} Z_{(\alpha)}\right):=\sum\limits_{(\alpha)} a_{(\alpha)} {\bf W}_{(\alpha)}
$$
is a completely isometric isomorphism of operator algebras.
Moreover, if  $g:=\sum\limits_{(\alpha)} a_{(\alpha)} Z_{(\alpha)}$
is  a free holomorphic function on the abstract radial  polydomain ${\bf
D_{f,\text{\rm rad}}^m}$,
then the following statements are equivalent:
 \begin{enumerate}
 \item[(i)]$g\in A({\bf D_{f,\text{\rm
rad}}^m})$;
\item[(ii)] $g(r{\bf W}_{i,j}):=\sum_{q=0}^\infty \sum_{{(\alpha)\in \FF_{n_1}^+\times \cdots \times\FF_{n_k}^+ }\atop {|\alpha_1|+\cdots +|\alpha_k|=q}} r^q  a_{(\alpha)} {\bf W}_{(\alpha)}$ is convergent in the  norm topology  as $r\to 1$;
\item[(iii)]
there exists $\varphi\in \cA({\bf D^m_f})$ with $g={\bf
B}[\varphi]$, where ${\bf B}$ is the  noncommutative Berezin
transform  associated with the abstract  polydomain ${\bf D}^m_f$.
\end{enumerate}

In this case,
$$
\Phi(g)=\lim_{r\to 1}g(r{\bf W}_{i,j})  \quad \text{ and } \quad  \Phi^{-1}(\varphi)={\bf B}[\varphi],\quad \varphi\in \cA({\bf D^m_f}).
$$

\end{corollary}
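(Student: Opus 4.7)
The plan is to build on Theorem \ref{f-infty}, which already establishes a completely isometric isomorphism $\Phi:H^\infty({\bf D_{f,\text{\rm rad}}^m})\to F^\infty({\bf D^m_f})$. Since $\cA({\bf D^m_f})$ sits inside $F^\infty({\bf D^m_f})$, it suffices to identify the preimage of $\cA({\bf D^m_f})$ under $\Phi$ with $A({\bf D_{f,\text{\rm rad}}^m})$ and to note that every estimate in Theorem \ref{f-infty} respects the finer norm structure needed here.

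First I would show that $\Phi$ maps $A({\bf D_{f,\text{\rm rad}}^m})$ into $\cA({\bf D^m_f})$. Fix $g\in A({\bf D_{f,\text{\rm rad}}^m})$, take $\cH=\otimes_{i=1}^k F^2(H_{n_i})$, and observe that $r{\bf W}\to {\bf W}$ in operator norm as $r\to 1^-$, with $r{\bf W}\in {\bf D_{f,\text{\rm rad}}^m}(\cH)$ and ${\bf W}\in {\bf D_f^m}(\cH)$. The continuous extension of $g$ to ${\bf D_f^m}(\cH)$ therefore forces $g(r{\bf W}_{i,j})\to g({\bf W}_{i,j})$ in norm, and Lemma \ref{gKKg}(ii) tells us each $g(r{\bf W}_{i,j})$ lies in $\cA({\bf D_f^m})$. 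Norm-closedness of $\cA({\bf D_f^m})$ then gives $\Phi(g)=g({\bf W}_{i,j})\in \cA({\bf D_f^m})$, and this automatically gives the formula $\Phi(g)=\lim_{r\to 1}g(r{\bf W}_{i,j})$ in norm.

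Next I would establish surjectivity onto $\cA({\bf D^m_f})$. Given $\varphi\in\cA({\bf D^m_f})$, pick polynomials $p_n({\bf W}_{i,j})\in\cP({\bf W})$ with $p_n({\bf W}_{i,j})\to\varphi$ in norm. The free polynomials $p_n(Z)$ trivially lie in $A({\bf D_{f,\text{\rm rad}}^m})$ and $\Phi(p_n)=p_n({\bf W}_{i,j})$. Applying Theorem \ref{Poisson-C*} at any point $X\in {\bf D_f^m}(\cH)$ yields
\[
\|p_n(X_{i,j})-p_\ell(X_{i,j})\|\le \|p_n({\bf W}_{i,j})-p_\ell({\bf W}_{i,j})\|,
\]
so $\{p_n\}$ is Cauchy in $\|\cdot\|_\infty$ and the convergence on $X$ is uniform over all Hilbert spaces and all $X\in {\bf D_f^m}(\cH)$. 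The uniform limit $g$ is then continuous on ${\bf D_f^m}(\cH)$, and in particular $g\in A({\bf D_{f,\text{\rm rad}}^m})$ with $\Phi(g)=\lim p_n({\bf W}_{i,j})=\varphi$. Passing to matrices, the same von Neumann type inequality from Theorem \ref{Poisson-C*} gives the complete isometric statement for free.

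Finally, for the equivalences, (i)$\Rightarrow$(ii) is exactly the first paragraph; (ii)$\Rightarrow$(iii) follows since $\Phi(g)=\lim_{r\to 1}g(r{\bf W}_{i,j})\in \cA({\bf D^m_f})$ and $g={\bf B}[\Phi(g)]$ by the already-established identity in Theorem \ref{f-infty}; and (iii)$\Rightarrow$(i) is obtained by the polynomial approximation of $\varphi\in\cA({\bf D^m_f})$ exactly as in the surjectivity step, since $g={\bf B}[\varphi]$ coincides with the uniform limit of $p_n$ on ${\bf D_f^m}(\cH)$, providing the continuous extension. The only real obstacle is the uniform Cauchy control of polynomial approximants over all representations of ${\bf D_f^m}$, but this is precisely what Theorem \ref{Poisson-C*} supplies; everything else is packaging together Theorem \ref{f-infty} and this approximation.
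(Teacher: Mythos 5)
Your argument is correct and follows exactly the route the paper indicates for this corollary (which it leaves as a sketch): combine Theorem \ref{Poisson-C*}, Theorem \ref{f-infty}, and an approximation argument, with the von Neumann inequality transferring norm convergence at the universal model to uniform convergence over all representations of ${\bf D_f^m}$. Your use of polynomial approximants $p_n({\bf W}_{i,j})\to\varphi$ is a natural instantiation of that approximation step, and the only point worth making fully explicit is that the uniform limit restricted to ${\bf D_{f,\text{\rm rad}}^m}(\cH)$ is identified with the free holomorphic function $\sum a_{(\alpha)}Z_{(\alpha)}$ coming from the Fourier representation of $\varphi$ (via ${\bf B}[\varphi]$ and the norm continuity of $\Psi_{\bf f,X}$), which you do note in your final paragraph.
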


 We remark that there is an
important connection between the theory of free holomorphic
functions on abstract radial polydomains ${\bf D_{f, \text{\rm rad}}^m}$,
 and the theory of holomorphic functions on polydomains in
$\CC^d$.
Indeed, consider the case when $\cH=\CC^p$ and $p=1,2\ldots.$ Then
     ${\bf D_{f}^m}(\CC^p)$ can be seen as a subset of $\CC^{(n_1+\cdots +n_k)p^2}$ with
an arbitrary norm. We denote by $Int({\bf D_{f}^m}(\CC^p))$ the
interior of the closed set $ {\bf D_{f}^m}(\CC^p)$.   In the particular case when
$p=1$, the interior $Int({\bf D_{f}^m}(\CC))$ is a Reinhardt domain,
i.e., $(\xi_{i,j}\lambda_{i,j})\in Int({\bf D_{f}^m}(\CC))$ for any $( \lambda_{i,j})\in Int({\bf D_{f}^m}(\CC))$ and $\xi_{i,j}\in \TT$.
 Let  $M_{p\times p}(\CC)$  denote the set of all $p\times p$ matrices with entries
in $\CC$.

\begin{proposition} \label{rep-finite1} If  $p\in \NN$ and
$\varphi$ is  a  free holomorphic function   on the abstract radial polydomain ${\bf D_{f,\text{\rm rad}}^m}$,
  then its representation on $\CC^p$,  i.e., the map $\widehat \varphi$ defined by

  $$
  \CC^{(n_1+\cdots +n_k)p^2}\supset {\bf
D_{f,\text{\rm rad}}^m}(\CC^p)\ni (\lambda_{i,j})\mapsto \varphi( \lambda_{i,j})\in M_{ p\times p}(\CC)\subset
\CC^{p^2}
$$
is a  holomorphic function on the interior of ${\bf
D_{f}^m}(\CC^p)$. Moreover, the following statements hold:
\begin{enumerate}
\item[(i)] if $\varphi\in F^\infty({\bf D_{f,\text{\rm rad}}^m})$,
 then $\widehat \varphi$ is bounded on the interior of ${\bf D_{f}^m}(\CC^p)$;
\item[(ii)] if $\varphi\in A({\bf D_{f,\text{\rm rad}}^m})$,
 then $\widehat \varphi$ is  continuous  on  ${\bf D_{f}^m}(\CC^p)$ and
  holomorphic
 on the interior of ${\bf D_{f}^m}(\CC^p)$.
\end{enumerate}
\end{proposition}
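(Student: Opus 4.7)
The plan is to combine two facts: first, every point of $Int({\bf D_f^m}(\CC^p))$ lies in $r{\bf D_f^m}(\CC^p)$ for some $r<1$, and this can be arranged uniformly on compact subsets; second, Theorem \ref{Poisson-C*} supplies the norm bound $\|g(X_{i,j})\|\leq\|g(r{\bf W}_{i,j})\|$ whenever $(X_{i,j})\in r{\bf D_f^m}(\cH)$, which controls the tails of the defining series of $\varphi$. To verify the geometric claim I would argue as follows: if $X_0\in Int({\bf D_f^m}(\CC^p))$ then some ball $B(X_0,\eta)$ is contained in ${\bf D_f^m}(\CC^p)$, so $(1+\delta)X_0$ lies in ${\bf D_f^m}(\CC^p)$ for all $\delta>0$ with $\delta\|X_0\|<\eta$, and hence $X_0\in \tfrac{1}{1+\delta}{\bf D_f^m}(\CC^p)\subset {\bf D_{f,\text{\rm rad}}^m}(\CC^p)$. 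For a compact set $K\subset Int({\bf D_f^m}(\CC^p))$, a standard compactness argument (the function $X\mapsto\sup\{s\geq 0:B(X,s)\subset{\bf D_f^m}(\CC^p)\}$ is lower semicontinuous and strictly positive on $K$) produces a uniform $r=r(K)<1$ with $K\subset r{\bf D_f^m}(\CC^p)$.

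With this in hand, decompose $\varphi=\sum_{q=0}^\infty P_q$ where
$$
P_q(Z_{i,j}):=\sum_{{(\alpha)\in\FF_{n_1}^+\times\cdots\times\FF_{n_k}^+}\atop{|\alpha_1|+\cdots+|\alpha_k|=q}} a_{(\alpha)}Z_{(\alpha)}
$$
is the $q$-homogeneous part, and set $S_N:=\sum_{q=0}^N P_q$. Evaluated on $\CC^p$, each $\widehat{S_N}$ is a matrix-valued polynomial in the entries $\lambda_{i,j}$, hence an entire holomorphic map $\CC^{(n_1+\cdots+n_k)p^2}\to M_{p\times p}(\CC)$. For $X\in K$, write $X=rY$ with $Y\in{\bf D_f^m}(\CC^p)$ and apply Theorem \ref{Poisson-C*} to obtain
$$
\|\widehat{\varphi}(X)-\widehat{S_N}(X)\|=\|(\varphi-S_N)(rY)\|\leq \|(\varphi-S_N)(r{\bf W}_{i,j})\|.
$$
By Lemma \ref{free-ho}, the right-hand side tends to $0$ as $N\to\infty$, and the bound is independent of $X\in K$. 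Hence $\widehat{S_N}\to\widehat{\varphi}$ uniformly on $K$, and Weierstrass's theorem gives holomorphicity of $\widehat{\varphi}$ on $Int({\bf D_f^m}(\CC^p))$.

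Parts (i) and (ii) then drop out of the definitions. For (i), if $\varphi\in F^\infty({\bf D_{f,\text{\rm rad}}^m})$, then for every $\lambda\in Int({\bf D_f^m}(\CC^p))\subset {\bf D_{f,\text{\rm rad}}^m}(\CC^p)$ we have $\|\widehat{\varphi}(\lambda)\|\leq \|\varphi\|_\infty<\infty$. For (ii), the assumption $\varphi\in A({\bf D_{f,\text{\rm rad}}^m})$ is precisely that $\widehat{\varphi}$ extends continuously to ${\bf D_f^m}(\CC^p)$, while holomorphicity on the interior is already supplied by the main argument. The genuine technical step, and the only real obstacle, is thus the uniform-dilation/uniform-convergence step in the second paragraph; the rest is bookkeeping once the operator inequality of Theorem \ref{Poisson-C*} and the norm convergence of $\varphi(r{\bf W}_{i,j})$ from Lemma \ref{free-ho} are in place.
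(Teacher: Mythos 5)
Your proof is correct and follows essentially the same route as the paper: show every compact subset of $Int({\bf D_{f}^m}(\CC^p))$ lies in $r{\bf D_{f}^m}(\CC^p)$ for some $r<1$, then use the von Neumann-type inequality $\|g(X_{i,j})\|\leq\|g(r{\bf W}_{i,j})\|$ from Theorem \ref{Poisson-C*} together with the norm convergence of $\varphi(r{\bf W}_{i,j})$ from Lemma \ref{free-ho} to get uniform convergence of the polynomial partial sums, and conclude by Weierstrass. Your only deviations are cosmetic: a distance-function argument in place of the paper's finite-subcover argument for the uniform $r$, and bounding the full tail operator norm rather than summing the norms of the homogeneous pieces.
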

\begin{proof}

If $K$ is a compact subset in the interior of
${\bf D_{f}^m}(\CC^p)$, then there exists  $r\in (0,1)$ such that
$K\subset r{\bf D_{f}^m}(\CC^p)$. Indeed,  if $\lambda:=(\lambda_{i,j})\in  Int({\bf D_{f}^m}(\CC^p))\subset \CC^{(n_1+\cdots +n_k)p^2}$, then there exists $\epsilon_\lambda>0$ and $r_\lambda\in (0,1)$ such that
$\frac{1}{r_\lambda} \mu\in Int({\bf D_{f}^m}(\CC^p))$ for any
$\mu\in B_{\epsilon_\lambda}(\lambda):=\{z \in \CC^{(n_1+\cdots +n_k)p^2}: \ \|\lambda-z\|<\epsilon_\lambda\}$. Since $K$ is a compact set and $K\subset \cup_{\lambda\in K}B_{\epsilon_\lambda}(\lambda)$, there exists $\lambda_1,\ldots \lambda_l\in K$ such that  $K\subset \cup_{i=1}^l B_{\epsilon_{\lambda_i}}(\lambda_i)$. Consequently, for any $\mu\in K$, we have
$\frac{1}{r_{\lambda_i}} \mu\in Int({\bf D_{f}^m}(\CC^p))\subset {\bf
D_{f}^m}(\CC^p)  $ for some $i\in \{1,\ldots, n\}$.
Taking into account that  $r_1{\bf
D_{f}^m}(\CC^p)\subset r_2{\bf
D_{f}^m}(\CC^p)$ if  $r_1,r_2\in(0,1)$ and $r_1\leq r_2$, we conclude that $K\subset r{\bf D_{f}^m}(\CC^p)$, where $r:=\max\{r_1,\ldots, r_l\}$.

Note that if  $\varphi:=\sum_{q=0}^\infty \sum_{{(\alpha)\in \FF_{n_1}^+\times \cdots \times\FF_{n_k}^+ }\atop {|\alpha_1|+\cdots +|\alpha_k|=q}} a_{(\alpha)} Z_{(\alpha)}$, then

$$
\left\|\varphi( \lambda_{i,j})- \sum_{{(\alpha)\in \FF_{n_1}^+\times \cdots \times\FF_{n_k}^+ }\atop {|\alpha_1|+\cdots +|\alpha_k|\leq n}} a_{(\alpha)} {\bf \lambda}_{(\alpha)} \right\|\leq
\sum_{s=n+1}^\infty \left\|  \sum_{{(\alpha)\in \FF_{n_1}^+\times \cdots \times\FF_{n_k}^+ }\atop {|\alpha_1|+\cdots +|\alpha_k|=s}}r^{|\alpha_1|+\cdots +|\alpha_k|} a_{(\alpha)} {\bf W}_{(\alpha)}  \right\|
$$
for any $(\lambda_{i,j})\in K$. Using  Theorem \ref{free-ho}, we deduce that
$  \sum_{{(\alpha)\in \FF_{n_1}^+\times \cdots \times\FF_{n_k}^+ }\atop {|\alpha_1|+\cdots +|\alpha_k|\leq n}} a_{(\alpha)} {\bf \lambda}_{(\alpha)} $ converges
 to $\varphi(\lambda_{i,j})$ uniformly on $K$, as
$n\to\infty$. Therefore, the map $ (\lambda_{i,j})\mapsto \varphi( \lambda_{i,j})$ is holomorphic on
the interior of  ${\bf
D_{f}^m}(\CC^p)$. Now, the items (i) and (ii) are consequences of Theorem \ref{f-infty}
and Corollary \ref{A-infty}.
The proof is complete.
\end{proof}

We remark that one can obtain   versions  of all the results of this section
 in the setting of free holomorphic functions with operator-valued coefficients.
  Since the proofs   are very similar we shall omit
them. We also mention that,  in the particular case when
 $k=m_1=1$ and $f_1=Z_1+\cdots +Z_n$,
we recover some of the results concerning  the free holomorphic functions on
 the unit ball of $B(\cH)^n$ (see \cite{Po-holomorphic}, \cite{Po-holomorphic2},
  \cite{Po-automorphism}).

\section{Joint invariant subspaces and universal models}

We obtain  a Beurling type factorization and a characterization of the Beurling \cite{Be} type  joint invariant subspaces under $\{{\bf W}_{i,j}\}$.
We also characterize the reducing subspaces under $\{{\bf W}_{i,j}\}$ and present several results concerning the model theory for pure elements in the noncommutative polydomain ${\bf D_f^m}(\cH)$.

We recall that a   subspace $\cH\subseteq \cK$ is
called co-invariant under $\cS\subset B(\cK)$ if $X^*\cH\subseteq
\cH$ for any $X\in \cS$.

\begin{theorem}\label{cyclic} Let ${\bf W}:=({\bf W}_1,\ldots, {\bf W}_k)$ be the universal model
 associated to the abstract  noncommutative domain ${\bf D_f^m}$.
  If $\cK$  be a Hilbert space and
$\cM\subseteq (\otimes_{i=1}^k F^2(H_{n_i}))\otimes \cK$ is a co-invariant subspace under each operator
${\bf W}_{i,j}\otimes I_\cK$ for  $i\in \{1,\ldots, k\}$, $j\in \{1,\ldots, n_i\}$, then  there exists a subspace
$\cE\subseteq \cK$ such that
\begin{equation*}
\overline{\text{\rm span}}\,\left\{\left({\bf W}_{1,\beta_1}\cdots {\bf W}_{k,\beta_k}\otimes
I_\cK\right)\cM:\ \beta_1\in \FF_{n_1}^+,\ldots, \beta_k\in \FF_{n_k}^+\right\}=(\otimes_{i=1}^k F^2(H_{n_i}))\otimes \cE.
\end{equation*}
Consequently, a subspace
$\cM\subseteq (\otimes_{i=1}^k F^2(H_{n_i}))\otimes \cK$ is
 reducing under each operator
${\bf W}_{i,j}\otimes I_\cK$ for  $i\in \{1,\ldots, k\}$,
$j\in \{1,\ldots, n_i\}$,
if and only if   there exists a subspace $\cE\subseteq \cK$ such
that
\begin{equation*}
 \cM=(\otimes_{i=1}^k F^2(H_{n_i}))\otimes \cE.
\end{equation*}
\end{theorem}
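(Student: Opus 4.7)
The plan is to prove the equality $\cN = (\otimes_{i=1}^k F^2(H_{n_i}))\otimes\cE$ with the explicit choice
\[
\cE \;:=\; \overline{({\bf P}_\CC\otimes I_\cK)(\cM)} \;\subseteq\; \CC\otimes\cK \;\simeq\;\cK,
\]
where $\cN$ is the closed span appearing in the statement and ${\bf P}_\CC$ is the orthogonal projection of $\otimes_{i=1}^k F^2(H_{n_i})$ onto $\CC\cdot 1$. The consequence about reducing subspaces is then immediate: a reducing subspace is in particular invariant, so $\cN=\cM$, forcing $\cM=(\otimes_{i=1}^k F^2(H_{n_i}))\otimes\cE$; the converse is obvious since $(\otimes_{i=1}^k F^2(H_{n_i}))\otimes\cE$ is manifestly reducing under every ${\bf W}_{i,j}\otimes I_\cK$.

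For the inclusion $\cM\subseteq (\otimes_{i=1}^k F^2(H_{n_i}))\otimes\cE$, I expand $m\in\cM$ in the orthonormal basis $\{e_{(\alpha)}\}$ of $\otimes_{i=1}^k F^2(H_{n_i})$ as $m=\sum_{(\alpha)} e_{(\alpha)}\otimes m_{(\alpha)}$. A direct computation from the action of the $W_{i,j}^{*}$ on basis elements, together with the fact that ${\bf W}_{(\alpha)}^{*}\,e_{(\alpha\gamma)} = \prod_i \sqrt{b^{(m_i)}_{i,\gamma_i}/b^{(m_i)}_{i,\alpha_i\gamma_i}}\,e_{(\gamma)}$ (and zero on other basis vectors), gives the identity
\[
1\otimes m_{(\alpha)} \;=\; \Bigl(\prod_{i=1}^{k}\sqrt{b^{(m_i)}_{i,\alpha_i}}\Bigr)\cdot ({\bf P}_\CC\otimes I_\cK)\bigl(({\bf W}_{(\alpha)}^{*}\otimes I_\cK)\,m\bigr).
\]
Iterated co-invariance of $\cM$ places $({\bf W}_{(\alpha)}^{*}\otimes I_\cK)m\in\cM$, so $m_{(\alpha)}\in\cE$ for every $(\alpha)$ and $m\in(\otimes_{i=1}^k F^2(H_{n_i}))\otimes\cE$. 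Since $(\otimes_{i=1}^k F^2(H_{n_i}))\otimes\cE$ is invariant under each ${\bf W}_{i,j}\otimes I_\cK$, it contains $\cN$.

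For the reverse inclusion the key input is Lemma \ref{univ-model}(i), which identifies ${\bf P}_\CC$ with ${\bf \Delta}_{f,{\bf W}}^{\bf m}(I)$. Tensoring with $I_\cK$ and expanding the commuting composition by the binomial theorem yields
\[
{\bf P}_\CC\otimes I_\cK \;=\; \sum_{{\bf l}\le{\bf m}}(-1)^{|{\bf l}|}\binom{{\bf m}}{{\bf l}}\,\Phi_{f_1,{\bf W}_1\otimes I_\cK}^{l_1}\circ\cdots\circ\Phi_{f_k,{\bf W}_k\otimes I_\cK}^{l_k}(I),
\]
a finite signed sum in which each composition is a WOT-convergent positive series in the operators $({\bf W}_{(\beta)}\otimes I_\cK)\,(\,\cdot\,)\,({\bf W}_{(\beta)}^{*}\otimes I_\cK)$ evaluated at $I$. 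For any $m\in\cM$, each finite partial sum of such a series sends $m$ to a linear combination of vectors $({\bf W}_{(\beta)}\otimes I_\cK)\bigl[({\bf W}_{(\beta)}^{*}\otimes I_\cK)m\bigr]$; by co-invariance the bracketed element is in $\cM$, placing the partial sum in $\cN$. Weak closedness of the norm-closed subspace $\cN$, combined with the fact that WOT-convergence of operators acting on a fixed vector yields weak convergence of the resulting vectors, delivers $({\bf P}_\CC\otimes I_\cK)\,m\in\cN$. Hence $1\otimes\cE\subseteq\cN$, and invariance of $\cN$ under each ${\bf W}_{(\alpha)}\otimes I_\cK$ upgrades this to $e_{(\alpha)}\otimes\cE=\bigl(\prod_i \sqrt{b^{(m_i)}_{i,\alpha_i}}\bigr)({\bf W}_{(\alpha)}\otimes I_\cK)(1\otimes\cE)\subseteq\cN$ for every $(\alpha)$, so $(\otimes_{i=1}^k F^2(H_{n_i}))\otimes\cE\subseteq\cN$.

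The technical heart of the argument is the last step: transporting the operator-level WOT expansion of ${\bf P}_\CC\otimes I_\cK$ into the vector-level statement $({\bf P}_\CC\otimes I_\cK)m\in\cN$. The reconciliation works because each term of the inner positive series is of the form ``forward shift composed with backward shift'', and co-invariance of $\cM$ guarantees that the intermediate backward shifts of $m$ never leave $\cM$ while the subsequent forward shifts therefore land in $\cN$ by its very definition; weak closedness of $\cN$ then handles the passage to the limit and the outer signed finite sum preserves membership.
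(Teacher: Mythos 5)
Your proposal is correct and follows essentially the same route as the paper's proof: the same choice $\cE=\overline{({\bf P}_\CC\otimes I_\cK)\cM}$, the same basis computation via the action of ${\bf W}_{(\alpha)}^*$ to get $\cM\subseteq(\otimes_{i=1}^k F^2(H_{n_i}))\otimes\cE$, and the same appeal to Lemma \ref{univ-model}(i) (writing ${\bf P}_\CC\otimes I_\cK$ as the defect mapping applied to the identity) to obtain $1\otimes\cE\subseteq\cN$. You merely make explicit the WOT-limit and weak-closedness argument that the paper leaves implicit in the phrase ``taking into account that $\cM$ is co-invariant\dots we deduce that $h_0\in\cY$.''
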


\begin{proof}
  Define the subspace $\cE\subseteq \cK$ by
 $\cE:=({\bf P}_\CC\otimes I_\cK)\cM$, where ${\bf P}_\CC$ is the
 orthogonal projection from $\otimes_{i=1}^k F^2(H_{n_i})$ onto $\CC 1\subset \otimes_{i=1}^k F^2(H_{n_i})$. Let $\varphi$ be a
nonzero element of $\cM$ with   representation
$$\varphi=\sum\limits_{{\beta_1\in \FF_{n_1}^+,\ldots, \beta_k\in \FF_{n_k}^+}} e^1_{\beta_1}\otimes\cdots \otimes  e^k_{\beta_k}\otimes h_{\beta_1,\ldots, \beta_k},
$$
where $h_{\beta_1,\ldots, \beta_k}\in \cK$ and $\sum\limits_{{\beta_1\in \FF_{n_1}^+,\ldots, \beta_k\in \FF_{n_k}^+}}\|h_{\beta_1,\ldots, \beta_k}\|^2<\infty$.
Let $\sigma_1\in \FF_{n_1}^+,\ldots, \sigma_k\in \FF_{n_k}^+$ be such that $h_{\sigma_1,\ldots, \sigma_k}\neq 0$ and note that
\begin{equation*}\label{PB*}
({\bf P}_\CC \otimes I_\cK) ({\bf W}_{1,\sigma_1}^*\cdots {\bf W}_{k,\sigma_k}^*\otimes I_\cK)\varphi= 1\otimes\frac{1}{\sqrt{b_{1,\sigma_1}^{(m_1)}}}\cdots \frac{1}{\sqrt{b_{k,\sigma_k}^{(m_k)}}}  h_{\sigma_1,\ldots, \sigma_k}.
\end{equation*}
 Consequently, since $\cM$ is  a
co-invariant subspace under ${\bf W}_{i,j}\otimes I_\cK$ for  $i\in \{1,\ldots, k\}$, $j\in \{1,\ldots, n_i\}$,
we deduce  that $ h_{\sigma_1,\ldots, \sigma_k}\in \cE$.   This implies
$$
 ({\bf W}_{1,\sigma_1}\cdots {\bf W}_{k,\sigma_k}\otimes I_\cK)(1\otimes
h_{\sigma_1,\ldots, \sigma_k})=\frac{1}{\sqrt{b_{1,\sigma_1}^{(m_1)}}}\cdots \frac{1}{\sqrt{b_{k,\sigma_k}^{(m_k)}}}e^1_{\sigma_1}\otimes\cdots \otimes  e^k_{\sigma_k}\otimes h_{\sigma_1,\ldots, \sigma_k}
$$
is a vector in $\otimes_{i=1}^k F^2(H_{n_i})\otimes \cE$.
 Therefore,
\begin{equation}\label{phi-series}
\varphi= \lim_{n\to\infty}\sum_{q=0}^n \sum\limits_{{\beta_1\in \FF_{n_1}^+,\ldots, \beta_k\in \FF_{n_k}^+}\atop{|\beta_1|+\cdots +|\beta_k|=q}} e^1_{\beta_1}\otimes\cdots \otimes  e^k_{\beta_k}\otimes h_{\beta_1,\ldots, \beta_k}
\end{equation}
is in $\otimes_{i=1}^k F^2(H_{n_i})\otimes \cE$. Hence,
 $\cM\subset\otimes_{i=1}^k F^2(H_{n_i})\otimes \cE$ and
$$
\cY:= \overline{\text{\rm span}}\,\left\{({\bf W}_{1,\sigma_1}\cdots
 {\bf W}_{k,\sigma_k}\otimes I_\cK)\cM:\ \sigma_1\in \FF_{n_1}^+,\ldots,
  \sigma_k\in \FF_{n_k}^+\right\}\subset (\otimes_{i=1}^k F^2(H_{n_i}))\otimes \cE.
$$

To prove  the reverse inclusion, we show first that $\cE\subset
\cY$.
 If
$h_0\in \cE$, $h_0\neq 0$, then there exists $g\in \cM\subset
\otimes_{i=1}^k F^2(H_{n_i})\otimes \cE$ such that $$g=1\otimes
h_0+\sum\limits_{{\beta_1\in \FF_{n_1}^+,\ldots, \beta_k\in \FF_{n_k}^+}\atop{|\beta_1|+\cdots +|\beta_k|\geq 1}} e^1_{\beta_1}\otimes\cdots \otimes  e^k_{\beta_k}\otimes h_{\beta_1,\ldots, \beta_k}
$$
and $1\otimes h_0=({\bf P}_\CC\otimes I_\cK) g$.
Consequently, due to Lemma \ref{univ-model}, we have
\begin{equation*}
\begin{split}
1\otimes h_0=({\bf P}_\CC\otimes I_\cK) g
=(I-\Phi_{q_1,{\bf W}_1\otimes I_\cK})^{m_1}\cdots (I-\Phi_{q_k,{\bf W}_k\otimes I_\cK})^{m_k}(I)g.
\end{split}
\end{equation*}
 Taking into account that $\cM$ is co-invariant under ${\bf W}_{i,j}\otimes I_\cK$ for  $i\in \{1,\ldots, k\}$, $j\in \{1,\ldots, n_i\}$,
we deduce that $h_0\in\cY$ for any $h_0\in \cE$, i.e., $\cE\subset
\cY$. The latter inclusion  shows that $({\bf W}_{1,\sigma_1}\cdots {\bf W}_{k,\sigma_k}\otimes I_\cK)(1\otimes \cE)\subset \cY$ for any $\sigma_1\in \FF_{n_1}^+,\ldots, \sigma_k\in \FF_{n_k}^+$, which
implies
\begin{equation*}\label{PN}
\frac{1}{\sqrt{b_{1,\sigma_1}^{(m_1)}}}\cdots \frac{1}{\sqrt{b_{k,\sigma_k}^{(m_k)}}}e^1_{\sigma_1}\otimes\cdots \otimes  e^k_{\sigma_k}\otimes \cE\subset
\cY.
\end{equation*}
Hence, if $\varphi\in (\otimes_{i=1}^k F^2(H_{n_i}))\otimes \cE$
has   the  representation \eqref{phi-series},   we deduce that
$\varphi \in \cY.$
Therefore, $ (\otimes_{i=1}^k F^2(H_{n_i}))\otimes \cE\subseteq \cY$.
The last part of the theorem is now obvious.
 The proof is complete.
\end{proof}

Let ${\bf W}:=({\bf W}_1,\ldots, {\bf W}_k)$ be the universal model
 associated to the abstract  noncommutative domain ${\bf D_f^m}$.
  An operator $M:(\otimes_{i=1}^k F^2(H_{n_i}))\otimes \cH\to
(\otimes_{i=1}^k F^2(H_{n_i}))\otimes \cK$  is called {\it  multi-analytic}  with respect to ${\bf W}$
 if $$M({\bf W}_{i,j}\otimes I_\cH)=({\bf W}_{i,j}\otimes I_\cK)M$$
for any $i\in \{1,\ldots,k\}$ and  $j\in \{1,\ldots, n_i\}$. In case $M$ is a partial isometry, we call
it {\it inner multi-analytic} operator.

\begin{theorem}\label{Beur-fact} Let ${\bf W}:=({\bf W}_1,\ldots, {\bf W}_k)$ be the universal model associated to the abstract  noncommutative domain ${\bf D_f^m}$ and  let ${\bf W}_i\otimes I_\cH:=({\bf W}_{i,1}\otimes I_\cH,\ldots, {\bf W}_{i,n_i}\otimes I_\cH)$ for  $i\in \{1,\ldots,k\}$, where $\cH$ is a Hilbert space.
If $Y\in B((\otimes_{i=1}^k F^2(H_{n_i}))\otimes \cH)$ then the following statements  are equivalent.
\begin{enumerate}
\item[(i)]
There is  a multi-analytic operator
 $M:(\otimes_{i=1}^k F^2(H_{n_i}))\otimes \cE \to
 (\otimes_{i=1}^k F^2(H_{n_i}))\otimes \cH$ with respect to ${\bf W}$,
 where  $\cE$ is a Hilbert space, such that
$$Y=M M^*.$$
\item[(ii)] For any ${\bf p}:=(p_1,\ldots, p_k)\in \ZZ_+^k$
 such that ${\bf p}\leq {\bf m}$, ${\bf p}\neq 0$,
     $$
   (id-\Phi_{f_1,{\bf W}_1\otimes I_\cH})^{p_1}\cdots
    (id-\Phi_{f_k,{\bf W}_k\otimes I_\cH})^{p_k}(Y)\geq 0.
   $$
\end{enumerate}
\end{theorem}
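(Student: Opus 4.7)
The plan is to handle the two directions separately, with (ii)$\Rightarrow$(i) requiring a Berezin-kernel construction analogous to the one used in Theorem \ref{Berezin-prop}.

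For the easy direction (i)$\Rightarrow$(ii), I would exploit that a multi-analytic $M$ intertwines ${\bf W}_{i,j}\otimes I_\cE$ and ${\bf W}_{i,j}\otimes I_\cH$: summing over $\alpha_i$ with weights $a_{i,\alpha_i}$ yields
\[
\Phi_{f_i,{\bf W}_i\otimes I_\cH}(MM^*)
= M\,\Phi_{f_i,{\bf W}_i\otimes I_\cE}(I)\,M^*,
\]
and iterating this identity (using that the maps $\Phi_{f_i,{\bf W}_i\otimes I}$ commute, and expanding $(id-\Phi)^{p_i}$ binomially) gives
\[
{\bf \Delta}_{{\bf f},{\bf W}\otimes I_\cH}^{\bf p}(MM^*)
= M\,{\bf \Delta}_{{\bf f},{\bf W}\otimes I_\cE}^{\bf p}(I)\,M^*
\qquad \text{for every } {\bf p}\in \ZZ_+^k.
\]
Since ${\bf W}\in {\bf D_f^m}(\otimes_{i=1}^k F^2(H_{n_i}))$ by Lemma \ref{univ-model}, and the defining inequalities for ${\bf D_f^m}$ are preserved under tensoring with an identity (as ${\bf \Delta}^{\bf p}_{{\bf f},{\bf W}\otimes I_\cE}(I)={\bf \Delta}^{\bf p}_{{\bf f},{\bf W}}(I)\otimes I_\cE$), the right-hand side is positive whenever ${\bf p}\leq {\bf m}$.

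For (ii)$\Rightarrow$(i), I would first note that $\Phi_{f_i,{\bf W}_i\otimes I_\cH}^p(I)=\Phi_{f_i,{\bf W}_i}^p(I)\otimes I_\cH\to 0$ strongly as $p\to\infty$, by purity of ${\bf W}$ (Lemma \ref{univ-model}). Applying Proposition \ref{pure} to the commuting, power-bounded, pure maps $\varphi_i:=\Phi_{f_i,{\bf W}_i\otimes I_\cH}$ and the self-adjoint $Y$ upgrades (ii) to ${\bf \Delta}_{{\bf f},{\bf W}\otimes I_\cH}^{\bf q}(Y)\geq 0$ for \emph{every} ${\bf q}\leq {\bf m}$; in particular $Y\geq 0$. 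Setting $\cE:=\overline{{\bf \Delta}_{{\bf f},{\bf W}\otimes I_\cH}^{\bf m}(Y)^{1/2}\bigl((\otimes_{i=1}^k F^2(H_{n_i}))\otimes \cH\bigr)}$, I would define $\Psi_Y:(\otimes_{i=1}^k F^2(H_{n_i}))\otimes \cH\to (\otimes_{i=1}^k F^2(H_{n_i}))\otimes \cE$ in direct analogy with the noncommutative Berezin kernel, by
\[
\Psi_Y h:=\sum_{\beta_i\in \FF_{n_i}^+}\sqrt{b^{(m_1)}_{1,\beta_1}}\cdots\sqrt{b^{(m_k)}_{k,\beta_k}}\,
e^1_{\beta_1}\otimes\cdots\otimes e^k_{\beta_k}\otimes
{\bf \Delta}_{{\bf f},{\bf W}\otimes I_\cH}^{\bf m}(Y)^{1/2}({\bf W}_{1,\beta_1}\otimes I)^*\cdots({\bf W}_{k,\beta_k}\otimes I)^*h.
\]

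The computation of parts (i)--(iii) of Theorem \ref{Berezin-prop} transfers verbatim with ``$I$'' replaced by the positive operator ``$Y$'' (the argument uses only positivity of the intermediate defects, commutativity, and WOT-continuity of the $\Phi_{f_i,\cdot}$), giving boundedness of $\Psi_Y$ together with
\[
\Psi_Y^*\Psi_Y=\lim_{q_k\to\infty}\cdots\lim_{q_1\to\infty}(id-\Phi_{f_k,{\bf W}_k\otimes I}^{q_k})\cdots(id-\Phi_{f_1,{\bf W}_1\otimes I}^{q_1})(Y)
\]
and the intertwining relation $\Psi_Y({\bf W}_{i,j}\otimes I_\cH)^*=({\bf W}_{i,j}^*\otimes I_\cE)\Psi_Y$. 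Because $0\leq \Phi_{f_i,{\bf W}_i\otimes I_\cH}^p(Y)\leq \|Y\|\,\Phi_{f_i,{\bf W}_i\otimes I_\cH}^p(I)\to 0$ strongly, the iterated limit collapses to $Y$, so $\Psi_Y^*\Psi_Y=Y$. Setting $M:=\Psi_Y^*$, the intertwining becomes $M({\bf W}_{i,j}\otimes I_\cE)=({\bf W}_{i,j}\otimes I_\cH)M$, so $M$ is multi-analytic and $MM^*=Y$. The main obstacle is to justify the ``$I\mapsto Y$'' substitution in the kernel computation of Theorem \ref{Berezin-prop}: that step requires the positivity of \emph{all} intermediate defects ${\bf \Delta}_{{\bf f},{\bf W}\otimes I_\cH}^{\bf q}(Y)$, not merely the top one, which is precisely why the preliminary appeal to Proposition \ref{pure} is indispensable.
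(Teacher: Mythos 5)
Your proof is correct, and the easy direction (i)$\Rightarrow$(ii) is exactly the paper's argument: $M$ intertwines the two ampliations of ${\bf W}$, so ${\bf \Delta}_{{\bf f},{\bf W}\otimes I_\cH}^{\bf p}(MM^*)=M\,{\bf \Delta}_{{\bf f},{\bf W}\otimes I_\cE}^{\bf p}(I)\,M^*\geq 0$.

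For (ii)$\Rightarrow$(i) you take a genuinely different route from the paper, although both begin identically by using purity of ${\bf W}$ (your appeal to Proposition \ref{pure} versus the paper's inline repetition of that argument) to upgrade (ii) to positivity of \emph{all} defects ${\bf \Delta}_{{\bf f},{\bf W}\otimes I_\cH}^{\bf q}(Y)$, $0\leq{\bf q}\leq{\bf m}$. The paper then factors $Y=Y^{1/2}Y^{1/2}$, transports ${\bf W}\otimes I_\cH$ through $Y^{1/2}$ to an auxiliary tuple ${\bf X}$ on $\cM:=\overline{\mathrm{range}\,Y^{1/2}}$ via $X_{i,j}^*Y^{1/2}x=Y^{1/2}({\bf W}_{i,j}^*\otimes I_\cH)x$ (checking boundedness of these operators using $a_{i,g_j^i}>0$), verifies that ${\bf X}$ is a \emph{pure} element of ${\bf D_f^m}(\cM)$, and then invokes Theorem \ref{Berezin-prop} as a black box: its Berezin kernel ${\bf K_{f,X}}$ is an isometry satisfying the intertwining relation, and $M:=Y^{1/2}{\bf K_{f,X}^*}$ does the job. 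You instead build a $Y$-weighted Berezin kernel $\Psi_Y$ directly on $(\otimes_{i=1}^k F^2(H_{n_i}))\otimes\cH$ and set $M:=\Psi_Y^*$. This is legitimate precisely because the proof of Theorem \ref{Berezin-prop} is in fact carried out for a general positive operator $X$ with all intermediate defects positive before specializing to $X=I$ (relation \eqref{Psi} and the iterated composition $\Psi_k\circ\cdots\circ\Psi_1$), so your ``$I\mapsto Y$'' substitution is covered by the estimates already in the paper, and your observation that $0\leq\Phi_{f_i,{\bf W}_i\otimes I_\cH}^{p}(Y)\leq\|Y\|\,\Phi_{f_i,{\bf W}_i\otimes I_\cH}^{p}(I)\to 0$ correctly collapses the iterated limit to $Y$. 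The trade-off: the paper's construction reuses Theorem \ref{Berezin-prop} verbatim and needs no re-derivation of kernel estimates, at the cost of introducing the auxiliary tuple ${\bf X}$ and its purity/boundedness checks; yours avoids the auxiliary tuple entirely but must certify (as you do) that the kernel computation survives the substitution. The two factorizations agree up to a unitary identification of the coefficient spaces $\cE$, since $Y^{1/2}{\bf \Delta_{f,X}^m}(I_\cM)Y^{1/2}={\bf \Delta}_{{\bf f},{\bf W}\otimes I_\cH}^{\bf m}(Y)$.
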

\begin{proof} Setting  ${\bf \Delta}_{{\bf f,W}\otimes I_\cH}^{\bf p}
:=(id-\Phi_{f_1,{\bf W}_1\otimes I_\cH})^{p_1}\cdots
 (id-\Phi_{f_k,{\bf W}_k\otimes I_\cH})^{p_k}$, it is
   easy to see that if  item (i) holds, then
$$
{\bf \Delta}_{{\bf f,W}\otimes I_\cH}^{\bf p}
(Y)=M {\bf \Delta}_{{\bf f,W}\otimes I_\cE}^{\bf p}(I)M^*\geq 0
$$
for any ${\bf p}:=(p_1,\ldots, p_k)\in \ZZ_+^k$ such that ${\bf p}\leq {\bf m}$,
 ${\bf p}\neq 0$.

To prove the converse, assume that (ii) holds. In particular, we have
$\Phi_{f_1,{\bf W}_1\otimes I_\cH}({\bf \Delta}_{{\bf f,W}\otimes I_\cH}^{\bf m'}(Y))
\leq {\bf \Delta}_{{\bf f,W}\otimes I_\cH}^{\bf m'}(Y)$, where ${\bf m}'
=(m_1-1,m_2,\ldots,m_k)$. Consequently,
 $\Phi_{f_1,{\bf W}_1\otimes I_\cH}^n({\bf \Delta}_{{\bf f,W}\otimes I_\cH}^{\bf m'}(Y))
 \leq {\bf \Delta}_{{\bf f,W}\otimes I_\cH}^{\bf m'}(Y)$ for any $n\in \NN$. Since
$ {\bf W}:=({\bf W}_1,\ldots, {\bf W}_k)$ is a pure $k$-tuple, we have
SOT-$\lim_{n\to\infty}\Phi_{f_1,{\bf W}_1
\otimes I_\cH}^n({\bf \Delta}_{{\bf f,W}\otimes I_\cH}^{\bf m'}(Y))=0$, which implies
 ${\bf \Delta}_{{\bf f,W}\otimes I_\cH}^{\bf m'}(Y)\geq 0$.
Continuing this process, we deduce that $Y\geq 0$.

 Let $\cM:=\overline{\text{\rm range}\, Y^{1/2}}$ and define
\begin{equation}\label{ai}
A_{i,j}(Y^{1/2} x):=Y^{1/2} ({\bf W}_{i,j}^*\otimes I_\cH)x,\quad x\in
(\otimes_{i=1}^k F^2(H_{n_i}))\otimes \cH,
\end{equation}
 for any $i\in \{1,\ldots, k\}$ and $j\in \{1,\ldots, n_i\}$. Since
  $\Phi_{f_i,{\bf W}_i\otimes I_\cH}(Y)\leq Y$, we have
\begin{equation*}
\begin{split}
\sum_{\alpha\in \FF_{n_i}^+, |\alpha|\geq 1}
 a_{i,\alpha}\|A_{i,{\tilde \alpha}}Y^{1/2}x\|^2
=\left< \Phi_{f_i,{\bf W}_i\otimes I_\cH}(Y)x,x\right>\leq \|Y^{1/2} x\|^2
\end{split}
\end{equation*}
for any $x\in (\otimes_{i=1}^k F^2(H_{n_i}))\otimes \cH$.  Consequently, we deduce that
$a_{i,g_j^i}\|A_{i,j}Y^{1/2} x\|^2\leq \|Y^{1/2} x\|^2$, for any $x\in
(\otimes_{i=1}^k F^2(H_{n_i}))\otimes \cH$.  Since $a_{i,g_j^i}\neq 0$ each  $A_{i,j}$ can be uniquely
be extended to a bounded operator (also denoted by $A_{i,j}$) on the
subspace $\cM$. Denoting $X_{i,j}:=A_{i,j}^*$ for  $i\in \{1,\ldots, k\}$, $j\in\{1,\ldots, n_i\}$,   an
approximation argument shows that $\Phi_{f_i,X_i}(I_\cM)\leq I_\cM$
and relation \eqref{ai} implies
\begin{equation*}X_{i,j}^*(Y^{1/2} x)=Y^{1/2} ({\bf W}_{i,j}^*\otimes I_\cH)x,\qquad x\in
(\otimes_{i=1}^k F^2(H_{n_i}))\otimes \cH,
\end{equation*}
 for any $i\in \{1,\ldots, k\}$ and $j\in \{1,\ldots, n_i\}$. This implies
 $$
 Y^{1/2} {\bf \Delta_{f,X}^p}(I_\cM) Y^{1/2}
 ={\bf \Delta}_{{\bf f,W}\otimes I_\cH}^{\bf p}(Y)\geq 0
 $$
for any ${\bf p}:=(p_1,\ldots, p_k)\in \ZZ_+^k$ such that ${\bf p}\leq {\bf m}$, ${\bf p}\neq 0$.
 On the other hand,  we have
\begin{equation*}
\begin{split}
\left< \Phi_{f_i, X_i}^n(I_\cM) Y^{1/2}x, Y^{1/2} x\right> &= \left<
\Phi_{f_i,
{\bf W}_i\otimes I_\cH}^n(Y)x,  x\right>
\leq \|Y\| \left< \Phi_{f_i, {\bf W}_i\otimes I_\cH}^n(I)x,  x\right>
\end{split}
\end{equation*}
for any $ x\in
(\otimes_{i=1}^k F^2(H_{n_i}))\otimes \cH$ and $n\in \NN$. Since \
SOT-$\lim\limits_{n\to\infty}\Phi_{f_i,{\bf W}_i\otimes I_\cH}^n(I)=0$,
we have \
SOT-$\lim\limits_{m\to\infty}\Phi_{f_i,X_i}^n(I_\cM)=0$, which, due to Proposition
 \ref{pure2} shows that
${\bf X}:=(X_1,\ldots, X_k)$ is a pure $k$-tuple in the noncommutative polydomain
${\bf D_f^m}(\cM)$. Set $\cE:=\overline{{\bf \Delta_{f,X}^m}(I_\cM)(\cM)}$.
According to Theorem \ref{Berezin-prop}, the noncommutative Berezin
kernel ${\bf K_{f,X}}:\cM\to (\otimes_{i=1}^k F^2(H_{n_i}))\otimes \cE$
 is an isometry with the property that
\begin{equation*}
  X_{i,j}{\bf K_{f,X}^*}={\bf K_{f,X}^*} ({\bf W}_{i,j}\otimes I_\cE)
\end{equation*}
for any $ i\in \{1,\ldots,k\}$ and  any $ j\in \{1,\ldots, n_i\}$.
Now, define the bounded linear  operator  $M:=Y^{1/2} {\bf K_{f,X}^*}: (\otimes_{i=1}^k F^2(H_{n_i}))\otimes \cE\to (\otimes_{i=1}^k F^2(H_{n_i}))\otimes
\cH$ and note that
\begin{equation*}
\begin{split}
M({\bf W}_{i,j}\otimes I_\cE)&=Y^{1/2}{\bf K_{f,X}^*}
({\bf W}_{i,j}\otimes I_\cE)=Y^{1/2}
X_{i,j} {\bf K_{f,X}^*}\\
&=({\bf W}_{i,j}\otimes I_\cH) Y^{1/2} {\bf K_{f,X}^*} =({\bf W}_{i,j}\otimes I_\cH) M
\end{split}
\end{equation*}
for any $i\in\{1,\ldots, k\}$ and $j\in \{1,\ldots, n_i\}$, which proves that $M$ is a multi-analytic operator with respect to ${\bf W}_{i,j}$. We also have $MM^*=Y^{1/2}
{\bf K_{f,X}^*} {\bf  K_{f,X}} Y^{1/2} =Y$. This completes the proof.
\end{proof}

We say that $\cM\subset (\otimes_{i=1}^k F^2(H_{n_i}))\otimes \cH$ is
a Beurling type invariant subspace under the operators
 ${\bf W}_{i,j}\otimes I_\cH$, $i\in \{1,\ldots, k\}$, $j\in \{1,\ldots, n_i\}$,
  if there is an inner multi-analytic operator  with respect to ${\bf W}$,
  $$\Psi:(\otimes_{i=1}^k F^2(H_{n_i}))\otimes
   \cE \to (\otimes_{i=1}^k F^2(H_{n_i}))\otimes \cH,$$
   such that
   $\cM=\Psi\left((\otimes_{i=1}^k F^2(H_{n_i}))\otimes \cE\right)$.

\begin{corollary}\label{Beurling}
Let  $\cM\subset (\otimes_{i=1}^k F^2(H_{n_i}))\otimes \cH$
 be an invariant subspace under the operators
  ${\bf W}_{i,j}\otimes I_\cH$, $i\in \{1,\ldots, k\}$,
  $j\in \{1,\ldots, n_i\}$. Then $\cM$ is of  Beurling type if and only if
 $$
   (id-\Phi_{f_1,{\bf W}_1\otimes I_\cH})^{p_1}\cdots
    (id-\Phi_{f_k,{\bf W}_k\otimes I_\cH})^{p_k}(P_\cM)\geq 0
   $$
for any ${\bf p}:=(p_1,\ldots, p_k)\in \ZZ_+^k$ such that ${\bf p}\leq {\bf m}$, where $P_\cM$ is the orthogonal projection of the Hilbert space  $ (\otimes_{i=1}^k F^2(H_{n_i}))\otimes \cH$ onto $\cM$. In the particular case when ${\bf m}=(1,\ldots,1)$, the condition above is satisfied when ${\bf W}\otimes I_\cH|_\cM:=({\bf W}_1\otimes I_\cH|_\cM,\ldots, {\bf W}_k\otimes I_\cH|_\cM)$ is  doubly commuting.
 \end{corollary}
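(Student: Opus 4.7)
The result will follow almost directly from Theorem~\ref{Beur-fact} applied to $Y=P_\cM$. For the forward direction, if $\cM=\Psi\left((\otimes_{i=1}^k F^2(H_{n_i}))\otimes \cE\right)$ with $\Psi$ an inner multi-analytic operator with respect to ${\bf W}$, then $\Psi$ is a partial isometry whose range is exactly $\cM$, so $\Psi\Psi^* = P_\cM$, and the implication (i)$\Rightarrow$(ii) of Theorem~\ref{Beur-fact} delivers the required defect inequalities. Conversely, assume the defect inequalities hold for $P_\cM$. The implication (ii)$\Rightarrow$(i) of Theorem~\ref{Beur-fact} produces a multi-analytic operator $M:(\otimes_{i=1}^k F^2(H_{n_i}))\otimes \cE \to (\otimes_{i=1}^k F^2(H_{n_i}))\otimes \cH$ with $MM^*=P_\cM$; since $MM^*$ is a projection, $M$ is a partial isometry, hence inner multi-analytic, and its range coincides with the range of $MM^*=P_\cM$, namely $\cM$. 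This realizes $\cM$ as a Beurling type subspace.

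For the special case ${\bf m}=(1,\ldots,1)$ with ${\bf W}\otimes I_\cH|_\cM$ doubly commuting, let $V_{i,j}:=({\bf W}_{i,j}\otimes I_\cH)|_\cM$, which is well defined by invariance of $\cM$. Writing everything in block form with respect to $\cM\oplus\cM^\perp$, my plan is to derive the identity
$$
(id-\Phi_{f_1,{\bf W}_1\otimes I_\cH})^{p_1}\cdots(id-\Phi_{f_k,{\bf W}_k\otimes I_\cH})^{p_k}(P_\cM)=(id-\Phi_{f_1,V_1})^{p_1}\cdots(id-\Phi_{f_k,V_k})^{p_k}(I_\cM)\oplus 0
$$
for each ${\bf p}\leq {\bf m}$, ${\bf p}\neq 0$. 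The core observation is the stability, under each $\Phi_{f_i,{\bf W}_i\otimes I_\cH}$, of operators of the form $A\oplus 0$ with $A=A^*\in B(\cM)$: invariance of $\cM$ forces the range of $\Phi_{f_i,{\bf W}_i\otimes I_\cH}(A\oplus 0)$ to lie in $\cM$, and self-adjointness then eliminates the $(1,2)$ off-diagonal block, yielding $\Phi_{f_i,{\bf W}_i\otimes I_\cH}(A\oplus 0)=\Phi_{f_i,V_i}(A)\oplus 0$. Iterating, starting from $P_\cM = I_\cM\oplus 0$, produces the block identity above.

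Given the block identity, it only remains to prove that $V:=(V_1,\ldots,V_k)\in {\bf D_f^m}(\cM)$, for then Theorem~\ref{radial} immediately supplies the positivity of the right-hand side. A direct computation using invariance of $\cM$ gives $\Phi_{f_i,V_i}(I_\cM)\leq I_\cM$, which for $m_i=1$ means exactly that $V_i\in {\bf D}_{f_i}^{1}(\cM)$; the mutual commutativity of the entries of $V_i$ with those of $V_j$ for $i\neq j$ is inherited from ${\bf W}$; and the doubly commuting hypothesis then lets Proposition~\ref{exemp}(ii) conclude $V\in {\bf D_f^m}(\cM)$. The only genuinely computational step is the block identity itself; the rest is a direct invocation of Theorem~\ref{Beur-fact}, Proposition~\ref{exemp}(ii), and Theorem~\ref{radial}, so I do not expect any serious obstacle.
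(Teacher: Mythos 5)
Your proof of the main equivalence is correct and coincides with the paper's: the forward direction pulls $\Psi$ and $\Psi^*$ out of the defect mapping (implication (i)$\Rightarrow$(ii) of Theorem \ref{Beur-fact} with $\Psi\Psi^*=P_\cM$), and the converse applies (ii)$\Rightarrow$(i) to $Y=P_\cM$ and observes that $MM^*=P_\cM$ forces $M$ to be a partial isometry with range $\cM$. For the doubly commuting special case your route is organized differently from the paper's, though it rests on the same mechanism. The paper stays in the ambient space: it uses double commutativity to prove the identity $({\bf W}_{(\alpha)}\otimes I_\cH)P_\cM({\bf W}_{(\alpha)}^*\otimes I_\cH)=\prod_i({\bf W}_{i,\alpha_i}\otimes I_\cH)P_\cM({\bf W}_{i,\alpha_i}^*\otimes I_\cH)$, which factors the defect of $P_\cM$ as a product of the commuting positive operators $P_\cM-\Phi_{f_i,{\bf W}_i\otimes I_\cH}(P_\cM)$. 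You instead compress to $\cM$: your block identity $\Delta^{\bf p}_{{\bf f},{\bf W}\otimes I_\cH}(P_\cM)=\Delta^{\bf p}_{{\bf f},V}(I_\cM)\oplus 0$ is valid (it needs only invariance of $\cM$ plus self-adjointness to kill the off-diagonal corner, exactly as you argue), and then Proposition \ref{exemp}(ii) applied to the doubly commuting tuple $V$ finishes the job. Since the proof of Proposition \ref{exemp}(ii) is itself the factorization of the defect into commuting positive factors, the two arguments are the same computation performed at different levels; what your version buys is a cleaner separation of concerns -- the block identity is a general fact about invariant subspaces, independent of double commutativity, and could be reused elsewhere -- at the cost of having to verify that $V$ satisfies the standing hypotheses of Proposition \ref{exemp} ($\Phi_{f_i,V_i}(I_\cM)\le I_\cM$ and commutativity of the restricted tuples), which you correctly note follow from invariance. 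No gaps.
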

\begin{proof}
If
$\Psi:(\otimes_{i=1}^k F^2(H_{n_i}))\otimes
\cE \to (\otimes_{i=1}^k F^2(H_{n_i}))\otimes \cH$ is a
 inner multi-analytic operator
 and
 $\cM=\Psi\left((\otimes_{i=1}^k F^2(H_{n_i}))\otimes \cE\right)$,
then $P_\cM=\Psi \Psi^*$. Taking into account
Lemma \ref{univ-model}, we deduce that
$$
   (id-\Phi_{f_1,{\bf W}_1\otimes I_\cH})^{p_1}\cdots
    (id-\Phi_{f_k,{\bf W}_k\otimes I_\cH})^{p_k}(P_\cM)
    =\Psi (P_{\bf C}\otimes I_\cE) \Psi^*\geq 0
   $$
   for any ${\bf p}:=(p_1,\ldots, p_k)\in \ZZ_+^k$ such that ${\bf p}\leq {\bf m}$.
   The converse is a consequence of Theorem \ref{Beur-fact}, when we take $Y=P_\cM$.

   Now, we consider the case when ${\bf m}=(1,\ldots,1)$.
Note that if
    $\cM$
 is  an invariant subspace under the operators
  ${\bf W}_{i,j}\otimes I_\cH$, then
     ${\bf W}\otimes I_\cH|_\cM$
   is doubly commuting if and only if
     $P_\cM({\bf W}_{i_1,j_1}\otimes I_\cH)P_\cM$ commutes with
      $P_\cM({\bf W}_{i_2,j_2}^*\otimes I_\cH)P_\cM$ for any
       $i_1,i_2\in \{1,\ldots,k\}$,
      $i_1\neq i_2$, and any $j_1\in \{1,\ldots, n_{i_1}\}$,
       $j_2\in \{1,\ldots, n_{i_2}\}$.
       The latter condition is equivalent to
       \begin{equation}\label{commutes}
        P_\cM({\bf W}_{i_1,\alpha}\otimes I_\cH)P_\cM \quad \text{ commutes with } \qquad
      P_\cM({\bf W}_{i_2,\beta}^*\otimes I_\cH)P_\cM
       \end{equation}
       for any $\alpha\in \FF_{n_{i_1}}^+$ and $\beta\in \FF_{n_{i_2}}^+$.
     Assume that
   $\cM$
 is   invariant subspace under the operators
  ${\bf W}_{i,j}\otimes I_\cH$ and
     ${\bf W}\otimes I_\cH|_\cM$
   is doubly commuting. Then, due to relation \eqref{commutes},
    for any $\alpha_i\in \FF_{n_i}^+$, $i\in \{1,\ldots, k\}$, we have
    \begin{equation}
    \begin{split}
    &({\bf W}_{1,\alpha_1}\otimes I_\cH)\cdots ({\bf W}_{k,\alpha_k}\otimes I_\cH) P_\cM({\bf W}_{k,\alpha_k}^*\otimes I_\cH)
    \cdots
    ({\bf W}_{1,\alpha_1}^*\otimes I_\cH)\\
    &\qquad\qquad=({\bf W}_{1,\alpha_1}\otimes I_\cH)P_\cM ({\bf W}_{1,\alpha_1}^*\otimes I_\cH)
    \cdots ({\bf W}_{k,\alpha_k}\otimes I_\cH)P_\cM
    ({\bf W}_{k,\alpha_k}^*\otimes I_\cH).
    \end{split}
    \end{equation}
   Consequently, we deduce that
   $$
   (id-\Phi_{f_1,{\bf W}_1\otimes I_\cH})^{p_1}\cdots
    (id-\Phi_{f_k,{\bf W}_k\otimes I_\cH})^{p_k}(P_\cM)
    =\left(P_\cM-\Phi_{f_1,{\bf W}_1\otimes I_\cH}(P_\cM)\right)^{p_1}\cdots
     \left(P_\cM-\Phi_{f_k,{\bf W}_k\otimes I_\cH}(P_\cM)\right)^{p_k}
     $$
     for any ${\bf p}:=(p_1,\ldots, p_k)\in \ZZ_+^k$ such that
      ${\bf p}\leq (1,\ldots,1)$.
 Now, since ${\bf W}_1,\ldots, {\bf W}_k$  are commuting tuples, we deduce that
 $P_\cM-\Phi_{f_i,{\bf W}_i\otimes I_\cH}(P_\cM)$, $i\in \{1,\ldots, k\}$,
  are commuting  operators. On the other hand,  they  are also  positive operators. Indeed,
  let $\{a_{i,\alpha_i}\}_{\alpha\in \FF_{n_i}^+}$ be the coefficients of the positive regular free holomorphic
  function $f_i$, and let $x\in  (\otimes_{i=1}^k F^2(H_{n_i}))\otimes \cH$ have the representation
  $x=x_1+x_2$ with respect to the orthogonal decomposition $\cM\oplus \cM^\perp$.
  Note that
  \begin{equation*}
  \begin{split}
  \left<\Phi_{f_i,{\bf W}_i\otimes I_\cH}(P_\cM)x,x\right>
  &=\left<\Phi_{f_i,{\bf W}_i\otimes I_\cH}(P_\cM)x_1,x_1\right>=
  \sum_{|\alpha|\geq 1} a_{i,\alpha_i}
   \|P_\cM ({\bf W}_{i,\alpha_i}\otimes I_\cH)x_1\|^2\\
   &\leq \left<\Phi_{f_i,{\bf W}_i\otimes I_\cH}(I)x_1,x_1\right>\leq \|x_1\|^2=
   \left<P_\cM x,x\right>,
  \end{split}
  \end{equation*}
which proves our assertion. Therefore, we can deduce that
$$
(id-\Phi_{f_1,{\bf W}_1\otimes I_\cH})^{p_1}\cdots
    (id-\Phi_{f_k,{\bf W}_k\otimes I_\cH})^{p_k}(P_\cM)\geq 0
    $$
for any ${\bf p}:=(p_1,\ldots, p_k)\in \ZZ_+^k$ such that
      ${\bf p}\leq (1,\ldots,1)$. Due to the first part  of this corollary,
       we conclude that  $\cM $
 is a Beurling type invariant subspace under the operators
  ${\bf W}_{i,j}\otimes I_\cH$. The proof is complete.
 \end{proof}

Let ${\bf W}:=({\bf W}_1,\ldots, {\bf W}_k)$ be the universal model
 associated to the abstract  noncommutative domain ${\bf D_f^m}$, and let
   $\Phi:(\otimes_{i=1}^k F^2(H_{n_i}))\otimes \cH\to
(\otimes_{i=1}^k F^2(H_{n_i}))\otimes \cK$  be a multi-analytic operator
with respect to ${\bf W}$, i.e.,
 if $\Phi({\bf W}_{i,j}\otimes I_\cH)=({\bf W}_{i,j}\otimes I_\cK)\Phi$
for any $i\in \{1,\ldots,k\}$ and  $j\in \{1,\ldots, n_i\}$.
We introduce the {\it support} of $\Phi$  as the  smallest reducing subspace
 $\supp (\Phi)\subset \otimes_{i=1}^k F^2(H_{n_i}))\otimes \cH$
 under  each operator ${\bf W}_{i,j}$, containing   the co-invariant
  subspace $\cM:=\overline{\Phi^*((\otimes_{i=1}^k F^2(H_{n_i}))\otimes \cK)}$.
  Using Theorem \ref{cyclic} and its proof, we deduce that
 $$
 \supp (\Phi)=\bigvee_{(\alpha)\in \FF_{n_1}^+\times\cdots
  \times \FF_{n_k}^+}({\bf W}_{(\alpha)}\otimes I_\cH) (\cM)
 =(\otimes_{i=1}^k F^2(H_{n_i}))\otimes \cL,
 $$
 where $\cL:=({\bf P}_\CC\otimes I_\cH)\overline{\Phi^*
 ((\otimes_{i=1}^k F^2(H_{n_i}))\otimes \cK)}$.

Assume that ${\bf W}:=({\bf W}_1,\ldots, {\bf W}_k)$ is the universal model associated to the abstract  noncommutative domain ${\bf D_f^m}$. We remark that if
  $\Psi:(\otimes_{i=1}^k F^2(H_{n_i}))\otimes
\cE \to (\otimes_{i=1}^k F^2(H_{n_i}))\otimes \cH$ is an
 isometric multi-analytic operator and
  $\cM=\Psi\left((\otimes_{i=1}^k F^2(H_{n_i}))\otimes \cE\right)$, then
     ${\bf W}\otimes I_\cH|_\cM$
   is doubly commuting. Since this is a straightforward  computation, we  omit it.
   The converse of this implication holds true for the  noncommutative polyball.

\begin{corollary} \label{polyball-Beurling} Let ${\bf W}:=({\bf W}_1,\ldots, {\bf W}_k)$ be
 the universal model
 associated to the noncommutative polyball
 $[B(\cH)^{n_1}]_{1}^-\times_c \cdots \times_c [B(\cH)^{n_k}]_{1}^-$, i.e.,
 ${\bf m}=(1,\ldots,1)$ and $f_i:=Z_{i,1}+\cdots + Z_{i,n_i}$
  for $i\in \{1,\ldots, k\}$. If $\cM\subset (\otimes_{i=1}^k F^2(H_{n_i}))\otimes
  \cH$
 is a nonzero  invariant subspace under the operators
  ${\bf W}_{i,j}\otimes I_\cH$, then  ${\bf W}\otimes I_\cH|_\cM$
   is doubly commuting if and only if there is a Hilbert space $\cL$
   and an isometric multi-analytic operator
    $\Phi:(\otimes_{i=1}^k F^2(H_{n_i}))\otimes
\cL \to (\otimes_{i=1}^k F^2(H_{n_i}))\otimes \cH$ such that
 $\cM=\Phi((\otimes_{i=1}^k F^2(H_{n_i}))\otimes
\cL )$.
\end{corollary}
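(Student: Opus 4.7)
The plan is to handle $(\Leftarrow)$ as the ``straightforward computation'' already noted in the paragraph preceding the statement, and to reduce $(\Rightarrow)$ to Corollary \ref{Beurling} by promoting the resulting partial isometric (``inner'') multi-analytic operator to a genuinely isometric one via the support construction.

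For the backward direction: if $\Phi: (\otimes_{i=1}^k F^2(H_{n_i}))\otimes \cL \to (\otimes_{i=1}^k F^2(H_{n_i}))\otimes \cH$ is an isometric multi-analytic operator with range $\cM$, I would observe that $\Phi$ restricts to a unitary from $(\otimes_{i=1}^k F^2(H_{n_i}))\otimes \cL$ onto $\cM$ intertwining $({\bf W}\otimes I_\cL)$ with $({\bf W}\otimes I_\cH)|_\cM$. Since the universal model ${\bf W}$ is manifestly doubly commuting (distinct ${\bf W}_{i_1,\cdot}$ and ${\bf W}_{i_2,\cdot}$ act on disjoint tensor factors, as used in the proof of Lemma \ref{univ-model}), so is ${\bf W}\otimes I_\cL$, and the property transfers to its unitary image ${\bf W}\otimes I_\cH|_\cM$.

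For the forward direction, assuming ${\bf W}\otimes I_\cH|_\cM$ is doubly commuting, I would first invoke the second assertion of Corollary \ref{Beurling} (valid because ${\bf m}=(1,\ldots,1)$) to obtain an inner multi-analytic operator $\Psi: (\otimes_{i=1}^k F^2(H_{n_i}))\otimes \cE \to (\otimes_{i=1}^k F^2(H_{n_i}))\otimes \cH$, i.e., a partial isometry with $\Psi\Psi^* = P_\cM$, such that $\cM = \Psi((\otimes_{i=1}^k F^2(H_{n_i}))\otimes \cE)$. To upgrade $\Psi$ to an isometry, I would set $\cN := \supp(\Psi)$, the smallest reducing subspace under $\{{\bf W}_{i,j}\otimes I_\cE\}$ containing $(\ker\Psi)^\perp = \overline{\Psi^*((\otimes_{i=1}^k F^2(H_{n_i}))\otimes \cH)}$. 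By Theorem \ref{cyclic}, $\cN = (\otimes_{i=1}^k F^2(H_{n_i}))\otimes \cL$ for some $\cL \subset \cE$, and defining $\Phi := \Psi|_\cN$ one checks routinely that $\Phi$ is multi-analytic (the reducing property of $\cN$ identifies ${\bf W}_{i,j}\otimes I_\cE|_\cN$ with ${\bf W}_{i,j}\otimes I_\cL$ and the intertwining descends) and that $\Phi((\otimes_{i=1}^k F^2(H_{n_i}))\otimes \cL) = \cM$ (since $\cN^\perp \subset \ker\Psi$ forces $\Psi|_{\cN^\perp}=0$, whence $\Psi(\cN) = \Psi((\otimes_{i=1}^k F^2(H_{n_i}))\otimes \cE) = \cM$).

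The main obstacle is showing $\Phi$ is isometric, equivalently $\cN \cap \ker\Psi = \{0\}$, which amounts to proving that $(\ker\Psi)^\perp$ is itself reducing. The intertwining already yields that $(\ker\Psi)^\perp$ is $({\bf W}_{i,j}\otimes I_\cE)^*$-invariant; the doubly commuting hypothesis is essential to supply the missing $({\bf W}_{i,j}\otimes I_\cE)$-invariance of $(\ker\Psi)^\perp$ (equivalently, the $({\bf W}_{i,j}\otimes I_\cE)^*$-invariance of $\ker\Psi$). In the polyball case each ${\bf W}_{i,j}$ is an isometry and the family $\{{\bf W}_{i,j}\}$ is doubly commuting on the ambient space; the doubly commuting of $({\bf W}\otimes I_\cH)|_\cM$ transfers via the unitary $\Psi|_{(\ker\Psi)^\perp}: (\ker\Psi)^\perp \to \cM$ to give doubly commuting of the compressions $V_{i,j} := P_{(\ker\Psi)^\perp}({\bf W}_{i,j}\otimes I_\cE)|_{(\ker\Psi)^\perp}$. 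Combining this with the identity $({\bf W}_{i_1,j_1}\otimes I_\cE)({\bf W}_{i_2,j_2}\otimes I_\cE)^* = ({\bf W}_{i_2,j_2}\otimes I_\cE)^*({\bf W}_{i_1,j_1}\otimes I_\cE)$ for $i_1\neq i_2$ and the isometric structure of each ${\bf W}_{i,j}\otimes I_\cE$, a direct computation then yields that $\ker\Psi$ is invariant under each $({\bf W}_{i,j}\otimes I_\cE)^*$, closing the argument.
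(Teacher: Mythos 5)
Your proposal follows the same route as the paper's proof: the backward direction by transferring double commutativity through the unitary $\Phi|$, and the forward direction by invoking Corollary \ref{Beurling} to get an inner multi-analytic $\Psi$ with $\Psi\Psi^*=P_\cM$ and then cutting $\Psi$ down to its support. You also correctly isolate the crux, namely that one must show $(\ker\Psi)^\perp$ is invariant under each ${\bf W}_{i,j}\otimes I_\cE$ (co-invariance being automatic from the intertwining), so that the support coincides with the initial space and $\Phi:=\Psi|_{\supp(\Psi)}$ is isometric.

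The gap is precisely in that crux. You declare the doubly commuting hypothesis ``essential'' for this step and leave the verification as ``a direct computation'' combining double commutativity of the compressions $V_{i,j}$ with the ambient relations $({\bf W}_{i_1,j_1}\otimes I_\cE)({\bf W}_{i_2,j_2}\otimes I_\cE)^*=({\bf W}_{i_2,j_2}\otimes I_\cE)^*({\bf W}_{i_1,j_1}\otimes I_\cE)$. That computation is never exhibited, and double commutativity is in fact irrelevant here --- it has already been fully spent in producing the inner operator $\Psi$ via Corollary \ref{Beurling}. The argument the paper uses (and the one that actually closes your proof) needs only the fact that in the polyball case each ${\bf W}_{i,j}$ is an isometry: since $\Psi$ is a partial isometry, its initial space is exactly $\{x:\ \|\Psi x\|=\|x\|\}$, and for $x$ in this set
$$
\|\Psi({\bf W}_{i,j}\otimes I_\cE)x\|=\|({\bf W}_{i,j}\otimes I_\cH)\Psi x\|=\|\Psi x\|=\|x\|=\|({\bf W}_{i,j}\otimes I_\cE)x\|,
$$
so $({\bf W}_{i,j}\otimes I_\cE)x$ again lies in the initial space. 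Hence $(\ker\Psi)^\perp$ is reducing, $\supp(\Psi)=\overline{\Psi^*((\otimes_{i=1}^k F^2(H_{n_i}))\otimes\cH)}=(\otimes_{i=1}^k F^2(H_{n_i}))\otimes\cL$, and the remainder of your argument ($\Phi$ isometric and multi-analytic, $\cM=\Phi((\otimes_{i=1}^k F^2(H_{n_i}))\otimes\cL)$) goes through as written. You should replace the appeal to double commutativity of the compressions by this one-line norm computation; as stated, the decisive step of your proof is both unperformed and pointed at the wrong hypothesis.
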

\begin{proof} Due to the remarks preceding  this corollary, it remains to prove
 the direct implication.
Assume that  ${\bf W}\otimes I_\cH|_\cM$
   is doubly commuting. Corollary \ref{Beurling} and Theorem \ref{Beur-fact} imply
   the existence of an inner multi-analytic operator
    $\Psi:(\otimes_{i=1}^k F^2(H_{n_i}))\otimes
\cE \to (\otimes_{i=1}^k F^2(H_{n_i}))\otimes \cH$ such that
$\cM=\Psi((\otimes_{i=1}^k F^2(H_{n_i}))\otimes
\cE )$. Since ${\bf W}_{i,j}$ are isometries, the initial space of $\Psi$, i.e.,
$\Psi^*((\otimes_{i=1}^k F^2(H_{n_i}))\otimes \cH)
 =\{x\in (\otimes_{i=1}^k F^2(H_{n_i}))\otimes \cE: \ \|\Psi x\|=\|x\|\}$ is
 reducing under each ${\bf W}_{i,j}$. On the other hand,
 the {\it support} of $\Psi$  is the the smallest reducing subspace
 $\supp (\Psi)\subset F^2(H_{n_i}))\otimes \cH$
 under  each operator ${\bf W}_{i,j}$, containing   the co-invariant
  subspace $\Psi^*((\otimes_{i=1}^k F^2(H_{n_i}))\otimes \cH)$.
   Therefore, we must have
   $\supp (\Psi)=\Psi^*((\otimes_{i=1}^k F^2(H_{n_i}))\otimes \cH)$. Note that
   $\Phi:=\Psi|_{\supp (\Psi)}$ is an isometric multi-analytic operator. Since
   $\supp (\Psi)=(\otimes_{i=1}^k F^2(H_{n_i}))\otimes \cL$, where
   $\cL:=({\bf P}_\CC\otimes I_\cE) \Psi^*
 ((\otimes_{i=1}^k F^2(H_{n_i}))\otimes \cH)$ and $\cM=\Phi((\otimes_{i=1}^k F^2(H_{n_i}))\otimes
\cL )$, the proof is complete.
\end{proof}
We remark that in the particular case when $n_1=\cdots=n_k=1$,
 Corollary \ref{polyball-Beurling}  is a Beurling type result for the
   the Hardy space  $H^2(\DD^k)$ of the polydisc, which seems to be new if $k>2$.

We recall that $\cP({\bf W})$  is the set of all polynomials $p({\bf W}_{i,j})$  in  the operators ${\bf W}_{i,j}$, $i\in \{1,\ldots, k\}$, $j\in \{1,\ldots, n_i\}$,  and the identity.

\begin{lemma}\label{irreducible} If ${\bf W}:=({\bf W}_1,\ldots, {\bf W}_k)$ is the universal model associated to the abstract  noncommutative polydomain ${\bf D_f^m}$, then
the  $C^*$-algebra  $C^*({\bf W}_{i,j})$ is
irreducible.
\end{lemma}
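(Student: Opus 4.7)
The plan is to show that the commutant $C^*({\bf W}_{i,j})'$ consists only of scalar multiples of the identity, since this is equivalent to irreducibility. So I take an arbitrary operator $A$ in this commutant and aim to show $A = cI$ for some $c \in \CC$.

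The crucial observation is that, by Lemma \ref{univ-model}(i), we have
$$
{\bf P}_\CC = (id-\Phi_{f_1,{\bf W}_1})^{m_1}\cdots(id-\Phi_{f_k,{\bf W}_k})^{m_k}(I),
$$
where ${\bf P}_\CC$ is the rank-one orthogonal projection onto $\CC(1\otimes\cdots\otimes 1) \subset \otimes_{i=1}^k F^2(H_{n_i})$. Since each $\Phi_{f_i,{\bf W}_i}(Y) = \sum_{|\alpha|\geq 1} a_{i,\alpha} {\bf W}_{i,\alpha} Y {\bf W}_{i,\alpha}^*$ is a WOT-limit of polynomials in ${\bf W}_{i,j}$, ${\bf W}_{i,j}^*$, and $Y$, and $A$ commutes with every ${\bf W}_{i,j}$ and ${\bf W}_{i,j}^*$, it follows that $A$ commutes with $\Phi_{f_i,{\bf W}_i}(Y)$ whenever it commutes with $Y$. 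Iterating, $A$ commutes with ${\bf P}_\CC$.

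Since ${\bf P}_\CC$ is rank-one, commuting with ${\bf P}_\CC$ forces $A$ to preserve both its range and its kernel. Hence there is a scalar $c \in \CC$ with $A(1\otimes\cdots\otimes 1) = c(1\otimes\cdots\otimes 1)$. Now I use the fact that the orthonormal basis vectors of $\otimes_{i=1}^k F^2(H_{n_i})$ are reachable from the vacuum by the universal model: from the definition \eqref{Wij} of ${\bf W}_{i,j}$, an easy induction gives
$$
{\bf W}_{1,\alpha_1}\cdots {\bf W}_{k,\alpha_k}(1\otimes\cdots\otimes 1) = \frac{1}{\sqrt{b_{1,\alpha_1}^{(m_1)}}}\cdots\frac{1}{\sqrt{b_{k,\alpha_k}^{(m_k)}}}\, e^1_{\alpha_1}\otimes\cdots\otimes e^k_{\alpha_k}
$$
for all $\alpha_i \in \FF_{n_i}^+$. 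Using that $A$ commutes with each ${\bf W}_{i,j}$, I deduce
$$
A(e^1_{\alpha_1}\otimes\cdots\otimes e^k_{\alpha_k}) = c(e^1_{\alpha_1}\otimes\cdots\otimes e^k_{\alpha_k}),
$$
so $A = cI$ on a total set and hence everywhere. Therefore $C^*({\bf W}_{i,j})' = \CC I$ and $C^*({\bf W}_{i,j})$ is irreducible.

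There is no real obstacle here; the only subtlety is checking that the commutation of $A$ with each ${\bf W}_{i,j}$ and ${\bf W}_{i,j}^*$ passes through the WOT-convergent series defining $\Phi_{f_i,{\bf W}_i}$, which follows from WOT-continuity on bounded sets of the map $Y \mapsto {\bf W}_{i,\alpha} Y {\bf W}_{i,\alpha}^*$. Once ${\bf P}_\CC$ is in the bicommutant of $A$, the rank-one structure does all the work.
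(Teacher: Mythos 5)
Your proof is correct and rests on exactly the same two ingredients as the paper's: the identity ${\bf \Delta_{f,W}^m}(I)={\bf P}_\CC$ from Lemma \ref{univ-model} (which places the rank-one vacuum projection in the bicommutant of the generators) and the cyclicity of the vacuum vector under the ${\bf W}_{i,j}$. The only difference is that you verify triviality of the commutant while the paper shows every nonzero reducing subspace contains $1$ and hence is everything; these are the two standard dual formulations of irreducibility, so the argument is essentially the same.
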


\begin{proof}
  Let $\cM\neq\{0\}$ be a
subspace of $\otimes_{i=1}^k F^2(H_{n_i})$, which is jointly reducing
for each operator ${\bf W}_{i,j}$ for all  $i\in \{1,\ldots,k\}$ and $j\in\{1,\ldots,n_i\}$. Let $\varphi\in \cM$,
$\varphi\neq 0$, and assume that
$$
\varphi=\sum\limits_{{\beta_1\in \FF_{n_1}^+,\ldots, \beta_k\in \FF_{n_k}^+}} a_{\beta_1,\ldots, \beta_k}e^1_{\beta_1}\otimes\cdots \otimes  e^k_{\beta_k}.
$$
 If
 $a_{\beta_1,\ldots, \beta_k}$  is a nonzero coefficient of $\varphi$,
then, using relation  \eqref{WbWb}, we deduce that
 $$
{\bf P}_\CC {\bf W}_{1,\beta_1}^*\cdots {\bf W}_{k,\beta_k}^*\varphi= \frac{1}{\sqrt{b_{1,\beta_1}^{(m_1)}}}\cdots \frac{1}{\sqrt{b_{k,\beta_k}^{(m_k)}}}  a_{\beta_1,\ldots, \beta_k}.
$$
 On the other hand, according to Lemma \ref{univ-model},
 $
  (I-\Phi_{q_1,{\bf W}_1})^{m_1}\cdots (I-\Phi_{q_k,{\bf W}_k})^{m_k}(I)={\bf P}_\CC,
$
where ${\bf P}_\CC$ is the
 orthogonal projection from $\otimes_{i=1}^k F^2(H_{n_i})$ onto $\CC 1\subset \otimes_{i=1}^k F^2(H_{n_i})$.
Hence,  and  using the fact that
 $\cM$ is reducing for each  ${\bf W}_{i,j}$,
we deduce that $a_{\beta_1,\ldots, \beta_k}\in \cM$, so $1\in \cM$. Using  again
that $\cM$ is invariant under the operators ${\bf W}_{i,j}$, we
have $\cM=\otimes_{i=1}^k F^2(H_{n_i})$.
This completes the proof.
\end{proof}

 Let ${\bf T}=({ T}_1,\ldots, { T}_k)\in {\bf D_f^m}(\cH)$  and  ${\bf T}'=({ T}_1',\ldots, { T}_k')\in {\bf D_f^m}(\cH')$ be $k$-tuples with $T_i:=(T_{i,1},\ldots, T_{i,n_i})$ and $T_i':=(T_{i,1}',\ldots, T_{i,n_i}')$. We say that  ${\bf T}$ is unitarily equivalent to ${\bf T}'$ if there is a unitary operator $U:\cH\to \cH'$ such that
$
T_{i,j}=U^* T_{i,j}'U
$
for all $i\in\{1,\ldots,k\}$ and $j\in\{1,\ldots, n_i\}$.

\begin{theorem}\label{dil3}
Let ${\bf T}=({ T}_1,\ldots, { T}_k)$ be  a
 pure $k$-tuple in the noncommutative polydomain ${\bf D_f^m}(\cH)$ and let
  $${\bf K_{f,T}}: \cH \to F^2(H_{n_1})\otimes \cdots \otimes  F^2(H_{n_k}) \otimes  \overline{{\bf \Delta_{f,T}^m}(I)(\cH)}$$
  be the noncommutative Berezin kernel.
 Then the  subspace ${\bf K_{f,T}}\cH$ is   co-invariant  under  each operator
${\bf W}_{i,j}\otimes  I_{\overline{{\bf \Delta_{f,T}^m}\cH}}$ for   any $i\in \{1,\ldots, k\}$,  $  j\in \{1,\ldots, n_i\}$ and
    the dilation
  provided by   the relation
    $$ { \bf T}_{(\alpha)}= {\bf K_{f,T}^*}({\bf W}_{(\alpha)}\otimes  I_{\overline{{\bf\Delta_{f,T}^m}(I)(\cH)}})  {\bf K_{f,T}}, \qquad (\alpha) \in \FF_{n_1}^+\times \cdots \times \FF_{n_k}^+,
    $$
  is minimal.
If  \ ${\bf f}= {\bf q}=(q_1,\ldots, q_k)$ is a $k$-tuple of   positive regular  noncommutative polynomials and
\begin{equation*}
\overline{\text{\rm span}}\,\{{\bf W}_{(\alpha)}  {\bf W}_{(\beta)}^*) :\
 (\alpha), (\beta)\in \FF_{n_1}^+\times \cdots \times \FF_{n_k}^+\}=C^*({\bf W}_{i,j}),
\end{equation*}
then  the   minimal dilation  of ${\bf T}$
 is unique up to an isomorphism.
\end{theorem}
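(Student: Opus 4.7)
The plan is to decompose the theorem into two pieces: existence/minimality of the Berezin kernel dilation, which reduces almost entirely to results already in the paper (Theorem \ref{Berezin-prop} and Theorem \ref{cyclic}); and uniqueness, which will be obtained by recognizing both dilations as minimal Stinespring dilations of the completely positive map supplied by Corollary \ref{C*-charact}.

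For the first half I would proceed as follows. Purity of ${\bf T}$ combined with Theorem \ref{Berezin-prop}(ii) gives ${\bf K_{f,T}}^*{\bf K_{f,T}}=I_\cH$, so the Berezin kernel is an isometry. Theorem \ref{Berezin-prop}(iii) then yields ${\bf K_{f,T}} T_{i,j}^*=({\bf W}_{i,j}^*\otimes I){\bf K_{f,T}}$, showing that ${\bf K_{f,T}}\cH$ is invariant under each ${\bf W}_{i,j}^*\otimes I$, hence co-invariant under ${\bf W}_{i,j}\otimes I$. Iterating the intertwining relation (using that the entries of ${\bf W}_p$ and ${\bf W}_q$ commute for $p\neq q$) produces ${\bf K_{f,T}} T_{(\alpha)}^*=({\bf W}_{(\alpha)}^*\otimes I){\bf K_{f,T}}$; taking adjoints and multiplying by ${\bf K_{f,T}}$ on the right then gives the stated dilation identity. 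Minimality follows from Theorem \ref{cyclic}: the smallest reducing subspace for $\{{\bf W}_{i,j}\otimes I\}$ containing ${\bf K_{f,T}}\cH$ has the form $\bigotimes_{i=1}^k F^2(H_{n_i})\otimes\cE$ with
$$\cE=\overline{({\bf P}_\CC\otimes I){\bf K_{f,T}}\cH}.$$
Direct inspection of the definition of ${\bf K_{f,T}}$ shows that only the $\beta_i=g_0^i$ summand survives the projection ${\bf P}_\CC$, giving $({\bf P}_\CC\otimes I){\bf K_{f,T}}h=1\otimes {\bf \Delta_{f,T}^m}(I)^{1/2}h$, whence $\cE=\overline{{\bf \Delta_{f,T}^m}(I)^{1/2}\cH}=\overline{{\bf \Delta_{f,T}^m}(I)\cH}$, exactly the ambient space.

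For uniqueness, under the additional spanning hypothesis, Corollary \ref{C*-charact} furnishes a unital completely positive map $\Psi_{{\bf q},{\bf T}}:C^*({\bf W}_{i,j})\to B(\cH)$ satisfying $\Psi_{{\bf q},{\bf T}}(p({\bf W})r({\bf W})^*)=p(T)r(T)^*$. The dilation constructed above is a Stinespring realization of $\Psi_{{\bf q},{\bf T}}$, since $\Psi_{{\bf q},{\bf T}}(g)={\bf K_{f,T}}^*(g\otimes I){\bf K_{f,T}}$, and minimality from the first half is precisely the statement that ${\bf K_{f,T}}\cH$ is cyclic for the $*$-representation $g\mapsto g\otimes I$ of $C^*({\bf W}_{i,j})$. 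Given a second minimal dilation $(V_{i,j},\cK',V)$ of ${\bf T}$, the spanning density lets us extend $p({\bf W})r({\bf W})^*\mapsto p(V)r(V)^*$ to a $*$-representation $\pi':C^*({\bf W}_{i,j})\to B(\cK')$ satisfying $\Psi_{{\bf q},{\bf T}}(g)=V^*\pi'(g)V$ with $V\cH$ cyclic for $\pi'$. Uniqueness of the minimal Stinespring dilation then yields a unitary $U:\bigotimes_{i=1}^k F^2(H_{n_i})\otimes\overline{{\bf \Delta_{f,T}^m}(I)\cH}\to \cK'$ with $U{\bf K_{f,T}}=V$ and $U({\bf W}_{i,j}\otimes I)=V_{i,j}U$, which is the desired isomorphism of dilations.

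The main technical obstacle will lie in this last step, namely in showing that the prescription $p({\bf W})r({\bf W})^*\mapsto p(V)r(V)^*$ really extends to a bounded $*$-homomorphism on $C^*({\bf W}_{i,j})$. The spanning hypothesis ensures at most one such extension, but one still needs an a~priori norm estimate; the natural way to obtain it is to apply Corollary \ref{C*-charact} to the tuple ${\bf V}=(V_1,\ldots,V_k)$ (which sits in ${\bf D_q^m}(\cK')$ by construction of a dilation) so that the resulting completely positive map on $C^*({\bf W}_{i,j})$ is, by the spanning condition, automatically a $*$-homomorphism. Once this representation $\pi'$ is in place, the classical uniqueness of the minimal Stinespring dilation closes the argument.
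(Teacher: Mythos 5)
Your first half is correct and is exactly the paper's argument: purity gives that $\mathbf{K_{f,T}}$ is an isometry via Theorem \ref{Berezin-prop}(ii), the intertwining in Theorem \ref{Berezin-prop}(iii) gives co-invariance and the dilation identity, and Theorem \ref{cyclic} applied to $\cM=\mathbf{K_{f,T}}\cH$ with $\cE=(\mathbf{P}_\CC\otimes I)\mathbf{K_{f,T}}\cH=\overline{\mathbf{\Delta_{f,T}^m}(I)(\cH)}$ gives minimality. The skeleton of your uniqueness argument (identify both dilations as minimal Stinespring dilations of the completely positive map $\Psi_{\mathbf{q},\mathbf{T}}$ and invoke uniqueness of the minimal Stinespring representation) is also the paper's route.

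The gap is in your treatment of the competing dilation. You allow a second minimal dilation given by an arbitrary tuple $(V_{i,j})$ on $\cK'$, and you then need the prescription $p(\mathbf{W})r(\mathbf{W})^*\mapsto p(V)r(V)^*$ to extend to a $*$-representation $\pi'$ of $C^*(\mathbf{W}_{i,j})$. Your proposed fix --- apply Corollary \ref{C*-charact} to $\mathbf{V}$ and argue that the resulting unital completely positive map is ``automatically a $*$-homomorphism'' by the spanning condition --- does not work: the spanning hypothesis guarantees at most one bounded map with the prescribed values on the set $\{p(\mathbf{W})r(\mathbf{W})^*\}$, but a unital completely positive map determined on a spanning set is not thereby multiplicative, and the product of two elements $p_1(\mathbf{W})r_1(\mathbf{W})^*$ and $p_2(\mathbf{W})r_2(\mathbf{W})^*$ is in general not again of that form, so multiplicativity cannot even be checked on the generators. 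The paper avoids the issue entirely by \emph{defining} the second minimal dilation to be of the form $\mathbf{T}_{(\alpha)}=V^*(\mathbf{W}_{(\alpha)}\otimes I_\cD)V$ with $V\cH$ co-invariant under the operators $\mathbf{W}_{i,j}\otimes I_\cD$; then $\pi_2(X):=X\otimes I_\cD$ is a $*$-representation by construction, the identity $\Psi_{\mathbf{q},\mathbf{T}}(X)=V^*\pi_2(X)V$ follows from co-invariance on the spanning set and extends by density, and Stinespring uniqueness applies. You should either adopt that formulation of ``dilation'' or supply an independent argument that the $V_{i,j}$ generate a representation of $C^*(\mathbf{W}_{i,j})$. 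Finally, note that the paper does not stop at the intertwining unitary $U$: it uses irreducibility of $C^*(\mathbf{W}_{i,j})$ (Lemma \ref{irreducible}) to conclude $U=I\otimes Z$ for a unitary $Z$, which is what identifies the two dilation spaces as tensor products over Hilbert spaces of equal dimension; you should include this step to match the intended sense of ``unique up to an isomorphism.''
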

 \begin{proof} Due to Theorem \ref{Berezin-prop}, we have
  ${\bf K_{f,T}} { T}^*_{i,j}= ({\bf W}_{i,j}^*\otimes I)  {\bf K_{f,T}}
    $
    for any $i\in \{1,\ldots, k\}$ and $j\in \{1,\ldots, n_i\}$, where the noncommutative Berezin kernel ${\bf K_{f,T}}$  is an isometry.
On the other hand, the
definition of the Berezin kernel ${\bf K_{f,T}}$  implies
$$
 ({\bf P}_\CC \otimes I_{\overline{{\bf\Delta_{f,T}^m}(I)(\cH)}}) K_{\bf f,T}\cH= \overline{{\bf\Delta_{f,T}^m}(I)(\cH)}.
$$
   Using
Theorem \ref{cyclic} in the particular case when $\cM:=K_{\bf f,T}\cH$
and $\cE:=\overline{{\bf\Delta_{f,T}^m}(I)(\cH)}$, we deduce that the subspace
${\bf K_{f,T}}\cH$ is cyclic for  ${\bf W}_{i,j}\otimes I_\cE$ for $i\in \{1,\ldots, k\}$ and $j\in \{1,\ldots, n_i\}$,
which proves the minimality of the  dilation, i.e.,
\begin{equation}\label{minimal1}
(\otimes_{i=1}^k F^2(H_{n_i}))\otimes \overline{{\bf\Delta_{f,T}^m}(I)(\cH)}=\bigvee_{(\alpha)\in \FF_{n_1}^+\times \cdots \times \FF_{n_k}^+}
({\bf W}_{(\alpha)}\otimes I_{\overline{{\bf\Delta_{f,T}^m}(I)(\cH)}}) {\bf K_{f,T}}\cH.
\end{equation}

To prove the last part of the theorem, assume that
${\bf f}= {\bf q}=(q_1,\ldots, q_k)$ is a $k$-tuple of   positive
 regular  noncommutative polynomials  and that  the relation in the theorem  holds. Consider
another minimal   dilation of ${\bf T}$, i.e.,
\begin{equation}
\label{another} { \bf T}_{(\alpha)}=V^* ({ \bf W}_{(\alpha)}\otimes I_\cD)V, \qquad (\alpha) \in \FF_{n_1}^+\times \cdots \times \FF_{n_k}^+,
\end{equation}
where $V:\cH\to (\otimes_{i=1}^k F^2(H_{n_i}))\otimes \cD$ is an isometry, $V\cH$ is
co-invariant under each operator ${\bf W}_{i,j}\otimes I_\cD$, and
\begin{equation}\label{minimal2}
(\otimes_{i=1}^k F^2(H_{n_i}))\otimes \cD=\bigvee_{(\alpha)\in \FF_{n_1}^+\times \cdots \times \FF_{n_k}^+} ({\bf W}_{(\alpha)}\otimes
I_{\cD}) V\cH.
\end{equation}
Due to Theorem \ref{Poisson-C*}, there exists a unique unital completely positive
linear map
${\bf \Psi_{q,T}}: C^*({\bf W}_{i,j})\to B(\cH)$
 such that
$$
{\bf \Psi_{ q,T}}\left(\sum_{\gamma=1}^s p_\gamma({\bf W}_{i,j})q_\gamma({\bf W}_{i,j})^*\right)= \sum_{\gamma=1}^s p_\gamma(T_{i,j})q_\gamma(T_{i,j})^*
$$
for any $p_\gamma({\bf W}_{i,j}),q_\gamma({\bf W}_{i,j}) \in  \cP({\bf W})$ and $ s\in \NN$.   Consider the $*$-representations
$$
\pi_1:C^*({\bf W}_{i,j})\to B((\otimes_{i=1}^k F^2(H_{n_i}))\otimes \overline{{\bf\Delta_{q,T}^m}(I)(\cH)}),\quad \pi_1(X)
:= X\otimes I_{\overline{{\bf\Delta_{q,T}^m}(I)(\cH)}}
$$
  and
  $$
\pi_2:C^*({\bf W}_{i,j})\to B((\otimes_{i=1}^k F^2(H_{n_i}))\otimes \cD),\quad \pi_2(X):=
X\otimes I_{\cD}.
$$
Since the   subspaces ${\bf K_{q,T}}\cH$  and $V\cH$  are     co-invariant  for each operator
${\bf W}_{i,j}\otimes  I_{\overline{{\bf \Delta_{q,T}^m}(I)\cH}}$,
 the
 relation    \eqref{another}  implies
$$
{\bf \Psi_{q,T}}(X)={\bf K_{q,T}^*}\pi_1(X){\bf K_{q,T}}
=V^*\pi_2(X)V,\qquad
\  X\in C^*({\bf W}_{i,j}).
$$
Due to relations \eqref{minimal1} and
\eqref{minimal2},  we deduce that $\pi_1$
and $\pi_2$ are  minimal Stinespring dilations of the completely
 positive linear map
 ${\bf \Psi_{q,T}}$.  Since
these representations are unique up to an isomorphism, there exists a unitary operator $U:(\otimes_{i=1}^k F^2(H_{n_i}))\otimes
\overline{{\bf\Delta_{q,T}^m}(I)(\cH)}\to (\otimes_{i=1}^k F^2(H_{n_i}))\otimes \cD$ such that
\begin{equation*}
U({\bf W}_{i,j}\otimes I_{\overline{{\bf\Delta_{q,T}^m}(I)(\cH)}})=({\bf W}_{i,j}\otimes
I_\cD)U
\end{equation*}
and $U{\bf K_{q,T}}=V$.  Taking into account that $U$ is unitary, we deduce that
$$
U({\bf W}_{i,j}^*\otimes I_{\overline{{\bf\Delta_{q,T}^m}(I)(\cH)}})=({\bf W}_{i,j}^*\otimes
I_\cD)U.
$$
Since  $C^*({\bf W}_{i,j})$ is irreducible (see
Lemma \ref{irreducible}),  we must have $U=I\otimes Z$, where
$Z\in B(\overline{{\bf\Delta_{q,T}^m}(\cH)},\cD)$ is a unitary operator.
This implies that $\dim \overline{{\bf\Delta_{q,T}^m}(\cH)}=\dim\cD$ and
$U{\bf K_{q,T}}\cH=V\cH$, which proves that the two dilations are
unitarily equivalent.
 The proof is complete.
\end{proof}

 Let $\cD$ be a Hilbert space such that the Hilbert space  $\cH$ can be identified with
a co-invariant subspace of $(\otimes_{i=1}^k F^2(H_{n_i}))\otimes \cD$
 under each operator
${\bf W}_{i,j}\otimes  I_{\cD}$ for   any $i\in \{1,\ldots, k\}$,
 $  j\in \{1,\ldots, n_i\}$ and such that
 ${ \bf T}_{(\alpha)}=V^* ({ \bf W}_{(\alpha)}\otimes I_\cD)V$
 for $ (\alpha) \in \FF_{n_1}^+\times \cdots \times \FF_{n_k}^+$. The dilation index of ${\bf T}$ is the minimum dimension  of $\cD$  with the above mentioned property.
 We remark that
 the dilation index of ${\bf T}$ coincides with $\text{\rm rank}\,
 {\bf \Delta_{f,T}^m}(I)$.
  Indeed, since
 ${\bf \Delta_{f,W}^m}(I)={\bf P}_\CC$, where ${\bf P}_\CC$ is the
 orthogonal projection from $\otimes_{i=1}^k F^2(H_{n_i})$ onto
  $\CC 1\subset \otimes_{i=1}^k F^2(H_{n_i})$,
  we deduce that
 $
 {\bf \Delta_{f,T}^m}(I)= P_\cH \left[  {\bf P}_\CC \otimes I_\cD
\right]|\cH.
 $
Hence, $\rank {\bf \Delta_{f,T}^m}(I)\leq \dim \cD$. Now, Theorem \ref{dil3} implies
that the dilation index of $T$ is equal to $\rank {\bf \Delta_{f,T}^m}(I)$.

\begin{proposition}\label{rank-n} Let ${\bf q}=(q_1,\ldots, q_k)$ is a $k$-tuple of   positive regular  noncommutative polynomials such that
  \begin{equation*}
\overline{\text{\rm span}}\,\{{\bf W}_{(\alpha)}  {\bf W}_{(\beta)}^*) :\
 (\alpha), (\beta)\in \FF_{n_1}^+\times \cdots \times \FF_{n_k}^+\}=C^*({\bf W}_{i,j}).
\end{equation*}
A
 pure $k$-tuple ${\bf T}=({ T}_1,\ldots, { T}_k)\in {\bf D_q^m}(\cH)$
    has
    $\rank {\bf \Delta_{q,T}^m}(I)=n$,  $ n=1,2,\ldots, \infty,
     $
     if and only if it is
unitarily equivalent to one obtained by compressing $({\bf W}_1\otimes
I_{\CC^n},\ldots, {\bf W}_k\otimes I_{\CC^n})$ to a co-invariant subspace
$\cM\subset  (\otimes_{i=1}^k F^2(H_{n_i}))\otimes \CC^n$  under each operator
 ${\bf W}_{i,j}\otimes
I_{\CC^n}$,  $i\in \{1,\ldots, k\}$ and   $j\in \{1,\ldots, n_i\}$,
 with the property that $\dim [({\bf P}_\CC\otimes I_{\CC^n})\cM]=n$,
where ${\bf P}_\CC$ is the orthogonal projection from $\otimes_{i=1}^k F^2(H_{n_i}) $
onto  $\CC 1$.
\end{proposition}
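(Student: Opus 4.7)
My plan is to prove the two implications separately, in both cases transferring information between ${\bf T}$ and the compression of $\{{\bf W}_{i,j}\otimes I_{\CC^n}\}$ via the noncommutative Berezin kernel.

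For the forward direction, I would suppose ${\bf T}$ is pure in ${\bf D_q^m}(\cH)$ with $\rank{\bf\Delta_{q,T}^m}(I)=n$. The defect space $\cD:=\overline{{\bf\Delta_{q,T}^m}(I)(\cH)}$ is then $n$-dimensional and may be unitarily identified with $\CC^n$, so by Theorem \ref{Berezin-prop}(ii) the Berezin kernel ${\bf K_{q,T}}:\cH\to(\otimes_{i=1}^k F^2(H_{n_i}))\otimes\CC^n$ is an isometry, and by Theorem \ref{Berezin-prop}(iii) it satisfies ${\bf K_{q,T}}T_{i,j}^*=({\bf W}_{i,j}^*\otimes I_{\CC^n}){\bf K_{q,T}}$. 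Consequently $\cM:={\bf K_{q,T}}\cH$ is co-invariant under every ${\bf W}_{i,j}\otimes I_{\CC^n}$ and ${\bf K_{q,T}}$ implements a unitary equivalence between ${\bf T}$ and the compression of $\{{\bf W}_{i,j}\otimes I_{\CC^n}\}$ to $\cM$. The dimension condition follows from the explicit formula $({\bf P}_\CC\otimes I_{\CC^n}){\bf K_{q,T}}h=1\otimes{\bf\Delta_{q,T}^m}(I)^{1/2}h$, so that $({\bf P}_\CC\otimes I_{\CC^n})\cM$ coincides with $1\otimes\overline{{\bf\Delta_{q,T}^m}(I)^{1/2}\cH}$ and therefore has dimension $n$.

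For the converse, I would start with a co-invariant subspace $\cM\subset(\otimes_{i=1}^k F^2(H_{n_i}))\otimes\CC^n$ satisfying $\dim[({\bf P}_\CC\otimes I_{\CC^n})\cM]=n$, set $T_{i,j}:=P_\cM({\bf W}_{i,j}\otimes I_{\CC^n})|_\cM$, and verify three things: membership in ${\bf D_q^m}(\cM)$, purity, and the rank equality. Co-invariance gives $P_\cM({\bf W}_{i,j}\otimes I_{\CC^n})=P_\cM({\bf W}_{i,j}\otimes I_{\CC^n})P_\cM$, from which an induction yields $p(T)=P_\cM(p({\bf W})\otimes I_{\CC^n})P_\cM$ for every polynomial $p\in\cP({\bf W})$; combined with $r(T)^*=(r({\bf W})^*\otimes I_{\CC^n})|_\cM$ (from the invariance of $\cM$ under the adjoints), this gives $p(T)r(T)^*=P_\cM(p({\bf W})r({\bf W})^*\otimes I_{\CC^n})|_\cM$. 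Hence the unital completely positive map $\Psi(X):=P_\cM(X\otimes I_{\CC^n})|_\cM$ satisfies the hypothesis of Corollary \ref{C*-charact}, placing ${\bf T}$ in ${\bf D_q^m}(\cM)$.

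For purity and the rank formula I would rely on the quadratic-form identity
\[
\langle{\bf\Delta_{q,T}^{\bf p}}(I_\cM)h,h\rangle=\langle{\bf\Delta_{q,W\otimes I}^{\bf p}}(I)h,h\rangle,\qquad h\in\cM,\ 0\le{\bf p}\le{\bf m},
\]
obtained by expanding ${\bf\Delta_{q,T}^{\bf p}}(I_\cM)$ as a signed sum of monomial terms $\Phi_{q_{i_1},T_{i_1}}^{s_1}\!\cdots\Phi_{q_{i_r},T_{i_r}}^{s_r}(I_\cM)$ and recognising each sandwich $\langle T_{\alpha_1}\cdots T_{\alpha_s}T_{\alpha_s}^*\cdots T_{\alpha_1}^*h,h\rangle$ as $\|T_{\alpha_s}^*\cdots T_{\alpha_1}^*h\|^2=\|({\bf W}_{\alpha_s}^*\otimes I)\cdots({\bf W}_{\alpha_1}^*\otimes I)h\|^2$ via the co-invariance identity $T_{i,\alpha}^*=({\bf W}_{i,\alpha}^*\otimes I)|_\cM$. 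Purity of ${\bf W}\otimes I_{\CC^n}$ (Lemma \ref{univ-model}) combined with Proposition \ref{pure2} then forces $\Phi_{q_i,T_i}^p(I_\cM)\to 0$ strongly, and the case ${\bf p}={\bf m}$ of the identity, together with Lemma \ref{univ-model}(i), yields ${\bf\Delta_{q,T}^m}(I)=P_\cM({\bf P}_\CC\otimes I_{\CC^n})|_\cM$ as self-adjoint operators on $\cM$. Setting $A:=({\bf P}_\CC\otimes I_{\CC^n})|_\cM:\cM\to\CC\otimes\CC^n$ and using $\rank A^*A=\rank A=\dim\overline{A(\cM)}$ finally gives $\rank{\bf\Delta_{q,T}^m}(I)=\dim[({\bf P}_\CC\otimes I_{\CC^n})\cM]=n$. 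The main obstacle will be precisely this quadratic-form identity: since $T_{i,j}$ is a compression rather than a restriction, there is no clean operator-level equation $\Phi_{q_i,T_i}^p(I_\cM)=P_\cM\Phi_{q_i,{\bf W}_i\otimes I}^p(I)|_\cM$, and one must carefully exploit the asymmetry that $T_{i,j}^*$ genuinely restricts while $T_{i,j}$ does not, collapsing the alternating sums of sandwich expressions to squared norms of ${\bf W}$-adjoints acting on $h$ before comparing with the corresponding quantities for ${\bf W}\otimes I_{\CC^n}$.
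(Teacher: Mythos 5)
Your proof is correct and follows the same overall skeleton as the paper: the forward direction via the noncommutative Berezin kernel (Theorem \ref{Berezin-prop}/Theorem \ref{dil3}), the converse via the identity ${\bf \Delta_{q,T}^m}(I)=P_\cM({\bf P}_\CC\otimes I_{\CC^n})|_\cM$. Two remarks on where you diverge. First, the ``obstacle'' you flag at the end is illusory: because $\cM$ is co-invariant, compression is multiplicative on words, i.e.\ $T_{(\gamma)}=P_\cM({\bf W}_{(\gamma)}\otimes I)|_\cM$ (take adjoints of $T_{(\gamma)}^*=({\bf W}_{(\gamma)}^*\otimes I)|_\cM$), so each sandwich $T_{(\gamma)}T_{(\gamma)}^*$ equals $P_\cM({\bf W}_{(\gamma)}{\bf W}_{(\gamma)}^*\otimes I)|_\cM$ and the clean operator-level equation $\Phi_{q_{i_1},T_{i_1}}^{s_1}\cdots\Phi_{q_{i_r},T_{i_r}}^{s_r}(I_\cM)=P_\cM\,\Phi_{q_{i_1},{\bf W}_{i_1}\otimes I}^{s_1}\cdots\Phi_{q_{i_r},{\bf W}_{i_r}\otimes I}^{s_r}(I)|_\cM$ does hold; this is what the paper uses tacitly, and your quadratic-form version is an equivalent but more laborious way of saying the same thing. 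Second, your rank computation is genuinely different from, and arguably cleaner than, the paper's: you write ${\bf \Delta_{q,T}^m}(I)=A^*A$ with $A=({\bf P}_\CC\otimes I_{\CC^n})|_\cM$ and use $\rank A^*A=\rank A=\dim\overline{A(\cM)}$, which handles $n<\infty$ and $n=\infty$ uniformly, whereas the paper splits into cases, arguing for finite $n$ that $({\bf P}_\CC\otimes I_{\CC^n})\cM=\CC^n$ forces $P_\cM|_{\CC^n}$ to be injective, and for $n=\infty$ invoking Theorem \ref{cyclic} together with the uniqueness of the minimal dilation from Theorem \ref{dil3}. Your approach avoids that appeal to uniqueness entirely (and hence, strictly speaking, makes lighter use of the spanning hypothesis on $C^*({\bf W}_{i,j})$ in the converse direction).
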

\begin{proof}
  The direct implication  is a consequence
of Theorem \ref{dil3}. To prove the converse, assume that
$$
{ \bf T}_{(\alpha)}=P_\cH ({ \bf W}_{(\alpha)}\otimes I_{\CC^n})|_\cH,\qquad  (\alpha) \in \FF_{n_1}^+\times \cdots \times \FF_{n_k}^+
$$
where $\cH\subset (\otimes_{i=1}^k F^2(H_{n_i})) \otimes \CC^n$ is a co-invariant subspace
under each operator
${\bf W}_{i,j}\otimes  I_{\CC^n}$ for   any $i\in \{1,\ldots, k\}$,
 $  j\in \{1,\ldots, n_i\}$ , such
that $\dim ({\bf P}_\CC\otimes I_{\CC^n})\cH=n$.  Note that ${\bf T}$
 is  a pure element in the noncommutative polydomain ${\bf D_q^m}(\cH)$.
  First, we consider   the case when
$n<\infty$. Since
$({\bf P}_\CC\otimes I_{\CC^n})\cH\subseteq \CC^n$ and $\dim ({\bf P}_\CC\otimes I_{\CC^n})\cH=n$, we
deduce that $({\bf P}_\CC\otimes I_{\CC^n})\cH=\CC^n$. This
condition is equivalent  to the equality
 $
  \cH^\perp\cap \CC^n=\{0\}.
$
Since
 ${\bf \Delta_{q,W}^m}(I)={\bf P}_\CC$, where ${\bf P}_\CC$ is the
 orthogonal projection from $\otimes_{i=1}^k F^2(H_{n_i})$ onto
  $\CC 1\subset \otimes_{i=1}^k F^2(H_{n_i})$,
  we deduce that
 $
 {\bf \Delta_{q,T}^m}(I)= P_\cH \left[  {\bf P}_\CC \otimes I_{\CC^n}
\right]|_\cH= P_\cH \CC^n.
 $
Consequently, we have  $\rank  {\bf \Delta_{q,T}^m}(I)=\dim P_\cH \CC^n$.
 If we assume that $\rank{\bf \Delta_{q,T}^m}(I)<n$, then there exists  $h\in
\CC^n$, $h\neq 0$, with $P_\cH h=0$.  This contradicts the fact that
$\cH^\perp\cap \CC^n=\{0\}$. Therefore, we must have $\rank {\bf \Delta_{q,T}^m}(I)=n$.

Now, we consider the case when $n=\infty$. According to Theorem
\ref{cyclic} and its proof,   we have
$$
(\otimes_{i=1}^k F^2(H_{n_i}))\otimes \cE=\bigvee_{(\alpha)\in \FF_{n_1}^+\times \cdots \times \FF_{n_k}^+} ({\bf W}_{(\alpha)}\otimes
I_{\CC^n}) \cH
$$
where $\cE:=({\bf P}_\CC\otimes I_{\CC^n})\cH$.  Since $(\otimes_{i=1}^k F^2(H_{n_i}))\otimes \cE$  is reducing for each operator
${\bf W}_{i,j}\otimes I_{\CC^m}$,   we deduce that
$
{ \bf T}_{(\alpha)}=P_\cH ({ \bf W}_{(\alpha)}\otimes I_\cE)|_\cH,$
$ (\alpha) \in \FF_{n_1}^+\times \cdots \times \FF_{n_k}^+.
$
The uniqueness of the  minimal   dilation of
${\bf T}$ (see  Theorem \ref{dil3}) implies
$\dim \overline{ {\bf \Delta_{q,T}^m(I)}\cH}=\dim\cE=\infty.
$
This completes the proof.
\end{proof}

 We can
characterize now the pure  $n$-tuples of operators in the
noncommutative  polydomain ${\bf D_q^m}(\cH)$, having  rank one, i.e.,
$\rank {\bf \Delta_{q,T}^m}(I)=1$.

\begin{corollary}\label{rank1} Under the hypothesis  of Proposition \ref{rank-n}, the following statements hold.
 \begin{enumerate}
 \item[(i)]
If $\cM\subset \otimes_{i=1}^k F^2(H_{n_i})$ is  a co-invariant  subspace under each operator  ${\bf W}_{i,j}$, where  $i\in \{1,\ldots, k\}$ and   $j\in \{1,\ldots, n_i\}$,  then
$$
{\bf T}:=(T_1,\ldots, T_k), \quad  T _i:=(P_\cM {\bf W}_{i,1}|_\cM,\ldots,
 P_\cM{\bf W}_{i,n_i}|_\cM),
$$
is a pure    $k$-tuple   in ${\bf D_q^m}(\cM)$ such that
$\rank {\bf \Delta_{q,T}^m}=1$.
\item[(ii)]
If $\cM'$ is another co-invariant subspace under   each operator ${\bf W}_{i,j}$, which gives rise to an $k$-tuple   ${\bf T}'$, then ${\bf T}$
and ${\bf T}'$ are unitarily equivalent if and only if $\cM=\cM'$.
\end{enumerate}
\end{corollary}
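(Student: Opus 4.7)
For (i), I plan to reduce everything to the universal model by compression. Since $\cM$ is invariant under each ${\bf W}_{i,j}^*$, a straightforward induction yields
\begin{equation*}
\varphi(T_{i,j})\psi(T_{i,j})^*=P_\cM\, \varphi({\bf W}_{i,j})\psi({\bf W}_{i,j})^*\big|_\cM
\end{equation*}
for all polynomial symbols $\varphi,\psi\in\cP({\bf W})$. Expanding $\Phi_{q_i,T_i}^s(I_\cM)$ and ${\bf \Delta_{q,T}^p}(I_\cM)$ as WOT-convergent sums of such terms then gives
$\Phi_{q_i,T_i}^s(I_\cM)=P_\cM\Phi_{q_i,{\bf W}_i}^s(I)|_\cM$ and ${\bf \Delta_{q,T}^p}(I_\cM)=P_\cM{\bf \Delta_{q,W}^p}(I)|_\cM$ for ${\bf p}\leq {\bf m}$. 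The second identity, together with Theorem \ref{radial} and Lemma \ref{univ-model}(ii), places ${\bf T}$ in ${\bf D_q^m}(\cM)$; the first, together with Proposition \ref{pure2} and purity of ${\bf W}$, gives purity of ${\bf T}$.

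For the rank assertion, I specialize to ${\bf p}={\bf m}$ and use Lemma \ref{univ-model}(i) to obtain ${\bf \Delta_{q,T}^m}(I_\cM)=P_\cM{\bf P}_\CC|_\cM$, which has rank at most one. To see that the rank is exactly one when $\cM\neq\{0\}$, I pick $y\in\cM\setminus\{0\}$ with some nonzero Fourier coefficient $c_{(\beta)}:=\langle y, e^1_{\beta_1}\otimes\cdots\otimes e^k_{\beta_k}\rangle$; formula \eqref{WbWb} then shows that ${\bf W}_{1,\beta_1}^*\cdots{\bf W}_{k,\beta_k}^*y$ lies in $\cM$ by co-invariance and has nonzero scalar component, exhibiting a nonzero vector in the range of $P_\cM{\bf P}_\CC|_\cM$.

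For (ii), the reverse implication is trivial. For the forward direction, I plan to use the uniqueness part of Theorem \ref{dil3} together with irreducibility of $C^*({\bf W}_{i,j})$ (Lemma \ref{irreducible}). The main step, which I view as the only nonroutine point, is recognizing that the original inclusion $\cM\hookrightarrow \otimes_{i=1}^k F^2(H_{n_i})$ is already a minimal dilation of ${\bf T}$: by Theorem \ref{cyclic}, this reduces to ${\bf P}_\CC\cM\neq\{0\}$, which was verified in the rank argument above, and similarly for $\cM'$. Applied to the unitarily equivalent tuples ${\bf T}$ and ${\bf T}'$, the uniqueness statement produces a unitary $V\in B(\otimes_{i=1}^k F^2(H_{n_i}))$ intertwining each ${\bf W}_{i,j}$ with itself and satisfying $V\cM=\cM'$. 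Unitarity of $V$ promotes this to $V{\bf W}_{i,j}^*={\bf W}_{i,j}^*V$, so $V$ commutes with all of $C^*({\bf W}_{i,j})$; Lemma \ref{irreducible} then forces $V=\lambda I$ for some $\lambda\in\TT$, and hence $\cM'=\lambda\cM=\cM$.
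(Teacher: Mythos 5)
Your proposal is correct and follows essentially the same route as the paper: the identity ${\bf \Delta_{q,T}^m}(I_\cM)=P_\cM{\bf P}_\CC|_\cM$ together with purity for part (i), and the uniqueness of the minimal dilation from Theorem \ref{dil3} combined with irreducibility of $C^*({\bf W}_{i,j})$ for part (ii). The only (harmless) deviation is in showing ${\bf \Delta_{q,T}^m}(I_\cM)\neq 0$: you exhibit a vector of $\cM$ with nonzero scalar component via the Fourier coefficients and co-invariance, whereas the paper deduces it from purity (a pure tuple with vanishing defect would force $\cM=\{0\}$ since the Berezin kernel is an isometry); both are fine, and your filled-in compression identities for part (i) are a correct expansion of what the paper leaves implicit.
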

\begin{proof} Since  $
 {\bf \Delta_{q,T}^m}(I)= P_\cM  {\bf P}_\CC|_\cM
 $
we have $\rank  {\bf \Delta_{q,T}^m}(I) \leq 1$.
  On the other hand, it is clear that ${\bf T}$ is pure.
  This
also implies that ${\bf \Delta_{q,T}^m}(I)\neq 0$, so
 $\rank {\bf \Delta_{q,T}^m}(I)\geq 1$. Therefore,  $\rank {\bf \Delta_{q,T}^m}(I)=1$.

To prove  (ii), note that,  as in the proof of  Theorem
\ref{dil3}, one can show that ${\bf T}$ and ${\bf T}'$ are unitarily
equivalent if and only if there exists a unitary operator
$\Lambda:\otimes_{i=1}^k F^2(H_{n_i})\to \otimes_{i=1}^k F^2(H_{n_i})$ such that
$
\Lambda {\bf W}_{i,j}={\bf W}_{i,j} \Lambda$,
   $i\in \{1,\ldots, k\}$,  $j\in \{1,\ldots, n_i\},
$
and $\Lambda \cM=\cM'$.
Hence $\Lambda {\bf W}_{i,j}^*={\bf W}_{i,j}^* \Lambda$. Since
$C^*({\bf W}_{i,j})$ is irreducible (see Theorem \ref{compact}),
$\Lambda$ must be a scalar multiple of the identity. Therefore, we have
$\cM=\Lambda \cM=\cM'$.
\end{proof}

\bigskip

\section{Characteristic functions and operator models}

We  provide a characterization for the class of tuples of  operators in  ${\bf D_f^m}(\cH)$ which admit  characteristic functions.
  We prove that  the characteristic function is  a complete unitary invariant  for the class of completely non-coisometric tuples   and provide an operator model for this class of elements  in terms of their   characteristic functions.

Let
${\bf W}:=({\bf W}_1,\ldots, {\bf W}_k)$  be the  the universal
 model associated with  the abstract noncommutative domain ${\bf D_f^m}$.
  We say that  two
multi-analytic operator $\Phi:(\otimes_{i=1}^k F^2(H_{n_i}))\otimes
 \cK_1 \to (\otimes_{i=1}^k F^2(H_{n_i}))\otimes \cK_2$ and
 $\Phi':(\otimes_{i=1}^k F^2(H_{n_i}))\otimes \cK_1' \to
  (\otimes_{i=1}^k F^2(H_{n_i}))\otimes \cK_2'$  coincide if there are two unitary operators $\tau_j\in
B(\cK_j, \cK_j')$ such that
$$
\Phi'(I_{\otimes_{i=1}^k F^2(H_{n_i})}\otimes \tau_1)
=(I_{\otimes_{i=1}^k F^2(H_{n_i})}\otimes \tau_2) \Phi.
$$

\begin{lemma}\label{fifi*}   Let $\Phi_s:(\otimes_{i=1}^k F^2(H_{n_i}))\otimes \cH_s\to
(\otimes_{i=1}^k F^2(H_{n_i}))\otimes \cK$, \ $s=1,2$,    be  multi-analytic operators   with respect to ${\bf W}:=({\bf W}_1,\ldots, {\bf W}_k)$ such that
$
\Phi_1 \Phi_1^*=\Phi_2 \Phi_2^*.
$
Then there is a unique partial isometry $V:\cH_1\to \cH_2$ such that
$$\Phi_1=\Phi_2(I_{\otimes_{i=1}^k F^2(H_{n_i})}\otimes V),
$$
where $(I_{\otimes_{i=1}^k F^2(H_{n_i})}\otimes V)$ is an inner multi-analytic operator  with initial space $\supp (\Phi_1)$ and  final space $\supp (\Phi_2)$.
In particular,  the  multi-analytic operators $\Phi_1|_{\supp (\Phi_1)}$ and $\Phi_2|_{\supp (\Phi_2)}$ coincide.
\end{lemma}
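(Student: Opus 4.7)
My plan is to promote a polar-decomposition-type isometry $V:\cM_1\to\cM_2$, where $\cM_s := \overline{\Phi_s^*\bigl((\otimes_{i=1}^k F^2(H_{n_i}))\otimes\cK\bigr)}$, to an operator of the tensor form $I\otimes V_0$ on the supports, using critically the fact from Lemma \ref{univ-model}(i) that ${\bf P}_\CC = {\bf \Delta_{f,W}^m}(I)$. Since $\Phi_1\Phi_1^* = \Phi_2\Phi_2^*$ gives $\|\Phi_1^*x\| = \|\Phi_2^*x\|$ for every $x$, the prescription $V\Phi_1^*x := \Phi_2^*x$ extends by continuity to a well-defined surjective isometry $V:\cM_1\to\cM_2$. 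Multi-analyticity yields $({\bf W}_{i,j}^*\otimes I_{\cH_s})\Phi_s^* = \Phi_s^*({\bf W}_{i,j}^*\otimes I_\cK)$, so each $\cM_s$ is invariant under ${\bf W}_{i,j}^*\otimes I_{\cH_s}$ and $V$ intertwines those restrictions.

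The central computational step is to iterate the multi-analyticity identity
$$\Phi_s\bigl(\Phi_{f_i,{\bf W}_i\otimes I_{\cH_s}}(X)\bigr)\Phi_s^* = \Phi_{f_i,{\bf W}_i\otimes I_\cK}\bigl(\Phi_s X\Phi_s^*\bigr),$$
applied with $X=I$, together with Lemma \ref{univ-model}(i), to obtain
$$\Phi_s({\bf P}_\CC\otimes I_{\cH_s})\Phi_s^* = {\bf \Delta}^{\bf m}_{{\bf f},\,{\bf W}\otimes I_\cK}\bigl(\Phi_s\Phi_s^*\bigr).$$
The right-hand side depends only on $\Phi_s\Phi_s^*$, so the hypothesis forces $\|({\bf P}_\CC\otimes I_{\cH_1})\Phi_1^*x\| = \|({\bf P}_\CC\otimes I_{\cH_2})\Phi_2^*x\|$ for every $x$. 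Recalling from the paragraph preceding the lemma that $\supp(\Phi_s) = (\otimes_{i=1}^k F^2(H_{n_i}))\otimes\cL_s$ with $\cL_s := ({\bf P}_\CC\otimes I_{\cH_s})\cM_s \subseteq \cH_s$ (identifying $\CC\otimes\cH_s$ with $\cH_s$), I define $V_0:\cL_1\to\cL_2$ by
$$V_0\bigl(({\bf P}_\CC\otimes I_{\cH_1})w\bigr) := ({\bf P}_\CC\otimes I_{\cH_2})Vw, \qquad w\in\cM_1.$$
The norm equality above makes this assignment well-defined and isometric; the symmetric argument applied to $V^{-1}$ yields surjectivity, so $V_0:\cL_1\to\cL_2$ is a unitary. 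Extending $V_0$ by $0$ on $\cL_1^\perp$ produces the claimed partial isometry $V:\cH_1\to\cH_2$ with initial space $\cL_1$ and final space $\cL_2$.

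It remains to verify $Vy = (I_{\otimes F^2}\otimes V_0)y$ for every $y\in\cM_1$ by comparing Fourier coefficients. Writing $y = \sum_{(\alpha)} e_{(\alpha)}\otimes y_{(\alpha)}$ with $y_{(\alpha)}\in\cL_1$ (which holds since $\cM_1\subseteq\supp(\Phi_1)$), the intertwining $V({\bf W}_{(\alpha)}^*\otimes I_{\cH_1}) = ({\bf W}_{(\alpha)}^*\otimes I_{\cH_2})V$ on $\cM_1$ together with the explicit action of ${\bf W}_{(\alpha)}^*$ on basis vectors gives $({\bf P}_\CC\otimes I_{\cH_s})({\bf W}_{(\alpha)}^*\otimes I_{\cH_s})y = y_{(\alpha)}/\sqrt{b_{1,\alpha_1}^{(m_1)}\cdots b_{k,\alpha_k}^{(m_k)}}$, and applying $V_0$ and the scaling factor yields $(Vy)_{(\alpha)} = V_0\,y_{(\alpha)}$. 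Taking adjoints of $V\Phi_1^* = \Phi_2^*$ then gives $\Phi_1 = \Phi_2(I_{\otimes F^2}\otimes V_0)$ on $\supp(\Phi_1)$, while both sides vanish on $\supp(\Phi_1)^\perp = (\otimes F^2)\otimes\cL_1^\perp$. Uniqueness is immediate: if $V_0,V_0'$ both satisfy the conclusion, then $\Phi_2(I\otimes(V_0-V_0')) = 0$ forces $(V_0^*-V_0'^*)(\Phi_2^*x)_{g_0} = 0$ for every $x$, and since $\{(\Phi_2^*x)_{g_0}\}$ spans $\cL_2$ by definition, $V_0$ and $V_0'$ agree on $\cL_1$ (and both vanish on $\cL_1^\perp$). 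The main obstacle is the identity $\Phi_s({\bf P}_\CC\otimes I)\Phi_s^* = {\bf \Delta}^{\bf m}_{{\bf f},\,{\bf W}\otimes I_\cK}(\Phi_s\Phi_s^*)$: without it $V$ could be shown only to be a multi-analytic partial isometry in the commutant of $\{{\bf W}_{i,j}\otimes I\}$, which is strictly larger than the algebra of operators of tensor form $I\otimes V_0$, so this identity is what ultimately forces the tensor structure claimed by the lemma.
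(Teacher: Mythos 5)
Your proposal is correct and follows essentially the same route as the paper: both derive $\Phi_s({\bf P}_\CC\otimes I_{\cH_s})\Phi_s^*={\bf \Delta}^{\bf m}_{{\bf f},{\bf W}\otimes I_\cK}(\Phi_s\Phi_s^*)$ from multi-analyticity together with Lemma \ref{univ-model}(i), use the resulting norm identity to define a unitary between the coefficient spaces $\cL_1$ and $\cL_2$, and extend it to the claimed partial isometry. The only difference is that you spell out, via the Fourier-coefficient comparison, the step the paper compresses into ``since $\Phi_1,\Phi_2$ are multi-analytic, we deduce that $\Phi_1(I\otimes V^*)=\Phi_2$.''
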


\begin{proof} Due to Lemma \ref{univ-model},
$(id-\Phi_{f_1,{\bf W}_1})^{m_1}\cdots (id-\Phi_{f_k,{\bf W}_k})^{m_k}(I)
={\bf P}_\CC$, where ${\bf P}_\CC$ is the
 orthogonal projection from $\otimes_{i=1}^k F^2(H_{n_i})$ onto $\CC 1\subset \otimes_{i=1}^k F^2(H_{n_i})$. Since $\Phi_1, \Psi_2$ are  multi-analytic operators   with respect to ${\bf W}$, we deduce that
 $\Phi_1({\bf P}_\CC\otimes I_{\cH_1})\Phi_1^*=\Phi_2({\bf P}_\CC\otimes I_{\cH_2})\Phi_2^*$.
 Consequently, we have
 $$
 \|({\bf P}_\CC\otimes I_{\cH_1})\Phi_1^*x\|=\|({\bf P}_\CC\otimes I_{\cH_2})\Phi_2^*x\|, \qquad x\in (\otimes_{i=1}^k F^2(H_{n_i}))\otimes \cK.
 $$
 Set $\cL_s:=({\bf P}_\CC\otimes I_{\cH_s})\overline{\Phi_s^*((\otimes_{i=1}^k F^2(H_{n_i}))\otimes \cK)}$, $s=1,2$, and  define the unitary  operator
 $U:\cL_1\to \cL_2$ by
 $$
 U({\bf P}_\CC\otimes I_{\cH_1})\Phi_1^*x:=({\bf P}_\CC\otimes I_{\cH_2})\Phi_2^*x, \qquad x\in (\otimes_{i=1}^k F^2(H_{n_i}))\otimes \cK.
 $$
 This implies that there is a unique  partial isometry $V:\cH_1\to \cH_2$ with initial space $\cL_1$ and final space $\cL_2$, extending $U$. Moreover, we have
 $\Phi_1 V^*= \Phi_2|_{1\otimes \cH_2}$.   Since $\Phi_1, \Psi_2$ are  multi-analytic operators   with respect to ${\bf W}$, we deduce that
 $\Phi_1(I_{\otimes_{i=1}^k F^2(H_{n_i})}\otimes V^*)=\Phi_2$. Hence, the result follows.
Now, the last part of the lemma is clear.
\end{proof}

We say that  ${\bf T}=({ T}_1,\ldots, { T}_k)\in {\bf D_f^m}(\cH)$  has
  characteristic function   if there is a Hilbert space $\cE$ and
   a multi-analytic operator $\Psi:(\otimes_{i=1}^k F^2(H_{n_i}))\otimes \cE \to (\otimes_{i=1}^k F^2(H_{n_i}))\otimes \overline{{\bf \Delta_{f,T}^m}(I) (\cH)}$ with respect to $W_{i,j}$, $i\in \{1,\ldots,k\}$, $j\in \{1,\ldots, n_i\}$, such that
$$
{\bf K_{f,T}}{\bf K_{f,T}^*} +\Psi \Psi^*=I.
$$
 According to Lemma \ref{fifi*}, if there is a characteristic function
  for ${\bf T}\in {\bf D_f^m}(\cH)$, then it is essentially unique.

We give  now   an example of  a class of elements ${\bf T} \in {\bf D_f^m}(\cH)$
which have characteristic function.  Let $\Psi:(\otimes_{i=1}^k F^2(H_{n_i}))\otimes \cE\to
(\otimes_{i=1}^k F^2(H_{n_i}))\otimes \cG$ be
an inner multi-analytic operator with $\Psi(0)=0$ and consider the subspace  $\cM:=\Psi((\otimes_{i=1}^k F^2(H_{n_i}))\otimes \cE)$.
Note that $\cM$ is invariant under each operator ${\bf W}_{i,j}$ and define
$T_{i,j}:=P_{\cM^\perp}({\bf W}_{i,j}\otimes I_\cG)|_{\cM^\perp}$ for $i\in \{1,\ldots, k\}$ and $j\in \{1,\ldots, n_i\}$.
Set ${\bf T}:=(T_1,\ldots, T_k)$, where $T_i=(T_{i,1},\ldots, T_{i,j})$,  and note that
$$
{\bf \Delta_{f,T}^m}(I_{\cM^\perp})
=P_{\cM^\perp}{\bf \Delta}_{{\bf f,W}\otimes I_\cG}^{\bf m}(I_\cG)|_{\cM^\perp}=P_{\cM^\perp} ({\bf P}_\CC \otimes I_{\cG})|_{\cM^\perp}.
$$
Since $\Psi(0)=0$, we have $1\otimes \cG\subset \cM^\perp$ and, consequently,
${\bf \Delta_{f,T}^m}(I_{\cM^\perp})^{1/2}=({\bf P}_\CC \otimes I_{\cG})|_{\cM^\perp}$.
Consider an arbitrary vector
$$
h=\sum_{\beta_i\in \FF_{n_i}^+, i=1,\ldots,k}e^1_{\beta_1}\otimes \cdots \otimes  e^k_{\beta_k} \otimes h_{\beta_1,\ldots, \beta_k}
$$
   in $\cM^\perp\subset (\otimes_{i=1}^k F^2(H_{n_i}))\otimes \cG$. Using the definition of the noncommutative Berezin kernel and relation \eqref{WbWb}, we obtain
\begin{equation*}
\begin{split}
 {\bf K_{f,T}}h&:=\sum_{\beta_i\in \FF_{n_i}^+, i=1,\ldots,k}
   \sqrt{b_{1,\beta_1}^{(m_1)}}\cdots \sqrt{b_{k,\beta_k}^{(m_k)}}
   e^1_{\beta_1}\otimes \cdots \otimes  e^k_{\beta_k}\otimes
    ({\bf P}_\CC\otimes I_\cG) ({\bf W}_{1,\beta_1}^*\cdots {\bf W}_{k,\beta_k}^*\otimes I_\cG)^*h\\
   &=
   \sum_{\beta_i\in \FF_{n_i}^+, i=1,\ldots,k}
   \sqrt{b_{1,\beta_1}^{(m_1)}}\cdots \sqrt{b_{k,\beta_k}^{(m_k)}}
   e^1_{\beta_1}\otimes \cdots \otimes  e^k_{\beta_k}\otimes  \frac{1}{\sqrt{b_{1,\beta_1}^{(m_1)}}}\cdots \frac{1}{\sqrt{b_{k,\beta_k}^{(m_k)}}}(1\otimes h_{\beta_1,\ldots, \beta_k})=h
   \end{split}
 \end{equation*}
 Consequently, ${\bf K_{f,T}}$ can be identified with the injection of $\cM^\perp$ into $(\otimes_{i=1}^k F^2(H_{n_i}))\otimes \cG$, and
 ${\bf K_{f,T}}{\bf K_{f,T}^*}$ can be identified with the orthogonal projection
 $P_{\cM^\perp}$. Therefore,  ${\bf K_{f,T}}{\bf K_{f,T}^*}+\Psi\Psi^*=I$, which proves our assertion.

We also remark that in the particular case when $k=1$ and $m_1=1$,
all the elements in the
 noncommutative domain ${\bf D}_{f_1}^{1}$ have characteristic functions.

\begin{theorem} A $k$-tuple  ${\bf T}=({ T}_1,\ldots, { T}_k)$  in  the noncommutative polydomain ${\bf D_f^m}(\cH)$  admits a characteristic function if and only if
$$
{\bf \Delta}_{{\bf f,W}\otimes I}^{\bf p}(I -{\bf K_{f,T}}{\bf K_{f,T}^*})\geq 0
$$
for  any ${\bf p}:=(p_1,\ldots, p_k)\in \ZZ_+^k$ such that ${\bf p}\leq {\bf m}$, where ${\bf K_{f,T}}$ is the noncommutative Berezin kernel  associated with ${\bf T}$.
\end{theorem}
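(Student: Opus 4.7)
The plan is to reduce the entire statement to Theorem~\ref{Beur-fact}, applied to the positive operator
$$Y := I - {\bf K_{f,T}}{\bf K_{f,T}^*}$$
acting on the Hilbert space $\cK := (\otimes_{i=1}^k F^2(H_{n_i}))\otimes \overline{{\bf \Delta_{f,T}^m}(I)(\cH)}$. Note first that Theorem~\ref{Berezin-prop}(i) gives ${\bf K_{f,T}^*}{\bf K_{f,T}}\leq I_\cH$, hence also ${\bf K_{f,T}}{\bf K_{f,T}^*}\leq I_\cK$, so that $Y\geq 0$. This takes care of the case ${\bf p}=0$ in the hypothesis (where ${\bf \Delta}_{{\bf f,W}\otimes I}^{\bf 0}=id$), and so throughout the argument only the nonzero multi-indices ${\bf p}\leq {\bf m}$ really play a role.

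For the direct implication, assume that ${\bf T}$ admits a characteristic function, that is, there exists a multi-analytic operator
$$\Psi:(\otimes_{i=1}^k F^2(H_{n_i}))\otimes \cE \to (\otimes_{i=1}^k F^2(H_{n_i}))\otimes \overline{{\bf \Delta_{f,T}^m}(I)(\cH)}$$
with ${\bf K_{f,T}}{\bf K_{f,T}^*}+\Psi\Psi^*=I$. Then $Y=\Psi\Psi^*$ and, since $\Psi$ satisfies $\Psi({\bf W}_{i,j}\otimes I_\cE)=({\bf W}_{i,j}\otimes I_{\overline{{\bf \Delta_{f,T}^m}(I)(\cH)}})\Psi$, the implication (i)$\Rightarrow$(ii) of Theorem~\ref{Beur-fact} yields
$${\bf \Delta}_{{\bf f,W}\otimes I}^{\bf p}(Y)={\bf \Delta}_{{\bf f,W}\otimes I}^{\bf p}(\Psi\Psi^*)=\Psi\,{\bf \Delta}_{{\bf f,W}\otimes I_\cE}^{\bf p}(I)\,\Psi^*\geq 0$$
for every ${\bf p}\leq {\bf m}$, ${\bf p}\neq 0$. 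Combined with the case ${\bf p}=0$ observed above, this gives the required positivity condition.

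For the converse implication, assume the positivity hypothesis. Since $Y$ is a self-adjoint operator on $\cK=(\otimes_{i=1}^k F^2(H_{n_i}))\otimes \overline{{\bf \Delta_{f,T}^m}(I)(\cH)}$ satisfying ${\bf \Delta}_{{\bf f,W}\otimes I}^{\bf p}(Y)\geq 0$ for all $0\neq {\bf p}\leq {\bf m}$, Theorem~\ref{Beur-fact} (implication (ii)$\Rightarrow$(i)) furnishes a Hilbert space $\cE$ and a multi-analytic operator
$$\Psi:(\otimes_{i=1}^k F^2(H_{n_i}))\otimes \cE \to (\otimes_{i=1}^k F^2(H_{n_i}))\otimes \overline{{\bf \Delta_{f,T}^m}(I)(\cH)}$$
such that $Y=\Psi\Psi^*$. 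Unwinding the definition of $Y$, this is precisely the identity ${\bf K_{f,T}}{\bf K_{f,T}^*}+\Psi\Psi^*=I$, so $\Psi$ is a characteristic function of ${\bf T}$.

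No real obstacle is expected: the only technical point is to recognize that the ${\bf p}=0$ case of the hypothesis is automatic from Theorem~\ref{Berezin-prop}(i), so that the statement matches verbatim the hypothesis of Theorem~\ref{Beur-fact}. Essential uniqueness of $\Psi$ up to its support then follows from Lemma~\ref{fifi*}, as noted in the text just after the definition of characteristic function.
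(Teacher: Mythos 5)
Your proof is correct and follows essentially the same route as the paper: the direct implication by pulling the multi-analytic operator $\Psi$ through the defect mapping to get $\Psi\,{\bf \Delta}_{{\bf f,W}\otimes I}^{\bf p}(I)\,\Psi^*\geq 0$, and the converse by applying Theorem~\ref{Beur-fact} to $Y=I-{\bf K_{f,T}}{\bf K_{f,T}^*}$. Your extra remark that the ${\bf p}=0$ case is automatic from the contractivity of ${\bf K_{f,T}}$ is a useful clarification of the interface with Theorem~\ref{Beur-fact}, but it does not change the argument.
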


\begin{proof}
 If ${\bf T}$
 has characteristic function, then there is
  a multi-analytic operator $\Psi$  with the property  that
$
{\bf K_{f,T}}{\bf K_{f,T}^*} +\Psi \Psi^*=I.
$
Using the multi-analyticity of $\Psi$,
 we have
$$
{\bf \Delta}_{{\bf f,W}\otimes I}^{\bf p}(I -{\bf K_{f,T}}{\bf K_{f,T}^*})
=\Psi{\bf \Delta}_{{\bf f,W}\otimes I}^{\bf p}(I)\Psi^*\geq 0,
$$
for  any ${\bf p}:=(p_1,\ldots, p_k)\in \ZZ_+^k$ such that ${\bf p}\leq {\bf m}$.
 For the converse, we apply Theorem \ref{Beur-fact} to the operator
  $Y=I -{\bf K_{f,T}}{\bf K_{f,T}^*}$ and  complete the proof.
\end{proof}

 If ${\bf T}$ has characteristic function, the multi-analytic operator $M$ provided by the  proof of Theorem \ref{Beur-fact} when $Y=I -{\bf K_{f,T}}{\bf K_{f,T}^*}$, which we denote by $\Theta_{\bf f,T}$,  is called  the {\it characteristic function} of ${\bf T}$. More precisely, $\Theta_{\bf f,T}$
  is the multi-analytic operator
 $$\Theta_{\bf f,T}:(\otimes_{i=1}^k F^2(H_{n_i}))\otimes \overline{{\bf \Delta_{f,M_T}^m}(I)(\cM_T)} \to (\otimes_{i=1}^k F^2(H_{n_i}))\otimes \overline{{\bf \Delta_{f,T}^m}(I)(\cH)}
 $$
defined by $\Theta_{\bf f,T}:=(I -{\bf K_{f,T}}{\bf K_{f,T}^*})^{1/2} {\bf K_{f,M_T}^*}$, where
$${\bf K_{f,T}}: \cH \to F^2(H_{n_1})\otimes \cdots \otimes  F^2(H_{n_k}) \otimes  \overline{{\bf \Delta_{f,T}^m}(I)(\cH)}$$
is the noncommutative Berezin kernel associated with ${\bf T}$ and
$${\bf K_{f,M_T}}: \cH \to F^2(H_{n_1})\otimes
\cdots \otimes  F^2(H_{n_k}) \otimes  \overline{{\bf \Delta_{f,M_T}^m}(I)(\cM_{\bf T})}$$
is the noncommutative Berezin kernel associated
 with ${\bf M_T}\in {\bf D_f^m}(\cM_{\bf T})$.  Here, we have
$$
\cM_{\bf T}:= \overline{{\rm range}\,(I -{\bf K_{f,T}}{\bf K_{f,T}^*}) }
$$
and  ${\bf M_T}:=(M_1,\ldots, M_k)$ is the $k$-tuple
 with $M_i:=(M_{i,1},\ldots, M_{i,n_i})$ and $M_{i,j}\in B(\cM_{\bf T})$   given by $M_{i,j}:=A_{i,j}^*$, where $A_{i,j}\in B(\cM_{\bf T})$ is uniquely defined by
$$
A_{i,j}\left[(I -{\bf K_{f,T}}{\bf K_{f,T}^*})^{1/2}x\right]:=(I -{\bf K_{f,T}}{\bf K_{f,T}^*})^{1/2}({\bf W}_{i,j}\otimes I)x
$$
for any $x\in (\otimes_{i=1}^k F^2(H_{n_i}))\otimes \overline{{\bf \Delta_{f,T}^m}(I)(\cH)}$. According to Theorem \ref{Beur-fact}, we have
$
{\bf K_{f,T}}{\bf K_{f,T}^*}+ \Theta_{\bf f,T}\Theta_{\bf f, T}^*=I.
$

We denote by $\cC_{\bf f}^{\bf m}(\cH)$ the set of all ${\bf T}=({ T}_1,\ldots, { T}_k)\in {\bf D_f^m}(\cH)$  which admit characteristic functions.
\begin{theorem}\label{pure-model} Let  ${\bf T}=({ T}_1,\ldots, { T}_k)$ be a  $k$-tuple  in
$\cC_{\bf f}^{\bf m}(\cH)$. Then ${\bf T}$  is pure if and only if  the characteristic function $\Theta_{\bf f,T}$ is an inner multi-analytic operator. Moreover, in this case  ${\bf T}=({ T}_1,\ldots, { T}_k)$ is unitarily equivalent to ${\bf G}=({ G}_1,\ldots, { G}_k)$, where $G_i:=(G_{i,1},\ldots, G_{i,n_i})$ is defined by
$$ G_{i,j}:=P_{\bf H_{f,T}} \left({\bf W}_{i,j}\otimes I\right)|_{\bf H_{f,T}}
$$
and $P_{\bf H_{f,T}}$ is the orthogonal projection of $(\otimes_{i=1}^k F^2(H_{n_i}))\otimes \overline{{\bf \Delta_{f,T}^m}(I)(\cH)}$ onto
$${\bf H_{f,T}}:=\left\{(\otimes_{i=1}^k F^2(H_{n_i}))\otimes \overline{{\bf \Delta_{f,T}^m}(I)(\cH)}\right\}\ominus {\rm range}\, \Theta_{\bf f,T}.
$$
\end{theorem}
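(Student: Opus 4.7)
The strategy rests on the identity ${\bf K_{f,T}}{\bf K_{f,T}^*}+\Theta_{\bf f,T}\Theta_{\bf f,T}^*=I$ built into the definition of $\Theta_{\bf f,T}$, together with the intertwining $({\bf W}_{i,j}^*\otimes I){\bf K_{f,T}}={\bf K_{f,T}}T^*_{i,j}$ furnished by Theorem~\ref{Berezin-prop}(iii). The first observation to record is that $\Theta_{\bf f,T}$ is inner (i.e.\ a partial isometry) if and only if $\Theta_{\bf f,T}\Theta_{\bf f,T}^*$ is an orthogonal projection, equivalently ${\bf K_{f,T}}{\bf K_{f,T}^*}$ is a projection; in that situation ${\bf K_{f,T}}$ is itself a partial isometry whose closed range is exactly $\operatorname{range}(I-\Theta_{\bf f,T}\Theta_{\bf f,T}^*)={\bf H_{f,T}}$.

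The forward implication is then almost cosmetic: assuming ${\bf T}$ is pure, Theorem~\ref{Berezin-prop}(ii) gives ${\bf K_{f,T}^*}{\bf K_{f,T}}=I_\cH$, so ${\bf K_{f,T}}$ is an isometry, ${\bf K_{f,T}}{\bf K_{f,T}^*}$ is the orthogonal projection onto its range, and $\Theta_{\bf f,T}\Theta_{\bf f,T}^*=I-{\bf K_{f,T}}{\bf K_{f,T}^*}$ is the complementary projection, forcing $\Theta_{\bf f,T}$ to be inner. For the converse, suppose $\Theta_{\bf f,T}$ is inner, so that ${\bf K_{f,T}}$ is a partial isometry onto ${\bf H_{f,T}}$. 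Since $\operatorname{range}\Theta_{\bf f,T}$ is invariant under each ${\bf W}_{i,j}\otimes I$ by multi-analyticity, ${\bf H_{f,T}}$ is coinvariant and the intertwining restricts to ${\bf K_{f,T}}T^*_{i,j}=G_{i,j}^*{\bf K_{f,T}}$. Consequently $\ker{\bf K_{f,T}}$ is $T_{i,j}^*$-invariant; reading off the lowest-order summand in the defining series of ${\bf K_{f,T}}$ shows $\ker{\bf K_{f,T}}\subseteq\ker{\bf \Delta_{f,T}^m}(I)$, and propagating via the $T_{i,j}^*$ yields that the whole $T^*$-invariant subspace generated by any kernel vector lies in $\ker{\bf \Delta_{f,T}^m}(I)$. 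To conclude that $\ker{\bf K_{f,T}}$ actually vanishes I plan to combine this singular-subspace analysis with the essential uniqueness of the characteristic function provided by Lemma~\ref{fifi*} and the standing hypothesis ${\bf T}\in\cC_{\bf f}^{\bf m}(\cH)$. Then ${\bf K_{f,T}^*}{\bf K_{f,T}}=I$, and by Theorem~\ref{Berezin-prop}(ii) the tuple ${\bf T}$ is pure.

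For the operator model, with ${\bf T}$ pure and $\Theta_{\bf f,T}$ inner, the map $U:={\bf K_{f,T}}:\cH\to{\bf H_{f,T}}$ is a unitary: it is an isometry by purity and surjective because $\overline{\operatorname{range}\,{\bf K_{f,T}}}=\operatorname{range}({\bf K_{f,T}}{\bf K_{f,T}^*})={\bf H_{f,T}}$. The intertwining rewrites as $UT_{i,j}^*=G_{i,j}^*U$, where $G_{i,j}^*=({\bf W}_{i,j}^*\otimes I)|_{{\bf H_{f,T}}}$ by coinvariance of ${\bf H_{f,T}}$; taking adjoints gives $T_{i,j}=U^*G_{i,j}U$ for every $i,j$, yielding the unitary equivalence ${\bf T}\cong{\bf G}$. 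The main obstacle I foresee is the converse direction above, namely extracting full purity of ${\bf T}$ from the weaker information that ${\bf K_{f,T}}$ is merely a partial isometry; the delicate balance between the $T^*$-invariant subspace on which ${\bf \Delta_{f,T}^m}(I)$ vanishes and the uniqueness of $\Theta_{\bf f,T}$ is where I expect to spend the most care.
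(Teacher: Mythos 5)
Your forward implication and the construction of the unitary equivalence ${\bf T}\cong{\bf G}$ are correct and follow the same route as the paper: purity gives ${\bf K_{f,T}^*}{\bf K_{f,T}}=I$ via Theorem~\ref{Berezin-prop}(ii), so ${\bf K_{f,T}}{\bf K_{f,T}^*}$ and hence $\Theta_{\bf f,T}\Theta_{\bf f,T}^*$ are complementary projections, ${\bf K_{f,T}}\cH={\bf H_{f,T}}$, and the intertwining $({\bf W}_{i,j}^*\otimes I){\bf K_{f,T}}={\bf K_{f,T}}T_{i,j}^*$ transports ${\bf T}$ onto ${\bf G}$.

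The converse, however, has a genuine gap exactly where you anticipated trouble. Once $\Theta_{\bf f,T}$ is inner you correctly get that ${\bf K_{f,T}}$ is a partial isometry, but your plan to force $\ker{\bf K_{f,T}}=\{0\}$ by combining the singular-subspace analysis with Lemma~\ref{fifi*} and the hypothesis ${\bf T}\in\cC_{\bf f}^{\bf m}(\cH)$ cannot succeed. By Theorem~\ref{Berezin-prop}(i), $\ker{\bf K_{f,T}}$ is precisely the set of $h$ with $\lim_{\bf q}\left<(id-\Phi_{f_k,T_k}^{q_k})\cdots(id-\Phi_{f_1,T_1}^{q_1})(I)h,h\right>=0$, i.e.\ the obstruction to ${\bf T}$ being completely non-coisometric; neither the existence nor the essential uniqueness of the characteristic function says anything about this set. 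Indeed, any ${\bf T}$ with ${\bf \Delta_{f,T}^m}(I)=0$ has ${\bf K_{f,T}}=0$ and $\Theta_{\bf f,T}=0$, which is (vacuously) a partial isometry, yet such a tuple need not be pure, so the implication is simply false without an extra hypothesis. The paper's own proof of the converse inserts the assumption that ${\bf T}$ is completely non-coisometric at exactly this point: then ${\bf K_{f,T}}$ is one-to-one, a one-to-one partial isometry is an isometry, and ${\bf K_{f,T}^*}{\bf K_{f,T}}=I$ gives purity by Theorem~\ref{Berezin-prop}(i). So the fix is not a cleverer argument but an added standing hypothesis (complete non-coisometricity, as in the surrounding model theorems); you should state it and then the converse is two lines.
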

\begin{proof} Assume that ${\bf T}$ is a pure $k$-tuple in $\cC_{\bf f}^{\bf m}(\cH)$.    Theorem \ref{Berezin-prop} shows that
 the noncommutative Berezin kernel associated with ${\bf T}$, i.e.,
$${\bf K_{f,T}}: \cH \to F^2(H_{n_1})\otimes \cdots \otimes  F^2(H_{n_k}) \otimes  \overline{{\bf \Delta_{f,T}^m}(I)(\cH)}$$
is an isometry, the subspace ${\bf K_{f,T}}\cH$ is coinvariant  under  the operators
${\bf W}_{i,j}\otimes I_{\overline{{\bf \Delta_{f,T}^m}(I)(\cH)}}$, $i\in \{1,\ldots, k\}$, $j\in \{1,\ldots, n_i\}$, and
$T_{i,j}={\bf K_{f,T}^*}({\bf W}_{i,j}\otimes I_{\overline{{\bf \Delta_{f,T}^m}(I)(\cH)}}) {\bf K_{f,T}}$.
Since ${\bf K_{f,T}}{\bf K_{f,T}^*}$ is the orthogonal projection of $(\otimes_{i=1}^k F^2(H_{n_i}))\otimes \overline{{\bf \Delta_{f,T}^m}(I)(\cH)}$ onto ${\bf K_{f,T}}\cH$ and  ${\bf K_{f,T}}{\bf K_{f,T}^*}+\Theta_{\bf f,T}\Theta_{\bf f,T}^*=I $, we deduce that $\Theta_{\bf f,T}$ is a partial isometry and
${\bf K_{f,T}}\cH={\bf H_{f,T}}$. Since ${\bf K_{f,T}}$ is an isometry, we can identify $\cH$ with ${\bf K_{f,T}}\cH$. Therefore, ${\bf T}=({ T}_1,\ldots, { T}_k)$ is unitarily equivalent to ${\bf G}=({ G}_1,\ldots, { G}_k)$.

Conversely,  if we assume that  $\Theta_{\bf f,T}$ is inner, then it
is a partial isometry. Due to the fact that $
{\bf K_{f,T}}{\bf K_{f,T}^*}+ \Theta_{\bf f,T}\Theta_{\bf f, T}^*=I,
$
 the noncommutative Berezin kernel ${\bf K_{f,T}}$ is a partial isometry. On the other hand,
 since ${\bf T}$ is completely non-coisometric,
  ${\bf K_{f,T}}$ is a one-to-one partial isometry and,
therefore, isometry. Due to  Theorem  \ref{Berezin-prop}, we have
$$
{\bf K_{f,T}^*}{\bf K_{f,T}}=
\lim_{{\bf q}=(q_1,\ldots, q_k)\in \ZZ_+^k}(id-\Phi_{f_k,T_k}^{q_k})\cdots (id-\Phi_{f_1,T_1}^{q_1})(I)=I
$$
Consequently,  ${\bf T}$ is a pure $k$-tuple. The proof is complete.
\end{proof}

Now, we are able to provide a model theorem for class of  the  completely non-coisometric $k$-tuple  of operators  in
$\cC_{\bf f}^{\bf m}(\cH)$.

\begin{theorem}\label{model}  Let ${\bf T}=({ T}_1,\ldots, { T}_k)$ be a
 completely non-coisometric $k$-tuple  in
$\cC_{\bf f}^{\bf m}(\cH)$  and let  ${\bf W}:=({\bf W}_1,\ldots, {\bf W}_k)$ be the universal model associated to the abstract  noncommutative domain ${\bf D_f^m}$.   Set
$$
\cD:=\overline{{\bf \Delta_{f,T}^m}(I)(\cH)},\quad  \quad \cD_*:=\overline{{\bf \Delta_{f,M_T}^m}(I)(\cM_T)},
$$
and $\Delta_{\Theta_{\bf f,T}}:= \left(I-\Theta_{\bf f,T}^*
\Theta_{\bf f,T}\right)^{1/2}$, where $\Theta_{\bf f,T}$ is the characteristic function of ${\bf T}$.
 Then ${\bf T} $ is unitarily equivalent to
$\TT:=(\TT_1,\ldots, \TT_k)\in \cC_{\bf f}^{\bf m}(\HH_{\bf f,T})$, where $\TT_i:=(\TT_{i,1},\ldots, \TT_{i,n_i})$ and $\TT_{i,j}$ is a bounded operator acting on the
Hilbert space
\begin{equation*}
\begin{split}
\HH_{\bf f,T}&:=\left[\left((\otimes_{i=1}^k F^2(H_{n_i}))\otimes\cD\right)\oplus
\overline{\Delta_{\Theta_{\bf f,T}}((\otimes_{i=1}^k F^2(H_{n_i}))\otimes \cD_*)}\right]\\
& \qquad \qquad\ominus\left\{\Theta_{\bf f,T}\varphi\oplus
\Delta_{\Theta_{\bf f,T}}\varphi:\ \varphi\in (\otimes_{i=1}^k F^2(H_{n_i}))\otimes  \cD_*\right\}
\end{split}
\end{equation*}
 and is uniquely defined by the relation
$$
\left( P_{(\otimes_{i=1}^k F^2(H_{n_i}))\otimes\cD}|_{\HH_{\bf f,T}}\right) \TT_{i,j}^*x=
({\bf W}_{i,j}^*\otimes I_{ \cD})\left( P_{(\otimes_{i=1}^k F^2(H_{n_i}))\otimes \cD}|_{\HH_{\bf f,T}}\right)x
$$
for any $x\in \HH_{\bf f,T}$. Here,
   $ P_{(\otimes_{i=1}^k F^2(H_{n_i}))\otimes  \cD}$ is the orthogonal
projection of the Hilbert space
$$\cK_{\bf f,T}:=\left((\otimes_{i=1}^k F^2(H_{n_i}))\otimes\cD\right)\oplus
\overline{\Delta_{\Theta_{\bf f,T}}((\otimes_{i=1}^k F^2(H_{n_i}))\otimes \cD_*)}$$
 onto
the subspace $(\otimes_{i=1}^k F^2(H_{n_i}))\otimes \cD$.
 \end{theorem}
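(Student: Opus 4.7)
The plan is to construct an isometry $\Phi : \cH \to \HH_{\bf f,T}$ that intertwines each $T_{i,j}^*$ with $\TT_{i,j}^*$, and then to use complete non-coisometricity of ${\bf T}$ to upgrade $\Phi$ to a unitary. The first component of $\Phi$ is forced to be the Berezin kernel ${\bf K_{f,T}}$, since the defining relation $P \TT_{i,j}^* x = ({\bf W}_{i,j}^* \otimes I_\cD) Px$ for $\TT$ combined with the intertwining ${\bf K_{f,T}} T_{i,j}^* = ({\bf W}_{i,j}^* \otimes I_\cD){\bf K_{f,T}}$ of Theorem \ref{Berezin-prop} requires $P\Phi = {\bf K_{f,T}}$. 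The second component is an auxiliary operator $Y : \cH \to \overline{\Delta_{\Theta_{\bf f,T}}((\otimes_{i=1}^k F^2(H_{n_i}))\otimes\cD_*)}$ characterized by the two conditions
\begin{equation*}
\Delta_{\Theta_{\bf f,T}}\, Y h = -\Theta_{\bf f,T}^*\,{\bf K_{f,T}} h, \qquad \|Y h\|^2 = \|h\|^2 - \|{\bf K_{f,T}} h\|^2, \quad h \in \cH,
\end{equation*}
and I set $\Phi h := {\bf K_{f,T}} h \oplus Y h$. The consistency of these two conditions rests on the factorization ${\bf K_{f,T}}{\bf K_{f,T}^*} + \Theta_{\bf f,T}\Theta_{\bf f,T}^* = I$, which yields the key identity
\begin{equation*}
\|\Theta_{\bf f,T}^*\,{\bf K_{f,T}} h\|^2 = \langle {\bf K_{f,T}^*}{\bf K_{f,T}}(I-{\bf K_{f,T}^*}{\bf K_{f,T}})h, h\rangle \leq \langle (I-{\bf K_{f,T}^*}{\bf K_{f,T}})h, h\rangle,
\end{equation*}
enabling a Douglas/polar-decomposition lifting of $\Theta_{\bf f,T}^*{\bf K_{f,T}}$ through $\Delta_{\Theta_{\bf f,T}}$ and uniquely pinning down $Y$.

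With $\Phi$ in place, the identity $\|\Phi h\|^2 = \|h\|^2$ is immediate, while
\begin{equation*}
\langle \Phi h, \Theta_{\bf f,T}\varphi \oplus \Delta_{\Theta_{\bf f,T}}\varphi\rangle = \langle \Theta_{\bf f,T}^*{\bf K_{f,T}} h + \Delta_{\Theta_{\bf f,T}} Yh, \varphi\rangle = 0
\end{equation*}
places $\Phi(\cH) \subset \HH_{\bf f,T}$. For the intertwining, I first observe that $P|_{\HH_{\bf f,T}}$ is injective: any vector $0 \oplus y \in \HH_{\bf f,T}$ must satisfy $y \perp \mathrm{range}\,\Delta_{\Theta_{\bf f,T}}$ while also lying in $\overline{\mathrm{range}\,\Delta_{\Theta_{\bf f,T}}}$, hence $y=0$. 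Consequently the relation $P\TT_{i,j}^* = ({\bf W}_{i,j}^*\otimes I_\cD)P$ uniquely determines $\TT_{i,j}^*$, and checking $\Phi T_{i,j}^* = \TT_{i,j}^*\Phi$ reduces to $P\Phi T_{i,j}^* = ({\bf W}_{i,j}^*\otimes I_\cD)P\Phi$, which is precisely the Berezin-kernel intertwining of Theorem \ref{Berezin-prop} together with $P\Phi = {\bf K_{f,T}}$.

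For surjectivity, the intertwining makes $\Phi(\cH)$ invariant under each $\TT_{i,j}^*$, so $\cN := \HH_{\bf f,T} \ominus \Phi(\cH)$ is invariant under every $\TT_{i,j}$. Note that ${\bf T}$ and $\TT$ share $\Theta_{\bf f,T}$ as their characteristic function, and a direct computation using the definition of $\HH_{\bf f,T}$ shows $\TT$ is itself completely non-coisometric. A nonzero $\cN$ would give a sub-tuple $\TT|_\cN$ whose Berezin kernel, when composed with the inclusion $\cN \hookrightarrow \HH_{\bf f,T} \hookrightarrow \cK_{\bf f,T}$, would be forced to be trivial (because the first-component image misses $P\Phi(\cH) = {\bf K_{f,T}}\cH$ in a way that makes all defect operators vanish on $\cN$), contradicting the CNC property inherited from ${\bf T}$. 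Hence $\cN = \{0\}$ and $\Phi$ is unitary.

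The main obstacle is the construction of $Y$ in the first paragraph: one must verify simultaneously that the two defining conditions are consistent, that $Y$ takes values in $\overline{\Delta_{\Theta_{\bf f,T}}(\cdot)}$, and that $Y$ interacts with the multi-analyticity of $\Theta_{\bf f,T}$ strongly enough for the second-component part of the intertwining calculation to go through. This is the delicate Douglas/polar-decomposition lifting step, anchored in the fundamental factorization identity ${\bf K_{f,T}}{\bf K_{f,T}^*} + \Theta_{\bf f,T}\Theta_{\bf f,T}^* = I$.
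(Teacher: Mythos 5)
Your map is the right one: on the dense subspace $\text{\rm range}\,{\bf K_{f,T}^*}$ one checks that
$\Phi({\bf K_{f,T}^*}g)={\bf K_{f,T}}{\bf K_{f,T}^*}g\oplus\bigl(-\Delta_{\Theta_{\bf f,T}}\Theta_{\bf f,T}^*g\bigr)=P_{\HH_{\bf f,T}}(g\oplus 0)$,
which is exactly the unitary $\Gamma$ the paper constructs (from the adjoint side). But two of your steps are not actually carried out, and one of them fails as written. First, the construction of $Y$: the Douglas lemma applied to
$\Theta_{\bf f,T}^*{\bf K_{f,T}}{\bf K_{f,T}^*}\Theta_{\bf f,T}=\Theta_{\bf f,T}^*\Theta_{\bf f,T}(I-\Theta_{\bf f,T}^*\Theta_{\bf f,T})\le\Delta_{\Theta_{\bf f,T}}^2$
does produce the unique $Y$ with $\Delta_{\Theta_{\bf f,T}}Y=-\Theta_{\bf f,T}^*{\bf K_{f,T}}$ and range in $\overline{\text{\rm range}\,\Delta_{\Theta_{\bf f,T}}}$, but it yields only an operator-norm bound, not the pointwise identity $\|Yh\|^2=\|h\|^2-\|{\bf K_{f,T}}h\|^2$. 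That identity is true, but it must be verified, and the verification is precisely the paper's norm computation: on $h={\bf K_{f,T}^*}g$ one has $Y{\bf K_{f,T}^*}g=-\Delta_{\Theta_{\bf f,T}}\Theta_{\bf f,T}^*g$, and writing $P:={\bf K_{f,T}}{\bf K_{f,T}^*}$ the claim reduces to $\langle(I-P)g,g\rangle-\|(I-P)g\|^2=\langle Pg,g\rangle-\|Pg\|^2$, after which one extends by continuity using complete non-coisometricity. You flag this as ``the delicate step'' but leave it undone.

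Second, and more seriously, your surjectivity argument does not work. The claim that $\TT$ is completely non-coisometric ``by a direct computation using the definition of $\HH_{\bf f,T}$'' is circular: the only available route to that fact is the unitary equivalence with ${\bf T}$, which is what you are trying to prove. Moreover, the parenthetical assertion that a nonzero $\cN:=\HH_{\bf f,T}\ominus\Phi(\cH)$ would force ``all defect operators to vanish on $\cN$'' is not a deduction: for $x=x_1\oplus x_2\perp\Phi(\cH)$ one only gets $\langle x_1,{\bf K_{f,T}}h\rangle+\langle x_2,Yh\rangle=0$, which says nothing about $x_1$ being orthogonal to ${\bf K_{f,T}}\cH$, let alone about the defect operators of $\TT|_\cN$. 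The correct and much shorter argument is the paper's: $\cK_{\bf f,T}$ is the closed span of the vectors $g\oplus 0$ together with $\Theta_{\bf f,T}\varphi\oplus\Delta_{\Theta_{\bf f,T}}\varphi$, hence $\HH_{\bf f,T}=\overline{\{P_{\HH_{\bf f,T}}(g\oplus 0)\}}$; since $\Phi({\bf K_{f,T}^*}g)=P_{\HH_{\bf f,T}}(g\oplus 0)$ and complete non-coisometricity makes $\text{\rm range}\,{\bf K_{f,T}^*}$ dense in $\cH$, the isometry $\Phi$ has dense, hence full, range. With that replacement (and the $Y$-verification above) your proof closes up and coincides with the paper's.
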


\begin{proof}
   First, we show that there is a unique  unitary
operator $\Gamma:\cH\to \HH_{\bf f,T}$ such that
\begin{equation}\label{Ga}
\Gamma({\bf K_{f,T}^*} g)=P_{\HH_{\bf f,T}}(g\oplus 0), \qquad  g\in
(\otimes_{i=1}^k F^2(H_{n_i}))\otimes \cD,
\end{equation}
where $P_{\HH_{\bf f,T}}$ the orthogonal projection of
$\cK_{\bf f,T}$ onto the subspace $\HH_{\bf f,T}$.
 Indeed, note that the operator $\Phi: (\otimes_{i=1}^k F^2(H_{n_i}))\otimes \cD\to
\cK_{\bf f,T}$ defined by
$$
\Phi \varphi:=\Theta_{\bf f,T} \varphi\oplus \Delta_{\Theta_{\bf f,T}}
\varphi,\quad \varphi\in (\otimes_{i=1}^k F^2(H_{n_i}))\otimes \cD_*,
$$
 is an isometry and
\begin{equation}\label{fi}
\Phi^*(g\oplus 0)=\Theta_{\bf f,T}^*g, \qquad g\in (\otimes_{i=1}^k F^2(H_{n_i}))\otimes
\cD.
\end{equation}
This leads to
\begin{equation*}
\begin{split}
\|g\|^2&= \|P_{\HH_{\bf f,T}}(g\oplus 0)\|^2+\|\Phi \Phi^*(g\oplus 0)\|^2
=\|P_{\HH_{\bf f,T}}(g\oplus 0)\|^2+\|\Theta_{\bf f,T}^*g\|^2
\end{split}
\end{equation*}
for any $g\in (\otimes_{i=1}^k F^2(H_{n_i}))\otimes \cD$. Now, taking into account that
\begin{equation*}
 \|{\bf K_{f,T}^*}  g\|^2+ \|\Theta_{\bf f,T}^*g\|^2=\|g\|^2,
\quad g\in(\otimes_{i=1}^k F^2(H_{n_i}))\otimes \cD,
\end{equation*}
 we deduce
that
\begin{equation}\label{K*P}
\|{\bf K_{f,T}^*} g\|=\|P_{\HH_{\bf f,T}}(g\oplus 0)\|,  \quad g\in
(\otimes_{i=1}^k F^2(H_{n_i}))\otimes \cD.
\end{equation}
Since the $k$-tuple  ${\bf T}=(T_1,\ldots, T_k)$ is  completely non-coisometric,  the noncommutative Berezin kernel ${\bf K_{f,T}}$ is  a one-to-one
operator   and, consequently,  $\text{\rm
range}\, {\bf K_{f,T}^*}$ is dense in $\cH$. Now,  let
$x\in \HH_{\bf f,T}$ and assume that $\left<x, P_{\HH_{\bf f,T}}(g\oplus
0)\right>=0$ for any $g\in (\otimes_{i=1}^k F^2(H_{n_i}))\otimes \cD $. Using the definition of
$\HH_{\bf f,T}$ and the fact that $\cK_{\bf f,T}$ coincides with the
span of all vectors
$g\oplus 0$ for $ g\in (\otimes_{i=1}^k F^2(H_{n_i}))\otimes \cD$ and
$\Theta_{\bf f,T} \varphi\oplus \Delta_{\Theta_{\bf f,T}}
\varphi$ for  $ \varphi\in (\otimes_{i=1}^k F^2(H_{n_i}))\otimes \cD$,
we deduce that $x=0$. This shows that
$$
\HH_{\bf f,T}=\left\{P_{\HH_{\bf f,T}}(g\oplus 0):\ g\in (\otimes_{i=1}^k F^2(H_{n_i}))\otimes
\cD\right\}
$$
Using  relation \eqref{K*P},  we conclude that there is a
unique unitary operator $\Gamma$ satisfying relation \eqref{Ga}.
For each $i\in \{1,\ldots, k\}$ and $j\in\{1,\ldots, n_i\}$, let $\TT_{i,j}:\HH_{\bf f,T}\to \HH_{\bf f,T}$ be
  defined  by
$$\TT_{i,j}:=\Gamma
T_{i,j}\Gamma^*,\qquad  i\in \{1,\ldots,k \}, j\in \{1,\ldots,n_i\}.
$$
 In what follows,  we prove that
\begin{equation}\label{PN-intert}
 \left( P_{(\otimes_{i=1}^k F^2(H_{n_i}))\otimes \cD}|_{\HH_{\bf f,T}}\right) \TT_{i,j}^*x=
({\bf W}_{i,j}^*\otimes I_\cD)\left( P_{(\otimes_{i=1}^k F^2(H_{n_i}))\otimes
\cD}|_{\HH_{\bf f,T}}\right)x
\end{equation}
for any   $i\in \{1,\ldots, k\}$, $j\in\{1,\ldots, n_i\}$, and $x\in \HH_{\bf f,T}$.
Using  relations \eqref{Ga} and \eqref{fi},
and the fact that $\Phi$ is an isometry, we deduce that
\begin{equation*}
\begin{split}
P_{(\otimes_{i=1}^k F^2(H_{n_i}))\otimes \cD} \Gamma {\bf K_{f,T}^*} g&= P_{(\otimes_{i=1}^k F^2(H_{n_i}))\otimes
\cD} P_{\HH_{\bf f,T}}(g\oplus 0)
 =
g-P_{(\otimes_{i=1}^k F^2(H_{n_i}))\otimes \cD} \Phi \Phi^*(g\oplus 0)\\
&=g-\Theta_{\bf f,T} \Theta_{\bf f,T}^* g ={\bf K_{f,T}} {\bf K_{f,T}^*}g
\end{split}
\end{equation*}
for any $g\in (\otimes_{i=1}^k F^2(H_{n_i}))\otimes \cD$. Taking into account
that  the range of ${\bf K_{f,T}^*}$ is dense in $\cH$, we deduce that
\begin{equation}\label{PGK} P_{(\otimes_{i=1}^k F^2(H_{n_i}))\otimes \cD} \Gamma={\bf K_{f,T}}.
\end{equation}
Hence,  and using the fact that the  noncommutative Berezin kernel
${\bf K_{f,T}}$ is one-to-one, we  can see that
\begin{equation*}
 P_{(\otimes_{i=1}^k F^2(H_{n_i}))\otimes \cD} |_{\HH_{\bf f,T}}={\bf K_{f,T}}
\Gamma^*
\end{equation*}
is a one-to-one operator acting from $\HH_{\bf f,T}$ to $(\otimes_{i=1}^k F^2(H_{n_i}))\otimes
\cD$. Relation \eqref{PGK} and  Theorem \ref{Berezin-prop} imply
\begin{equation*}
\begin{split}
\left(P_{(\otimes_{i=1}^k F^2(H_{n_i}))\otimes \cD} |_{\HH_{\bf,T}}\right)
\TT_{i,j}^*\Gamma h&= \left(P_{(\otimes_{i=1}^k F^2(H_{n_i}))\otimes \cD}
|_{\HH_{\bf f,T}}\right) \Gamma T_{i,j}^* h ={\bf K_{f,T}} T_{i,j}^*h\\&=
\left( {\bf W}_{i,j}^*\otimes I_{\cD}\right) {\bf K_{f,T}}h= \left( {\bf W}_{i,j}^*\otimes I_{\cD}\right)
\left(P_{(\otimes_{i=1}^k F^2(H_{n_i}))\otimes \cD} |_{\HH_{\bf f,T}}\right)\Gamma h
\end{split}
\end{equation*}
for any $i\in \{1,\ldots, k\}$, $j\in\{1,\ldots, n_i\}$, and $h\in \cH$. Now, we can  deduce   relation \eqref{PN-intert}.
Note that, since the operator $P_{(\otimes_{i=1}^k F^2(H_{n_i}))\otimes
\cD}|_{\HH_{\bf f,T}}$ is one-to-one, the
relation \eqref{PN-intert} uniquely determines each operator
$\TT_{i,j}^*$ for all $i\in \{1,\ldots, k\}$ and $j\in \{1,\ldots, n_i\}$.
This completes the proof.
\end{proof}

In what follows, we show  that the  characteristic function
$\Theta_{\bf f,T}$  is a complete unitary invariant for the completely non-coisometric
part of the noncommutative domain $\cC_{\bf f}^{\bf m}$.

\begin{theorem}\label{u-inv}
Let   ${\bf T}:=(T_1,\ldots,
T_k)\in \cC_{\bf f}^{\bf m}(\cH)$ and ${\bf T}':=(T_1',\ldots, T_k')\in
\cC_{\bf f}^{\bf m}(\cH')$ be two completely non-coisometric $k$-tuples. Then ${\bf T}$ and
${\bf T}'$ are unitarily equivalent if and only if their
characteristic functions $\Theta_{\bf f,T}$ and $\Theta_{\bf f,T'}$
coincide.
\end{theorem}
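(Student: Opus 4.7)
The forward implication is the easier half. Suppose $U:\cH\to\cH'$ is a unitary with $U T_{i,j}=T_{i,j}'U$ for all $i,j$. Then $U\Phi_{f_i,T_i}(X)U^{*}=\Phi_{f_i,T_i'}(UXU^{*})$, so by induction on $(p_1,\ldots,p_k)$ we obtain $U\,{\bf \Delta_{f,T}^m}(I)\,U^{*}={\bf \Delta_{f,T'}^m}(I)$. Consequently, the partial isometry $\tau:\overline{{\bf \Delta_{f,T}^m}(I)(\cH)}\to\overline{{\bf \Delta_{f,T'}^m}(I)(\cH')}$ defined by $\tau\,{\bf \Delta_{f,T}^m}(I)^{1/2}h={\bf \Delta_{f,T'}^m}(I)^{1/2}Uh$ is a unitary, and a direct term-by-term check using the definition of the Berezin kernel yields $(I\otimes\tau){\bf K_{f,T}}={\bf K_{f,T'}}U$. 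Hence $I-{\bf K_{f,T}}{\bf K_{f,T}^*}$ and $I-{\bf K_{f,T'}}{\bf K_{f,T'}^*}$ are unitarily equivalent via $I\otimes\tau$; this induces a unitary $\sigma:\cM_{\bf T}\to\cM_{\bf T'}$ that intertwines the auxiliary tuples ${\bf M_T}$ and ${\bf M_{T'}}$ (by construction they are the pieces dictated by the left side of the two defect identities). Repeating the Berezin-kernel argument for ${\bf M_T},{\bf M_{T'}}$ produces a unitary $\tau_*:\overline{{\bf \Delta_{f,M_T}^m}(I)(\cM_{\bf T})}\to\overline{{\bf \Delta_{f,M_{T'}}^m}(I)(\cM_{\bf T'})}$ with $(I\otimes\tau_*){\bf K_{f,M_T}}={\bf K_{f,M_{T'}}}\sigma$. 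Combining, the definition $\Theta_{\bf f,T}=(I-{\bf K_{f,T}}{\bf K_{f,T}^*})^{1/2}{\bf K_{f,M_T}^*}$ immediately gives $\Theta_{\bf f,T'}(I\otimes\tau_*)=(I\otimes\tau)\Theta_{\bf f,T}$, which is precisely the statement that $\Theta_{\bf f,T}$ and $\Theta_{\bf f,T'}$ coincide.

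For the converse, the plan is to use the functional model of Theorem \ref{model}. Assume there are unitaries $\tau_1:\cD_*\to\cD_*'$ and $\tau_2:\cD\to\cD'$ with $\Theta_{\bf f,T'}(I\otimes\tau_1)=(I\otimes\tau_2)\Theta_{\bf f,T}$. A short computation then yields $(I\otimes\tau_1)\Delta_{\Theta_{\bf f,T}}=\Delta_{\Theta_{\bf f,T'}}(I\otimes\tau_1)$, so that $I\otimes\tau_1$ restricts to a unitary between the closed ranges $\overline{\Delta_{\Theta_{\bf f,T}}((\otimes_i F^2(H_{n_i}))\otimes\cD_*)}$ and $\overline{\Delta_{\Theta_{\bf f,T'}}((\otimes_i F^2(H_{n_i}))\otimes\cD_*')}$. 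The operator
\[
V:=(I\otimes\tau_2)\oplus (I\otimes\tau_1)|_{\overline{\Delta_{\Theta_{\bf f,T}}(\cdot)}}:\cK_{\bf f,T}\to\cK_{\bf f,T'}
\]
is then a unitary, and by the intertwining above it sends the graph subspace $\{\Theta_{\bf f,T}\varphi\oplus\Delta_{\Theta_{\bf f,T}}\varphi:\varphi\in(\otimes_i F^2(H_{n_i}))\otimes\cD_*\}$ onto its analogue for $\Theta_{\bf f,T'}$. Consequently $V$ restricts to a unitary $V_0:\HH_{\bf f,T}\to\HH_{\bf f,T'}$.

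To conclude, I would verify that $V_0$ intertwines the model tuples $\TT$ and $\TT'$. Because $V$ commutes with ${\bf W}_{i,j}\otimes I$ on the first summand (it is just $I\otimes\tau_2$ there), it intertwines the operators $P_{(\otimes_i F^2(H_{n_i}))\otimes\cD}({\bf W}_{i,j}^*\otimes I)$ and $P_{(\otimes_i F^2(H_{n_i}))\otimes\cD'}({\bf W}_{i,j}^*\otimes I)$; combined with the defining relation
\[
(P_{(\otimes_i F^2(H_{n_i}))\otimes\cD}|_{\HH_{\bf f,T}})\TT_{i,j}^{*}=({\bf W}_{i,j}^{*}\otimes I)(P_{(\otimes_i F^2(H_{n_i}))\otimes\cD}|_{\HH_{\bf f,T}})
\]
and the fact that this compression is injective (since ${\bf T}$, hence $\TT$, is completely non-coisometric, by the argument in the proof of Theorem \ref{model}), one reads off $V_0\TT_{i,j}^{*}=(\TT_{i,j}')^{*}V_0$. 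Pulling back through the unitaries $\Gamma,\Gamma'$ furnished by Theorem \ref{model} produces a unitary $U:\cH\to\cH'$ with $UT_{i,j}=T_{i,j}'U$.

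The main obstacle, and the step that requires the most care, is showing that the candidate unitary $V$ maps $\HH_{\bf f,T}$ exactly onto $\HH_{\bf f,T'}$. This requires the coincidence identity in both the form given and its version for $\Delta_{\Theta_{\bf f,T}}$, and it is essential that the $\tau_j$ be honest unitaries (not merely partial isometries), which is precisely what the definition of ``coincide'' in Lemma \ref{fifi*} guarantees on the supports. A secondary subtlety is ensuring complete non-coisometry is transported through the model so that the compression $P_{(\otimes_i F^2(H_{n_i}))\otimes\cD}|_{\HH_{\bf f,T}}$ remains one-to-one, which is what makes the intertwining identity for $\TT_{i,j}^{*}$ uniquely determine the transported tuple.
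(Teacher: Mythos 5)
Your proposal is correct and follows essentially the same route as the paper: the forward direction transports the unitary through the Berezin kernels to the defect spaces $\cD,\cD_*$ and the auxiliary tuple ${\bf M_T}$, and the converse builds the block unitary $(I\otimes\tau)\oplus(I\otimes\tau_*)$ on $\cK_{\bf f,T}$, checks it carries the graph subspace onto its analogue, and reads off the intertwining of the model tuples from the injectivity of the compression $P_{(\otimes_i F^2(H_{n_i}))\otimes\cD}|_{\HH_{\bf f,T}}$. No gaps.
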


\begin{proof}
Assume that  the $k$-tuples ${\bf T}$ and ${\bf T}'$ are unitarily
equivalent and let $U:\cH\to \cH'$ be a unitary operator such that
$T_{i,j}=U^*T_{i,j}'U$ for any $i\in\{1,\ldots,k\}$ and $j\in\{1,\ldots, n_i\}$.
It is easy to see  that $U{\bf \Delta_{f,T}^m}(I)={\bf \Delta_{f,T'}^m}(I) U$ and, consequently,
$U\cD=\cD'$, where
$$
\cD:=\overline{{\bf \Delta_{f,T}^m}(I)(\cH)},\quad
 \quad \cD':=\overline{{\bf \Delta_{f,T'}^m}(I)(\cH')}.
$$
 Using the definition of the noncommutative Berezin kernel
  associated with ${\bf D_f^m}$, one can easily check that $(I_{\otimes_{i=1}^k F^2(H_{n_i})}\otimes U){\bf K_{f,T}}={\bf K_{f,T'}} U$. This implies
\begin{equation}
\label{UKKU}
(I_{\otimes_{i=1}^k F^2(H_{n_i})}\otimes U)(I- {\bf K_{f,T}}{\bf K_{f,T}^*}) (I_{\otimes_{i=1}^k F^2(H_{n_i})}\otimes U)=I- {\bf K_{f,T'}}{\bf K_{f,T'}^*}
\end{equation}
 and $(I_{\otimes_{i=1}^k F^2(H_{n_i})}\otimes U)\cM_{\bf T}= \cM_{\bf T'}$,
 where
 $
\cM_{\bf T}:= \overline{{\rm range}\,(I -{\bf K_{f,T}}{\bf K_{f,T}^*}) }
$
and $\cM_{T'}$ is defined similarly. Recall that  ${\bf M_T}:=(M_1,\ldots, M_k)$ is
 the $k$-tuple with $M_i:=(M_{i,1},\ldots, M_{i,n_i})$ and
  $M_{i,j}\in B(\cM_{\bf T})$, and it  is  given by $M_{i,j}:=A_{i,j}^*$, where $A_{i,j}\in B(\cM_{\bf T})$ is uniquely defined by
$$
A_{i,j}\left[(I -{\bf K_{f,T}}{\bf K_{f,T}^*})^{1/2}x\right]:=(I -{\bf K_{f,T}}{\bf K_{f,T}^*})^{1/2}({\bf W}_{i,j}\otimes I)x
$$
for any $x\in (\otimes_{i=1}^k F^2(H_{n_i}))\otimes \overline{{\bf \Delta_{f,T}^m}(I)(\cH)}$. Similarly, we define the $k$-tuple ${\bf M_{T'}}$ and the operators $A_{i,j}'\in B(\cM_{\bf T'})$.
Note that
\begin{equation*}
\begin{split}
A_{i,j} (I -{\bf K_{f,T}}{\bf K_{f,T}^*})^{1/2}x
&=(I_{\otimes_{i=1}^k F^2(H_{n_i})}\otimes U^*) A_{i,j}'(I -{\bf K_{f,T'}}{\bf K_{f,T'}^*})^{1/2}(I_{\otimes_{i=1}^k F^2(H_{n_i})}\otimes U^*)x\\
&=(I_{\otimes_{i=1}^k F^2(H_{n_i})}\otimes U^*) A_{i,j}' (I_{\otimes_{i=1}^k F^2(H_{n_i})}\otimes U)(I -{\bf K_{f,T}}{\bf K_{f,T}^*})^{1/2}x
\end{split}
\end{equation*}
 for any $x\in (\otimes_{i=1}^k F^2(H_{n_i}))\otimes \overline{{\bf \Delta_{f,T}^m}(I)(\cH)}$.
Hence, we deduce that
 $$
 A_{i,j}=(I_{\otimes_{i=1}^k F^2(H_{n_i})}\otimes U^*) A_{i,j}' (I_{\otimes_{i=1}^k F^2(H_{n_i})}\otimes U).
$$
 Now, we can see that $(I_{\otimes_{i=1}^k F^2(H_{n_i})}\otimes U)\cD_*=\cD_*'$, where
 $\cD_*:=\overline{{\bf \Delta_{f,M_T}^m}(I)(\cM_T)}$ and $\cD_*'$ is defined similarly.
 We introduce the
unitary operators $\tau$ and $\tau'$ by setting
$$\tau:=U|_\cD:\cD\to \cD' \ \text{ and }\
\tau_*:=(I_{\otimes_{i=1}^k F^2(H_{n_i})}\otimes U)|_{\cD_*}:\cD_*\to {\cD_*'}.
$$
Using the definition of the  characteristic function, it
is easy to show that
$$
(I_{\otimes_{i=1}^k F^2(H_{n_i})}\otimes
\tau)\Theta_{\bf f,T}=\Theta_{\bf f,T'}(I_{\otimes_{i=1}^k F^2(H_{n_i})}\otimes \tau_*).
$$

To prove the converse, assume that the  characteristic functions  of
${\bf T}$ and ${\bf T'}$ coincide.  Then  there exist unitary operators
$\tau:\cD\to \cD'$ and $\tau_*:\cD_*\to
\cD_*'$ such that
\begin{equation*}
(I_{\otimes_{i=1}^k F^2(H_{n_i})}\otimes \tau)\Theta_{\bf f,T}
=\Theta_{\bf f,T'}(I_{\otimes_{i=1}^k F^2(H_{n_i})}\otimes
\tau_*).
\end{equation*}
It is clear  that this relation   implies
$$
\Delta_{\Theta_{\bf f,T}}=\left(I_{\otimes_{i=1}^k F^2(H_{n_i})}\otimes \tau_*\right)^*
\Delta_{\Theta_{\bf f,T'}}\left(I_{(\otimes_{i=1}^k F^2(H_{n_i})}\otimes \tau_*\right)
$$
and
$$
\left(I_{\otimes_{i=1}^k F^2(H_{n_i})}\otimes
\tau_*\right)\overline{\Delta_{\Theta_{\bf f,T}}((\otimes_{i=1}^k F^2(H_{n_i})\otimes
\cD_*)}= \overline{\Delta_{\Theta_{\bf f,T'}}(\otimes_{i=1}^k F^2(H_{n_i})\otimes
\cD_*')}.
$$
 Define now the unitary operator $U:\cK_{\bf f,T}\to \cK_{\bf f,T'}$
by setting
$$U:=(I_{\otimes_{i=1}^k F^2(H_{n_i})}\otimes \tau)\oplus (I_{\otimes_{i=1}^k F^2(H_{n_i})}\otimes \tau_*).
$$
Note that the operator $\Phi:(\otimes_{i=1}^k F^2(H_{n_i})\otimes
\cD_*\to \cK_{\bf f,T}$, defined  by $$ \Phi\varphi:=
\Theta_{\bf f,T}\varphi \oplus \Delta_{\Theta_{\bf f,T}}\varphi,\quad
\varphi\in (\otimes_{i=1}^k F^2(H_{n_i})\otimes \cD_*, $$
 and the corresponding $\Phi'$ satisfy the following relations:
\begin{equation}
\label{Uni1} U \Phi\left(I_{\otimes_{i=1}^k F^2(H_{n_i})}\otimes \tau_*\right)^*=\Phi'
\end{equation}
and
\begin{equation}
\label{Uni2} \left(I_{\otimes_{i=1}^k F^2(H_{n_i})}\otimes \tau\right)
 P_{\otimes_{i=1}^k F^2(H_{n_i})\otimes
\cD}^{\cK_{\bf f,T}} U^*=P_{\otimes_{i=1}^k F^2(H_{n_i})\otimes
\cD'}^{\cK_{\bf f,T'}},
\end{equation}
where $P_{(\otimes_{i=1}^k F^2(H_{n_i})\otimes \cD}^{\cK_{\bf f,T}}$ is the orthogonal
projection of $\cK_{\bf f,T}$ onto $(\otimes_{i=1}^k F^2(H_{n_i})\otimes \cD$. Note
also that relation \eqref{Uni1} implies
\begin{equation*}
\begin{split}
U\HH_{\bf f,T}&=U\cK_{\bf f,T}\ominus U\Phi((\otimes_{i=1}^k F^2(H_{n_i})\otimes \cD_*)\\
&=\cK_{\bf f,T'}\ominus \Phi'(I_{\otimes_{i=1}^k F^2(H_{n_i})}\otimes \tau_*)((\otimes_{i=1}^k F^2(H_{n_i})\otimes
 \cD_*)\\
&=\cK_{\bf f,T'}\ominus \Phi' ((\otimes_{i=1}^k F^2(H_{n_i})\otimes \cD_*').
\end{split}
\end{equation*}
This shows that  the operator $U|_{\HH_{\bf f,T}}:\HH_{\bf f,T}\to
\HH_{\bf f,T'}$ is unitary.
Note also that
\begin{equation}
\label{intertw} ({\bf W}_{i,j}^*\otimes I_{\cD'})(I_{(\otimes_{i=1}^k F^2(H_{n_i})}\otimes
\tau)= (I_{(\otimes_{i=1}^k F^2(H_{n_i})}\otimes \tau)({\bf W}_{i,j}^*\otimes I_{\cD}).
\end{equation}
Let $\TT:=(\TT_1,\ldots \TT_n)$ and $\TT':=(\TT_1',\ldots \TT_n')$
be the model operators provided by Theorem \ref{model}  for ${\bf T}$ and
${\bf T}'$, respectively. Using the relation \eqref{PN-intert}  for ${\bf T}'$
and ${\bf T}$, as well as \eqref{Uni2} and \eqref{intertw}, we have
\begin{equation*}
\begin{split}
P_{(\otimes_{i=1}^k F^2(H_{n_i})\otimes \cD'}^{\KK_{\bf f,T'}}{\TT_{i,j}'}^*Ux
&=  ({\bf W}_{i,j}^*\otimes I_{\cD'}) P_{(\otimes_{i=1}^k F^2(H_{n_i})\otimes
\cD}^{\cK_{\bf f,T}}Ux\\
&=({\bf W}_{i,j}^*\otimes I_{\cD'})(I_{\otimes_{i=1}^k F^2(H_{n_i})}\otimes \tau)
P_{(\otimes_{i=1}^k F^2(H_{n_i})\otimes \cD}^{\cK_{\bf f,T}}x\\
&=(I_{\otimes_{i=1}^k F^2(H_{n_i})}\otimes \tau)({\bf W}_{i,j}^*\otimes I_{\cD})
P_{(\otimes_{i=1}^k F^2(H_{n_i})\otimes \cD}^{\cK_{\bf f,T}}x\\
&=(I_{\otimes_{i=1}^k F^2(H_{n_i})}\otimes \tau) P_{(\otimes_{i=1}^k F^2(H_{n_i})\otimes \cD}^{\cK_{\bf f,T}}
\TT_i^*x\\
&= P_{(\otimes_{i=1}^k F^2(H_{n_i})\otimes \cD'}^{\cK_{\bf f,T'}}U \TT_{i,j}^*x
\end{split}
\end{equation*}
for any $i=\{1,\ldots, k\}$, $j\in\{1,\ldots, n_i\}$, and $x\in \HH_{\bf f,T}$. Since
$P_{(\otimes_{i=1}^k F^2(H_{n_i})\otimes \cD'}^{\cK_{\bf f,T'}}|_{\HH_{\bf f,T'}}$ is an
one-to-one operator (see Theorem \ref{model}), we obtain
$
\left(U|_{\HH_{\bf f,T}}\right)
\TT_{i,j}^*=({\TT_{i,j}'})^*\left(U|_{\HH_{\bf f,T}}\right).
$
 Due to  Theorem \ref{model}, we conclude that the $k$-tuples
  ${\bf T}$ and ${\bf T}'$ are
 unitarily equivalent. The proof is complete.
\end{proof}

\begin{proposition} If  ${\bf T}=({ T}_1,\ldots, { T}_k)\in {\bf D_f^m}(\cH)$, then the following statements hold.
\begin{enumerate}
\item[(i)]
${\bf T}$  is unitarily equivalent to
$({\bf W}_1\otimes I_\cK,\ldots, {\bf W}_k\otimes I_\cK)$
for some Hilbert space $\cK$ if and only if
${\bf T}\in \cC_{\bf f}^{\bf m}(\cH)$ is completely non-coisometric  and
the characteristic function $\Theta_{\bf f,T}=0$.
\item[(ii)] If ${\bf T}\in \cC_{\bf f}^{\bf m}(\cH)$, then $\Theta_{\bf f,T}$  has dense range if and only if there is no nonzero vector $h\in \cH$ such that
    $$
    \lim_{q=(q_1,\ldots, q_k)\in \NN^k} \left< (id-\Phi_{f_1,T_1}^{q_1})\cdots (id-\Phi_{f_k,T_k}^{q_k})(I_\cH)h, h\right>=\|h\|.
    $$
\end{enumerate}
\end{proposition}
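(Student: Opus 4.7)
The plan is to derive both statements from the defining identity
$$
{\bf K_{f,T}}{\bf K_{f,T}^*} + \Theta_{\bf f,T}\Theta_{\bf f,T}^* = I
$$
together with the intertwining property of Theorem \ref{Berezin-prop}(iii) and the observation (from Theorem \ref{Berezin-prop}(i)) that ${\bf T}$ is completely non-coisometric precisely when ${\bf K_{f,T}}$ is injective.

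For the forward direction of (i), since being completely non-coisometric and having zero characteristic function are unitary invariants, it suffices to verify them for ${\bf T}={\bf W}\otimes I_\cK$. Pureness (hence the completely non-coisometric property) follows from $\Phi_{f_i,{\bf W}_i\otimes I_\cK}^p(I)=\Phi_{f_i,{\bf W}_i}^p(I)\otimes I_\cK$, which converges strongly to zero by Lemma \ref{univ-model} and Proposition \ref{pure2}. The explicit computation in the example paragraph preceding the theorem (the case $\cM=\{0\}$) already shows that ${\bf K_{f,W\otimes I_\cK}}$ equals the identity operator on $(\otimes_{i=1}^k F^2(H_{n_i}))\otimes\cK$, so ${\bf K}{\bf K}^*=I$ and hence $\Theta_{\bf f,T}\Theta_{\bf f,T}^*=0$. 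For the reverse direction, assume ${\bf T}\in\cC_{\bf f}^{\bf m}(\cH)$ is completely non-coisometric with $\Theta_{\bf f,T}=0$. The identity above forces ${\bf K_{f,T}}{\bf K_{f,T}^*}=I$, so ${\bf K_{f,T}}$ is a coisometry; combined with its injectivity (from the completely non-coisometric assumption) it must be unitary, since ${\bf K_{f,T}^*}{\bf K_{f,T}}$ is a projection whose range is the closure of the range of ${\bf K_{f,T}^*}$, which equals $\cH$ exactly when ${\bf K_{f,T}}$ is injective. Taking adjoints in the intertwining relation then yields $T_{i,j}={\bf K_{f,T}^*}({\bf W}_{i,j}\otimes I_\cD){\bf K_{f,T}}$ with $\cD=\overline{{\bf\Delta_{f,T}^m}(I)(\cH)}$, so $U:={\bf K_{f,T}}$ implements the required unitary equivalence of ${\bf T}$ with ${\bf W}\otimes I_\cD$.

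For (ii), I would argue as follows. The range of $\Theta_{\bf f,T}$ is dense iff $\ker\Theta_{\bf f,T}^*=\{0\}$ iff $\Theta_{\bf f,T}\Theta_{\bf f,T}^*=I-{\bf K_{f,T}}{\bf K_{f,T}^*}$ is injective (using the identity $\ker A^*=\ker AA^*$ for bounded $A$). Since this operator is a positive contraction on $(\otimes_{i=1}^k F^2(H_{n_i}))\otimes\cD$, its kernel is exactly $\{g:\|{\bf K_{f,T}^*}g\|=\|g\|\}$. I would next set up the bijection between nonzero vectors $g$ with $\|{\bf K_{f,T}^*}g\|=\|g\|$ and nonzero $h\in\cH$ with $\|{\bf K_{f,T}}h\|=\|h\|$ via $h\mapsto {\bf K_{f,T}}h$ with inverse $g\mapsto {\bf K_{f,T}^*}g$; this reduces to the observation that $\|{\bf K_{f,T}}h\|=\|h\|$ forces ${\bf K_{f,T}^*K_{f,T}}h=h$ (and symmetrically on the other side). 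Finally, by Theorem \ref{Berezin-prop}(i), $\|{\bf K_{f,T}}h\|^2=\langle {\bf K_{f,T}^*K_{f,T}}h,h\rangle$ coincides with the increasing limit $\lim_{\bf q}\langle(id-\Phi_{f_1,T_1}^{q_1})\cdots(id-\Phi_{f_k,T_k}^{q_k})(I)h,h\rangle$, so $\|{\bf K_{f,T}}h\|=\|h\|$ is exactly the condition that this limit equals $\|h\|^2$, which I take to be the intended reading of the displayed condition in the statement.

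I do not expect serious obstacles. The essentials are the fundamental identity, the intertwining property of the Berezin kernel, and the identity ${\bf K_{f,W\otimes I_\cK}}=I$ already recorded in the example. The only technical step worth singling out is the bijection in part (ii), which is a routine Hilbert-space argument.
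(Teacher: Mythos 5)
Your proof is correct and follows essentially the same route as the paper: both directions of (i) rest on the identity ${\bf K_{f,T}}{\bf K_{f,T}^*}+\Theta_{\bf f,T}\Theta_{\bf f,T}^*=I$ together with ${\bf K}_{{\bf f},{\bf W}\otimes I_\cK}=I$, and (ii) reduces to $\ker(I-{\bf K_{f,T}^*}{\bf K_{f,T}})=\{0\}\Leftrightarrow\ker(I-{\bf K_{f,T}}{\bf K_{f,T}^*})=\{0\}$ via Theorem \ref{Berezin-prop}. You are somewhat more explicit than the paper in two spots (arguing directly that ${\bf K_{f,T}}$ is unitary instead of citing Theorem \ref{model}, and supplying the bijection behind the kernel equivalence), and your reading of the displayed condition as $\|h\|^2$ is the intended one.
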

\begin{proof}
Note that if ${\bf T} =({\bf W}_1\otimes I_\cK,\ldots, {\bf W}_k\otimes I_\cK)$ for some Hilbert space $\cK$, then ${\bf K_{f,T}}=I$. Since
$
{\bf K_{f,T}}{\bf K_{f,T}^*}+ \Theta_{\bf f,T}\Theta_{\bf f, T}^*=I,
$
we deduce that $\Theta_{\bf f,T}=0$. Conversely,
 if ${\bf T}\in \cC_{\bf f}^{\bf m}(\cH)$ is completely non-coisometric  and
the characteristic function $\Theta_{\bf f,T}=0$, then ${\bf K_{f,T}}{\bf K_{f,T}^*}=I$. Using Theorem \ref{model}, the result follows.

Due to Theorem \ref{Berezin-prop}, the condition in item (ii) is equivalent to
$\ker (I-{\bf K_{f,T}^*}{\bf K_{f,T}})=\{0\}$, which is equivalent to
 $\ker (I-{\bf K_{f,T}}{\bf K_{f,T}^*})=\{0\}$ and, therefore, to
  $\ker \Theta_{\bf f,T}\Theta_{\bf f, T}^*=\{0\}$. Hence, the result follows. The
  proof is complete.
\end{proof}

\section{Dilation theory on noncommutative polydomains}

We develop a dilation theory on the noncommutative polydomain  ${\bf D_q^m}(\cH)$ and  obtain  Wold type
decompositions for non-degenerate $*$-representations of the
$C^*$-algebra $C^*({\bf W}_{i,j})$.

We recall that $\cP({\bf W})$  is the set of all polynomials $p({\bf W}_{i,j})$  in  the operators ${\bf W}_{i,j}$, $i\in \{1,\ldots, k\}$, $j\in \{1,\ldots, n_i\}$,  and the identity.

\begin{lemma}\label{compact}  Let ${\bf q}=(q_1,\ldots, q_k)$ be a $k$-tuple of   positive regular  noncommutative polynomials
 and let ${\bf W}=({\bf W}_1,\ldots, {\bf W}_k)$ be the universal model
associated with the noncommutative polydomain ${\bf D_q^m}$. Then all
the compact operators in $B(\otimes_{i=1}^k F^2(H_{n_i}))$ are contained in the operator
space
$$\cS:=\overline{\text{\rm  span}} \{ p({\bf W}_{i,j})q({\bf W}_{i,j})^*:\
p({\bf W}_{i,j}),q({\bf W}_{i,j}) \in  \cP({\bf W})\},
$$
where the closure is in the operator norm.
\end{lemma}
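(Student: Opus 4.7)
The plan is to exhibit a rank-one projection inside $\cS$, then pre- and post-multiply it by the noncommutative monomials ${\bf W}_{(\alpha)}$ and ${\bf W}_{(\beta)}^*$ to produce all rank-one operators relative to the canonical orthonormal basis of $\otimes_{i=1}^k F^2(H_{n_i})$, and finally invoke norm-density of finite rank operators.

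First, I would observe that each $q_i$ is a polynomial, so $\Phi_{q_i, {\bf W}_i}(I) = \sum_{|\alpha|\geq 1} a_{i,\alpha}\, {\bf W}_{i,\alpha}{\bf W}_{i,\alpha}^*$ is a finite sum. Consequently
$$
(id-\Phi_{q_1,{\bf W}_1})^{m_1}\cdots (id-\Phi_{q_k,{\bf W}_k})^{m_k}(I)
$$
is a finite linear combination of terms of the form $p({\bf W})r({\bf W})^*$ (each $\Phi_{q_i,{\bf W}_i}^{s}(I)$ being a finite sum of ${\bf W}_{i,\gamma}{\bf W}_{i,\gamma}^*$, and the tuples ${\bf W}_1,\ldots,{\bf W}_k$ mutually commute). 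By Lemma~\ref{univ-model}(i), this operator equals the rank-one projection ${\bf P}_\CC$ onto $\CC 1 \subset \otimes_{i=1}^k F^2(H_{n_i})$. Hence ${\bf P}_\CC \in \cS$.

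Next I would note that $\cS$ is stable under left multiplication by ${\bf W}_{(\alpha)}={\bf W}_{1,\alpha_1}\cdots {\bf W}_{k,\alpha_k}$ and right multiplication by ${\bf W}_{(\beta)}^*$: indeed, for a typical element $p({\bf W})r({\bf W})^*\in \cS$ one has ${\bf W}_{(\alpha)} p({\bf W})r({\bf W})^* {\bf W}_{(\beta)}^* = ({\bf W}_{(\alpha)}p({\bf W}))({\bf W}_{(\beta)}r({\bf W}))^* \in \cS$, and this extends to the norm closure. Therefore ${\bf W}_{(\alpha)}{\bf P}_\CC{\bf W}_{(\beta)}^*\in \cS$ for any $\alpha_i,\beta_i\in \FF_{n_i}^+$. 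A direct computation using the action \eqref{Wij} and the definition of ${\bf P}_\CC$ gives
$$
{\bf W}_{(\alpha)}{\bf P}_\CC{\bf W}_{(\beta)}^* \;=\; c_{\alpha,\beta}\,\bigl(e^1_{\alpha_1}\otimes\cdots\otimes e^k_{\alpha_k}\bigr)\bigl(e^1_{\beta_1}\otimes\cdots\otimes e^k_{\beta_k}\bigr)^*,
$$
where $c_{\alpha,\beta}=\bigl(b^{(m_1)}_{1,\alpha_1}\cdots b^{(m_k)}_{k,\alpha_k}\,b^{(m_1)}_{1,\beta_1}\cdots b^{(m_k)}_{k,\beta_k}\bigr)^{-1/2}\ne 0$. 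The vectors $\{e^1_{\alpha_1}\otimes\cdots\otimes e^k_{\alpha_k}\}$ form an orthonormal basis, so $\cS$ contains every rank-one operator built from this basis, and by linearity every finite-rank operator. Since finite-rank operators are norm-dense in the compact operators, closing in the operator norm yields all compact operators, completing the proof.

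There is no serious obstacle here; the only point needing care is verifying that the rank-one formula above has a nonzero coefficient (which follows from $b^{(m_i)}_{i,\gamma}>0$ for all $\gamma$), and that the operator space $\cS$ is genuinely closed under the two-sided polynomial action used in the second step — both of which are immediate from the definitions.
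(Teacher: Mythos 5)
Your proof is correct and follows essentially the same route as the paper's: both arguments realize the rank-one projection ${\bf P}_\CC$ as an element of $\cS$ via Lemma \ref{univ-model} (using that the $q_i$ are polynomials, so the defect operator is a finite sum of terms $p({\bf W})r({\bf W})^*$), and then sandwich it between monomials/polynomials in the ${\bf W}_{i,j}$ and their adjoints to produce a norm-dense family of rank-one operators. The only cosmetic difference is that you conjugate by monomials to obtain the matrix units $e_{(\alpha)}e_{(\beta)}^*$ and then appeal to norm-density of finite-rank operators, whereas the paper conjugates by general polynomials to get rank-one operators $(\chi({\bf W})(1))(g({\bf W})(1))^*$ directly; these are interchangeable.
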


\begin{proof} According to Lemma \ref{univ-model}, we have
\begin{equation}\label{proj}
  (I-\Phi_{q_1,{\bf W}_1})^{m_1}\cdots (I-\Phi_{q_k,{\bf W}_k})^{m_k}(I)={\bf P}_\CC,
\end{equation}
where ${\bf P}_\CC$ is the
 orthogonal projection from $\otimes_{i=1}^k F^2(H_{n_i})$ onto $\CC 1\subset \otimes_{i=1}^k F^2(H_{n_i})$.
Fix
$$g({\bf W}_{i,j}):= \sum\limits_{{\beta_1\in \FF_{n_1}^+,\ldots, \beta_k\in \FF_{n_k}^+}\atop{|\beta_1|+\cdots +|\beta_k|\leq n}} d_{\beta_1,\ldots, \beta_k}
{\bf W}_{1,\beta_1}\cdots {\bf W}_{k,\beta_k} \quad
\text{ and } \quad \xi:=\sum\limits_{{\beta_1\in \FF_{n_1}^+,\ldots, \beta_k\in \FF_{n_k}^+}} c_{\beta_1,\ldots, \beta_k}e^1_{\beta_1}\otimes\cdots \otimes  e^k_{\beta_k}
$$
and note  that
$
{\bf P}_\CC g({\bf W}_{i,j})^*\xi= \left< \xi,g({\bf W}_{i,j})(1)\right>.
$
Consequently, we have
\begin{equation}\label{rankone}
\chi({\bf W}_{i,j}){\bf P}_\CC g({\bf W}_{i,j})^*\xi= \left<
\xi,g({\bf W}_{i,j})(1)\right>\chi({\bf W}_{i,j})(1)
\end{equation}
for any polynomial $\chi({\bf W}_{i,j})$.  Using relation \eqref{proj}, we deduce that the
operator $\chi({\bf W}_{i,j}){\bf P}_\CC g({\bf W}_{i,j})^*$
 has rank one  and  it is   in the
operator space $\cS$. On the other hand, due to the fact that
the set of all vectors  of the form
 $  \sum\limits_{{\beta_1\in \FF_{n_1}^+,\ldots, \beta_k\in \FF_{n_k}^+}\atop{|\beta_1|+\cdots +|\beta_k|\leq n}} d_{\beta_1,\ldots, \beta_k}
{\bf W}_{1,\beta_1}\cdots {\bf W}_{k,\beta_k}(1)$ with  $n\in \NN$, $d_{\beta_1,\ldots, \beta_k}\in \CC$, is  dense in
$\otimes_{i=1}^k F^2(H_{n_i})$, relation \eqref{rankone} implies that   all
the compact operators in $B(\otimes_{i=1}^k F^2(H_{n_i}))$ are contained in  $\cS$.
This completes the proof.
\end{proof}

Let $C^*(\Gamma)$ be the $C^*$-algebra generated by  a set of operators $\Gamma\subset B(\cK)$ and the identity.
A subspace $\cH\subset \cK$ is called $*$-cyclic for $\Gamma$ if
$\cK=\overline{\text{\rm span}}\{Xh, X\in C^*(\Gamma), h\in \cH\}$.
The main result of this section is the following dilation theorem
 for the elements of
the  noncommutative polydomain ${\bf D_q^m}(\cH)$.

\begin{theorem}\label{dil2}  Let ${\bf q}=(q_1,\ldots, q_k)$ be a $k$-tuple of   positive regular  noncommutative polynomials
 and let ${\bf W}=({\bf W}_1,\ldots, {\bf W}_k)$ be the universal model
associated with the abstract noncommutative polydomain ${\bf D_q^m}$.
If ${\bf T}=({ T}_1,\ldots, { T}_k)$ is  a  $k$-tuple  in
${\bf D}_{\bf q}^{\bf m}(\cH)$,
then there exists  a $*$-representation $\pi:C^*({\bf W}_{i,j})\to
B(\cK_\pi)$  on a separable Hilbert space $\cK_\pi$,  which
annihilates the compact operators and
$$
(I-\Phi_{q_1,\pi({\bf W}_1)}) \cdots (I-\Phi_{q_k,\pi({\bf W}_k)}) (I_{\cK_\pi})=0,
$$
where $\pi({\bf W}_{i}):=(\pi({\bf W}_{i,1}),\ldots, \pi({\bf W}_{i,n_i}))$,
such that
$\cH$ can be identified with a $*$-cyclic co-invariant subspace of
$$\tilde\cK:=\left[(\otimes _{i=1}^k F^2(H_{n_i}))\otimes
 \overline{{\bf \Delta_{q,T}^m}(I)(\cH)}\right]\oplus
\cK_\pi$$ under  each operator
$$
V_{i,j}:=\left[\begin{matrix} {\bf W}_{i,j}\otimes
I_{\overline{{\bf \Delta_{q,T}^m}(I)(\cH)}}&0\\0&\pi({\bf W}_{i,j})
\end{matrix}\right],\qquad i\in \{1,\ldots,k\}, j\in\{1,\ldots, n_i\},
$$
where ${\bf \Delta_{q,T}^m}(I)
:= (id-\Phi_{q_1,T_1})^{m_1}\cdots (id-\Phi_{q_k,T_k})^{m_k}(I)$,
and such that
$ T_{i,j}^*=V_{i,j}^*|_{\cH}$
  for all   $ i\in \{1,\ldots,k\}$ and  $j\in\{1,\ldots, n_i\}$.
\end{theorem}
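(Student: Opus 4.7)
The plan is to combine the completely positive functional calculus of Theorem \ref{Poisson-C*} with Stinespring's dilation theorem and a Wold-type decomposition based on the fact (Lemma \ref{compact}) that $C^*({\bf W}_{i,j})$ contains all compact operators on $\otimes_{i=1}^k F^2(H_{n_i})$.

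First, I extend the completely contractive map $\Psi_{\bf q, T}$ from Theorem \ref{Poisson-C*} to a unital completely positive map $\widetilde\Psi: C^*({\bf W}_{i,j}) \to B(\cH)$ via Arveson's extension theorem. By Stinespring's theorem applied to $\widetilde\Psi$, there is a minimal dilation: a $*$-representation $\rho: C^*({\bf W}_{i,j}) \to B(\cK)$ and an isometry $V: \cH \to \cK$ such that $\widetilde\Psi(X) = V^* \rho(X) V$ for all $X \in C^*({\bf W}_{i,j})$, with $\cK = \overline{\text{span}}\{\rho(X) V h : X \in C^*({\bf W}_{i,j}), h \in \cH\}$. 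Since $\widetilde\Psi$ is multiplicative on the polydomain algebra $\cA({\bf D_q^m})$ (by Theorem \ref{Poisson-C*}), a standard multiplicative-domain argument shows that $V(\cH)$ is co-invariant under each $\rho({\bf W}_{i,j})$; after identifying $\cH$ with $V(\cH)$, this gives $T_{i,j}^* = \rho({\bf W}_{i,j})^*|_\cH$.

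Next, since $C^*({\bf W}_{i,j})$ contains the compact operators and the identity representation on $\otimes_{i=1}^k F^2(H_{n_i})$ is irreducible (Lemma \ref{irreducible}), the representation $\rho$ decomposes canonically as $\rho = \rho_1 \oplus \pi$ on $\cK = \cK_1 \oplus \cK_\pi$, where $\rho_1$ is unitarily equivalent to a multiple of the identity representation and $\pi$ annihilates the compact operators. Thus $\cK_1$ can be identified with $\left(\otimes_{i=1}^k F^2(H_{n_i})\right) \otimes \cD$ and $\rho_1({\bf W}_{i,j})$ with ${\bf W}_{i,j} \otimes I_\cD$ for some multiplicity Hilbert space $\cD$. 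Since Lemma \ref{univ-model} gives ${\bf \Delta_{q,W}^m}(I) = {\bf P}_\CC$, which is compact (in fact rank one), applying $\pi$ yields ${\bf \Delta_{q, \pi({\bf W})}^m}(I_{\cK_\pi}) = 0$, as required by the theorem.

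It remains to identify $\cD$ with $\overline{{\bf \Delta_{q,T}^m}(I)(\cH)}$. Applying $\widetilde\Psi$ to ${\bf \Delta_{q,W}^m}(I) = {\bf P}_\CC$ gives $V^* \rho({\bf P}_\CC) V = {\bf \Delta_{q,T}^m}(I)$; since $\rho({\bf P}_\CC)$ vanishes on $\cK_\pi$ and equals ${\bf P}_\CC \otimes I_\cD$ on $\cK_1$, minimality of the Stinespring dilation forces $\cD$ to be unitarily identifiable with $\overline{{\bf \Delta_{q,T}^m}(I)(\cH)}$, the identification being implemented by the noncommutative Berezin kernel ${\bf K_{q,T}}$ of Theorem \ref{Berezin-prop}. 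Setting $V_{i,j} := ({\bf W}_{i,j} \otimes I_\cD) \oplus \pi({\bf W}_{i,j})$ on $\tilde\cK$ and embedding $\cH$ into $\cK_1$ via ${\bf K_{q,T}}$ completes the construction; the $*$-cyclicity of $\cH$ in $\tilde\cK$ is inherited directly from the minimality of the Stinespring dilation. The main obstacle is this explicit identification of the multiplicity space with $\overline{{\bf \Delta_{q,T}^m}(I)(\cH)}$, which requires verifying that the abstract Stinespring construction agrees with the concrete Berezin-kernel realization; this ultimately follows from the uniqueness (up to unitary equivalence) of the minimal Stinespring dilation, in a manner parallel to Theorem \ref{dil3}.
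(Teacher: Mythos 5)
Your overall strategy (Arveson extension, minimal Stinespring dilation, multiplicative-domain argument for co-invariance, and the splitting of the representation along the ideal of compacts supplied by Lemma \ref{compact}) is exactly the paper's, and the first two paragraphs are sound. The gap is in your final paragraph, at the one point where the theorem actually requires work: the identification of the multiplicity space $\cD$ with $\overline{{\bf \Delta_{q,T}^m}(I)(\cH)}$ and the placement of $\cH$ inside $\tilde\cK$. You propose to ``embed $\cH$ into $\cK_1$ via ${\bf K_{q,T}}$,'' but for a non-pure tuple ${\bf T}$ the Berezin kernel is a strict contraction, not an isometry (Theorem \ref{Berezin-prop}(i)), so it cannot embed $\cH$ into $(\otimes_{i=1}^k F^2(H_{n_i}))\otimes\cD$ at all; moreover, if $\cH$ did sit co-invariantly inside $\cK_1$ with $T_{i,j}^*=({\bf W}_{i,j}^*\otimes I)|_\cH$, then ${\bf T}$ would automatically be pure, contradicting the generality of the statement (cf.\ Corollary \ref{part-cases}(i), which shows $\cH\perp(0\oplus\cK_\pi)$ exactly when ${\bf T}$ is pure). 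The embedding must be the Stinespring isometry $V$ into all of $\tilde\cK=\cK_1\oplus\cK_\pi$. Relatedly, your appeal to ``uniqueness of the minimal Stinespring dilation'' to match the abstract construction with a ``Berezin-kernel realization'' has nothing to compare against: for general ${\bf T}$ the Berezin kernel does not furnish a Stinespring dilation of $\widetilde\Psi$ on $C^*({\bf W}_{i,j})$, and the uniqueness statement of Theorem \ref{dil3} is proved only for pure tuples under the extra span hypothesis $\overline{\operatorname{span}}\{{\bf W}_{(\alpha)}{\bf W}_{(\beta)}^*\}=C^*({\bf W}_{i,j})$, which Theorem \ref{dil2} does not assume.

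The paper closes this gap by a direct computation rather than an abstract uniqueness argument: using minimality one shows
$\operatorname{range}\tilde\pi({\bf P}_\CC)=\overline{\operatorname{span}}\{\tilde\pi({\bf P}_\CC)\tilde\pi({\bf W}_{(\beta)}^*)h\}$, and then the factorization property $\Psi_{\bf q,T}({\bf W}_{(\alpha)}X)=\Psi_{\bf q,T}({\bf W}_{(\alpha)})\Psi_{\bf q,T}(X)$ (a consequence of co-invariance of $V\cH$) yields
$\bigl\langle\tilde\pi({\bf P}_\CC)\tilde\pi({\bf W}_{(\alpha)}^*)h,\ \tilde\pi({\bf P}_\CC)\tilde\pi({\bf W}_{(\beta)}^*)k\bigr\rangle=\bigl\langle{\bf \Delta_{q,T}^m}(I)^{1/2}{\bf T}_{(\alpha)}^*h,\ {\bf \Delta_{q,T}^m}(I)^{1/2}{\bf T}_{(\beta)}^*k\bigr\rangle$,
which defines a concrete unitary from $\operatorname{range}\tilde\pi({\bf P}_\CC)$ onto $\overline{{\bf \Delta_{q,T}^m}(I)(\cH)}$ and hence identifies $\cD$. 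You should supply this computation (or an equivalent one) rather than invoke uniqueness. A smaller omission: from $\pi$ annihilating compacts you only get ${\bf \Delta}_{{\bf q},\pi({\bf W})}^{\bf m}(I_{\cK_\pi})=0$ with the exponents $m_i$; the statement asserts the first-power identity $(I-\Phi_{q_1,\pi({\bf W}_1)})\cdots(I-\Phi_{q_k,\pi({\bf W}_k)})(I_{\cK_\pi})=0$, which requires the additional step of Proposition \ref{exemp}(iv).
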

 \begin{proof}
 Applying Arveson extension theorem
\cite{Arv-acta} to the map $\Psi_{\bf q,T}$ of Theorem \ref{Poisson-C*}, we find a unital
completely positive linear map $\Psi_{\bf q,T}:C^*({\bf W}_{i,j})\to B(\cH)$ such that $\Psi_{\bf q,T}({\bf W}_{(\alpha)} {\bf W}_{(\beta)})^*={\bf T}_{(\alpha)} {\bf T}_{(\beta)}^*$,
where
 ${\bf T}_{(\alpha)}:=T_{1,\alpha_1}\cdots T_{k,\alpha_k}$ for
$(\alpha):=(\alpha_1,\ldots, \alpha_k)\in \FF_{n_1}^+\times \cdots \times \FF_{n_k}^+$, and ${\bf W}_{(\alpha)}$ is defined similarly.
 Let $\tilde\pi:C^*({\bf W}_{i,j})\to
B(\tilde\cK)$ be the minimal Stinespring dilation \cite{St}  of
$\Psi_{\bf q,T}$. Then we have
$$\Psi_{\bf q,T}(X)=P_{\cH} \tilde\pi(X)|\cH,\quad X\in C^*({\bf W}_{i,j}),
$$
and $\tilde\cK=\overline{\text{\rm span}}\{\tilde\pi(X)h:\ X\in C^*({\bf W}_{i,j}), h\in
\cH\}.$  Now,  we prove  that
  that $P_\cH \tilde\pi({\bf W}_{(\alpha)})|_{\cH^\perp}=0$ for any
$(\alpha):=(\alpha_1,\ldots, \alpha_k)\in \FF_{n_1}^+\times \cdots \times \FF_{n_k}^+$. Indeed, we have
\begin{equation*}
\begin{split}
\Psi_{\bf q,T}({\bf W}_{(\alpha)}{\bf W}_{(\alpha)}^*)&={\bf T}_{(\alpha)}{\bf T}_{(\alpha)}^*=P_\cH\tilde\pi({\bf W}_{(\alpha)})   \tilde\pi({\bf W}_{(\alpha)}^*)|_\cH\\
&=P_\cH\tilde\pi({\bf W}_{(\alpha)})(P_\cH+P_{\cH^\perp})   \tilde\pi({\bf W}_{(\alpha)}^*)|_\cH\\
&=\Psi_{\bf q,T}({\bf W}_{(\alpha)}{\bf W}_{(\alpha)}^*)+ (P_\cH\tilde\pi({\bf W}_{(\alpha)})|_{\cH^\perp})(P_\cH\tilde\pi({\bf W}_{(\alpha)})|_{\cH^\perp})^*.
\end{split}
\end{equation*}
Consequently, we deduce that $P_\cH \tilde\pi({\bf W}_{(\alpha)})|_{\cH^\perp}=0$ and, therefore, $\cH$
 is an
invariant subspace under each  operator $\tilde\pi({\bf W}_{i,j})^*$
and
\begin{equation}\label{coiso}
\tilde\pi({\bf W}_{i,j})^*|_\cH=\Psi_{\bf q,T}({\bf W}_{i,j}^*)=T_{i,j}^*
\end{equation}
for any $i\in \{1,\ldots,k\}$ and $ j\in\{1,\ldots, n_i\}$.

According to   Theorem \ref{compact},   all the
compact operators $ \cC(\otimes _{i=1}^k F^2(H_{n_i}))$ in
$B(\otimes _{i=1}^k F^2(H_{n_i}))$ are contained in the
$C^*$-algebra $C^*({\bf W}_{i,j})$.
 Due to  standard theory of
representations of  $C^*$-algebras \cite{Arv-book}, the
representation $\tilde\pi$ decomposes into a direct sum
$\tilde\pi=\pi_0\oplus \pi$ on $\tilde \cK=\cK_0\oplus \cK_\pi$,
where $\pi_0$, $\pi$  are disjoint representations of
$C^*({\bf W}_{i,j})$ on the Hilbert spaces
$$\cK_0:=\overline{\text{\rm span}}\{\tilde\pi(X)\tilde\cK:\ X\in \cC(\otimes _{i=1}^k F^2(H_{n_i}))\}
\quad \text{ and  }\quad  \cK_\pi:=\cK_0^\perp,
$$
respectively,
such that
 $\pi$ annihilates  the compact operators in $B(\otimes _{i=1}^k F^2(H_{n_i}))$, and
  $\pi_0$ is uniquely determined by the action of $\tilde\pi$ on the
  ideal $\cC(\otimes _{i=1}^k F^2(H_{n_i}))$ of compact operators.
Since every representation of $\cC(\otimes _{i=1}^k F^2(H_{n_i}))$ is equivalent to a
multiple of the identity representation, we deduce
 that
\begin{equation}\label{sime}
\cK_0\simeq(\otimes _{i=1}^k F^2(H_{n_i}))\otimes \cG, \quad  \pi_0(X)=X\otimes I_\cG, \quad
X\in C^*({\bf W}_{i,j}),
\end{equation}
 for some Hilbert space $\cG$.
 Using Theorem \ref{compact} and its proof, one can
easily see
that
\begin{equation*}\begin{split}
\cK_0&:=\overline{\text{\rm span}}\{\tilde\pi(X)\tilde\cK:\ X\in \cC(\otimes _{i=1}^k F^2(H_{n_i}))\}\\
&=\overline{\text{\rm span}}\{\tilde\pi({\bf W}_{(\alpha)} {\bf P}_\CC  {\bf W}_{(\beta)}^*)\tilde\cK:\
 (\alpha), (\beta)\in \FF_{n_1}^+\times \cdots \times \FF_{n_k}^+\}\\
&= \overline{\text{\rm span}}\left\{\tilde\pi({\bf W}_{(\alpha)}) \left[(I-\Phi_{q_1,\tilde\pi({\bf W}_1)})^{m_1}\cdots (I-\Phi_{q_k,\tilde\pi({\bf W}_k)})^{m_k}
(I_{\tilde\cK})\right] \tilde\cK:\ (\alpha)\in \FF_{n_1}^+\times \cdots \times \FF_{n_k}^+\right\}.
\end{split}
\end{equation*}
Since $(I-\Phi_{q_1, {\bf W}_1) })^{m_1}\cdots (I-\Phi_{q_k, {\bf W}_k)}^{m_k}
(I )={\bf P}_\CC$,    is a
 projection  of rank one in $C^*({\bf W}_{i,j})$,
  we deduce  that
$
 (I-\Phi_{q_1,\pi({\bf W}_1)})^{m_1}\cdots (I-\Phi_{q_k,\pi({\bf W}_k)})^{m_k}
(I_{\cK_\pi})= 0$
  and
$
\dim \cG=\dim \left[\text{\rm range}\,\tilde\pi({\bf P}_\CC )\right].
$
On the other hand, since  the  Stinespring representation $\tilde\pi$  is minimal, we
can use the proof of Theorem \ref{compact} to  deduce that
\begin{equation*}
\text{\rm range}\,\tilde\pi({\bf P}_\CC )=
 \overline{\text{\rm
span}}\{\tilde\pi({\bf P}_\CC )\tilde\pi({\bf W}_{(\beta)}^*)h:\ (\beta)\in \FF_{n_1}^+\times \cdots \times \FF_{n_k}^+, h\in \cH\}.
\end{equation*}
Indeed, we have
\begin{equation*}\begin{split}
\text{\rm range}\,\tilde\pi({\bf P}_\CC)&=
\overline{\text{\rm span}}\{\tilde\pi({\bf P}_\CC)\tilde\pi(X)h:\ X\in C^*({\bf W}_{i,j}), h\in \cH\}\\
&=
\overline{\text{\rm span}}\{\tilde\pi({\bf P}_\CC)\tilde\pi(Y)h:\ Y\in \cC(\otimes _{i=1}^k F^2(H_{n_i})), h\in \cH\}\\
&=
\overline{\text{\rm span}}\{\tilde\pi({\bf P}_\CC)\tilde\pi({\bf W}_{(\alpha)} {\bf P}_\CC  {\bf W}_{(\beta)}^*)h:\ (\alpha), (\beta)\in \FF_{n_1}^+\times \cdots \times \FF_{n_k}^+, h\in \cH\}\\
&=
\overline{\text{\rm span}}\{\tilde\pi({\bf P}_\CC)\tilde\pi({\bf W}_{(\beta)}^*)h:\ (\beta)\in \FF_{n_1}^+\times \cdots \times \FF_{n_k}^+, h\in \cH\}.
\end{split}
\end{equation*}
 Now,  using the fact that
 \begin{equation*}
\begin{split}
\Psi_{\bf q,T}({\bf W}_{(\alpha)} X)&=P_\cH(\tilde\pi({\bf W}_{(\alpha)}) \tilde\pi(X))|\cH\\
&=(P_\cH\tilde\pi({\bf W}_{(\alpha)})|\cH)(P_\cH\tilde\pi(X)
|\cH)\\
&=\Psi_{\bf q,T}({\bf W}_{(\alpha)})
\Psi_{\bf q,T}(X)\end{split}
\end{equation*}
for any $X\in C^*({\bf W}_{i,j})$ and $(\alpha) \in \FF_{n_1}^+\times \cdots \times \FF_{n_k}^+$, it is easy to see that
\begin{equation*}\begin{split}
\left<\tilde\pi({\bf P}_\CC)\tilde\pi({\bf W}_{(\alpha)}^*)h,
\tilde\pi({\bf P}_\CC)\tilde\pi({\bf W}_{(\beta)}^*)k\right>
& = \left<h,{\bf T}_{(\alpha)}\left[(id-\Phi_{q_1,T_1})^{m_1}\cdots (id-\Phi_{q_k,T_k})^{m_k}(I_\cH) \right]{\bf T}_{(\beta)}^*h\right>\\
& =
\left<{\bf \Delta_{q,T}^m}(I){\bf T}_{(\alpha)}^*h,{\bf \Delta_{q,T}^m}(I){\bf T}_{(\beta)}^*k\right>
\end{split}
\end{equation*}
for any $h, k \in \cH$ and $(\alpha), (\beta) \in \FF_{n_1}^+\times \cdots \times \FF_{n_k}^+$. This implies the existence of      a unitary
operator $\Lambda:\text{\rm range}\,\tilde\pi({\bf P}_\CC)\to
\overline{{\bf \Delta_{q,T}^m}(I)\cH}$ defined by
$$
\Lambda[\tilde\pi({\bf P}_\CC)\tilde\pi({\bf W}_{(\alpha)}^*)h]:={\bf \Delta_{q,T}^m}(I)
{\bf T}_{(\alpha)}^*h,\quad h\in \cH, \,\alpha\in \FF_n^+.
$$
 This shows that
$$
\dim[\text{\rm range}\,\pi({\bf P}_\CC)]= \dim
\overline{{\bf \Delta_{q,T}^m}(I)\cH}=\dim \cG.
$$
 Using relations \eqref{coiso} and
\eqref{sime},  and identifying    $\cG$ with
$ \overline{{\bf \Delta_{q,T}^m}(I)\cH}$, we obtain the required dilation.
On the other hand, due to the fact that $
 (I-\Phi_{q_1,\pi({\bf W}_1)})^{m_1}\cdots (I-\Phi_{q_k,\pi({\bf W}_k)})^{m_k}
(I_{\cK_\pi})= 0$,
  we can use  Proposition \ref{exemp}  to deduce that
     $
 (I-\Phi_{q_1,\pi({\bf W}_1)}) \cdots (I-\Phi_{q_k,\pi({\bf W}_k)})
(I_{\cK_\pi})= 0$.
 The proof is complete.
\end{proof}

We remark that if we replace   ${\bf q}=(q_1,\ldots, q_k)$, in Theorem \ref{dil2},  by    a $k$-tuple ${\bf f}:=(f_1,\ldots, f)$ of  positive regular free holomorphic functions  we obtain   a dilation theorem for any ${\bf T}=({ T}_1,\ldots, { T}_k)$    in
${\bf D}_{\bf f}^{\bf m}(\cH)$. More precisely, one can show  that there is a
  $*$-representation $\tilde\pi:C^*({\bf W}_{i,j})\to
B(\tilde\cK)$  such that  $\cH$ is an invariant subspace  under each operator
$\tilde\pi({\bf W}_{i,j})^*$  and $T_{i,j}^*=\tilde\pi({\bf W}_{i,j})^*|_{\cH}$ for any $i\in \{1,\ldots,k\}, j\in\{1,\ldots, n_i\}$.

 On the other hand, note that, using  the proof of Theorem \ref{dil2}
  and due to  the  standard theory of
representations of   $C^*$-algebras,   one can deduce
 the following   Wold type
decomposition for non-degenerate $*$-representations of the
$C^*$-algebra $C^*({\bf W}_{i,j})$.

\begin{corollary}\label{wold} Let ${\bf q}=(q_1,\ldots, q_k)$ be a $k$-tuple of   positive regular  noncommutative polynomials
 and let ${\bf W}=({\bf W}_{i,j})$ be the universal model
associated with the abstract noncommutative polydomain ${\bf D_q^m}$. If
\ $\pi:C^*({\bf W}_{i,j})\to B(\cK)$ is  a nondegenerate
$*$-representation  of \ $C^*({\bf W}_{i,j})$ on a separable
Hilbert space  $\cK$, then $\pi$ decomposes into a direct sum
$$
\pi=\pi_0\oplus \pi_1 \  \text{ on  } \ \cK=\cK_0\oplus \cK_1,
$$
where $\pi_0$ and  $\pi_1$  are disjoint representations of \
$C^*({\bf W}_{i,j})$ on the Hilbert spaces
\begin{equation*}
\cK_0:=\overline{\text{\rm span}}\left\{\pi({\bf W}_{(\alpha)}) \left[(I-\Phi_{q_1,\pi({\bf W}_1)})^{m_1}\cdots (I-\Phi_{q_k,\pi({\bf W}_k)})^{m_k}
(I_{\cK})\right] \cK:\ (\alpha)\in \FF_{n_1}^+\times \cdots \times \FF_{n_k} \right\}
\end{equation*}
and $\cK_1:=\cK_0^\perp,$
 respectively, where $\pi({\bf W}_{i}):=(\pi({\bf W}_{i,1}),\ldots, \pi({\bf W}_{i,n_i}))$. Moreover,
  up to an isomorphism,
\begin{equation*}
\cK_0\simeq (\otimes _{i=1}^k F^2(H_{n_i}))\otimes \cG, \quad  \pi_0(X)=X\otimes I_\cG \quad
\text{ for  any } \  X\in C^*({\bf W}_{i,j}),
\end{equation*}
 where $\cG$ is  a Hilbert space with
$$
\dim \cG=\dim \left\{\text{\rm range}\, \left[(I-\Phi_{q_1,\pi({\bf W}_1)})^{m_1}\cdots (I-\Phi_{q_k,\pi({\bf W}_k)})^{m_k}
(I_{\cK})\right]\right\},
$$
 and $\pi_1$ is a $*$-representation  which annihilates the compact operators   and
$$
 (I-\Phi_{q_1,\pi_1({\bf W}_1)}) \cdots (I-\Phi_{q_k,\pi_1({\bf W}_k)})
(I_{\cK_1})= 0.
$$
If  $\pi'$ is another nondegenerate  $*$-representation of
$C^*({\bf W}_{i,j})$ on a separable  Hilbert space  $\cK'$, then
$\pi$ is unitarily equivalent to $\pi'$ if and only if
$\dim\cG=\dim\cG'$ and $\pi_1$ is unitarily equivalent to $\pi_1'$.
\end{corollary}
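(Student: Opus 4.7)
The plan is to transplant the Wold-type argument that appears inside the proof of Theorem \ref{dil2} to the setting of an arbitrary nondegenerate representation $\pi$. The crucial ingredients already in hand are: (a) Lemma \ref{compact}, which says $C^*({\bf W}_{i,j})$ contains the ideal $\cC:=\cC(\otimes_{i=1}^k F^2(H_{n_i}))$ of compact operators and in fact $\cC$ is the closed span of the rank-one operators ${\bf W}_{(\alpha)} {\bf P}_\CC {\bf W}_{(\beta)}^*$; (b) Lemma \ref{univ-model}, which identifies
\[
{\bf P}_\CC=(I-\Phi_{q_1,{\bf W}_1})^{m_1}\cdots (I-\Phi_{q_k,{\bf W}_k})^{m_k}(I)
\]
as a rank-one projection lying in $C^*({\bf W}_{i,j})$; (c) the standard fact that every nondegenerate representation of $\cC$ on a separable Hilbert space is unitarily equivalent to a multiple of the identity representation; and (d) Proposition \ref{exemp}(iv), which passes from the vanishing of ${\bf \Delta}_{{\bf q},T}^{\bf m}(I)$ to the vanishing of the product of the individual $(id-\Phi_{q_i,T_i})$'s.

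First, I would set $\cK_0:=\overline{\text{span}}\{\pi(K)x:K\in\cC,\,x\in \cK\}$ and $\cK_1:=\cK_0^\perp$. Since $\cC$ is a two-sided ideal of $C^*({\bf W}_{i,j})$, the subspace $\cK_0$ is reducing for $\pi$; write $\pi=\pi_0\oplus\pi_1$ for the corresponding decomposition. By construction $\pi_1|_{\cC}=0$, and $\pi_0|_{\cC}$ is nondegenerate. By (c) there is a Hilbert space $\cG$ and a unitary $U:\cK_0\to (\otimes_{i=1}^k F^2(H_{n_i}))\otimes \cG$ intertwining $\pi_0|_\cC$ with the representation $K\mapsto K\otimes I_\cG$. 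Because $\cC$ is irreducible in the sense of Lemma \ref{irreducible}, the commutant of its image under the identity representation on $\otimes_{i=1}^k F^2(H_{n_i})$ consists of scalars; consequently the intertwiner relation $U\pi_0(K)=(K\otimes I_\cG)U$ for all $K\in\cC$ forces $U\pi_0(X)=(X\otimes I_\cG)U$ for every $X\in C^*({\bf W}_{i,j})$ (this is a standard double-commutant argument: $\pi_0(X)$ commutes with $\pi_0(\cC)'$).

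Next, I would identify $\cK_0$ with the alternative description in the statement. Using (a), every compact $K\in \cC$ is a norm limit of linear combinations of ${\bf W}_{(\alpha)}{\bf P}_\CC {\bf W}_{(\beta)}^*$, so
\[
\cK_0=\overline{\text{span}}\bigl\{\pi({\bf W}_{(\alpha)})\pi({\bf P}_\CC)\pi({\bf W}_{(\beta)}^*)x : (\alpha),(\beta)\in \FF_{n_1}^+\times\cdots\times\FF_{n_k}^+,\,x\in\cK\bigr\}.
\]
Since $\pi$ is a $*$-homomorphism, $\pi({\bf P}_\CC)=(I-\Phi_{q_1,\pi({\bf W}_1)})^{m_1}\cdots (I-\Phi_{q_k,\pi({\bf W}_k)})^{m_k}(I_\cK)$, and absorbing $\pi({\bf W}_{(\beta)}^*)x$ into the ambient vector from $\cK$ gives the claimed formula. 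The dimension formula for $\cG$ follows by observing that under $U$ the projection $\pi_0({\bf P}_\CC)$ becomes ${\bf P}_\CC\otimes I_\cG$, whose range has dimension equal to $\dim\cG$; on the other hand the range of $\pi({\bf P}_\CC)$ sits entirely in $\cK_0$ (as $\pi({\bf P}_\CC)\in\pi(\cC)$) so $\dim\text{range}\,\pi({\bf P}_\CC)=\dim\cG$.

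For the statement on $\pi_1$, the vanishing $\pi_1({\bf P}_\CC)=0$ translates (via Lemma \ref{univ-model}) to ${\bf \Delta}_{{\bf q},\pi_1({\bf W})}^{\bf m}(I_{\cK_1})=0$; since each $\pi_1({\bf W}_{i,j})$ is a contraction so that $\Phi_{q_i,\pi_1({\bf W}_i)}$ is power bounded and positive, Proposition \ref{exemp}(iv) (applied to the $k$-tuple $\pi_1({\bf W})\in {\bf D}_{\bf q}^{\bf m}(\cK_1)$, which lies in the polydomain because both $\pi(\Phi_{q_i,{\bf W}_i}^{p_i}(I))$ and the relevant differences are positive in $\pi$) yields $(I-\Phi_{q_1,\pi_1({\bf W}_1)})\cdots(I-\Phi_{q_k,\pi_1({\bf W}_k)})(I_{\cK_1})=0$. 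Disjointness of $\pi_0$ and $\pi_1$ is automatic: $\pi_0$ restricts to a multiple of the identity representation of $\cC$, while $\pi_1|_{\cC}=0$.

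Finally, for uniqueness, suppose $\pi\simeq\pi'$ via a unitary $V:\cK\to\cK'$. Then $V$ intertwines the two actions of $\cC$; since $\pi_0,\pi_0'$ are the subrepresentations generated by $\pi(\cC)\cK$ and $\pi'(\cC)\cK'$ respectively, $V$ restricts to unitaries $\cK_0\to\cK_0'$ and $\cK_1\to\cK_1'$ implementing $\pi_0\simeq\pi_0'$ and $\pi_1\simeq\pi_1'$. The first equivalence combined with the identification $\pi_0\simeq (\,\cdot\,\otimes I_\cG)$ forces $\dim\cG=\dim\cG'$, and the converse implication is clear. The main technical obstacle is the passage $\pi_0|_\cC\simeq(\text{id}\otimes I_\cG)\Rightarrow \pi_0\simeq(\text{id}\otimes I_\cG)$ on the entire algebra $C^*({\bf W}_{i,j})$; this rests on the irreducibility of $\cC$ inside $B(\otimes_{i=1}^k F^2(H_{n_i}))$ (Lemma \ref{irreducible}), which ensures the intertwining unitary cannot be altered by a nontrivial element of the commutant.
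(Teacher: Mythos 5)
Your proposal is correct and follows essentially the same route the paper intends: the paper derives this corollary by pointing to the decomposition $\tilde\pi=\pi_0\oplus\pi$ carried out inside the proof of Theorem \ref{dil2} (restriction to the ideal of compacts via Lemma \ref{compact}, the identification of $\pi_0$ with a multiple of the identity representation, the alternative description of $\cK_0$ through $\pi({\bf P}_\CC)$, and Proposition \ref{exemp} to pass from ${\bf \Delta}^{\bf m}=0$ to the first-power product vanishing), and you have fleshed out exactly those steps together with the standard uniqueness argument. No gaps.
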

 Note that  in the particular case when ${\bf m}=(1,\ldots,1)$, $q_i:=Z_{i,1}+\cdots + Z_{i,n_i}$
  for $i\in \{1,\ldots, k\}$, and $V_i=(V_{i,1},\ldots, V_{i,n_i})$ are row isometries  such that ${\bf V}=(V_{i,j})$ are doubly commuting, Corollary \ref{wold} provides a Wold type decomposition for ${\bf V}$.
We also remark that under
 the hypotheses and notations of Corollary $\ref{wold}$, and setting
$V_{i,j}:=\pi({\bf W}_{i,j})$ for any $i\in \{1,\ldots,k\}$ and $ j\in\{1,\ldots, n_i\}$,   the following statements are
equivalent:
\begin{enumerate}
\item[(i)]
${\bf V}:=(V_1,\ldots, V_k)$ is a pure element in ${\bf D_q^m}(\cK)$ ;
\item[(ii)]  for each $i\in\{1,\ldots,k\}$,
 $\lim\limits_{p\to\infty} \Phi^p_{q_i,{V}_i}(I)=0$ in the strong operator
 topology;
 \item[(iii)]
$
\cK:=\overline{\text{\rm span}}\left\{ {V}_{(\alpha)} \left[(I-\Phi_{q_1,{V}_1})^{m_1}\cdots (I-\Phi_{q_k,{V}_k})^{m_k}
(I_{\cK})\right] (\cK):\ (\alpha)\in \FF_{n_1}^+\times \cdots \times \FF_{n_k} \right\}.
$
\end{enumerate}

 We mention that,  under the
additional  condition that
$
\overline{\text{\rm span}}\,\{{\bf W}_{(\alpha)}  {\bf W}_{(\beta)}^* :\
 (\alpha), (\beta)\in \FF_{n_1}^+\times \cdots \times \FF_{n_k}^+\}$ is equel to $C^*({\bf W}_{i,j}),
$
(eg. for the polyball)
  the map $\Psi_{\bf q,T}$ in the
proof of  Theorem \ref{dil2} is unique and the dilation of ${\bf T}$ is minimal, i.e.,
$\tilde \cK$ is the closed span of all $ {\bf V}_{(\alpha)} \cH$, $(\alpha)\in \FF_{n_1}^+\times \cdots \times \FF_{n_k}^+$.
 Taking into account the uniqueness of the
minimal Stinespring representation    and the
Wold type decomposition mentioned above,  one can prove  the
uniqueness, up to  unitary equivalence, of the minimal dilation provided by Theorem \ref{dil2}.
  Moreover, let ${\bf T}'=({ T}_1',\ldots, { T}_k')$ be another    $k$-tuple  in
${\bf D}_{\bf q}^{\bf m}(\cH')$   and let ${\bf V}'=({ V}_1',\ldots, { V}_k')$   be the corresponding dilation.    Using  standard  arguments concerning the
representation theory  of $C^*$-algebras,  one can prove that
 ${\bf T}$ and ${\bf T}'$ are unitarily equivalent if and only if
$\dim \overline{{\bf\Delta_{q,T}^m}(I)(\cH)}
=\dim \overline{{\bf\Delta_{q,T'}^m}(I)(\cH')}
$
and there are unitary operators
 $U:(\otimes _{i=1}^k F^2(H_{n_i}))
 \otimes\overline{{\bf\Delta_{q,T}^m}(I)(\cH)}\to
  (\otimes _{i=1}^k F^2(H_{n_i}))\otimes\overline{{\bf\Delta_{q,T'}^m}(I)(\cH')}$ and
$\Gamma:\cK_\pi\to \cK_{\pi '}$ such that
$$
 U({\bf W}_{i,j}\otimes I_{\overline{{\bf\Delta_{q,T}^m}(I)(\cH)}})
 =({\bf W}_{i,j}\otimes I_{\overline{{\bf\Delta_{q,T'}^m}(I)(\cH')}})U, \qquad  \Gamma\pi({\bf W}_{i,j})=\pi'({\bf W}_{i,j}) \Gamma
 $$
 for any $i\in \{1,\ldots,k\}$ and $ j\in\{1,\ldots, n_i\}$,
and
$\left[ \begin{matrix} U&0\\
0&\Gamma\end{matrix} \right] \cH=\cH'.
$

\begin{corollary} \label{part-cases}
Let ${\bf V}:=(V_1,\ldots, V_k)\in {\bf D_q^m}(\tilde\cK)$ be the dilation
 of ${\bf T}:=(T_1,\ldots,T_k)\in {\bf D_q^m}(\cH)$, given by Theorem $\ref{dil2}$.
 Then,
\begin{enumerate}
\item[(i)]
 ${\bf V}$ is a pure element in ${\bf D_q^m}(\tilde\cK)$ if and only if
${\bf T}$ is  a pure element in ${\bf D_q^m}(\cH)$;
\item[(ii)]
   $
 (I-\Phi_{q_1,{V}_1}) \cdots (I-\Phi_{q_k, {V}_k})
(I_{\tilde\cK})= 0
$ \
if and only if \ $
 (I-\Phi_{q_1, {T}_1}) \cdots (I-\Phi_{q_k,{T}_k})
(I_{\cH})= 0
$.
\end{enumerate}
\end{corollary}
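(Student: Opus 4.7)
The plan is to reduce everything to a single compression identity on $\cH$, which handles three of the four implications immediately, and then to handle the remaining direction by analyzing the Berezin kernel ${\bf K_{q,V}}$ on the block decomposition $\tilde\cK = \cK_0 \oplus \cK_\pi$ built into Theorem \ref{dil2}.

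\emph{Compression identity.} Since $\cH$ is co-invariant under each $V_{i,j}$, one has $T_{i,\alpha}^{*} = V_{i,\alpha}^{*}|_\cH$ for every $\alpha \in \FF_{n_i}^{+}$. A direct sesquilinear computation on $\cH$ then yields, for any $A \in B(\tilde\cK)$, the transfer rule $\Phi_{q_i,T_i}(P_\cH A|_\cH) = P_\cH \Phi_{q_i,V_i}(A)|_\cH$. Iterating and inducting on $|{\bf p}|$ give
$$
\Phi_{q_i,T_i}^{p}(I_\cH) = P_\cH \Phi_{q_i,V_i}^{p}(I_{\tilde\cK})|_\cH, \qquad {\bf \Delta_{q,T}^{p}}(I_\cH) = P_\cH {\bf \Delta_{q,V}^{p}}(I_{\tilde\cK})|_\cH
$$
for all $p \in \NN$ and ${\bf p} \in \ZZ_+^{k}$; the same mechanism passes $(id-\Phi_{q_i,V_i}^{q_i})$ through the compression.

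\emph{Three easy implications.} The forward direction of (ii) is the case ${\bf p} = (1,\ldots,1)$. For the forward direction of (i), Proposition \ref{pure2} reduces purity of ${\bf V}$ to $\Phi_{q_i,V_i}^{p}(I_{\tilde\cK}) \to 0$ strongly, and the compression then gives $\Phi_{q_i,T_i}^{p}(I_\cH) h \to 0$ for each $h \in \cH$, so ${\bf T}$ is pure. For the converse of (ii), if $(id-\Phi_{q_1,T_1})\cdots(id-\Phi_{q_k,T_k})(I_\cH) = 0$, then Proposition \ref{exemp}(iv) upgrades this to ${\bf \Delta_{q,T}^{m}}(I_\cH) = 0$, so $\overline{{\bf \Delta_{q,T}^{m}}(I)(\cH)} = \{0\}$, the Fock summand of $\tilde\cK$ disappears, $\tilde\cK = \cK_\pi$ and ${\bf V} = \pi({\bf W})$, and the desired vanishing is precisely the annihilation property built into the statement of Theorem \ref{dil2}.

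\emph{Converse of (i), the main obstacle.} Direct compression fails because $\cH$ is only co-invariant, not reducing, so strong convergence on $\cH$ does not extend to $\tilde\cK$. The plan is to use the block structure. Proposition \ref{exemp}(iv) applied to the vanishing statement in Theorem \ref{dil2} gives ${\bf \Delta_{q,\pi({\bf W})}^{m}}(I_{\cK_\pi}) = 0$, and Lemma \ref{univ-model} gives ${\bf \Delta_{q,{\bf W}\otimes I}^{m}}(I_{\cK_0}) = {\bf P}_\CC \otimes I_\cG$; hence ${\bf \Delta_{q,V}^{m}}(I_{\tilde\cK}) = ({\bf P}_\CC \otimes I_\cG)\oplus 0$. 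Feeding this into the defining formula for ${\bf K_{q,V}}$ shows that ${\bf K_{q,V}}$ annihilates $\cK_\pi$ and, on $\cK_0$, coincides with the Berezin kernel of the pure tuple ${\bf W}\otimes I$, which Theorem \ref{Berezin-prop}(ii) makes isometric. Hence $\|{\bf K_{q,V}} h\|^{2} = \|h_0\|^{2}$ for every $h = h_0 \oplus h_\pi \in \cH$. On the other hand, the compression identity applied to Theorem \ref{Berezin-prop}(i) yields $\langle {\bf K_{q,V}^{*}}{\bf K_{q,V}} h, h\rangle = \langle {\bf K_{q,T}^{*}}{\bf K_{q,T}} h, h\rangle = \|h\|^{2}$ whenever ${\bf T}$ is pure. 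Comparing forces $h_\pi = 0$ for every $h \in \cH$, so $\cH \subset \cK_0$; $*$-cyclicity of $\cH$ and the fact that $\cK_0$ is reducing then give $\tilde\cK = \cK_0$, and ${\bf V} = {\bf W}\otimes I$ is pure by Lemma \ref{univ-model}.
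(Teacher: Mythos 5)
Your proof is correct and follows essentially the same route as the paper: the compression identity $\Phi_{q_i,T_i}^{p}(I_\cH)=P_\cH\Phi_{q_i,V_i}^{p}(I_{\tilde\cK})|_\cH$, the block-diagonal structure ${\bf \Delta_{q,V}^{m}}(I_{\tilde\cK})=({\bf P}_\CC\otimes I_\cG)\oplus 0$ together with the annihilation property on $\cK_\pi$, Proposition \ref{exemp}(iv) to pass between the first-power and $m$-th power defects, and minimality of the dilation to upgrade $\cH\subset\cK_0$ to $\tilde\cK=\cK_0$. Your detour through $\|{\bf K_{q,V}}h\|^{2}=\|h_0\|^{2}$ in the converse of (i) is only a repackaging of the paper's direct computation of $\lim(id-\Phi_{q_k,V_k}^{q_k})\cdots(id-\Phi_{q_1,V_1}^{q_1})(I)$ on each block, since Theorem \ref{Berezin-prop}(i) identifies ${\bf K_{q,V}^{*}}{\bf K_{q,V}}$ with exactly that limit.
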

\begin{proof} According to Theorem \ref{dil2}, we have
$$
(id-\Phi_{q_k,T_k}^{p_k})\cdots (id-\Phi_{q_1,T_1}^{p_1})(I_\cH)=P_\cH \left[\begin{matrix}
(id-\Phi_{q_k,{\bf W}_k}^{p_k})\cdots (id-\Phi_{q_1,{\bf W}_1}^{p_1})(I_{\otimes_{i=1}^k F^2(H_{n_i})})\otimes
I_{\overline{{\bf\Delta_{q,T}^m}(\cH)}}&0\\0&
0\end{matrix}\right]|\cH.
$$
Hence,  we deduce that
$\lim_{{\bf q}=(q_1,\ldots, q_k)\in \ZZ_+^k}(id-\Phi_{q_k,T_k}^{p_k})\cdots (id-\Phi_{q_1,T_1}^{p_1})(I_\cH)=I$
if and only if
$P_\cH
\left[\begin{matrix}
 I&0\\0& 0\end{matrix}\right]|\cH=I$.
Consequently,   ${\bf T}$ is pure   if and only if  $\cH\perp (0\oplus
\cK_\pi)$.  According to Theorem \ref{dil2}, this is equivalent to
  $\cH\subset (\otimes _{i=1}^k F^2(H_{n_i}))\otimes
   \overline{{\bf \Delta_{q,T}^m}(I)(\cH)}$. On
the other hand, since $(\otimes _{i=1}^k F^2(H_{n_i}))\otimes
\overline{{\bf \Delta_{q,T}^m}(I)(\cH)}$
is reducing for each $V_{i,j}$, and $\widetilde \cK$ is the
smallest reducing subspace for   $V_{i,j}$, which contains
$\cH$, we must have
$\widetilde\cK=(\otimes _{i=1}^k F^2(H_{n_i}))\otimes
\overline{{\bf \Delta_{q,T}^m}(I)(\cH)}$.
Therefore, item (i) holds.

   To prove part (ii), note that
$$
{\bf\Delta_{q,V}^m}(I_{\tilde\cK})= \left[\begin{matrix}
{\bf\Delta_{q,W}^m}(I_{\otimes_{i=1}^k F^2(H_{n_i})}) \otimes
I_{\overline{{\bf\Delta_{q,T}^m}(\cH)}}&0\\0&
0\end{matrix}\right].
$$
Hence, we  deduce that
 ${\bf\Delta_{q,V}^m}(I_{\tilde\cK})=0$ if and only if
 $
 {\bf\Delta_{q,W}^m}(I_{\otimes_{i=1}^k F^2(H_{n_i})}) \otimes
I_{\overline{{\bf\Delta_{q,T}^m}(\cH)}}=0.
$
 On the other hand,  we know
that   ${\bf\Delta_{q,W}^m}(I_{\otimes_{i=1}^k F^2(H_{n_i})})={\bf P}_\CC$.
Consequently,  the relation above holds
if and only if ${\bf\Delta_{q,T}^m}=0$.
 Now, using  Proposition \ref{exemp},  we obtain the
equivalence in part (ii). The proof is complete.
\end{proof}

We remark that
 every pure  $k$-tuple ${\bf T}\in {\bf D_q^m}(\cH)$ with $\rank {\bf \Delta_{q,T}^m}=1$ is
unitarily equivalent to one obtained by compressing $({\bf W}_1,\ldots,
{\bf W}_n)$ to a co-invariant subspace  under  ${\bf W}_{i,j}$, where
  $i\in \{1,\ldots, k\}$ and $  j\in \{1,\ldots, n_i\}$.
Indeed, this follows from Theorem
\ref{dil2},   Corollary \ref{part-cases}, and  the remarks preceding Corollary \ref {part-cases}.

       %

      \end{document}